\theoremstyle{plain}
\newtheorem{THEOREM}{Theorem}[section]
\newtheorem{Lemma}[THEOREM]{Lemma}
\newtheorem{theorem}[THEOREM]{Theorem}
\newtheorem{corollary}[THEOREM]{Corollary}
\newtheorem{lemma}[THEOREM]{Lemma}
\theoremstyle{definition}
\theoremstyle{remark}
\newtheorem{remark}[THEOREM]{Remark}
\DeclareMathOperator{\supp}{supp} %
\def \a {\alpha}
\def \d {\delta}
\def \e {\varepsilon}
\def \l {\lambda}
\def \bu {{\bf u}}
\def \bx {{\bf x}}
\def \by {{\bf y}}
\def \bx {{\bf x}}
\def \by {{\bf y}}
\def \cL {\mathcal{L}}
\def \cN {\mathcal{N}}
\def \cR {\mathcal{R}}
\def \ff {\mathsf{f}}
\newcommand{\N}{\ensuremath{\mathbb{N}}}   %%% naturals
\newcommand{\R}{\ensuremath{\mathbb{R}}}   %%% reals
\newcommand{\T}{\ensuremath{\mathbb{T}}}   %%% torus
\newcommand{\pa}{\partial}
\def \dx  {\, \mbox{d}x}
\def \dy  {\, \mbox{d}y}
\newcommand{\ep}{\varepsilon}
\def \S {C^{1+\alpha}_{0,\,\text{even}}}
\begin{document}

\title[2D EULER: POISEUILLE FLOW]{TRAVELING WAVES  NEAR POISEUILLE FLOW \\ FOR THE 2D EULER EQUATION }

\author{\'Angel Castro and Daniel Lear}

\address{Instituto de Ciencias Matematicas, Madrid.}
\email{angel\_castro@icmat.es}

\address{Universidad de Cantabria, Santander.}
\email{daniel.lear@unican.es}

\date{\today}

\subjclass{76E05,76B03,35Q31,35Q35}

\keywords{2D Euler, hydrodynamic stability, Poiseuille flow, traveling waves, bifurcation.}

%\thanks{\textbf{Acknowledgment.}  AC is supported by the Spanish Ministry of Science and Innovation, through
%the “Severo Ochoa Programme for Centres of Excellence in R\&D (CEX2019-000904-S)”,
%and by grants  Europa Excelencia program ERC2018-092824 and  RED2018-102650-T funded by MCIN/AEI/10.13039/501100011033. AC and DL are supported by grants MTM2017-89976-P and  PID2020-114703GB-I00 funded by MCIN}

\begin{abstract}
In this paper we reveal the existence of a large family of new, nontrivial and Lipschitz traveling waves for the 2D Euler equation at an arbitrarily small distance from the Poiseuille flow in $H^s$, with $s<\sfrac{3}{2}$, at the level of the vorticity.
\end{abstract}

\maketitle

%\addtocontents{toc}{\protect\setcounter{tocdepth}{1}}

\tableofcontents

\section{Introduction and main result}
In this paper we consider the 2D Euler equation for an incompressible and inviscid fluid
\begin{align}
\pa_t \bu +(\bu\cdot\nabla) \bu =&-\nabla p,\label{euleru}\\
\nabla \cdot \bu =&0,\label{incompressibility}
\end{align}
in a periodic channel $\T\times [-1, 1]$ and with slip boundary condition
\begin{align}\label{slip}
%u_2(x,\pm 1,t)=0.
u_2\big|_{y=\pm 1}=0.
\end{align}
We also fix the circulation of the velocity, which is conserved. Indeed, we will take
\begin{align}\label{circulation}
\int_{-1}^1 u_1(x,y,t)dy=\frac{2}{3}, \qquad \forall t\geq 0.
\end{align}
In this way, the Poiseuille flow, which is given by
\begin{align}\label{poiseuille}
\bu_P(x,y)=(y^2,0),
\end{align}
solves \eqref{euleru}, \eqref{incompressibility}, \eqref{slip} and \eqref{circulation} (with pressure $p=0$).\medskip

The main objective of this paper is to study the evolution of 2D Euler equation near Poiseuille flow in a periodic channel with slip boundary condition.
We show that the dynamics of 2D Euler at low regularity is quite rich. In fact, we are able to show the existence of traveling waves at a distance as close as we want to Poiseuille measured in the $H^{3/2^-}(\T\times[-1,1])$ topology, at the level  of the  vorticity.

Investigating the behavior of Euler equations near shear flows, such as Poiseuille flow, transcends theoretical curiosity and is of paramount importance. Despite its simplicity, Poiseuille flow exhibits rich dynamics and has broad practical implications. For example, in biomedical engineering, it is essential for understanding blood flow in arteries and for designing medical devices such as catheters and drug delivery systems, see \cite{fung,nichols}.

For the sake of clarity, we will now give an elementary statement of our main result.
\begin{theorem}\label{thmbasic} For  any $0\leq s<\sfrac{3}{2}$ and $\epsilon>0$,  the 2D Euler system \eqref{euleru}, \eqref{incompressibility}, \eqref{slip} and \eqref{circulation} admits a nontrivial traveling wave solution at distance $\epsilon$ of the Poiseuille flow. That is, the travelling wave satisfies the smallness condition  $$\|\omega+2y\|_{H^s(\T\times[-1,1])}< \epsilon,$$
with $\omega\in W^{1,\infty}(\T\times[-1,1])$ and whose speed is $O(1)$ with respect to $\epsilon$.
\end{theorem}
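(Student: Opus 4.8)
The plan is to construct the traveling waves by bifurcation from the Poiseuille flow $\bu_P = (y^2,0)$, viewing the traveling-wave speed $c$ as the bifurcation parameter. A traveling wave with velocity $c$ in the $x$-direction corresponds to a stationary solution in the moving frame, so at the level of the stream function $\psi$ (with $\bu = \nabla^\perp \psi$ and $\omega = \Delta\psi$) one seeks solutions of the semilinear elliptic problem $\Delta \psi = F(\psi - cy)$ in $\T\times[-1,1]$ with the appropriate boundary and circulation constraints, for a suitable profile function $F$. The Poiseuille flow corresponds to the shear stream function $\psi_P(y)$ with $\psi_P'(y) = y^2$, i.e. $\psi_P(y) = y^3/3$, and one chooses $F$ so that $\psi_P$ is a solution of the stationary equation in the frame of speed $c$. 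The key point is to linearize the equation around $(\psi_P, c_\star)$ for a well-chosen critical speed $c_\star = O(1)$ and to show that the linearized operator has a one-dimensional kernel spanned by a mode of the form $\phi(y)\cos(x)$ (or with some fixed wavenumber), with the transversality (Crandall–Rabinowitz) condition holding, so that a local curve of nontrivial solutions bifurcates.

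First I would set up the functional-analytic framework: fix the function spaces — since the target regularity is $\omega \in W^{1,\infty}$ with $\|\omega + 2y\|_{H^s} < \epsilon$ for $s < 3/2$, one works with stream functions in a space like $C^{2+\alpha}$ or a weighted Hölder/Sobolev space adapted to the boundary, and the low regularity of $\omega$ (only Lipschitz, not better) is what forces the restriction $s < 3/2$ and is presumably built into the choice of profile $F$, which will be only Lipschitz or piecewise-smooth rather than analytic. Second, I would identify the linearized operator $\mathcal L_{c} h = \Delta h - F'(\psi_P - cy)(h)$ acting on perturbations $h$, separate variables $h = \sum_k h_k(y) e^{ikx}$, and reduce the kernel equation to a Sturm–Liouville / Rayleigh-type ODE boundary value problem in $y$ on $[-1,1]$; the critical speed $c_\star$ is chosen precisely as the value at which this ODE (for a chosen wavenumber, say $k=1$) first admits a nontrivial solution satisfying the boundary conditions. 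Third, verify the Crandall–Rabinowitz hypotheses: one-dimensionality of the kernel, codimension-one range, and the transversality condition $\partial_c \mathcal L_{c_\star}$ applied to the kernel element lies outside the range — this is a computation with the explicit $\psi_P$ and the eigenfunction. Fourth, apply the bifurcation theorem to obtain a local curve $s \mapsto (\psi_s, c_s)$ of solutions with $\psi_0 = \psi_P$, $c_0 = c_\star$, translate back to the original frame, and check that along this curve $\|\omega_s + 2y\|_{H^s} \to 0$ as $s\to 0$ while $c_s \to c_\star = O(1)$, and that $\omega_s$ is genuinely only $W^{1,\infty}$ and the waves are nontrivial (not shears and not mere translates of $\bu_P$).

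The main obstacle I anticipate is the spectral analysis of the linearized operator — proving that there is a genuine critical speed $c_\star = O(1)$ (not degenerate, not at the edge of the continuous spectrum) for which the Rayleigh-type ODE has a one-dimensional kernel with the correct boundary behavior, and simultaneously controlling the regularity: because Poiseuille flow has an inflection point issue and the relevant operator is non-self-adjoint with possible critical layers, one must choose the profile $F$ (equivalently the nonlinearity relating $\omega$ to $\psi - cy$) very carefully so that (a) $\psi_P$ solves the nonlinear equation, (b) the linearization is well-behaved and Fredholm of index zero on the chosen space, and (c) the resulting bifurcated vorticity sits in $W^{1,\infty}$ but generically not in $H^s$ for $s \geq 3/2$, which is what makes the theorem sharp. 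Verifying transversality and the genuine nonlinearity (so that the branch is nontrivial rather than consisting of shear flows) is a secondary but essential technical point. The circulation constraint \eqref{circulation} and the slip boundary condition \eqref{slip} must be incorporated as part of the domain of the operator, which slightly complicates the Fredholm bookkeeping but does not change the overall strategy.
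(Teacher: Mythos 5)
Your overall bifurcation framework (set up a nonlinear functional, apply Crandall--Rabinowitz, do a Sturm--Liouville/Rayleigh analysis for the kernel) is the right one, but there is a genuine gap in the proposed starting point that would make the argument fail as written.

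The central difficulty is that you propose to write $\Delta\psi = F(\psi - cy)$, choose $F$ so that the Poiseuille stream function $\psi_P$ solves it, and bifurcate directly off the branch $(\psi_P,c)$. This cannot work. First, for the branch constructed in the paper the wave speed lies inside the range of the background shear velocity (there is a critical layer at some $y^*\in(a,b)$), so $\psi_P(y)-cy$ is non-monotone on $[-1,1]$ and $F$ is not a single-valued function: the ansatz $\omega=F(\psi-cy)$ already breaks down at the linear level. Second, bifurcating directly from Poiseuille is actually obstructed --- Coti Zelati--Elgindi--Widmayer proved nonexistence of nontrivial traveling waves near Poiseuille in $H^{5+}$, and the only reason the statement here is true is the low-regularity window $s<3/2$; a naive $\Delta\psi=F(\psi-cy)$ bifurcation would not see this threshold at all. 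What the paper does is fundamentally a \emph{two-parameter} construction: it first replaces the Poiseuille vorticity $-2y$ by an explicitly chosen odd Lipschitz profile $\varpi_\ep$ with $\varpi_\ep'(y)=-2$ on $[0,a_\ep)\cup(b_\ep,1]$ and $\varpi_\ep'=0$ on the thin band $[a_\ep,b_\ep]$ of width $\ep$ (this base shear is already at $O(\ep)$ distance from Poiseuille), and then bifurcates from that \emph{perturbed} shear, not from Poiseuille itself. The deliberate ``gap'' where $\varpi'$ vanishes is exactly where the critical layer is placed, so the equation never has to be solved there.

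The second structural difference is the formulation. Rather than a semilinear elliptic PDE for $\psi$, the paper parametrizes the level sets of the vorticity as graphs $(x,y+f(x,y))$, i.e.\ $\omega(x,y+f(x,y))=\varpi_\ep(y)$, and the unknown is the deformation $f$ on $\T\times\operatorname{supp}(\varpi_\ep')$. The traveling-wave equation becomes $\lambda f = \overline{\Psi}_f-\fint\overline{\Psi}_f\,d\bar x$ on $\T\times[0,a_\ep)\cup(b_\ep,1]$, and one bifurcates from $f\equiv 0$ with bifurcation parameter $\lambda$. This is dual to your $F$-profile ansatz, but it sidesteps the multivaluedness of $F$ entirely and makes the Lipschitz regularity of $\varpi_\ep$ manageable (the functional is shown to be $C^1$ on a Hölder ball via $W^{2,p}$ elliptic estimates and Morrey, rather than Schauder theory). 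The kernel computation is then for each fixed $\ep$ a one-dimensional integral/ODE problem in $y$; the paper passes to an ODE with regular-singular points at $\tilde\mu,\tilde\nu$ just outside the domain (Frobenius method), reduces to a $2\times 2$ determinant condition on the matching constants at $y=a_\ep,b_\ep$, and closes with an $\ep\to 0$ limiting analysis plus a Brouwer fixed-point argument for the subleading coefficient of $\tilde\mu$. Your proposal correctly flags the spectral analysis as the main difficulty, but without the modified Lipschitz profile and the contour formulation you would not have a tractable kernel problem to analyze, and the $s<3/2$ threshold --- which in the paper comes from the corner of $\varpi_\ep'$ producing $\nabla\omega\in L^\infty$ but a jump in $\nabla\omega$ across the interfaces $y=\pm a_\ep,\pm b_\ep$ --- would have no source.
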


\begin{remark}
Here and throughout the rest of the paper, ``nontrivial solution'' means that the velocity is not a shear flow, i.e., that the dependence of $u_1$ on the horizontal variable $x$ is nontrivial.
\end{remark}

This result will follow from the combination of Theorems \ref{thm:detallado} and \ref{t:distance}   where the properties of these solutions will be stated in more detail.
We hope that the techniques introduced here will be sufficiently robust and flexible for studying the existence of traveling wave solutions near other more general shear flows.

\subsection{Motivation and background}

The study of the  Euler equation in 2D remains a vibrant and active field of research. Unlike its 3D counterpart, the global existence of smooth solutions is a classic result dating back to the 1930's, see \cite{H,W}. However, understanding the qualitative behavior of these global solutions presents formidable difficulties and challenges.

Understanding the long time behaviour of these solutions is especially difficult due to the absence of a global relaxation mechanism. In light of these challenges, it is natural to investigate possible ``end state'' configurations to better understand the long-time dynamics. These configurations serve as important benchmarks and may even emerge as dominant aspects of the dynamics of 2D Euler.\\

There have been some attempts to construct a theory for the generic long-term behavior of solutions to the 2D Euler equation. Numerical simulations suggest that, in the generic case, the solutions, while exhibiting complicated fine-scale behavior, may exhibit certain structures on the unit scale. Examples of such ``end states'' configurations include stationary, time-periodic, quasi-periodic, and perhaps chaotic states.

In this direction, there are a couple of far-reaching conjectures regarding the long-term behavior of the 2D Euler equation on bounded planar domains $M \subset \mathbb{R}^2$:

\begin{itemize}
	\item \textbf{Conjecture } (\v{S}verák (2011), \cite{S}).
Generic initial data $\omega_0\in L^{\infty}(M)$ gives rise to inviscid incompressible motions whose vorticity orbits $\{\omega(t)\}_{t\in\R}$ are not precompact in $L^2(M)$.

	\item \textbf{Conjecture } (Shnirelman (2013), \cite{Sh})
For any initial data $\omega_0\in L^{\infty}(M),$ the collection of $L^2(M)$ weak limits of the orbit $\{\omega(t)\}_{t\in\R}$  consists of vorticities which  generate $L^2(M)$ precompact orbits under 2D Euler evolution.
\end{itemize}

This would help explain the local chaos versus global structure phenomenon. We refer the reader to \cite{BL,DD,DEJ,EMS} and references therein for important recent works on these and related problems, and Section 3 of the recent book \cite{DE} for an excellent survey. A rigorous proof of these conjectures is beyond the scope of current PDE techniques. A more realistic and achievable approach is to consider the 2D Euler equations in physically relevant perturbative regimes, which is the problem we consider in this paper below.\\

A particularly important class of solutions of the Euler equations is given by the steady states, i.e. time-independent flow configurations. Thus, a fundamental question that naturally arises is how the solutions near such steady states behave and, in particular, whether they are stable in an appropriate sense, which must be defined very carefully.

%Among the steady states of the Euler equations, a class of particular importance arises from shear flows, exemplified by well-known configurations such as Couette, Poiseuille, or Kolmogorov flows. Since the foundational investigations of Kelvin [?] and Reynolds [?] in the 1880's, shear flows have been important in both theory and applications of fluid dynamics, and are commonly regarded as the natural state of a fluid in non-turbulent situations.\\

The study of the stability properties of these steady states is a classical topic and a fundamental problem in hydrodynamics. In the 1960's, Arnold \cite{Arnold} proved a general stability criteria, using the energy Casimir method, but this  does not give asymptotic information on the global dynamics.

The full nonlinear asymptotic stability problem has only been investigated recently, starting with the breakthrough work of Bedrossian-Masmoudi \cite{BM}, who proved \textit{inviscid damping} and nonlinear stability
in the Gevrey spaces $\mathcal{G}^s$, with $s >\sfrac{1}{2}$, for perturbations of the Couette flow on $\T\times\R$.

%This \textit{inviscid damping} phenomenon share analogies with the \textit{Landau damping} in the kinetic theory of plasma physics, see the paper of Mohout–Villani \cite{} and references therein.

%The same result does not hold for $s < \sfrac{1}{2}$, as it was shown in \cite{DM} by Deng-Masmoudi, see also \cite{DZ2}.
This result does not hold for $s < \sfrac{1}{2}$, as demonstrated in \cite{DM} by Deng-Masmoudi. See also \cite{DZ2}.

%See also the very recent results \cite{CWZZ,Zhao} for the inhomogeneous case.

In very recent and independent works Ionescu-Jia \cite{IJ_otro,IJ_acta} and Masmoudi-Zhao \cite{MZ} proved that nonlinear asymptotic stability holds true also for perturbations around the Couette flow and more general monotone shear flows in the finite periodic channel $\T\times [0,1]$.

Motivated by the results mentioned above, the linearized equations around more general shear flows have been intensively investigated in the last few years. See, for example, \cite{BZV,BMlinear,DZ1,DZ2,GNRS,Jia1,Jia2,Wei-Zhang-Zhao,Wei-Zhang-Zhao_1,Wei-Zhang-Zhao_2,Zillinger1,Zillinger2} and their references for a complete, but not exhaustive, list of references.\\

By all the above, an important direction to explore further is how the regularity of initial data may or may not affect the dynamics. In this direction we refer to the work of Lin–Zeng \cite{LZ}, where they proved that nonlinear inviscid damping is not true for perturbations of the Couette flow in $H^s$ with $s < \sfrac{3}{2}$. In addition, the non existence of nontrivial traveling waves close to the Couette in $H^s$, with $s > \tfrac{3}{2}$, is also proved.

More specifically, they discovered Kelvin’s cat’s eyes steady states nearby to the Couette shear flow. In fact, the dynamics at this level of regularity exhibit even greater complexity and there are also travelling waves with speed of order O(1) and zero circulation, as shown by the authors in \cite{CL}.

For other related equations like 2D Euler equation with Coriolis force or on a rotating sphere we refer to \cite{LWZZ,WZZhu} and \cite{CP,N} respectively.\\

Recently, there has been a growing interest in the study of the existence or absence of invariant structures such as steady-states and traveling waves for the incompressible 2D Euler equations near other shear flows. Here, it is important to mention the result of Coti Zelati–Elgindi–Widmayer \cite{ZEW}. They prove the existence of analytic nontrivial steady-states near Kolmogorov flow  $\mathbf{u}_K=(\sin(y),0)$ in the domain $\T\times[0,2\pi]$. In this result the distance is measured in $C^w$ (analytic functions). On the contrary, in the domain $\T\times [0,2\pi\delta]$, with $\delta\notin \mathbb{N}$, they show a rigidity result: nonexistence of  nontrivial traveling waves near Kolmogorov flow in the $H^2$ topology. Finally, they also prove the nonexistence of nontrivial traveling waves near the Poiseuille flow in the $H^{5+}$ topology in $\T\times [-1,1]$. This last result complements Theorem \ref{thmbasic}, but it still leaves a gap for $\sfrac{3}{2}\leq s \leq 5$.\\

\begin{comment}
In addition,
the non existence of nontrivial traveling waves close to the  Poiseuille and Kolmogorov flow in $H^{5^+}$ for a channel $\T\times[-1,1]$ and $H^3$ for a  rectangular torus $\T_\d^2$ is also proved.
\end{comment}

For the Navier-Stokes equation, when the viscosity $\nu > 0$ is small but nonzero, another significant physical phenomenon emerges, known as \textit{enhanced dissipation}, which contributes to stabilizing the flow. Essentially, the background flow mixes the vorticity field, amplifying the viscous effect in averaging the vorticity function. This leads to a quicker decay of the vorticity compared to when the viscous effect acts alone. Using enhanced dissipation allows for establishing improved nonlinear asymptotic stability results for perturbations of the order $O(\nu^\gamma)$, with a suitable $\gamma > 0$. Determining the smallest $\gamma$ for which nonlinear stability remains valid is a crucial challenge and represents an active research area, addressing the ``transition threshold conjecture''. For comprehensive insights into enhanced dissipation and transition threshold problems, we direct readers to recent works \cite{AH,BGM,BMV,BZ,CLWZ,CWZ,G,GW,LWZ,LX,MZ1,MZ2,WZZ,WZ} and the references therein. In our context, it is pertinent to point out the importance of several recent works \cite{CDLZ,CWZ1,CZEW,DZ,DL1,WX} focused on Poiseuille flow.

\subsection{Tools and ideas behind the proof}
In this section, we explain the key ingredients of the proof and highlight the main differences and new challenges that arise in the case of Poiseuille flow with respect to Couette and Taylor-Couette \cite{CL,CL1}.

\textit{Step 1: Formulation of the problem via vorticity contour lines.}
As in our previous works \cite{CL,CL1} the starting point will be the use of the transport character of the vorticity equation to translate the problem to the study of vorticity  contour lines. That is, we will assume that the level sets of the vorticity are given by the family of graphs $(x,y+f(\bx,t))$, i.e.,
\[
\omega(x,y+f(\bx,t),t)=\varpi(y), \qquad \forall t\geq 0,
\]
for certain profile $\varpi$ which will be fixed in an appropriate manner later. Then, all the information of the problem is encoded in the time-evolution of $f(\bx,t)$ and in  $\varpi$. After some algebraic manipulations the problem reduces to the equation
\[
\pa_t f(\bx,t)=\pa_x\overline{\Psi}_{\varpi}[f](\bx,t), \quad \text{over  $\T\times\stackrel{\circ}\supp\left(\varpi'\right)$},
\]
with boundary conditions $f|_{y=\pm 1}=0$ and where $\pa_x\overline{\Psi}_{\varpi}[\cdot]$ is a nonlocal and nonlinear functional.

To search for traveling wave solutions we will take, with a little abuse of notation, the ansatz $f(\bx,t):=f(x+\l t,y)$ which will lead to a time-independent equation for $(\l, f(\bx))$ of the form
\begin{equation}\label{introFunctional}
F_\varpi[\l,f]=0, \quad \text{over  $\T\times\stackrel{\circ}\supp\left(\varpi'\right)$}.
\end{equation}

Bifurcation theory, particularly through the Crandall-Rabinowitz theorem, will be a essential tool in our demonstration. This theory has proven to be invaluable in establishing the existence of solutions to a wide range of equations within the field of fluid mechanics.\medskip

\textit{Step 2: Bifurcation theory and the profile function $\varpi$.}  The above functional equation \eqref{introFunctional} will be solved by using the Crandall-Rabinowitz theorem as one can  bifurcate from zero for some values of $\l$. The main task of the manuscript will be to check all the required conditions of C-R theorem.

Prior to that, it is important to emphasize that this approach has enabled us to reduce the problem just to the support of the vorticity gradient, where all the dynamics occur.

An important difference, which makes this problem challenging and necessitates novel approaches for its resolution, lies in the fact that for both Couette  and  Taylor-Couette flow, their respective vorticities remain constant. Consequently, their gradients vanish across the entire working domain. This circumstance allows us to seek perturbed profiles whose gradient is localized in two small regions, each comparable in size to the distance between the time periodic solution and the shear/radial flow.

 This is in  stark contrast  with the case at hand, where the gradient of the associated vorticity is a non-zero constant. This fact implies we are forced to seek new perturbed profiles whose gradient is nontrivial in almost  the entirety of the working domain. As we will see in Section \ref{s:analysisLOP}, this issue makes the analysis of the speed and direction of bifurcation completely different.

We advance that our perturbed profile $\varpi\in W^{1,\infty}([-1,1])$ will be given by an odd function defined through
\[
\varpi'(y)=(-2)\begin{cases}
    1 \qquad \phantom{.}0\leq y < a_\ep,\\
    0 \qquad a_\ep \leq y \leq b_\ep,\\
    1 \qquad b_\ep < y \leq 1,
\end{cases}
\]
with
\[
a_\e \equiv \frac{1-\ep}{2}, \qquad b_\e\equiv \frac{1+\ep}{2}.
\]

Notice that there is a large part where $\varpi'\neq 0$ and a small part, of order $\ep$, where $\varpi'=0$. Both of them will play an essential role in the proof of theorem \ref{thmbasic}.\medskip

\textit{Step 3: Regularity of the functional for a  Lipschitz profile $\varpi$.}

A difficulty to be faced is related to the non explicit form of the Green’s function associated to the Laplacian in a channel $\T\times[-1,1]$. The situation in bounded domains turns out to be more delicate due to the presence of boundaries. This has an impact on the study of the regularity of the functional that will describe the traveling wave solutions. This type of difficulty has appeared previously in the literature, see for example \cite{CL1} for the Euler equation in an annular domain or \cite{HXX} for the generalized SQG equation in the disc. Here, we will adopt a similar approach to that used in \cite{CL1}.

Another new technical difference from our previous results is that  we will need to achieve the required regularity of the functional for a Lipschitz profile $\varpi$,  in contrast to the smooth profiles used previously in \cite{CL,CL1}. Roughly speaking, we conduct a detailed study through the $L^p$ theory of  the elliptic problem associated with the stream function.

Here, it is important to note that the fact that $\varpi \in W^{1,\infty}([-1,1])$ implies that the traveling waves obtained in the paper belong to $W^{1,\infty}(\T \times [-1,1])$, but they  are not in $C^1(\T \times [-1,1])$.  We hope that the same result holds with a smooth regularization of the profile $\varpi$, which would allow us to obtain smooth traveling waves. However, the analysis of the linear operator, needed in the C-R theorem and carried out in Section \ref{s:analysisLOP}, would be much more involved with a smooth profile. Because of that, here we work with a Lipschitz profile and pay the price of some complications in the proof of the differentiability of the functional.\medskip

\textit{Step 4: Analysis of the linear operator.}
To implement C-R theorem and generate a nontrivial solution to the equation \eqref{introFunctional}, one needs to check the required spectral properties for the linearized operator at $f=0$.

This linear operator  is given by $\cL^\l \equiv D_f F_\varpi[0,\l]$ and  for any  function
\[
h(x,y)=\sum_{n\geq 1}h_n(y)\cos(nx),
\]
we have that
\[
\cL^\lambda h(x,y) =\sum_{n\geq 1} \cL^\lambda_{n}h_n (y)\cos(nx), \quad \text{over  $\T\times\stackrel{\circ}\supp\left(\varpi'\right)$},
\]
where each operator $\cL^\lambda_{n}$ is of integral type. More precisely, after employing the symmetries of the system, we obtain
\begin{align*}
\cL_{n}^\lambda h_n(y)=(\lambda-\Psi'_0(y))h_n(y)-\int_{0}^1\varpi'(z)G_n(y,z)h_n(z)dz, \qquad y\in [0,1]\cap\stackrel{\circ}\supp\left(\varpi'\right),
\end{align*}
with
\[
\Psi'_0(y)=\int_{0}^y\varpi(\bar{y})d\bar{y}-\int_{0}^1\int_{0}^{\bar{y}}\varpi(z)dzd\bar{y}-\frac{1}{3},
\]
and where $G_n(\cdot,\cdot)$ is just the Green function associated to the operator $\pa_y^2 -n^2$ on $[0,1]$. That is,
\begin{align*}
G_n(y,z)=\frac{1}{n\sinh(n)}\left\{\begin{array}{cc}\sinh(n(1-y))\sinh(nz) &\quad  z<y,\\ \sinh(ny)\sinh(n(1-z)) &\quad  z>y.\end{array}\right.
\end{align*}

The main core of the paper consists of  the study of the spectral properties of $\mathcal{L}^\lambda$. In particular, a large part will be dedicated to obtain an element in its kernel. Here, in this road map of the proof, we give particular attention to this issue since it is the base also to obtain the remaining necessary conditions to apply the C-R theorem.

\textit{Step 4.1:  Choice of the mode, manipulations and rescaling.} In order to  find an element in the kernel, a usual trick is to focus our attention in just one mode. For simplicity, in this problem we fix $m=1$ (that is, $h\equiv h_1$ and $h_n\equiv 0,  \text{ for all } n\neq m$ with $m=1$) and using the fact that $\varpi'(y)=-2\chi_{[0,a_\ep)\cup(b_\ep,1]}(y)$ we have reduced the problem to find $(\l,h)$ such that
\begin{equation}\label{eqintegralintro}
(\lambda-\Psi'_0(y))h(y)+2\int_{[0,a_\ep)\cup(b_\ep,1]}G(y,z)h(z)dz=0, \qquad y\in [0,a_\ep)\cup(b_\ep,1],
\end{equation}
with the boundary conditions
\[
h(0)=0=h(1).
\]
%\begin{remark}\label{ABtildeintro}
%Since \eqref{eqintegralintro} is a linear equation we have that the values of $h(a_\ep)$ and $h(b_\ep)$ are uniquely determined up to multiplication by a constant. For simplicity, in the rest, we set that $\tilde{A}\equiv h(a_\ep)$ and $\tilde{B}\equiv h(b_\ep)$ for some $\tilde{A},\tilde{B}\in\R.$
%\end{remark}

\begin{remark}
Here, it is important to note that the associated integral equation for both Couette and Taylor-Couette flow have an integration domain of size of order $O(\ep)$. This fact play a fundamental role in the proofs in \cite{CL,CL1}. However, in this setting we have a domain of integration of order $O(1)$ which requires new ideas and use a completely different approach to proceed.
\end{remark}

The main goal is to look for a solution of \eqref{eqintegralintro} with speed $\l$ that satisfies $\Psi_0'(b_\e)< \l < \Psi_0'(a_\e)$. This means that $\l=\Psi_0'(y^\ast)$ for some $y^*\in(a_\e,b_\e)$. This way we  avoid any zero of $(\lambda-\Psi'_0(y))$ in $[0,a_\ep]\cup[b_\ep,1]$.  In fact,
\[
\lambda-\Psi'_0(y)=\begin{cases}
    (y-\mu)(y+\mu), \qquad 0\leq y < a_\e,\\
    (y-\nu)(y-\rho), \qquad b_\e< y\leq 1,
\end{cases}
\]
for some $\mu=\mu(\lambda), \nu(\mu), \rho(\mu) \in \R$ satisfying
\begin{equation}\label{mu>a_intro}
\mu>a_\ep,\qquad \text{and} \qquad \rho<\nu< b_\ep.
\end{equation}

Next we introduce the function
\[
H(y)\equiv\begin{cases}
h(y)(y-\mu)(y+\mu),\qquad 0\leq y<a_\e,\\
h(y)(y-\nu)(y-\rho),\qquad b_\e<y\leq 1,
\end{cases}
\]
which allow us to write \eqref{eqintegralintro} as
\begin{align}\label{compactintro}
H(y)+2\int_{0}^1 G(y,z)\left(\frac{\chi_{[0,a_\ep)}(z)}{z^2-\mu^2}+\frac{\chi_{(b_\ep,1]}(z)}{(z-\nu)(z-\rho)}\right)H(z)dz=0, \qquad y\in[0,a_\ep)\cup(b_\ep,1].
\end{align}

Let us rescale equation \eqref{compactintro} in a suitable manner to fix the domain. To achieve this, we introduce the following auxiliary rescaled variable $x$, defined as
\[
x=\begin{cases}
\frac{y}{1-\ep},\qquad \text{for $0\leq y<a_\ep$}, \\
\frac{y-\ep}{1-\ep}, \qquad \text{for $b_\ep<y\leq 1$},
\end{cases}
\]
and the rescaled auxiliary function
\[
F(x)=\begin{cases}
H((1-\ep)x),\qquad &\text{for $0\leq x<\frac{1}{2}$},\\
H((1-\ep)x+\ep), \qquad &\text{for $\frac{1}{2}<x\leq 1$}.
\end{cases}
\]
\begin{remark}
Here, we emphasize that  $F(x)$ does not have to be continuous at the point $x=1/2$.
\end{remark}

This procedure allow us to pass from a problem \eqref{compactintro} in a domain $[0,a_\ep)\cup(b_\ep,1]$ depending of the parameter $\ep$ to a fixed domain $[0,1/2)\cup(1/2,1]$ independent of it.
In this way, we have translated our problem to obtain $$F(x)=F^L(x)\chi_{[0,1/2)}(x)+F^R(x)\chi_{(1/2,1]}(x),$$ where $F^{L}$ and $F^R$ solve respectively
\begin{multline}\label{inteqleft_intro}
   F^L(x) 
    + \int_0^{1/2} \tilde{G}_\ep(x,z)\frac{F^L(z)}{z^2-\tilde{\mu}^2}dz
    +\int_{1/2}^1 \tilde{G}_\ep(x,z)\frac{F^R(z)}{(z-\tilde{\nu})(z-\tilde{\rho})}dz=0, \qquad 0\leq x < 1/2,
\end{multline}
and
\begin{multline}\label{inteqright_intro}
     F^R(x)+
    \int_0^{1/2} \tilde{G}_\ep(x,z)\frac{F^L(z)}{z^2-\tilde{\mu}^2}dz
    +\int_{1/2}^1 \tilde{G}_\ep(x,z)\frac{F^R(z)}{(z-\tilde{\nu})(z-\tilde{\rho})}dz=0, \qquad 1/2< x\leq 1,
\end{multline}
with the boundary conditions
\begin{equation}\label{BCforFLRintro}
F^L(0)=0=F^R(1),
\end{equation}
%and
%\begin{equation}\label{BCforFLRcentrointro}
%\lim_{x\to 1/2^-}F^L(x)=\tilde{A}(a_\ep-\mu)(a_\ep+\mu)\equiv A, \qquad \lim_{x\to 1/2^+}F^R(x)=\tilde{B}(b_\ep-\nu)(b_\ep-\rho)\equiv B,
%\end{equation}
where
\begin{equation}\label{tildevariablesintro}
\tilde{\mu}=\frac{\mu}{1-\ep}, \quad  \text{ and } \quad \tilde{\nu}=\frac{\nu-\ep}{1-\ep}, \,\, \tilde{\rho}=\frac{\rho-\ep}{1-\ep}.
\end{equation}
In the end,  the problem reduces to obtain $(F,\tilde{\mu})$ solving \eqref{inteqleft_intro}-\eqref{BCforFLRintro}.\medskip

\textit{Step 4.2: Integral equation to ODE transformation.}
It is straightforward to check that if the pair $F\equiv (F^L,F^R)$ solves \eqref{inteqleft_intro}-\eqref{inteqright_intro}, then it also solves the following system of ordinary differential equations
\begin{align}
\left(f^L\right)''(x)-\left(\frac{2}{x^2-\tilde{\mu}^2}+(1-\ep)^2\right)f^L(x)&=0, \qquad \text{for }0\leq x< 1/2, \label{odeleft_intro}\\
f^L(0)&=0, \label{bcleft_intro}
\end{align}
and
\begin{align}
\left(f^R\right)''(x)-\left(\frac{2}{(x-\tilde{\nu})(x-\tilde{\rho})}+(1-\ep)^2\right)f^R(x)&=0, \qquad \text{for } 1/2 < x \leq 1, \label{oderight_intro}\\
f^R(1)&=0. \label{bcright_intro}
\end{align}
Due to \eqref{mu>a_intro} and \eqref{tildevariablesintro} we must impose
\begin{align}\label{rhonumu}\tilde{\rho}<\tilde{\nu}< 1/2 < \tilde{\mu}.\end{align}
Thus, the only regular-singular points $x=\tilde{\mu}$ and $x=\tilde{\nu}$ are outside the domain where we are working. In any case, using the Frobenius series method we can obtain two linearly independent solutions of each ODE equation. That is, we are able to
solve \eqref{odeleft_intro} over  $[0,\tilde{\mu})$ and \eqref{oderight_intro}  over $(\tilde{\nu},1]$.

 Since we are working with a linear second-order ODE we will include an ``extra'' boundary condition given by
\begin{equation}\label{bccentro_intro}
\lim_{x\to 1/2^-} f^L(x)=1=\lim_{x\to 1/2^+} f^R(x).
\end{equation}
Using a linear combination of the two linearly independent solutions obtained earlier for each ODE, we can also obtain a solution for each ODE that satisfies \eqref{bcleft_intro}, \eqref{bcright_intro} and \eqref{bccentro_intro}.

\begin{remark}
It is important to emphasize that the reciprocal is not true in general. That is, the solution of the ODE system \eqref{odeleft_intro}-\eqref{bcleft_intro}-\eqref{bccentro_intro} and \eqref{oderight_intro}-\eqref{bcright_intro}-\eqref{bccentro_intro} is not in general a solution of the original integral system \eqref{inteqleft_intro}-\eqref{inteqright_intro}. We must do a suitable choice of the values at points $x=1/2^{\pm}$.
\end{remark}

To recover the solution of the original integral system \eqref{inteqleft_intro}-\eqref{inteqright_intro}, we consider the candidate
\begin{equation}\label{candidateintro}
F(x)=A f^L(x)\chi_{[0,1/2)}(x)+ B f^R(x)\chi_{(1/2,1]}(x).
\end{equation}

\textit{Step 4.3: Reducing the problem to an equation for $\tilde{\mu}$.}
Using our candidate \eqref{candidateintro}, it can be  verified that
$(A,B)$ needs to satisfy
\begin{align}\label{matrixintro}
\left(\begin{array}{cc}1+\sinh(b_\ep)I_1 & \sinh(a_\ep)I_2\\ \sinh(a_\ep)I_1  &  1+\sinh(b_\ep)I_2 \end{array}\right)\left(\begin{array}{cc} A \\ B\end{array}\right)=0,
\end{align}
with
\[
I_1\equiv \int_{0}^\frac{1}{2}\tilde{G}_\ep(1/2^-,z)\frac{f^L(z)}{z^2-\tilde{\mu}^2}dz, \qquad I_2\equiv \int_{\frac{1}{2}}^1 \tilde{G}_\ep(1/2^+,z)\frac{f^R(z)}{(z-\tilde{\nu})(z-\tilde{\rho})}dz.
\]
Thus, in order to get a nontrivial solution $(A,B)\neq 0$ it must hold
\begin{align}\label{detsolintro}
1
+\sinh(b_\ep)\left(I_1+I_2\right)
+\left(\sinh^2(b_\ep)-\sinh^2(a_\ep)\right)I_1 I_2 =0,
\end{align}
which is actually an equation for $\tilde{\mu}$. Thus, we have finally reduced the problem to find $\tilde{\mu}$ satisfying inequalities \eqref{rhonumu}  and   solving  equation \eqref{detsolintro}. It is important to note that $\tilde{\mu}$ appears implicitly in the expressions of $f^L$ and $f^R$ as well.\medskip

\textit{Step 4.4: Analyzing the limiting problem.}
A crucial step to obtain $\tilde{\mu}$  solving the equation \eqref{detsolintro} and \eqref{rhonumu} will be to study the limiting problem that appears when we make the parameter $\ep$ to tend to zero.

We need to introduce an ansatz for $\tilde{\mu}$. That is,
\begin{align}\label{ansatzintro}
\tilde{\mu}= \frac{1}{2}+\left(\frac{1}{2}+\mu_1\right)\ep +\tilde{\mu}_2(\ep)\ep^2\log^2(\ep),
\end{align}
where $-\sfrac{1}{2} < \mu_1 < +\sfrac{1}{2}$ and $\tilde{\mu}_2(\ep)=O(1)$ in terms of $\ep.$ The fact that $\mu_1\in(-\sfrac{1}{2},+\sfrac{1}{2})$ implies that $\mu>a_\ep$ and $\rho<\nu<b_\ep$ for $\ep$ small enough.\\

The idea is as follows:

Suppose there exist $(F_\ep,\tilde{\mu}_\ep)$,  that solve our integral system  and satisfy \eqref{rhonumu} for all $\ep>0$ small enough. In particular, these solutions also solve the associated system of ODEs. Then, it is natural to ask what problem satisfies the limit when $\ep\to 0^+$. The function $F_0$, that formally appears, solves
\begin{align}\label{eqf0intro}
F_0''(x)-\left(\frac{2}{x^2-(1/2)^2}+1\right)F_0(x)=0, \quad \text{for }0\leq x<1/2 \text{ and } 1/2 <x\leq 1,
\end{align}
with
\[
F_0(0)=0=F_0(1).
\]

As we did before, the above system can also be analyzed and solved using the Frobenius series method over $[0,1/2)\cup(1/2,1]$, since the only regular-singular point is $x=1/2$.

In addition, if we assume $$F_0(1/2^-)=F_0(1/2^+)=1$$ we can pass to the limit in \eqref{detsolintro} to get
\begin{align*}
&1+C_1\sinh(1/2)\int_{0}^\frac{1}{2}\frac{\sinh(x)F_0(x)-\sinh(1/2)}{x^2-1/4}dx\\
&+C_1\sinh(1/2)
\int_{1/2}^1\frac{\sinh(1-x)F_0(x)-\sinh(1/2)}{x^2-1/4}dx
\nonumber\\ &+C_1\sinh^2(1/2)\log\left(\frac{1+2\mu_1}{1-2\mu_1}\right)-C_1\sinh^2(1/2)\log(3)=0.
\end{align*}

This expression can be understood as an equation for $\mu_1$, with a unique solution in $(-1/2,1/2)$.

The real proof consists of using this object $(F_0,\mu_1)$ to show the existence of a solution of \eqref{detsolintro}, $\tilde{\mu}_\ep$, for $\ep>0$ small enough. Indeed, \eqref{detsolintro}, with the ansatz \eqref{ansatzintro} can be written as
\[
\tilde{\mu}_2=\mathcal{G}[F_0,\mu_1,\ep]+o_\ep(1)\mathcal{F}[\tilde{\mu}_2,f[\tilde{\mu}_2]].
\]

This equation for $\tilde{\mu}_2$ is obtained by comparing $f[\tilde{\mu}_2]$ in \eqref{odeleft_intro}-\eqref{bcright_intro} and $F_0$. And it can be solved by Brouwer fixed point theorem.

\medskip

\textit{Step 5: The remaining conditions of C-R theorem and the distance to the Poiseuille flow.}
In the last part of the paper we verified the  remaining necessary conditions regarding the codimension of $\mathcal{L}^\lambda$ and the transversality property in C-R theorem. Showing that $\mathcal{L}^{\lambda}$ has codimension one still needs a new argument and the proof differs  from \cite{CL,CL1}. The transversality property is, instead, easy to obtain. Finally, we check that the distance between the traveling waves solutions and the Poiseuille flow is small in $H^{\frac{3}{2}-}$.

\subsection{Organization}
The rest of the paper is organized as follows. In Section \ref{s:formulationproblem}, we will derive the equation for the vorticity contour lines using the Biot-Savart law and set the profile function $\varpi$. In Section \ref{s:bifurcation},  we introduce and review some background on bifurcation theory and  C-R theorem. In Section \ref{s:spaces}, we define the spaces we will work with to apply the C-R theorem and study the regularity of the nonlinear functional. Sections \ref{s:analysisLOP}-\ref{s:transversality} constitute the core of the manuscript, where we conduct the spectral study and verify that all the hypotheses of the C-R theorem are satisfied. Finally, in Section \ref{s:mainthm}, we quantify the distance between our solution and the Poiseuille flow.\medskip

\section{Formulation of the problem}\label{s:formulationproblem}

We consider the 2D Euler equations \eqref{euleru}, \eqref{incompressibility}, \eqref{slip} and \eqref{circulation},  in the vorticity formulation, i.e.,
\begin{align}\label{eulerw}
\pa_t \omega+ \bu\cdot \nabla \omega=0,
\end{align}
where the velocity is rescued from the vorticity through the stream function $\Psi$, i.e.,
\begin{equation}\label{stream}
\Delta \Psi= \omega, \qquad \Psi\big|_{y=\pm 1}=\mp\frac{1}{3},
\end{equation}
and
\begin{align}\label{velostream}
\bu =\nabla^\perp \Psi \equiv \left(-\pa_y\Psi,\, \pa_x\Psi\right).
\end{align}
Notice that it is straightforward to check that $\omega(\bx,t)=-2 y$ solves \eqref{eulerw}-\eqref{stream} with
$\bu(\bx,t)=\bu_P(\bx)$.

\subsection{The equation for the traveling waves}
In order to study \eqref{eulerw}, \eqref{stream} and \eqref{velostream}, we look at the level sets of the vorticity $\omega$ inside of the support of $\nabla \omega$. We will assume that these level sets can be parameterized as a family of graphs, i.e.,
\begin{align*}
\omega(x,y+f(x,y,t),t)=\varpi(y),
\end{align*}
for some Lipschitz profile $\varpi$ (independent of time).

By straightforward computations, we get
\begin{align*}
\nabla \omega(x,y+f(\bx,t),t)=\frac{\left(-\pa_xf(\bx,t), 1\right)}{1+\pa_yf(\bx,t)}\varpi'(y).
\end{align*}
Then, using  \eqref{velostream} into \eqref{eulerw}, we can check that
\begin{align}\label{ecuacionlevel1}
\left(\pa_t f(\bx,t)-\pa_x \overline{\Psi}(\bx,t)\right)\frac{\varpi'(y)}{1+\pa_y f(\bx,t)}=0,
\end{align}
where
\begin{align*}
\overline{\Psi}(\bx,t)\equiv\Psi(x,y+f(\bx,t),t).
\end{align*}

In the following, we will use the notation
\begin{align*}
\Phi_f(x,y)\equiv (x,y+f(x,y)),
\end{align*}
and we also define the auxiliary domain
\begin{align*}
\Omega_f\equiv\{ (\bar{x},\bar{y})\in \T \times [-1,1]\, :\, (\bar{x},\bar{y})=\Phi_f(x,y)  \text{ with } x\in \T \text{ and } y\in \supp(\varpi')\}.
\end{align*}
Now we observe that
\begin{align*}
\Omega_f=\supp\left(\nabla \omega\right),
\end{align*}
and we emphasize that we just have to solve the equation
\begin{align}\label{ecuacionlevel2}
\pa_t f(\bx,t)=\pa_x\overline{\Psi}(\bx,t) \quad \text{in $\T\times\stackrel{\circ}\supp\left(\varpi'\right)$},
\end{align}
with boundary condition
\begin{align}\label{boundarycondf}
%f(x,\pm 1,t)=0,
f\big|_{y=\pm 1}=0,
\end{align}
as we can not move the top and bottom of the channel.

Since we are looking for traveling waves solutions, we will introduce the ansatz
$$f(x,y,t)\equiv f(x+\lambda t, y)$$ with a slight abuse of notation. Then, our problem \eqref{ecuacionlevel2} becomes
\begin{align*}
\lambda \pa_x f(\bx) =\pa_x \overline{\Psi}_f(\bx) \quad \text{in $\T\times\stackrel{\circ}\supp(\varpi')$},
\end{align*}
with the same boundary condition \eqref{boundarycondf} and with $\overline{\Psi}_f=\Psi_f\circ \Phi_f$. That is,
\begin{align*}
\overline{\Psi}_f(\bx)=\Psi_f(x,y+f(x,y)),
\end{align*}
with
\begin{equation*}
\left\{
\begin{aligned}
\Delta \Psi_f=& \omega_f\quad \text{in $\T\times [-1,1],$}\\
\Psi_f\big|_{y=\pm 1}=& \mp \frac{1}{3},
\end{aligned}
\right.
\end{equation*}
%\begin{align*}
%\Delta \Psi_f=& \omega_f\quad %\text{in $\T\times [-1,1],$}\\
%\Psi_f\big|_{y=\pm 1}=& \mp \frac{1}{3},
%\end{align*}
where
\begin{equation*}
\omega_f(\bx)=\left\{
\begin{aligned}
\varpi\circ\left(\Phi_{f}^{-1}\right)_2(\bx)\quad &\qquad\text{in $\Omega_f,$}\\
\text{constant} &\qquad \text{in $\T\times [-1,1]\setminus \Omega_f.$}
\end{aligned}
\right.
\end{equation*}
%where, for $\bx\in \Omega_f$,
%\begin{align*}
%\omega(\bx)= \varpi\circ\left(\Phi_{f}^{-1}\right)_2(\bx)
%\end{align*}
%and for the $\bx\in \T\times [-1,1]\setminus \Omega_f$, $\omega$ is extended in a continuous way.

\subsubsection{The profile function $\varpi$}
In this section, we set the profile function $\varpi(y)$ over $y\in [-1,1]$. In first place, we take $\varpi(y)$ an odd function of $y$.
Secondly, we take auxiliary parameters $a,b$ (which will be fixed later) such that
 $a>0$, $b<1$ and $a<b$. Thus, we define
\begin{align*}
\varpi'(y)\equiv -2\cdot\left\{\begin{array}{cc} 1 \quad& 0\leq y<a,\\
0 \quad& a\leq y \leq b, \\
1 \quad& b<y\leq 1.
\end{array}\right.
\end{align*}
As a direct computation, the profile $\varpi$ will be then defined by
\begin{align}\label{profile}
\varpi(y)=\int_{0}^y\varpi'(\bar{y})d\bar{y}=\left\{\begin{array}{cc} -2y \quad & 0\leq y<a,\\
-2a \quad & a\leq y \leq b, \\
-2(y-b)-2a \quad & b<y\leq 1.
\end{array}\right.
\end{align}
In addition, we define an auxiliary velocity given by
\begin{align}\label{velocidad}
u(y) \equiv \int_{0}^y\varpi(\bar{y})d\bar{y}=\left\{\begin{array}{cc}-y^2 \quad& 0\leq y<a,\\
-2ay +a^2 \quad& a\leq y \leq b,\\
-(y-b)^2-2ay+a^2 \quad& b<y\leq 1.
\end{array}\right.
\end{align}

With this choice of profile $\varpi$ given by \eqref{profile}, we arrive to the equation
\begin{align}\label{equationlevel3}
\lambda f(\bx)=\overline{\Psi}_f(\bx)-\frac{1}{2\pi}\int_{\T}\overline{\Psi}_f(\bar{x},y)d \bar{x},\quad
\text{in $\T\times [-1,-b)\cup(-a,a)\cup(b,1]$},
\end{align}
where we have chosen the constant of integration so that the right-hand side of \eqref{equationlevel3} is zero mean.
We also note that this choice is compatible with the boundary condition $f\big|_{y=\pm 1}=0.$

\subsubsection{Symmetries of the equation} In this section we see that if $f(x,y)$ is a solution of \eqref{equationlevel3} then $g(x,y)\equiv -f(x,-y)$ is also a solution of
\begin{align*}
\lambda g(\bx)=\overline{\Psi}_g(\bx)-\frac{1}{2\pi}\int_{\T}\overline{\Psi}_g(\bar{x},y)d \bar{x},\quad
\text{in $\T\times [-1,-b)\cup(-a,a)\cup(b,1]$},
\end{align*}
with  $\Psi_g(x,y)\equiv -\Psi_f(x,-y)$.

The proof is reduced to proving that $\Psi_g$ does indeed solve the system
\begin{equation*}
\left\{
\begin{aligned}
\Delta \Psi_g=& \omega_g\quad \text{in $\T\times [-1,1],$}\\
\Psi_g\big|_{y=\pm 1}=& \mp \frac{1}{3}.
\end{aligned}
\right.
\end{equation*}
The boundary condition follows immediately by the definition of $\Psi_g$. For the other, we note that
$$\Delta \Psi_g(x,y)=-\Delta \left(\Psi_f(x,-y)\right)=-\Delta\Psi_f (x,-y)=-\omega_f(x,-y).$$
In addition, since $(\Phi_g)_2(x,y)=-(\Phi_f)_2(x,-y)$ we have that
\[
\omega_f(x,-y)=\varpi\circ (\Phi_f^{-1})_2(x,-y)=-\varpi\circ (\Phi_g^{-1})_2(x,y)=-\omega_g(x,y).
\]
Then, combining both, we have proved our desired goal.

\subsection{The final equation}
The above  motivates us to look for odd solutions in $y$ of \eqref{equationlevel3}. This allows us to write the problem just in $\T\times [0,a)\cup (b,1]$. In fact, we get
\begin{equation}\label{ecuf1}
\lambda f(\bx) = \overline{\Psi}_f(\bx)-\frac{1}{2\pi}\int_{\T}\overline{\Psi}_f(\bar{x},y)d\bar{x} \quad \text{in $\T\times [0,a)\cup (b,1],$}
\end{equation}
together with the boundary conditions
\begin{align}
f(x,0)=&0,\label{ecuf2}\\
f(x,1)=&0,\label{ecuf3}
\end{align}
with $\overline{\Psi}_f(x,y)=\Psi_f(x,y+f(x,y))$ and
\begin{equation}\label{ecupsi1}
\left\{
\begin{aligned}
\Delta \Psi_f&= \omega_f\quad \text{in $\T\times [0,1],$}\\
\Psi_f\big|_{y=0}&= 0,\\
\Psi_f\big|_{y=1}&= - \tfrac{1}{3}.
\end{aligned}
\right.
\end{equation}

%\begin{align}\label{ecupsi1}
%\Delta \Psi_f=&\omega_f \quad  \text{in $\T\times (0,1)$,}\\
%\Psi_f(x,0)=&0,\label{ecupsi2}\\
%\Psi_f(x,1)=&-\frac{1}{3}\label{ecupsi3},
%\end{align}

From now on we will fix the parameters $a$ and $b$. We will take
\begin{align*}
a\equiv&\frac{1}{2}-\frac{\ep}{2},\\
b\equiv&\frac{1}{2}+\frac{\ep}{2},
\end{align*}
where $0<\ep\ll 1$. Thus
\begin{align}\label{omegaf}
\omega_f(x,y)=\left\{\begin{array}{cl} -2\left(\Phi_{f}^{-1}\right)_2(x,y) & x\in \T,\quad 0\leq y<a+f(x,a),\\
-2a &  x\in \T, \quad  a+f(x,a)\leq y \leq b+f(x,b),\\ -2\left(\left(\Phi_{f}^{-1}\right)_2(x,y)-b\right)-2a \quad & x\in \T, \quad b+f(x,b)<y\leq 1.\end{array}\right.
\end{align}

\subsection{From the level set equation to the Euler equation}

Let us assume that we have a solution $f\in C^{1+\alpha}$ of \eqref{ecuf1}-\eqref{ecuf3}. Since $\varpi\in W^{1,\infty}$, the vorticity $\omega_f$ given by \eqref{omegaf} also belongs to $W^{1,\infty}$. In this section we prove that such  vorticity solves Euler equations in a weak sense.

We are looking for solutions $\omega \in W^{1,\infty}(\T\times [-1,1]\times[0,T])$ satisfying
\begin{equation}\label{weakeuler}
\int_{0}^T\int_{\T\times (-1,1)} \left(\omega(x,y,t)\pa_t\eta(x,y,t)+\omega(x,y,t)\bu(x,y,t)\cdot\nabla\eta(x,y,t)\right)dxdydt=0,
\end{equation}
%\begin{align}\label{weakeuler}
%&\int_{0}^T\int_{\T\times (-1,1)} \left(\omega(x,y,t)\pa_t\eta(x,y,t)+\omega(x,y,t)\bu(x,y,t)\cdot\nabla\eta(x,y,t)\right)dxdydt=0,\\
%&\forall \eta(x,y,t)\in C^\infty_{c}\left(\T\times(-1,1)\times (0,T)\right),\nonumber
%\end{align}
for all $\eta(x,y,t)\in C^\infty_{c}\left(\T\times(-1,1)\times (0,T)\right)$, together with \eqref{stream} and \eqref{velostream}.

First of all, we show that if $\underline{\omega}\in W^{1,\infty}(\T\times[-1,1])$ satisfies
\begin{equation}\label{weaktravel}
\int_{\T\times (-1,1)} \left(\lambda\underline{\omega}(x,y)\pa_x\rho(x,y)+\underline{\omega}(x,y)\bu(x,y)\cdot\nabla\rho(x,y)\right)dxdy=0,
\end{equation}
for all $\rho(x,y)\in C_c^\infty(\T\times (-1,1))$
%\begin{align}\label{weaktravel}
%&\int_{\T\times (-1,1)} \left(\lambda\underline{\omega}(x,y)\pa_x\rho(x,y)+\underline{\omega}(x,y)\bu(x,y)\cdot\nabla\rho(x,y)\right)dxdy=0,\\
%&\forall \rho(x,y)\in C_c^\infty(\T\times (-1,1)),\nonumber
%\end{align}
and for some $\lambda\in \R$,
then $\omega(x,y,t)=\underline{\omega}(x+\lambda t,y)$ satisfies \eqref{weakeuler}. Indeed, it is just enough to show that
\begin{align*}
&\int_{0}^T\int_{\T\times (-1,1)} \underline{\omega}(x+\lambda t,y) \pa_t\eta(x,y,t)dxdydt=
\int_{0}^T\int_{\T\times (-1,1)} \underline{\omega}(x,y) \pa_t\eta(x-\lambda t,y,t)dxdydt\nonumber\\
&=\int_{0}^T\int_{\T\times (-1,1)} \underline{\omega}(x,y) \left(\frac{d}{dt} \eta(x-\lambda t, y,t)+\lambda \pa_x\eta(x-\lambda t, y ,t) \right)dxdydt\nonumber\\&=\int_{0}^T\int_{\T\times (-1,1)} \underline{\omega}(x,y) \lambda \pa_x\eta(x-\lambda t, y ,t) dxdydt.\nonumber
\end{align*}
%\myr{Mejor poner $\bu=\nabla^\perp\Delta^{-1}\omega$ en \eqref{weakeuler} y \eqref{weaktravel}. Otra opcion es definir $\underline{\bu}=\bu(x+\lambda t,y)$. Yo pondria mas detalles, del tipo tomar $\rho(x,y)=\eta(x-\lambda t,y,t)$ para un $t$ fijo.}

Now, to obtain a solution of \eqref{weaktravel}, we start by checking that if $f(\bx)\in C^{1+\alpha}$, has small norm (such that $\Phi_f^{-1}$ is well defined) and  satisfies \eqref{ecuf1} and \eqref{ecuf2}-\eqref{ecuf3}, where $\overline{\Psi_f}=\Psi_f\circ\Phi_f$ with $\Psi_f$ satisfying \eqref{ecupsi1} and with $\omega_f$ given by \eqref{omegaf}, then $\omega_f\in W^{1,\infty}(\T\times[-1,1])$ satisfies \eqref{weaktravel}.

To prove this, the first thing we need to do is to extend $f$ from $\T\times [0,a)\cup (b,1]$ to $\T\times [-1,-b)\cup (-a,a)\cup (b,1]$ and $\Psi_f$ from $\T\times [0,1]$ to $\T\times [-1,1]$ as odd functions on $y$. Thus, these extensions, which, with a little abuse of notation, we call again $f$ and $\Psi_f$ solve respectively \eqref{ecuf1} but now in $\T\times [-1,-b)\cup (-a,a)\cup (b,1]$, and \eqref{ecupsi1} in $\T\times [-1,1]$, being $\omega_f$ in $\T\times[-1,0]$ the odd extension of $\omega_f$ in $\T\times [0,1]$. In addition, we have that
\begin{align}
f(x,-1)&=0,\label{ecuf20}\\
\Psi_f(x,-1)&=\frac{1}{3}\label{ecupsi20}.
\end{align}

In order to check \eqref{weaktravel}, we begin splitting the first term as follows
\begin{align*}
&\int_{\T\times (-1,1)}\lambda \omega_f(x,y)\pa_x\rho(x,y)dxdy\\&=\int_{\T}\left(\int_{(0,a+f(x,a))}...\, dy+\int_{(a+f(x,a),b+f(x,b))}...\, dy+\int_{(b+f(x,b),1)}...\, dy +\int_{(-1,0)}...\,dy\right)dx.
\end{align*}
In order to do the manuscript more readable, we will focus only on the region $y\in(0,1)$ since the other part follows by analogous computations.

Let us start with the first term, after applying integration by parts and some direct computations we arrive to
\begin{align}\label{weaktrozo1}
\int_{\T\times (0,a+f(x,a))}\lambda \omega_f(x,y)\pa_x\rho(x,y)dydx&=-\int_{\T\times(0,a+f(x,a))}\lambda \pa_x\omega_f(x,y)\rho(x,y)dydx\\
&\quad +\int_{\T}\lambda\omega_f(x,a+f(x,a))\rho(x,a+f(x,a))(-\pa_xf(x,a))dx\nonumber\\
&=\int_{\T}\lambda (-2a)\rho(x,a+f(x,a))(-\pa_xf(x,a))dx\nonumber\\
&\quad +\int_{\T\times (0,a)}\lambda\pa_x f(x,y) \varpi'(y)\rho(x,y+f(x,y))dydx.\nonumber
\end{align}
%\begin{align*}
%&\int_{\T\times (0,a+f(x,a))}\lambda \omega_f(x,y)\pa_x\rho(x,y)dydx\\&=\int_{\T}\lambda\omega_f(x,a+f(x,a))\rho(x,a+f(x,a))(-\pa_xf(x,a))dx\\
%&-\int_{\T\times(0,a+f(x,a))}\lambda \pa_x\omega_f(x,y)\rho(x,y)dydx\\
%&=\int_{\T}\lambda (-2a)\rho(x,a+f(x,a))(-\pa_xf(x,a))dx\\
%&+\int_{\T\times (0,a)}\lambda\pa_x f(x,y) \varpi'(y)\rho(x,y+f(x,y))dydx.
%\end{align*}
Proceeding similarly, we obtain that
\begin{align}\label{weaktrozo2}
&\int_{\T\times (a+f(x,a),b+f(x,b))}\lambda\omega_f(x,y)\pa_x \rho(x,y)dydx=\int_{\T\times (a+f(x,a),b+f(x,b))}\lambda(-2a)\pa_x \rho(x,y)dydx\\
&=\int_{\T}\lambda(-2a)\rho(x,a+f(x,a))\pa_x f(x,a)dx+\int_{\T}\lambda(-2a)\rho(x,b+f(x,b))(-\pa_x f(x,b))dx.\nonumber
\end{align}
Making the same argument once again, we get
\begin{align}\label{weaktrozo3}
\int_{\T\times (b+f(x,b),1)}\lambda\omega_f(x,y)\pa_x \rho(x,y)dydx&=-\int_{\T\times (b+f(x,b),1)}\lambda\pa_x\omega_f(x,y)\rho(x,y)dydx\\
&\quad +\int_{\T}\lambda\omega_f(x,b+f(x,b))\rho(x,b+f(x,b))\pa_xf(x,b)dx\nonumber\\
&=\int_{\T}\lambda(-2a)\rho(x,b+f(x,b))\pa_xf(x,b)dx\nonumber\\
&\quad +\int_{\T\times (b,1)}\lambda\pa_x f(x,y) \varpi'(y)\rho(x,y+f(x,y))dydx.\nonumber
\end{align}
%\begin{align*}
%&\int_{\T\times (b+f(x,b),1)}\lambda\omega_f(x,y)\pa_x \rho(x,y)dydx=-\int_{\T\times (b+f(x,b),1)}\lambda\pa_x\omega_f(x,y)\rho(x,y)dydx\\
%&+\int_{\T}(-2a)\rho(x,b+f(x,b))\pa_xf(x,b)dx+\int_{\T}\omega_f(x,1)\rho(x,1)\pa_x f(x,1)dx
%\end{align*}
Thus, adding \eqref{weaktrozo1}, \eqref{weaktrozo2} and \eqref{weaktrozo3} we finally obtain
\begin{align}\label{weaktrozofinal1}
\int_{\T\times (0,1)}\lambda \omega_f(x,y)\pa_x\rho(x,y)dydx=\int_{\T\times (0,a)\cup (b,1)}\lambda \pa_xf(x,y)\varpi'(y)\rho(x,y+f(x,y))dydx.
\end{align}
Similar calculations result in the following
\begin{multline}\label{weaktrozofinal2}
\int_{\T\times (0,1)} \omega_f(x,y)\bu(x,y)\cdot\nabla\rho(x,y)dxdy\\
=-\int_{\T\times(0,a)\cup (b,1)}\bu(x,y+f(x,y))\cdot (-\pa_xf(x,y), 1) \varpi'(y) \rho(x,y+f(x,y))dydx.
\end{multline}
%\begin{align*}
%-\int_{\T\times(0,a)\cup (b,1)}\bu(x,y+f(x,y))\cdot (-\pa_xf, 1) \varpi'(y) \rho(x,y+f(x,y))dydx.\end{align*}
And therefore, combining \eqref{weaktrozofinal1} and \eqref{weaktrozofinal2} we find that
\begin{align*}
&\int_{\T\times (0,1)}\left(\omega_f(x,y)\pa_x\rho(x,y)+\omega_f(x,y)\bu(x,y)\cdot\nabla\rho(x,y)\right)dydx\\
&=\int_{\T\times(0,a)\cup (b,1)}\varpi'(y)\left(\lambda\pa_x f(x,y)-\bu(x,y+f(x,y))\cdot(-\pa_xf(x,y),1)\right)\rho(x,y+f(x,y))dydx\\
&=0.
\end{align*}

\section{Bifurcation theory and Crandall-Rabinowitz
}\label{s:bifurcation}
The resolution of a nonlinear equation of the type \eqref{ecuf1}  can be studied through bifurcation theory. Here we will apply
the well-known Crandall-Rabinowitz theorem whose proof can be found in \cite{CR}.

\begin{theorem}\label{th:CR} Let $X, Y$ be two Banach spaces, $V$ a neighborhood of $0$ in $X$ and let
$
F : V\times\R   \to Y
$
with the following  properties:
\begin{enumerate}
\item $F [0,\lambda] = 0$ for any $\lambda\in \R$.
\item The partial derivatives $D_\l F$, $D_{f}F$ and $D^2_{\l,f}F$ exist and are continuous.
\item There exists $\lambda^\ast$ such that if $\mathcal{L}^{^\ast}\equiv D_{f} F[0,\l^\ast]$ then  $\cN(\mathcal{L}^\ast)$ and $Y/\cR(\mathcal{L}^\ast)$ are one-dimensional.
\item {\it Transversality assumption}: $D^2_{\l,f}F[0,\l^\ast]h^\ast \not\in \cR(\mathcal{L}^\ast)$, where
$$
\cN(\mathcal{L}^\ast) = \text{span}\{h^\ast\}.
$$
\end{enumerate}
If $Z$ is any complement of $\cN(\mathcal{L}^\ast)$ in $X$, then there is a neighborhood $U$ of $(0,\l^\ast)$ in $X\times \R$, an interval $(-\sigma_0,\sigma_0)$, and continuous functions $\varphi: (-\sigma_0,\sigma_0) \to \R$, $\psi: (-\sigma_0,\sigma_0) \to Z$ such that $\varphi(0) = 0$, $\psi(0) = 0$ and
\begin{align*}
F^{-1}(0)\cap U=&\Big\{\big(\lambda^\ast+\varphi(\sigma), \sigma h^\ast+\sigma \psi(\sigma)\big)\,;\,\vert \sigma\vert<\sigma_0\Big\} \cup \Big\{(\lambda,0)\,;\, (\lambda,0)\in U\Big\}.
\end{align*}

\end{theorem}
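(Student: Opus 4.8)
The final statement is the classical Crandall--Rabinowitz theorem on bifurcation from a simple eigenvalue, and the natural plan is a Lyapunov--Schmidt reduction followed by a scalar continuation step. Write $\cL^\ast\equiv D_fF[0,\l^\ast]$; as is standard, hypothesis~(3) entails that $\cR(\cL^\ast)$ is closed, so $\cL^\ast$ is Fredholm of index zero. Since finite-dimensional and finite-codimensional closed subspaces of a Banach space are complemented, I may fix topological direct sums $X=\cN(\cL^\ast)\oplus Z=\operatorname{span}\{h^\ast\}\oplus Z$ and $Y=\cR(\cL^\ast)\oplus W$ with $\dim W=1$, and let $Q\colon Y\to W$ be the associated projection, so that $I-Q$ projects onto $\cR(\cL^\ast)$. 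A basic observation is that $\cL^\ast$ restricts to a Banach-space isomorphism $Z\to\cR(\cL^\ast)$: it is injective because $Z\cap\cN(\cL^\ast)=\{0\}$, and surjective because $\cL^\ast h^\ast=0$ forces $\cL^\ast X=\cL^\ast Z=\cR(\cL^\ast)$.

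Next I would split the equation. Writing $x=\s h^\ast+z$ with $\s\in\R$, $z\in Z$, the equation $F[x,\l]=0$ is equivalent to the auxiliary equation $(I-Q)F[\s h^\ast+z,\l]=0$ together with the bifurcation equation $QF[\s h^\ast+z,\l]=0$. The map $(\s,z,\l)\mapsto(I-Q)F[\s h^\ast+z,\l]$ vanishes at $(0,0,\l^\ast)$, is $C^1$ in all variables by hypothesis~(2) (its three partials are built from the continuous $D_fF$ and $D_\l F$), and its $z$-derivative at $(0,0,\l^\ast)$ is $(I-Q)\cL^\ast|_Z$, an isomorphism onto $\cR(\cL^\ast)$. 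The implicit function theorem therefore furnishes a $C^1$ map $z=\zeta(\s,\l)$ near $(0,\l^\ast)$ uniquely solving the auxiliary equation, with $\zeta(0,\l^\ast)=0$; since $F[0,\l]\equiv0$ makes $z=0$ a solution at $\s=0$, uniqueness yields $\zeta(0,\l)\equiv0$. Substituting, everything reduces to a scalar equation: setting $\cB(\s,\l)\equiv QF[\s h^\ast+\zeta(\s,\l),\l]\in W\cong\R$, the zero set of $F$ near $(0,\l^\ast)$ corresponds to $\cB(\s,\l)=0$, and since $\cB(0,\l)\equiv0$ one factors $\cB(\s,\l)=\s\,\widetilde{\cB}(\s,\l)$ with $\widetilde{\cB}(\s,\l)\equiv\int_0^1\pa_\s\cB(t\s,\l)\,dt$, a jointly continuous function; the trivial branch $\{(\l,0)\}$ lives at $\s=0$ and the nontrivial solutions at $\widetilde{\cB}=0$.

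The core is to solve $\widetilde{\cB}(\s,\l)=0$ for $\l$ near $\l^\ast$. Differentiating the auxiliary equation in $\s$ at $(0,\l^\ast)$ shows $(I-Q)\cL^\ast\pa_\s\zeta(0,\l^\ast)=0$, hence $\cL^\ast\pa_\s\zeta(0,\l^\ast)\in W\cap\cR(\cL^\ast)=\{0\}$ and so $\pa_\s\zeta(0,\l^\ast)\in\cN(\cL^\ast)\cap Z=\{0\}$; consequently $\widetilde{\cB}(0,\l^\ast)=\pa_\s\cB(0,\l^\ast)=Q\cL^\ast h^\ast=0$. For the transversal derivative, write $\pa_\s\cB(0,\l)=QD_fF[0,\l]\,w(\l)$ with $w(\l)\equiv h^\ast+\pa_\s\zeta(0,\l)$, which is the unique element of $h^\ast+Z$ solving the \emph{linear} equation $(I-Q)D_fF[0,\l]w=0$; as $(I-Q)D_fF[0,\l]|_Z$ is invertible near $\l^\ast$ and $D_fF[0,\l]$ is $C^1$ in $\l$ by hypothesis~(2), the map $\l\mapsto w(\l)$ is $C^1$ with $w(\l^\ast)=h^\ast$, whence $\widetilde{\cB}(0,\cdot)$ is $C^1$ near $\l^\ast$ and
\[
\pa_\l\widetilde{\cB}(0,\l^\ast)=Q\big(D^2_{\l,f}F[0,\l^\ast]h^\ast+\cL^\ast w'(\l^\ast)\big)=QD^2_{\l,f}F[0,\l^\ast]h^\ast,
\]
which is nonzero \emph{precisely} by the transversality hypothesis~(4). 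Thus $\widetilde{\cB}(0,\cdot)$ crosses zero strictly at $\l^\ast$, and joint continuity of $\widetilde{\cB}$ plus a short quantitative continuity argument (or, granting $F\in C^2$, the implicit function theorem) produce a continuous $\varphi$, $\varphi(0)=0$, with $\widetilde{\cB}(\s,\l)=0\iff\l=\l^\ast+\varphi(\s)$ near $(0,\l^\ast)$. Finally I would reassemble: set $\psi(\s)\equiv\int_0^1\pa_\s\zeta(t\s,\l^\ast+\varphi(\s))\,dt$, so $\zeta(\s,\l^\ast+\varphi(\s))=\s\psi(\s)$ with $\psi$ continuous and $\psi(0)=0$; retracing, any zero of $F$ near $(0,\l^\ast)$ has $z=\zeta(\s,\l)$ by the auxiliary step and then $\s\widetilde{\cB}(\s,\l)=0$ forces $\s=0$ (the trivial branch) or $\l=\l^\ast+\varphi(\s)$ (the curve $\s\mapsto(\l^\ast+\varphi(\s),\s h^\ast+\s\psi(\s))$), which is exactly the asserted description of $F^{-1}(0)\cap U$, valid for any complement $Z$ of $\cN(\cL^\ast)$.

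The routine ingredients are the Fredholm decomposition and the two implicit-function-type steps; the point requiring the most care is the regularity bookkeeping in the reduced scalar problem. Under the sharp hypothesis~(2) the reduced function $\widetilde{\cB}$ is only continuous and $\pa_\l\widetilde{\cB}$ need not be jointly continuous, so the computation above is deliberately arranged so that only the restriction $\widetilde{\cB}(0,\cdot)$ must be $C^1$ (which does follow from~(2), via the linear equation defining $w(\l)$), with the continuation in $\s$ then extracted by a direct sign/continuity argument rather than a naive appeal to the implicit function theorem. Identifying $\pa_\l\widetilde{\cB}(0,\l^\ast)$ with $QD^2_{\l,f}F[0,\l^\ast]h^\ast$ and recognizing that hypothesis~(4) is exactly what makes it nonzero is the conceptual crux; the remaining estimates are standard.
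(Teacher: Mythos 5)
The paper does not prove this theorem itself; it is quoted verbatim from Crandall--Rabinowitz and the proof is cited as \cite{CR}. So there is no ``paper's own proof'' to compare against, and I will assess your argument on its own terms.

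Your Lyapunov--Schmidt reduction is the standard route and the bulk of it is sound: the topological splittings $X=\operatorname{span}\{h^\ast\}\oplus Z$, $Y=\cR(\cL^\ast)\oplus W$, the invertibility of $\cL^\ast|_Z\colon Z\to\cR(\cL^\ast)$, the IFT step producing $\zeta(\sigma,\lambda)$ with $\zeta(0,\lambda)\equiv 0$, the factorization $\cB(\sigma,\lambda)=\sigma\,\widetilde{\cB}(\sigma,\lambda)$, the verification that $\widetilde{\cB}(0,\lambda^\ast)=0$, and the identification $\partial_\lambda\widetilde{\cB}(0,\lambda^\ast)=QD^2_{\lambda,f}F[0,\lambda^\ast]h^\ast\neq 0$ are all correct. (A minor remark: hypothesis (3) as literally stated does not by itself force $\cR(\cL^\ast)$ to be closed --- a subspace of finite algebraic codimension need not be closed --- so you are implicitly strengthening (3) to a Fredholm hypothesis; this is universal and harmless, but worth saying.)

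The genuine gap is in the final step, where you need \emph{both} existence and uniqueness of $\varphi(\sigma)$ solving $\widetilde{\cB}(\sigma,\lambda)=0$. Your ``direct sign/continuity argument'' is an intermediate-value argument: it produces, for each small $\sigma$, \emph{some} $\lambda$ with $\widetilde{\cB}(\sigma,\lambda)=0$, but it does not show this $\lambda$ is unique, and uniqueness is precisely what the asserted description of $F^{-1}(0)\cap U$ requires (every zero of $F$ near $(0,\lambda^\ast)$ must lie on one of the two branches). Joint continuity of $\widetilde{\cB}$ together with $\widetilde{\cB}(0,\cdot)$ being $C^1$ does not give monotonicity of $\widetilde{\cB}(\sigma,\cdot)$ for $\sigma\neq 0$. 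What is needed --- and what Crandall--Rabinowitz's Lemma~1.3 actually uses --- is that $\partial_\lambda\widetilde{\cB}$ exists and is \emph{jointly} continuous near $(0,\lambda^\ast)$; then their one-sided implicit function theorem (only $\partial_\lambda$ required, not $\partial_\sigma$) yields $\varphi$ continuous \emph{and} unique. Your worry that this forces $F\in C^2$ is in fact unfounded under hypothesis~(2), but establishing it takes more care than your sketch provides: the dangerous terms that appear to need $D^2_{ff}F$ when differentiating $\sigma^{-1}\Phi_\lambda$ are multiplied by $\zeta_\lambda(\sigma,\lambda)$, and one must use that $D_\lambda F[0,\lambda]\equiv 0$ (from hypothesis~(1)) forces $\zeta_\lambda(0,\lambda)\equiv 0$, whence $\sigma^{-1}\zeta_\lambda(\sigma,\lambda)$ has a continuous limit computable from $D^2_{\lambda,f}F$ alone. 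That computation is the crux that your proposal replaces with a hand-wave, and without it the ``$\iff$'' in your penultimate paragraph is unjustified.
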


The bulk of the work consists of proving all the assumptions of Theorem \ref{th:CR}. This will be done in detail in the
the following sections.

\section{Functional setting and regularity}\label{s:spaces}

To apply the C-R theorem we first need to fix the function spaces. We have to look for Banach spaces X and Y such that $F : X\times \R \rightarrow Y$ is well defined and satisfies the required assumptions.

First, we introduce the H\"{o}lder spaces

\begin{align*}
C^{1+\alpha}_{0}\equiv\left\{ f\in C^{1+\alpha}\left(\T\times [0,a)\cup (b,1]\right)\,:\, f(x,0)=f(x,1)=0,\, \int_{\T}f(\bar{x},y)d\bar{x}=0\right\},
\end{align*}
and
\begin{align*}
\S\equiv\left\{ f\in C^{1+\alpha}_0\,:\, f(x,y)=f(-x,y)\right\}.
\end{align*}

In the rest of the paper we will consider that $1/2\leq \alpha <1$. This fact will not be important until section \ref{s:mainthm} where it will be used.

\subsection{Analysis of the operator $\overline{\Psi}$}
After introduce the above Banach spaces, we are going to check that functional
\begin{equation}\label{auxFuncbarpsi}
\begin{array}{>{\displaystyle}r @{} >{{}}c<{{}} @{} >{\displaystyle}l}
          \overline{\Psi}_{(\cdot)}\colon B_{\ep/4}\subset   C^{1+\alpha}(\T\times[0,a)\cup(b,1]) &\rightarrow& C^{1+\alpha}(\T\times[0,a)\cup(b,1]) \\
          f &\mapsto& \overline{\Psi}_{f}
         \end{array}
\end{equation}
%\begin{align}
%\overline{\Psi}_{(\cdot)}\, : B_{\ep/4}\subset  & C^{1+\alpha}\to C^{1+\alpha}\\
%&\quad\quad f \to \overline{\Psi}_f,\nonumber
%\end{align}
is well-defined and it is  continuous from $B_{\ep/4}$ to $C^{1+\alpha}(\T\times[0,a)\cup(b,1])$, where
$$B_{\ep/4}\equiv\{ f\in \S\,:\, ||f||_{C^{1+\alpha}}<\ep/4\}.$$

To check this, we first note that,  for $f\in B_{\ep/4}$ taking $\ep$ small enough we directly have that
\[
a+f(x,a)<b+f(x,b), \qquad \forall x\in\T,
\]
and
\[
1+\pa_yf(x,y)>0, \qquad \forall (x,y)\in\T\times[0,a)\cup(b,1].
\]
%\begin{align*}a+f(x,a)&<b+f(x,b), \qquad \forall x\in\T,\\
%1+\pa_yf(x,y)&>0, \qquad \forall (x,y)\in\T\times[0,a)\cup(b,1].
%\end{align*}
In addition, we have
\begin{align*}
&\Phi_f\,:\, \T\times [0,a)\to \Omega_{\text{in}},\\
&\Phi_f\,:\, \T\times (b,1]\to \Omega_{\text{out}},
\end{align*}
where the sets
\begin{align*}
\Omega_{\text{in}}\equiv&\{(x,y)\in \T\times [0,1] \,: \, 0\leq y< a+f(x,a)\},\\
\Omega_{\text{out}}\equiv&\{(x,y)\in \T\times [0,1] \,: \,  b+f(x,b)<y\leq 1\},
\end{align*}
satisfy $$\Omega_{\text{in}}\cap\Omega_{\text{out}}=\emptyset.$$

Moreover, there exist a well-defined inverse  $\Phi_f^{-1}\in C^{1+\alpha}(\T\times[0,a)\cup (b,1])$ that maps $\Omega_{\text{in}}$ into $\T\times [0,a)$ and $\Omega_{\text{out}}$ into $\T\times(b,1]$. So, as an immediate consequence, we can define $\omega_f$ through \eqref{omegaf}.
More specifically, since $\varpi\in W^{1,\infty}([0,1])$ we directly have that $\omega_f\in W^{1,\infty}(\T\times [0,1])$. Then, in particular, we obtain $\omega_f\in C^\beta(\T\times [0,1])$ for all $0<\beta<1$. Now, applying elliptic regularity theory into \eqref{ecupsi1} we obtain $\Psi_f\in C^{2+\beta}(\T\times [0,1])$ and finally $\overline{\Psi}_f=\Psi_f\circ\Phi_f\in C^{1+\alpha}(\T\times [0,a)\cup (b,1]).$
%We get that, since  $\varpi\in W^{1,\infty}((0,1))$, $\omega_f\in W^{1,\infty}(\T\times (0,1))$. In particular, $\omega_f\in C^\beta(\T\times [0,1])$, for all $0<\beta<1$. Then $\Psi_f\in C^{2+\beta}(\T\times (0,1))$ and $\overline{\Psi}_f\in C^{1+\alpha}(\T\times (0,a)\cup (b,1))$.

The continuity of $\overline{\Psi}_f$ will be studied in the next section where we will even proof that it is differentiable.

\subsection{The continuity of the functional}
After introducing and studying the functional \eqref{auxFuncbarpsi}, we are now ready to analyze
\[
 \begin{array}{>{\displaystyle}r @{} >{{}}c<{{}} @{} >{\displaystyle}l}
\tilde{F}_{(\cdot)}\equiv \overline{\Psi}_{(\cdot)}-\frac{1}{2\pi}\int_{\T}\overline{\Psi}_{(\cdot)}(\bar{x},y)d\bar{x}\colon B_{\ep/4} &\rightarrow& \S \\
          f &\mapsto& \overline{\Psi}_f-\frac{1}{2\pi}\int_{\T}\overline{\Psi}_f(\bar{x},y)d\bar{x}
         \end{array}
\]
The fact that $\tilde{F}_{(\cdot)}$ is well-defined from $B_{\ep/4}$ to $C^{1+\a}(\T\times[0,a)\cup(b,1])$ follows straightforward from the above. To conclude that, in fact, is well-defined from $\S$ to itself we  need to check
\begin{enumerate}

    \item $\int_\T \tilde{F}_{(\cdot)}(\bar{x},y)d\bar{x}=0,$

    \item $\tilde{F}_{(\cdot)}(x,0)=\tilde{F}_{(\cdot)}(x,1)=0$,

    \item $\tilde{F}_{(\cdot)}(x,y)=\tilde{F}_{(\cdot)}(-x,y).$
\end{enumerate}
%In addition we will have that
%\begin{align*}
%\tilde{F}_{(\cdot)}\equiv \overline{\Psi}_{(\cdot)}-\frac{1}{2\pi}\int_{\T}\overline{\Psi}_{(\cdot)}(\bar{x},y)d\bar{x}\, : \,& \S \to \S\\
%& f \to \overline{\Psi}_f-\frac{1}{2\pi}\int_{\T}\overline{\Psi}_f(\bar{x},y)d\bar{x}.
%\end{align*}
That the mean of $\tilde{F}_f$ is equal to zero follows by its definition. In addition, $\tilde{F}_f(x,0)=0$ since $\overline{\Psi}_f(x,0)=\Psi_f(x,0)=0$. Also, $\tilde{F}_f(x,1)=0$ since $\overline{\Psi}_f(x,1)=\Psi_f(x,-1)=-1/3$ and then $$\frac{1}{2\pi}\int_{\T}\overline{\Psi}(\bar{x},1)d\bar{x}=-\frac{1}{3}.$$ Finally, we notice that, if $f(x,y)=f(-x,y)$ then $\omega_f(x,y)=\omega_f(-x,y)$. Thus $\Psi_f(x,y)=\Psi_f(-x,y)$ and consequently $\overline{\Psi}_f(x,y)=\overline{\Psi}_f(-x,y)$.\\

In summary, we have proved the following Lemmas:

\begin{Lemma}\label{regpsi} Let $f\in B_{\ep/4}$ and $\Psi_f$ solving \eqref{ecupsi1}, then $\Psi_f\in C^{2+\beta}(\T\times [-1,1])$ for all $0<\beta<1$.
\end{Lemma}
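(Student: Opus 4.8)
The goal of \lem{regpsi} is to show that the stream function $\Psi_f$ associated with a small profile perturbation $f\in B_{\ep/4}$ belongs to $C^{2+\beta}(\T\times[-1,1])$ for every $0<\beta<1$. The plan is to run a straightforward elliptic regularity / bootstrap argument on the boundary value problem \eqref{ecupsi1}, carefully tracking the regularity of the source term $\omega_f$, and then to combine the two vertical halves via the odd extension.

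First I would recall that, as established in the analysis preceding the statement, for $f\in B_{\ep/4}$ with $\ep$ small the map $\Phi_f(x,y)=(x,y+f(x,y))$ is a $C^{1+\alpha}$ diffeomorphism onto its image with a $C^{1+\alpha}$ inverse, the sets $\Omega_{\text{in}}$ and $\Omega_{\text{out}}$ are disjoint, and $1+\pa_y f>0$. Consequently $\omega_f$, defined by \eqref{omegaf}, is obtained by composing the Lipschitz profile $\varpi$ (with $\varpi\in W^{1,\infty}([0,1])$) with the Lipschitz function $(\Phi_f^{-1})_2$ on $\Omega_{\text{in}}\cup\Omega_{\text{out}}$, and equals a constant on the complementary strip. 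Hence $\omega_f\in W^{1,\infty}(\T\times[0,1])$, and in particular $\omega_f\in C^\beta(\T\times[0,1])$ for every $0<\beta<1$ by the Lipschitz embedding. The key point to check is that the jump of $\omega_f$ at the interfaces $\{y=a+f(x,a)\}$ and $\{y=b+f(x,b)\}$ does not destroy continuity: it does not, because $\varpi$ is continuous (it is the antiderivative of $\varpi'$), so $\omega_f$ is globally continuous and globally Lipschitz, just not $C^1$.

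Next I would apply interior and boundary Schauder estimates to \eqref{ecupsi1}. The domain $\T\times[0,1]$ has smooth ($C^\infty$, in fact flat) boundary $\{y=0\}\cup\{y=1\}$, the boundary data $\Psi_f|_{y=0}=0$ and $\Psi_f|_{y=1}=-1/3$ are constants hence smooth, and the right-hand side $\omega_f\in C^\beta$. Classical Schauder theory for the Dirichlet problem $\Delta \Psi_f=\omega_f$ (periodic in $x$, Dirichlet in $y$) then gives $\Psi_f\in C^{2+\beta}(\T\times[0,1])$ with a bound $\|\Psi_f\|_{C^{2+\beta}}\lesssim \|\omega_f\|_{C^\beta}+\|\text{boundary data}\|$; this is uniform over $f\in B_{\ep/4}$ if one tracks the dependence, though uniformity is not needed for the statement itself. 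One must make sure the periodicity in $x$ does not cause trouble: this is handled by viewing the problem on the compact manifold-with-boundary $\T\times[0,1]$, for which the interior and boundary Schauder estimates apply verbatim.

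Finally, to pass from $\T\times[0,1]$ to $\T\times[-1,1]$ I would use the odd extension. The extended $\Psi_f$ defined by $\Psi_f(x,-y)=-\Psi_f(x,y)$ satisfies $\Delta\Psi_f=\omega_f$ on all of $\T\times[-1,1]$ with $\omega_f$ extended oddly — consistent with the symmetry argument already carried out in the subsection on symmetries, and with \eqref{ecupsi20}. The potential worry is only the regularity across the line $\{y=0\}$: since $\Psi_f(x,0)=0$ and $\omega_f(x,0)=0$ (the profile $\varpi$ is odd, so $\varpi(0)=0$, and $\omega_f=-2(\Phi_f^{-1})_2$ near $y=0$ vanishes there because $f(x,0)=0$), the odd reflection of a $C^{2+\beta}$ function vanishing on $\{y=0\}$ whose Laplacian also vanishes there is again $C^{2+\beta}$; equivalently, one may simply re-apply the global Schauder estimate on $\T\times[-1,1]$ directly to the oddly extended data, which is $C^\beta$ because $\omega_f$ is continuous and vanishes at $y=0$. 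This yields $\Psi_f\in C^{2+\beta}(\T\times[-1,1])$ for all $0<\beta<1$, which is the claim. The only mildly delicate step — and the one I would present with some care — is verifying that $\omega_f$ is genuinely $C^\beta$ globally (continuity across the interior interfaces and across $\{y=0\}$ under odd reflection), since everything else is a black-box application of Schauder theory.
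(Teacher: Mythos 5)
Your proof is correct and follows essentially the same route as the paper: observe that $\omega_f\in W^{1,\infty}\subset C^\beta$ because $\varpi\in W^{1,\infty}$ and $\Phi_f^{-1}$ is $C^{1+\alpha}$ with $\varpi$ continuous across the interfaces, then invoke Schauder theory for the Dirichlet problem \eqref{ecupsi1} to get $\Psi_f\in C^{2+\beta}$. The paper's own proof (given in the paragraph just before the Lemma) stops at $\T\times[0,1]$; your extra care in checking that $\Psi_f(x,0)=0$, $\omega_f(x,0)=0$, and hence the odd reflection preserves $C^{2+\beta}$ across $\{y=0\}$, fills a small gap between the paper's argument and its statement of the Lemma on $\T\times[-1,1]$, and the verification you give (second normal derivative vanishes because $\Delta\Psi_f=\omega_f=0$ and $\partial_x^2\Psi_f=0$ on the symmetry line) is exactly right.
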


\begin{Lemma} Let $F[f,\lambda](x,y)\equiv \lambda f(x,y)-\tilde{F}_f(x,y).$ That is,
\begin{align*}
F[f,\lambda](x,y) = \lambda f(x,y) -\overline{\Psi}_f(x,y)+\frac{1}{2\pi}\int_{\T}\overline{\Psi}_f(\bar{x},y)d\bar{x}.
\end{align*}
Then
\begin{align*}
F\,:\, B_{\ep/4}\times \R\to \S
\end{align*}
is well-defined and it is continuous from  $B_{\ep/4}$ to $\S$.

\end{Lemma}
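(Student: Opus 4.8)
The plan is to read the conclusion off the structural decomposition $F[f,\lambda]=\lambda f-\tilde{F}_f$ introduced just above, combining it with the properties of $\overline{\Psi}_{(\cdot)}$ and $\tilde{F}_{(\cdot)}$ already recorded. Well-definedness is essentially bookkeeping: the three bullet points checked above show $\tilde{F}_f\in\S$ for every $f\in B_{\ep/4}$ (zero horizontal mean by construction; $\tilde{F}_f(x,0)=0$ because $\overline{\Psi}_f(x,0)=\Psi_f(x,0)=0$; $\tilde{F}_f(x,1)=0$ because $\overline{\Psi}_f(x,1)=\Psi_f(x,-1)=-\tfrac13$ cancels its own average; evenness in $x$ inherited from $\omega_f$ through the elliptic problem \eqref{ecupsi1}), and since $f\in B_{\ep/4}\subset\S$ and $\S$ is a vector space, $\lambda f\in\S$ as well, so $F[f,\lambda]=\lambda f-\tilde{F}_f\in\S$.

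For continuity I would treat the two summands separately. The linear piece is easy: $\S$ is a closed subspace of $C^{1+\alpha}(\T\times[0,a)\cup(b,1])$, hence a Banach space, and joint continuity of $(f,\lambda)\mapsto\lambda f$ from $\S\times\R$ to $\S$ follows from the elementary bound $\|\lambda_1 f_1-\lambda_2 f_2\|_{C^{1+\alpha}}\le|\lambda_1|\,\|f_1-f_2\|_{C^{1+\alpha}}+|\lambda_1-\lambda_2|\,\|f_2\|_{C^{1+\alpha}}$, using that $\|f_2\|_{C^{1+\alpha}}<\ep/4$ is uniformly bounded on $B_{\ep/4}$. For the piece $\tilde{F}_f=\overline{\Psi}_f-\frac{1}{2\pi}\int_{\T}\overline{\Psi}_f(\bar{x},y)d\bar{x}$, I would note that the horizontal averaging map $g\mapsto\frac{1}{2\pi}\int_{\T}g(\bar{x},\cdot)\,d\bar{x}$ is a bounded linear operator on $C^{1+\alpha}(\T\times[0,a)\cup(b,1])$, so continuity of $f\mapsto\tilde{F}_f$ reduces to continuity of $f\mapsto\overline{\Psi}_f$ from $B_{\ep/4}$ into $C^{1+\alpha}(\T\times[0,a)\cup(b,1])$. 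This is exactly what is established in the next subsection, where in fact the stronger statement of differentiability is proved; so I would simply invoke that, and then conclude that $f\mapsto\tilde{F}_f$, being a composition and a difference of continuous maps, is continuous.

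Combining the two parts, $F[f,\lambda]=\lambda f-\tilde{F}_f:B_{\ep/4}\times\R\to\S$ is well-defined and continuous, which is the assertion of the Lemma. The only genuinely substantive ingredient is the continuity of $f\mapsto\overline{\Psi}_f$, which is where the real work lies: it rests on the $L^p$ and Schauder analysis of the elliptic problem \eqref{ecupsi1} together with control of the change of variables $\Phi_f^{-1}$ in $C^{1+\alpha}$, and is deferred to the regularity section; everything else here is assembled from the well-definedness of $\overline{\Psi}_{(\cdot)}$ (via $\Phi_f^{-1}\in C^{1+\alpha}$, $\omega_f\in W^{1,\infty}\hookrightarrow C^\beta$, elliptic regularity giving $\Psi_f\in C^{2+\beta}$ as in Lemma~\ref{regpsi}, and composition) and the three algebraic checks above.
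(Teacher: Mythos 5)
Your proposal is correct and follows essentially the same route as the paper: the lemma is a summary statement that assembles the well-definedness checks (boundary values, zero horizontal mean, evenness in $x$ via $\omega_f\to\Psi_f\to\overline{\Psi}_f$) from the preceding paragraphs, while the continuity of $f\mapsto\overline{\Psi}_f$ — the only substantive ingredient — is explicitly deferred to the next subsection where differentiability is established, exactly as you note. Your handling of the easy linear piece $(f,\lambda)\mapsto\lambda f$ and the bounded averaging operator is the right way to reduce everything to that deferred fact.
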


\subsection{The differential of the functional}

Now, we compute the differential of $\overline{\Psi}_f$ with respect to $f$ at the point $\eta\in B_{\ep/4}$.
We start computing $D_f\overline{\Psi}_\eta h$ for $\eta\in B_{\ep/4}$ and $h\in \S$. That is,
%To start, we compute for $\eta\in B_{\frac{\ep}{4}}$ and $h\in \S$,
\begin{align}\label{dpsibarra}
D_f\overline{\Psi}_\eta h(x,y) =h(x,y)\pa_y \Psi(x,y+\eta(x,y))+D_f\Psi_\eta h(x,y+\eta(x,y)).
\end{align}
From the Lemma \ref{regpsi} we know that $\pa_y\Psi(x,y)\in C^{1+\beta}(\T\times [0,1])$ for all $0<\beta<1$, and then $\pa_y \Psi(x,y+\eta(x,y))\in C^{1+\alpha}(\T\times [0,a)\cup (b,1])$. Then, since the first term of the above expression  is just a derivative in the classical sense, we only have to prove that there exists the differential $D_f\Psi_\eta h$, which will be presented in the following Lemma.

\begin{Lemma}\label{l:gateaux} Let $\eta\in B_{\ep/4}$ and $h\in \S$ and consider
\begin{align*}
\Psi_{(\cdot)}\, : \, B_{\ep/4}\subset\S \to C^{1+\alpha}(\T\times [0,1]).
\end{align*}
Then, the Gateaux derivative of $\Psi_f$ with respect to $f$ at the point $\eta$ in the direction $h$, i.e. $D_f\Psi_\eta h$, is given by solving the problem
%\begin{align*}
%\Delta D_f\Psi_\eta h = &d\omega_{\eta}\quad \text{ in $\T\times [0,1]$}\\
%D_f\Psi_\eta h (x, 1)=D_f\Psi_\eta h (x, 0)=&0,
%\end{align*}
\begin{equation}\label{eq:DfPsieta}
\left\{
\begin{aligned}
\Delta D_f\Psi_\eta h = &d\omega_{\eta},\quad \text{ in $\T\times [0,1]$},\\
D_f\Psi_\eta h\big|_{y=0}=&0,\\
D_f\Psi_\eta h \big|_{y=1}=&0,
\end{aligned}
\right.
\end{equation}
where
\begin{align*}
d\omega_\eta=\left\{\begin{array}{cl} -\left(\frac{\varpi'h}{1+\pa_y \eta}\right)\circ \Phi_\eta^{-1} &\qquad   0\leq y<a+\eta(x,a),\\
0 &\qquad  a+\eta(x,a)\leq y\leq b+\eta(x,b),\\
 -\left(\frac{\varpi'h}{1+\pa_y \eta}\right)\circ \Phi_\eta^{-1} &\qquad  b+\eta(x,b)<y\leq 1.\end{array}\right.
\end{align*}
\end{Lemma}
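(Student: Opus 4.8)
The plan is to verify that the stream function $\Psi_{(\cdot)}$ is Gateaux differentiable at $\eta$ in the direction $h$ by explicitly exhibiting the candidate derivative as the solution of the linear elliptic problem \eqref{eq:DfPsieta}, and then showing that the difference quotient converges to it in $C^{1+\alpha}(\T\times[0,1])$. Concretely, for small $t\in\R$ set $\Psi_{\eta+th}$ to be the solution of \eqref{ecupsi1} with vorticity $\omega_{\eta+th}$, and write
\[
Q_t \equiv \frac{\Psi_{\eta+th}-\Psi_\eta}{t} - v_\eta,
\]
where $v_\eta$ denotes the solution of \eqref{eq:DfPsieta}. Since $\Delta$ is linear and the boundary conditions of $\Psi_{\eta+th}$, $\Psi_\eta$ and $v_\eta$ all match (the inhomogeneous datum $\mp 1/3$ cancels in the quotient, and $v_\eta$ has zero boundary data), $Q_t$ solves $\Delta Q_t = \frac{\omega_{\eta+th}-\omega_\eta}{t} - d\omega_\eta$ with zero Dirichlet data. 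By the $L^p$ / Schauder theory for the Laplacian on the channel (the same elliptic regularity invoked to prove Lemma~\ref{regpsi}), it suffices to show that
\[
R_t \equiv \frac{\omega_{\eta+th}-\omega_\eta}{t} - d\omega_\eta \longrightarrow 0
\]
in an appropriate norm — say $L^p(\T\times[0,1])$ for large $p$, which already yields $Q_t\to 0$ in $C^{1+\alpha}$ once $1/2\le\alpha<1$ and $p$ is chosen large; I may instead work in $C^\beta$ on the pieces where $\omega$ is genuinely Hölder and track the jump regions separately.

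The main computation is the asymptotic expansion of $\omega_{\eta+th}$. Recall $\omega_f = \varpi\circ(\Phi_f^{-1})_2$ on $\Omega_{\text{in}}\cup\Omega_{\text{out}}$ and is locally constant off it. Fix a point $(x,y)$ in the ``inner'' region and let $Y_t$ solve $Y_t + (\eta+th)(x,Y_t) = y$, i.e. $Y_t = (\Phi_{\eta+th}^{-1})_2(x,y)$; similarly $Y_0 = (\Phi_\eta^{-1})_2(x,y)$. Implicit differentiation in $t$ at $t=0$ gives
\[
\partial_t Y_t\big|_{t=0} = -\frac{h(x,Y_0)}{1+\partial_y\eta(x,Y_0)},
\]
hence by the chain rule $\partial_t\,\omega_{\eta+th}(x,y)\big|_{t=0} = \varpi'(Y_0)\cdot\partial_t Y_t|_{t=0} = -\left(\tfrac{\varpi' h}{1+\partial_y\eta}\right)\circ\Phi_\eta^{-1}(x,y)$, which is exactly $d\omega_\eta$. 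The same identity holds in the outer region, and in the flat middle strip both $\omega_{\eta+th}$ and $d\omega_\eta$ vanish identically, so there is no contribution there. What requires care is that this pointwise expansion has a \emph{uniform} first-order remainder: one must Taylor-expand $\varpi$, but $\varpi$ is only Lipschitz, so $\varpi'$ exists only a.e. and has jump discontinuities at $y=a,b$. The resolution is that $\varpi$ is piecewise affine, so on each of the three sub-intervals $\varpi$ is smooth and the remainder is controlled by $\|h\|_{C^1}^2 t$; the only delicate zones are thin $O(t)$-neighborhoods of the moving interfaces $y = a+(\eta+th)(x,a)$ and $y=b+(\eta+th)(x,b)$, where $R_t$ is merely $O(1)$ but supported on a set of measure $O(t)$, so its $L^p$ norm is $O(t^{1/p})\to 0$. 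This is precisely the place where passing through $L^p$ theory — rather than asking for $C^{1+\alpha}$ convergence of $\omega_f$ itself, which fails — is essential, and it mirrors the strategy the authors announce in Step~3 of the introduction.

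Once $R_t\to 0$ in $L^p$, elliptic estimates give $\|Q_t\|_{W^{2,p}}\lesssim\|R_t\|_{L^p}\to 0$, and Morrey embedding ($W^{2,p}\hookrightarrow C^{1+\alpha}$ for $p$ large) yields $Q_t\to 0$ in $C^{1+\alpha}(\T\times[0,1])$; this establishes that $D_f\Psi_\eta h$ exists and equals $v_\eta$. It also shows linearity and boundedness of $h\mapsto v_\eta$, since the map $h\mapsto d\omega_\eta$ is visibly linear and bounded $\S\to L^p$ and the solution operator of \eqref{eq:DfPsieta} is linear and bounded $L^p\to W^{2,p}$. Combining with \eqref{dpsibarra} — where the first term $h\,\partial_y\Psi(x,y+\eta)$ is a classical pointwise derivative handled by Lemma~\ref{regpsi} — completes the identification of $D_f\overline{\Psi}_\eta h$. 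I expect the genuine obstacle to be the uniform control of the remainder near the two free interfaces, i.e. making rigorous that the ``bad set'' has measure $O(t)$ and that $R_t$ stays bounded there; everything else is a routine application of linear elliptic theory and the implicit function theorem for the scalar equation defining $\Phi_\eta^{-1}$.
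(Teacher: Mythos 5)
Your proposal takes essentially the same route as the paper: reduce the matter to $L^p$ convergence of the vorticity difference quotient $\frac{\omega_{\eta+th}-\omega_\eta}{t}-d\omega_\eta$ via elliptic regularity and Morrey embedding, identify the pointwise limit in the interior regions by implicit differentiation of $(\Phi_{\eta+th}^{-1})_2$, and control the thin transition strips near the moving interfaces by the ``bounded integrand on a set of measure $O(t)$'' argument yielding an $O(t^{1/p})$ contribution. The one inaccuracy is the claim that the interior remainder is $O(t)$ (controlled by $\|h\|_{C^1}^2\,t$): since $\eta\in C^{1+\alpha}$ only, the term $\bigl(1+\partial_y\eta\bigr)^{-1}\circ\Phi_{\eta+sth}^{-1}-\bigl(1+\partial_y\eta\bigr)^{-1}\circ\Phi_\eta^{-1}$ is only $O\bigl(|\Phi^{-1}_{\eta+sth}-\Phi^{-1}_\eta|^\alpha\bigr)=O(t^\alpha)$, and indeed the paper records $\|\cdot\|_{L^\infty(\Omega_1)}\leq Ct^\alpha$ there; this weaker rate still tends to zero, so the conclusion is unaffected.
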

\begin{proof}
First of all, we define $\varphi$ through the problem
\begin{equation*}
\left\{
\begin{aligned}
\Delta \varphi = &d\omega_{\eta},\quad \text{ in $\T\times [0,1]$},\\
\varphi\big|_{y=0}=&0,\\
\varphi \big|_{y=1}=&0,
\end{aligned}
\right.
\end{equation*}
%\begin{align*}
%\Delta \varphi =& d\omega_\eta, \quad \text{ in $\T\times (0,1)$}\\
%\varphi(x,1)=&0,\\
%\varphi(x,0)=&0.
%\end{align*}
To conclude that $\varphi= D_f\Psi_\eta$, we have to show that
\begin{align}\label{limPsi}
\lim_{t\to 0} \left|\left|\frac{\Psi_{\eta+th}-\Psi_{\eta}}{t}-\varphi \right|\right|_{C^{1+\alpha}(\T\times[0,1])}=0.
\end{align}
Before starting with the main part of the proof, we note that taking $t$ small enough we get
\[
\frac{\Psi_{\eta+th}-\Psi_{\eta}}{t}-\varphi\in C^{1+\a}(\T\times[0,1]),
\]
This is an immediate consequence of the fact that by definition $d\omega_\eta\in L^p(\T\times [0,1])$ for all $1\leq p \leq \infty.$ Then, by elliptic regularity theory $\varphi\in W^{2,p}(\T\times [0,1])$ for $1<p<\infty$ and by Morrey's inequality we get $\varphi\in C^{1+\beta}(\T\times [0,1])$ for any $0<\beta<1$.

Now, we are ready to prove \eqref{limPsi}.
By definition we have that
\begin{equation*}
\left\{
\begin{aligned}
\Delta \left(\frac{\Psi_{\eta+th}-\Psi_{\eta}}{t}-\varphi\right)=&\frac{\omega_{\eta+th}-\omega_{\eta}}{t}-d\omega_\eta, \quad \text{ in $\T\times [0,1]$},\\
\left.\frac{\Psi_{\eta+th}-\Psi_{\eta}}{t}-\varphi\right|_{y=0,\,1}=&0,
\end{aligned}
\right.
\end{equation*}
and by elliptic regularity theory we get
\begin{align*}
\left|\left|\frac{\Psi_{\eta+th}-\Psi_{\eta}}{t}-\varphi \right|\right|_{C^{1+\alpha}}\leq \left|\left|\frac{\Psi_{\eta+th}-\Psi_{\eta}}{t}-\varphi \right|\right|_{W^{2,p}}\leq
\left|\left|\frac{\omega_{\eta+th}-\omega_{\eta}}{t}-d\omega_\eta \right|\right|_{L^p},
\end{align*}
where in the first step we have used Morrey's inequality
with $p=\frac{2}{1-\alpha}$. This means that \eqref{limPsi} reduces to proving that
\begin{align}\label{limomega}
\lim_{t\to 0}\left|\left|\frac{\omega_{\eta+th}-\omega_{\eta}}{t}-d\omega_\eta\right|\right|_{L^p(\T\times [0,1])}=0, \qquad \text{for }  2<p<\infty.
\end{align}

Let us compute the difference $\omega_{\eta+th}-\omega_{\eta}$. To do that we define the domains (for $t$ small enough)
\begin{align*}
\Omega_1=&\left\{x\in \T,\,\, 0\leq y<\text{min} (a+\eta(x,a), a+\eta(x,a)+th(x,a)) \right\},\\
\nonumber\Omega_2=&\left\{x\in \T,\,\, \text{min} (a+\eta(x,a), a+\eta(x,a)+th(x,a))\leq y<\text{max} (a+\eta(x,a), a+\eta(x,a)+th(x,a)) \right\},\\
\nonumber\Omega_3=&\left\{x\in \T,\,\, \text{max} (a+\eta(x,a), a+\eta(x,a)+th(x,a))\leq y\leq \text{min} (b+\eta(x,b), b+\eta(x,b)+th(x,b)) \right\},\\
\nonumber\Omega_4=&\left\{x\in \T,\,\, \text{min} (b+\eta(x,b), b+\eta(x,b)+th(x,b))<y\leq\text{max} (b+\eta(x,b), b+\eta(x,b)+th(x,b)) \right\},\\
\nonumber\Omega_5=&\left\{x\in \T,\,\, \text{max} (b+\eta(x,b), b+\eta(x,b)+th(x,b))<y\leq 1\right\}.
\end{align*}
such that
\begin{equation}\label{omega1}
\bigcup_{i=1}^5 \Omega_i=\T\times[0,1].
\end{equation}
Thus, from \eqref{omegaf},
\begin{align*}
\omega_{\eta+th}-\omega_\eta=\left\{\begin{array}{cc}-2\left(\Phi_{\eta+th}^{-1}\right)_2+2\left(\Phi_{\eta}^{-1}\right)_2  &\qquad (x,y)\in \Omega_1,\\ \text{ either} -2\left(\Phi^{-1}_{\eta+t h}\right)_2+2a \text{ or } -2a +2 \left(\Phi^{-1}_{\eta}\right)_2 &\qquad (x,y)\in \Omega_2, \\ 0 &\qquad (x,y)\in \Omega_3,\\
\text{ either} -2\left(\left(\Phi^{-1}_{\eta+t h}\right)_2-b\right) \text{ or } 2 \left(\left(\Phi^{-1}_{\eta}\right)_2-b\right) &\qquad (x,y)\in \Omega_4,\\
-2\left(\Phi^{-1}_{\eta+t h}\right)_2+2\left(\Phi^{-1}_{\eta}\right)_2 &\qquad (x,y)\in \Omega_5.\end{array}\right.
\end{align*}

In $\Omega_1$ we have that
\begin{align*}
\frac{\omega_{\eta+t h}-\omega_{\eta}}{t}-d\omega_{\eta} = -2\frac{\left(\Phi_{\eta+th}^{-1}\right)_2-\left(\Phi_{\eta}^{-1}\right)_2}{t}-\frac{2 h}{1+\pa_y\eta}\circ \Phi_\eta^{-1}.
\end{align*}
Moreover, the first factor of the above expression can be written in a more convenient way as
\begin{align*}\left(\Phi_{\eta+th}^{-1}\right)_2-\left(\Phi_{\eta}^{-1}\right)_2=\int_{0}^1\frac{d\left(\Phi^{-1}_{\eta+sth}\right)_2}{ds}ds.
\end{align*}
In addition, from
$$\left(\Phi_{\eta+sth}^{-1}\right)_2\circ\Phi_{\eta+sth}=y,$$
we have that
\begin{align*}
\frac{d\left(\Phi_{\eta+sth}^{-1}\right)_2}{ds}\circ\Phi_{\eta+sth} + \frac{d\Phi_{\eta+sth}}{ds}\cdot \left(\nabla \left(\Phi^{-1}_{\eta+sth}\right)_2\right)\circ \Phi_{\eta+sth}=0.
\end{align*}
Then
\begin{align*}
\frac{d\left(\Phi_{\eta+sth}^{-1}\right)_2}{ds}&=
-th\circ\Phi^{-1}_{\eta+tsh}\left(\pa_y\left(\Phi_{\eta+sth}^{-1}\right)_2\right)\\
&=-t\frac{ h }{1+\pa_y\eta+st \pa_y h }\circ\Phi^{-1}_{\eta+sth}.
\end{align*}
We obtain that
\begin{align*}
\frac{\omega_{\eta+th}-\omega_{\eta}}{t}-d\omega_\eta=2\int_{0}^1 \left(\frac{h}{1+\pa_y\eta+st\pa_yh }\circ \Phi^{-1}_{\eta+st h}-\frac{h}{1+\pa_y\eta}\circ\Phi^{-1}_{\eta}\right)ds.
\end{align*}
In order to estimate this last term, we split it in the following way
\begin{align*}
&\frac{h}{1+\pa_y\eta+st\pa_yh }\circ \Phi^{-1}_{\eta+st h}-\frac{h}{1+\pa_y\eta}\circ\Phi^{-1}_{\eta}=
\frac{h\circ \Phi^{-1}_{\eta+st h} - h \circ\Phi^{-1}_\eta}{1+(\pa_y\eta+st\pa_yh)\circ\Phi^{-1}_{\eta+sth}}
\\
&+ h\circ\Phi^{-1}_{\eta}\left(\frac{1}{1+\pa_y\eta+st\pa_yh }\circ \Phi^{-1}_{\eta+st h}-\frac{1}{1+\pa_y\eta}\circ\Phi^{-1}_{\eta}\right)\\
& \equiv I_1 + I_2.
\end{align*}
We can bound
\begin{align*}
|I_1|&\leq  C |\Phi^{-1}_{\eta+sth}-\Phi^{-1}_{\eta}|,\\
|I_2|&\leq  C \left(|\Phi^{-1}_{\eta+sth}-\Phi^{-1}_{\eta}|^\alpha+ t\right).
\end{align*}
\begin{remark}
In the rest the value of the constant $C$ can change from line to line and depends on $\ep$.
\end{remark}
The calculation for $I_1$ is trivially satisfied. For $I_2$ we only have to add and subtract the appropriate terms to arrive at a more convenient expression as follows
\begin{align*}
&\frac{1}{1+\pa_y\eta+st\pa_yh }\circ \Phi^{-1}_{\eta+st h}-\frac{1}{1+\pa_y\eta}\circ\Phi^{-1}_{\eta} \pm \frac{1}{1+\pa_y\eta}\circ \Phi^{-1}_{\eta+st h}&\\
&=\frac{1}{1+\pa_y\eta}\circ\Phi^{-1}_{\eta+st h}-\frac{1}{1+\pa_y\eta}\circ\Phi^{-1}_{\eta}\\
&-t\frac{s\pa_y h}{(1+\pa_y\eta+st\pa_yh)(1+\pa_y\eta)}\circ \Phi^{-1}_{\eta+st h}.
\end{align*}
After that, since $|\Phi_{\eta+sth}^{-1}-\Phi_{\eta}^{-1}|=|(\Phi_{\eta+sth}^{-1})_2-(\Phi^{-1}_{\eta})_2|\leq Ct$ we can conclude that
\begin{align*}
\left|\left|\frac{\omega_{\eta+t h}-\omega_{\eta}}{t}-d\omega_\eta \right|\right|_{L^\infty(\Omega_1)}\leq C t^\alpha.
\end{align*}

In $\Omega_2$ we have to distinguish between the following two cases:\\
\underline{Case $\eta(x,a)<\eta(x,a)+t h (x,a)$:} Note that in this case we have
\begin{align*}
\frac{\omega_{\eta+t h}-\omega_{\eta}}{t}-d\omega_\eta=\frac{-2\left(\Phi^{-1}_{\eta+th}\right)_2+2a}{t}.
\end{align*}
Since $a=\left(\Phi^{-1}_{\eta+th}\right)_2(x,a+\eta(x,a)+th(x,a))$ we get
\begin{align*}
\left|a-\left(\Phi_{\eta+th}^{-1}\right)_2(x,y)\right|\leq C |a+\eta(x,a)+th(x,a)-y|.
\end{align*}
Then, we find that
\begin{align*}
\left|\left| \frac{\omega_{\eta+t h}-\omega_{\eta}}{t}-d\omega_\eta\right|\right|_{L^p}^p =&C t^{-p}\int_{\T}\int_{a+\eta(x,y)}^{a+\eta(x,a)+th(x,a)}|a+\eta(x,a)+th(x,a)-y|^p dy dx\\
&\leq \frac{C}{p+1}t.
\end{align*}

\noindent
\underline{Case $\eta(x,a)+th(x,a)<\eta(x,a)$:} In this case we have
\begin{align*}
\frac{\omega_{\eta+t h}-\omega_{\eta}}{t}-d\omega_\eta=\frac{-2a+2\left(\Phi^{-1}_{\eta}\right)_2}{t} +\frac{\varpi' h}{1+\pa_y\eta}\circ \Phi^{-1}_\eta.
\end{align*}
With the first term $\frac{-2a+2\left(\Phi^{-1}_{\eta}\right)_2}{t}$ we proceed as before, but using that $a=(\Phi^{-1}_\eta)_2(x,a+\eta(x,a))$. The term $\frac{\varpi'h}{1+\pa_y\eta}\circ{\Phi}_\eta^{-1}$ can be bounded in $L^\infty$ such that
\begin{align*}
\left|\left| \frac{\varpi'}{1+\pa_y\eta}\circ{\Phi}_\eta^{-1} \right|\right|_{L^p}^p\leq C \int _{\T}\int^{a+\eta(x,a)}_{a+\eta(x,a)+t h(x,a)}dy dx \leq C t.
\end{align*}
Combining both cases, we can conclude that
\begin{align*}
\left|\left|\frac{\omega_{\eta+t h}-\omega_{\eta}}{t}-d\omega_\eta \right|\right|_{L^p(\Omega_2)}\leq C t^{\frac{1}{p}}, \qquad \text{for } 1<p<\infty.
\end{align*}

In $\Omega_3$ there is nothing to do. In $\Omega_4$ we proceed as we did with $\Omega_2$ and in $\Omega_5$ as we did with $\Omega_1$. Joining all these cases, we finally achieve that
\begin{align*}
\left|\left|\frac{\omega_{\eta+th}-\omega_{\eta}}{t}-d\omega_\eta \right|\right|_{L^p(\T\times[0,1])}\leq C \left(t^\alpha+t^\frac{1}{p}\right), \qquad \text{for } 1<p<\infty.
\end{align*}
Finally, taking the limit of the previous expression, we can conclude that \eqref{limomega} is satisfied.
\end{proof}

Once we have seen the existence of the Gateaux derivative, we show below that it is continuous.
%We next prove that $D_f\Psi_\eta h $ is continuous.

\begin{Lemma} \label{continuitydPsi}
Let $\eta_1,\eta_2\in B_{\ep/4}$ and $h\in \S$. Then, the following estimate holds
\begin{align*}
&||D_f\Psi_{\eta_1}h-D_f\Psi_{\eta_2}h||_{C^{1+\alpha}(\T\times [0,1])}\\ &\leq C||h||_{C^{1+\alpha}(\T\times [0,a)\cup (b,1])}\left(||\eta_1-\eta_2||^\frac{1}{p}_{C^{1+\alpha}(\T\times [0,a)\cup (b,1])}+||\eta_1-\eta_2||^\alpha_{C^{1+\alpha}(\T\times [0,a)\cup (b,1])}\right),
\end{align*}
for any  $p\in(2,\infty).$
\end{Lemma}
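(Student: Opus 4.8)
The goal is a Lipschitz-in-$\eta$ (actually Hölder) estimate for the map $\eta\mapsto D_f\Psi_\eta h$, so I would set up an elliptic estimate exactly as in the previous lemma: the difference $D_f\Psi_{\eta_1}h-D_f\Psi_{\eta_2}h$ solves a Poisson problem with zero boundary data and right-hand side $d\omega_{\eta_1}-d\omega_{\eta_2}$, hence by Lemma \ref{regpsi}-style elliptic regularity (i.e. $W^{2,p}$ estimates plus Morrey with $p=\tfrac{2}{1-\alpha}$) one gets
\[
\|D_f\Psi_{\eta_1}h-D_f\Psi_{\eta_2}h\|_{C^{1+\alpha}(\T\times[0,1])}\le C\|d\omega_{\eta_1}-d\omega_{\eta_2}\|_{L^p(\T\times[0,1])}.
\]
So the whole lemma reduces to estimating $\|d\omega_{\eta_1}-d\omega_{\eta_2}\|_{L^p}$ by $C\|h\|_{C^{1+\alpha}}\big(\|\eta_1-\eta_2\|^{1/p}_{C^{1+\alpha}}+\|\eta_1-\eta_2\|^{\alpha}_{C^{1+\alpha}}\big)$, which is the real content.

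To do this I would recall that
\[
d\omega_{\eta_j}=-\Big(\frac{\varpi' h}{1+\pa_y\eta_j}\Big)\circ\Phi_{\eta_j}^{-1}
\]
on the two regions where $\varpi'\ne0$ (and $0$ in between), and split $\T\times[0,1]$ into five subdomains analogous to $\Omega_1,\dots,\Omega_5$ from the previous proof, now defined using the min/max of $a+\eta_1(x,a)$ and $a+\eta_2(x,a)$ (and similarly at $b$). On the "bulk" regions where both $\Phi_{\eta_1}^{-1}$ and $\Phi_{\eta_2}^{-1}$ land in $[0,a)$ (resp. $(b,1]$), I would write the difference $d\omega_{\eta_1}-d\omega_{\eta_2}$ as a telescoping sum: first replace $h\circ\Phi_{\eta_1}^{-1}$ by $h\circ\Phi_{\eta_2}^{-1}$, picking up $\|h\|_{C^{1+\alpha}}\,|\Phi_{\eta_1}^{-1}-\Phi_{\eta_2}^{-1}|^\alpha\le C\|h\|_{C^{1+\alpha}}\|\eta_1-\eta_2\|^\alpha_{C^{1+\alpha}}$ (using $|\Phi_{\eta_1}^{-1}-\Phi_{\eta_2}^{-1}|\le C\|\eta_1-\eta_2\|_{C^0}$ since $1+\pa_y\eta_j$ is bounded below); then handle the factor $\frac1{1+\pa_y\eta_1}\circ\Phi_{\eta_1}^{-1}-\frac1{1+\pa_y\eta_2}\circ\Phi_{\eta_2}^{-1}$ by adding and subtracting $\frac1{1+\pa_y\eta_2}\circ\Phi_{\eta_1}^{-1}$, which produces a term controlled by $\|\eta_1-\eta_2\|_{C^{1+\alpha}}$ (from $\pa_y\eta_1-\pa_y\eta_2$ directly) plus a term controlled by $|\Phi_{\eta_1}^{-1}-\Phi_{\eta_2}^{-1}|^\alpha$ (Hölder continuity of $\pa_y\eta_2$). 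This gives an $L^\infty$ bound of order $\|h\|_{C^{1+\alpha}}\|\eta_1-\eta_2\|^\alpha_{C^{1+\alpha}}$ on the bulk regions.

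On the thin transition strips (the analogues of $\Omega_2,\Omega_4$), whose $y$-width is $O(|\eta_1(x,\cdot)-\eta_2(x,\cdot)|)\le C\|\eta_1-\eta_2\|_{C^0}$, the integrand $d\omega_{\eta_1}-d\omega_{\eta_2}$ is merely bounded (it equals one copy $-\big(\tfrac{\varpi'h}{1+\pa_y\eta_j}\big)\circ\Phi_{\eta_j}^{-1}$ since the other copy lies in the flat region where $\varpi'=0$), so $\|\cdot\|_{L^p}^p$ over such a strip is $\le C\|h\|_{C^{1+\alpha}}^p\cdot\|\eta_1-\eta_2\|_{C^0}$, giving the $\|\eta_1-\eta_2\|^{1/p}_{C^{1+\alpha}}$ contribution. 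Combining the bulk and strip estimates yields the claimed bound. I expect the main obstacle to be bookkeeping: carefully defining the five subdomains symmetrically in $\eta_1,\eta_2$, checking that on the strips exactly one of the two terms survives (so no cancellation is lost and the integrand stays $O(\|h\|_{C^{1+\alpha}})$), and tracking that the Hölder exponent $\alpha$ and the Lebesgue exponent $p$ enter precisely as stated — there is no analytic surprise beyond what was already done in Lemma \ref{l:gateaux}, just a two-parameter version of the same splitting.
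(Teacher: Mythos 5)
Your proposal follows essentially the same path as the paper's proof: reduce via $W^{2,p}$ elliptic estimates and Morrey's inequality to $\|d\omega_{\eta_1}-d\omega_{\eta_2}\|_{L^p}$, split $\T\times[0,1]$ into the same five subdomains adapted to $\eta_1,\eta_2$, bound the thin transition strips in $L^p$ using that their measure is $O(\|\eta_1-\eta_2\|_{C^0})$ (giving the $1/p$ power), and on the bulk regions split the difference into a piece controlled by $|\Phi_{\eta_1}^{-1}-\Phi_{\eta_2}^{-1}|$ and a piece controlled by $|\pa_y\eta_1\circ\Phi_{\eta_1}^{-1}-\pa_y\eta_2\circ\Phi_{\eta_2}^{-1}|$ (giving the $\alpha$ power). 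The only minor difference is that for the term $h\circ\Phi_{\eta_1}^{-1}-h\circ\Phi_{\eta_2}^{-1}$ you invoke the $C^\alpha$ modulus of $h$ rather than its $C^1$ bound as the paper does, which gives a slightly weaker (but still sufficient) estimate.
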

\begin{proof}
Proceeding as before, using Morrey's inequality and elliptic regularity theory, we have that
\begin{align}\label{difftodoeta12}
||D_f\Psi_{\eta_1}h-D_f\Psi_{\eta_2}h||_{C^{1+\alpha}(\T\times [0,1])}&\leq||D_f\Psi_{\eta_1}h-D_f\Psi_{\eta_2}h||_{W^{2,p}(\T\times [0,1])}\\
&\leq ||d\omega_{\eta_1}-d\omega_{\eta_2}||_{L^p(\T\times [0,1])}.\nonumber
\end{align}
Let $\Omega_i$ be the analogue of the domains defined in \eqref{omega1} but now for $\eta_1$ and $\eta_2$.  We obtain
\begin{align*}
d\omega_{\eta_1}-d\omega_{\eta_2}=\left\{\begin{array}{cc}\frac{2h}{1+\pa_y\eta_1}\circ \Phi^{-1}_{\eta_1}-\frac{2h}{1+\pa_y\eta_2}\circ \Phi^{-1}_{\eta_2} &\qquad (x,y)\in \Omega_1,\\
\text{ either } \frac{2h}{1+\pa_y\eta_1}\circ \Phi^{-1}_{\eta_1}\text{ or } -\frac{2h}{1+\pa_y\eta_2}\circ \Phi^{-1}_{\eta_2} &\qquad (x,y)\in \Omega_2,\\
0 &\qquad (x,y)\in \Omega_3, \\
\text{ either } \frac{2h}{1+\pa_y\eta_1}\circ \Phi^{-1}_{\eta_1}\text{ or } -\frac{2h}{1+\pa_y\eta_2}\circ \Phi^{-1}_{\eta_2}&\qquad (x,y)\in \Omega_4,\\
\frac{2h}{1+\pa_y\eta_1}\circ \Phi^{-1}_{\eta_1}-\frac{2h}{1+\pa_y\eta_2}\circ \Phi^{-1}_{\eta_2} & \qquad (x,y)\in \Omega_5.
\end{array}\right.
\end{align*}
In $\Omega_2$ we proceed as follow. In the set $\tilde{I}=\{ x\in \T\,:\, \eta_1(x,a)<\eta_2(x,a)\}$ we have that
\begin{align*}
\int_{\tilde{I}}\int_{a+\eta_1(x,a)}^{a+\eta_2(x,a)}\left|\frac{2h}{1+\pa_y\eta_2}\circ \Phi_{\eta_2}^{-1}\right|^p dydx
&\leq C || \eta_2(\cdot,a)-\eta_1(\cdot,a)||_{L^\infty(\tilde{I})}\\
&\leq C ||\eta_2-\eta_1||_{C^{1+\alpha}(\T\times[0,a)\cup (b,1])}.
\end{align*}
In the set $\T\setminus \tilde{I}$ we can produce a similar inequality. In $\Omega_4$ the same argument yields
\begin{equation}\label{diffOme24}
||d\omega_{\eta_1}-d\omega_{\eta_2}||_{L^p (\T\times \Omega_2\cup \Omega_4)}\leq C||h||_{L^\infty(\T\times [0,a)\cup(b,1])}||\eta_1-\eta_2||^{\frac{1}{p}}_{C^{1+\alpha}(\T\times [0,a)\cup (b,1])}.
\end{equation}
Since $d\omega_{\eta_1}-d\omega_{\eta_2}=0$ in $\Omega_3$ we just have to deal with this difference in $\Omega_1$ and $\Omega_5$. The estimate follow the same steps in both domains. In $\Omega_1$,
\begin{align*}
\frac{d\omega_{\eta_1}-d\omega_{\eta_2}}{2}=&\frac{h}{1+\pa_y\eta_1}\circ \Phi^{-1}_{\eta_1}-\frac{h}{1+\pa_y\eta_2}\circ \Phi^{-1}_{\eta_2}
\\=& \frac{h\circ \Phi^{-1}_{\eta_1}-h\circ \Phi^{-1}_{\eta_2}}{1+\pa_y\eta_1\circ \Phi^{-1}_{\eta_1}}+ h\circ \Phi_{\eta_2}^{-1}\left(\frac{1}{1+\pa_y\eta_1\circ\Phi_{\eta_1}^{-1}}-\frac{1}{1+\pa_y\eta_2\circ\Phi^{-1}_{\eta_2}}\right)\\
\equiv& J_1+J_2.
\end{align*}
Let us start with $J_1$. We can directly  estimate
\begin{align*}
&\left|\frac{h\circ \Phi^{-1}_{\eta_1}-h\circ \Phi^{-1}_{\eta_2}}{1+\pa_y\eta_1\circ \Phi^{-1}_{\eta_1}}\right|\leq
 C ||h||_{C^1}|\left(\Phi^{-1}_{\eta_1}\right)_2-\left(\Phi^{-1}_{\eta_2}\right)_2|.
\end{align*}
In addition, we can write
\begin{equation}\label{aux:diffeta12}
\left(\Phi^{-1}_{\eta_1}\right)_2-\left(\Phi^{-1}_{\eta_2}\right)_2=\int_{0}^1 \frac{d}{ds}\left(\Phi^{-1}_{s\eta_1+(1-s)\eta_2}\right)_2ds.
\end{equation}
Since $$\left(\Phi^{-1}_{s\eta_1+(1-s)\eta_2}\right)_2\circ \Phi_{s\eta_1+(1-s)\eta_2}=y,$$
we have that
\begin{align*}
\frac{d\left(\Phi^{-1}_{s\eta_1+(1-s)\eta_2}\right)_2}{ds}\circ \Phi_{s\eta_1+(1-s)\eta_2}+\frac{d\Phi_{s\eta_1+(1-s)\eta_2}}{ds}\cdot \nabla \left(\Phi^{-1}_{s\eta_1+(1-s)\eta_2}\right)_2\circ \Phi_{s\eta_1+(1-s)\eta_2}=0.
\end{align*}
Then
\begin{align*}
\frac{d\left(\Phi^{-1}_{s\eta_1+(1-s)\eta_2}\right)_2}{ds}&=-(\eta_1-\eta_2)\circ \Phi^{-1}_{s\eta_1+(1-s)\eta_2} \left(\pa_y\left(\Phi^{-1}_{s\eta_1+(1-s)\eta_2}\right)_2\right)\\
&=-\frac{(\eta_1-\eta_2)}{1+s\pa_y\eta_1+(1-s)\pa_y\eta_2}\circ\Phi^{-1}_{s\eta_1+(1-s)\eta_2}.
\end{align*}
Therefore, returning to \eqref{aux:diffeta12} we get
\begin{align*}
\left| \left(\Phi^{-1}_{\eta_1}\right)_2- \left(\Phi^{-1}_{\eta_2}\right)_2\right|\leq C ||\eta_1-\eta_2||_{C^{1+\alpha}(\T\times [0,a)\cup(b,1])},
\end{align*}
and consequently we obtain that
\[
|J_1|\leq C ||\eta_1-\eta_2||_{C^{1+\alpha}(\T\times [0,a)\cup(b,1])}.
\]
Let us continue with $J_2$. We note that
\begin{align*}
|J_2|=&\left|h\circ\Phi_{\eta_2}^{-1}\left(\frac{1}{1+\pa_y\eta_1\circ\Phi_{\eta_1}^{-1}}
-\frac{1}{1+\pa_y\eta_2\circ\Phi^{-1}_{\eta_2}}\right)\right|
\\&\leq C \left|\pa_y \eta_1\circ\Phi^{-1}_{\eta_1}-\pa_y\eta_2\circ\Phi^{-1}_{\eta_2}\right|\\
&\leq C\left|\pa_y\eta_1\circ \Phi^{-1}_{\eta_1}-\pa_y \eta_1\circ \Phi^{-1}_{\eta_2}\right|+
C\left|\pa_y \eta_{1}\circ\Phi^{-1}_{\eta_2}-\pa_y\eta_2\circ \Phi^{-1}_{\eta_2}\right|\\
&\leq C ||\eta_1||_{C^{1+\alpha}(\T\times [0,a)\cup (b,1])}|\Phi_{\eta_1}^{-1}-\Phi^{-1}_{\eta_2}|^\alpha
+C||\eta_1-\eta_2||_{C^{1+\alpha}(\T\times [0,a)\cup (b,1])}.
\end{align*}
Since $|\Phi_{\eta_1}^{-1}-\Phi^{-1}_{\eta_2}|=|\left(\Phi_{\eta_1}^{-1}\right)_2-\left(\Phi^{-1}_{\eta_2}\right)_2| \leq C ||\eta_1-\eta_2||_{C^{1+\alpha}(\T\times [0,a)\cup(b,1])}$, the above argument yields
\begin{equation}\label{diffOme15}
||d\omega_{\eta_1}-d\omega_{\eta_2}||_{L^p (\T\times \Omega_1\cup \Omega_5)}\leq C||h||_{C^1(\T\times [0,a)\cup(b,1])}||\eta_1-\eta_2||^{\alpha}_{C^{1+\alpha}(\T\times [0,a)\cup (b,1])}.
\end{equation}
Finally, combining \eqref{difftodoeta12} together with  \eqref{diffOme24}-\eqref{diffOme15}, we can conclude our goal.
\end{proof}

Now, putting together Lemma \ref{l:gateaux} and Lemma \ref{continuitydPsi} we obtain the following result.
\begin{corollary}The functional
\begin{align*}
\Psi_{(\cdot)} \, : \, B_{\ep/4}\subset \S\to C^{1+\alpha}(\T\times[0,1])
\end{align*}
is Frechet differentiable and its derivative is continuous.
\end{corollary}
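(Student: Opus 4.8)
The plan is to obtain the corollary from the classical principle that a map between Banach spaces which is Gateaux differentiable on an open set, with Gateaux derivative depending continuously on the base point, is automatically Frechet differentiable there and has continuous Frechet derivative. The two inputs are already available: \lem{l:gateaux} supplies the Gateaux derivative $D_f\Psi_\eta$ at every $\eta\in B_{\ep/4}$ (as the solution of the elliptic problem \eqref{eq:DfPsieta}), and \lem{continuitydPsi} supplies the continuity of $\eta\mapsto D_f\Psi_\eta$. So the only work left is to assemble these correctly.

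First I would record that, for each fixed $\eta\in B_{\ep/4}$, the map $h\mapsto D_f\Psi_\eta h$ is a \emph{bounded linear} operator from $\S$ to $C^{1+\alpha}(\T\times[0,1])$: linearity is clear because $h\mapsto d\omega_\eta$ is linear and the solution operator of \eqref{eq:DfPsieta} is linear, and boundedness follows from the same chain used in the proof of \lem{l:gateaux}, namely elliptic regularity in $W^{2,p}$ followed by Morrey's inequality with $p=\tfrac{2}{1-\alpha}$, giving $\|D_f\Psi_\eta h\|_{C^{1+\alpha}}\leq C\|d\omega_\eta\|_{L^p}\leq C\|h\|_{C^{1+\alpha}}$ with $C$ uniform on $B_{\ep/4}$. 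With this, \lem{continuitydPsi} is precisely the assertion that $\eta\mapsto D_f\Psi_\eta$ is H\"older continuous from $B_{\ep/4}$ into the space of bounded operators. Next I would fix $\eta\in B_{\ep/4}$ and $k\in\S$ with $\|k\|_{C^{1+\alpha}}$ small enough that the segment $\{\eta+sk:0\leq s\leq 1\}$ stays in $B_{\ep/4}$ (using that the ball is open and convex), introduce the curve $\phi(s)\equiv\Psi_{\eta+sk}$, and observe that by the definition of the Gateaux derivative $\phi$ is differentiable with $\phi'(s)=D_f\Psi_{\eta+sk}k$, which by \lem{continuitydPsi} is continuous in $s$. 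The fundamental theorem of calculus for Banach-space valued maps then gives $\Psi_{\eta+k}-\Psi_\eta-D_f\Psi_\eta k=\int_0^1\big(D_f\Psi_{\eta+sk}-D_f\Psi_\eta\big)k\,ds$, and estimating the right-hand side with \lem{continuitydPsi} produces a remainder bounded by $C\|k\|_{C^{1+\alpha}}^{1+\gamma}$ with $\gamma=\min(1/p,\alpha)>0$, hence $o(\|k\|_{C^{1+\alpha}})$. This yields Frechet differentiability at $\eta$ with derivative $D_f\Psi_\eta$, and continuity of the Frechet derivative is then \lem{continuitydPsi} once more.

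The only genuinely delicate point --- everything else being bookkeeping --- is justifying that the curve $\phi(s)=\Psi_{\eta+sk}$ is $C^1$ with $\phi'(s)=D_f\Psi_{\eta+sk}k$; this is exactly where one needs both lemmas acting together along the whole segment (existence of the difference-quotient limit at every interior point, \emph{and} continuity of $s\mapsto D_f\Psi_{\eta+sk}$), rather than information at the single point $\eta$. Once that curve is known to be $C^1$, the Banach-space fundamental theorem of calculus and the H\"older bound of \lem{continuitydPsi} close the argument with no further analysis.
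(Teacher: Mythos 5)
Your proposal is correct and takes exactly the route the paper implicitly intends: the corollary is stated with no proof and simply references Lemmas \ref{l:gateaux} and \ref{continuitydPsi}, and the standard mechanism connecting them is the classical fact that a Gateaux-differentiable map on an open convex set with continuous (in operator norm) Gateaux derivative is Fr\'echet differentiable with continuous Fr\'echet derivative. Your spelled-out version — boundedness of $h\mapsto D_f\Psi_\eta h$ from the $L^p$-elliptic/Morrey chain, $C^1$-regularity of $s\mapsto \Psi_{\eta+sk}$ from Lemmas \ref{l:gateaux} and \ref{continuitydPsi} along the segment, and the Banach-space fundamental theorem of calculus giving a remainder $O(\|k\|_{C^{1+\alpha}}^{1+\gamma})$ — is precisely the bookkeeping the authors leave to the reader, and it is sound.
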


It is remain to prove that the differential of $\overline{\Psi}$ is continuous. Until now we have proven that this differential is given by \eqref{dpsibarra}. In order to prove that it is continuous we will need the following auxiliary Lemma.
\begin{Lemma} \label{aux}Let $f\in L^p(\T\times[0,1])$.  For all  $\tilde{\ep}>0$ there exists $\delta>0$ (that depends on $f$) such that if $$||\eta_1-\eta_2||_{C^{1+\alpha}(\T\times [0,a)\cup (b,1])}\leq \delta,$$ then
$$||f\circ \Phi_{\eta_1}-f\circ \Phi_{\eta_2}||_{L^p(\T\times [0,a)\cup (b,1])}\leq \tilde{\ep}.$$
\end{Lemma}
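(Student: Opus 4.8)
The plan is to run a standard density argument, reducing the problem to the case of a \emph{continuous} $f$, for which composition with $\Phi_\eta$ depends on $\eta$ in an essentially trivial way. Throughout, recall that in the regime where this Lemma is applied (as in \lem{continuitydPsi}) one has $\eta_1,\eta_2\in B_{\ep/4}$, so that $1+\pa_y\eta_i\ge 1-\tfrac{\ep}{4}>0$; hence each $\Phi_{\eta_i}$ is a bi-Lipschitz homeomorphism of $\T\times[0,a)\cup(b,1]$ onto $\Omega_{\mathrm{in}}\cup\Omega_{\mathrm{out}}\subset\T\times[0,1]$ whose Jacobian $1+\pa_y\eta_i$ is bounded above and below by constants depending only on $\ep$. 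First I would fix $\tilde\ep>0$, use density of $C(\T\times[0,1])$ in $L^p(\T\times[0,1])$ to choose $g\in C(\T\times[0,1])$ with $\|f-g\|_{L^p(\T\times[0,1])}\le\tilde\ep$ (this choice, and therefore the final $\delta$, depends on $f$), and then split
\[
f\circ\Phi_{\eta_1}-f\circ\Phi_{\eta_2}=(f-g)\circ\Phi_{\eta_1}+\big(g\circ\Phi_{\eta_1}-g\circ\Phi_{\eta_2}\big)+(g-f)\circ\Phi_{\eta_2},
\]
estimating the three terms separately.

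For the two outer terms I would perform the change of variables $(x,y)\mapsto\Phi_{\eta_i}(x,y)=(x,y+\eta_i(x,y))$, whose Jacobian is $1+\pa_y\eta_i\ge 1-\tfrac{\ep}{4}$. This gives, for any $F\in L^p(\T\times[0,1])$, a uniform bound $\|F\circ\Phi_{\eta_i}\|_{L^p(\T\times[0,a)\cup(b,1])}\le C_\ep\|F\|_{L^p(\T\times[0,1])}$; applied with $F=f-g$ it controls the first and third terms by $C_\ep\tilde\ep$. The same two-sided Jacobian bound is also what makes $f\circ\Phi_{\eta_i}$ well defined a.e.\ for a merely $L^p$ function $f$, since preimages of null sets under $\Phi_{\eta_i}$ are then null.

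For the middle term I would use that $g$, being continuous on the compact set $\T\times[0,1]$, is uniformly continuous; let $\omega_g$ denote its modulus of continuity. Since $|\Phi_{\eta_1}(x,y)-\Phi_{\eta_2}(x,y)|=|\eta_1(x,y)-\eta_2(x,y)|\le\|\eta_1-\eta_2\|_{C^{1+\alpha}(\T\times[0,a)\cup(b,1])}\le\delta$ pointwise, we obtain $|g\circ\Phi_{\eta_1}-g\circ\Phi_{\eta_2}|\le\omega_g(\delta)$ pointwise, hence $\|g\circ\Phi_{\eta_1}-g\circ\Phi_{\eta_2}\|_{L^p(\T\times[0,a)\cup(b,1])}\le\omega_g(\delta)\,|\T\times[0,1]|^{1/p}$ over the finite-measure domain. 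Finally I would choose $\delta>0$ small enough, depending on $g$ (hence on $f$ and $\tilde\ep$), that this last quantity is $\le\tilde\ep$, and combine the three bounds to conclude (after renaming constants).

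There is no serious obstacle: the only points needing care are (i) that composition with $\Phi_{\eta_i}$ is a bounded operator on $L^p$ and that $f\circ\Phi_{\eta_i}$ is meaningful for $f$ defined only up to null sets, both of which follow from the uniform two-sided Jacobian bounds valid on $B_{\ep/4}$; and (ii) keeping the order of quantifiers straight, i.e.\ that $g$ (and hence $\delta$) is selected \emph{after} $f$ and $\tilde\ep$, which is precisely why the statement only claims a $\delta$ depending on $f$.
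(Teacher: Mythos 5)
Your proposal is correct, and it takes a genuinely different (and in fact simpler) route than the paper. The paper approximates $f$ in $L^p$ by a piecewise constant function $f_N=\sum_n c_n\chi_{I_n}$ built on a grid of rectangles, then controls the middle term $\|f_N\circ\Phi_{\eta_1}-f_N\circ\Phi_{\eta_2}\|_{L^p}$ by a careful set-theoretic analysis: it pushes forward through $\Phi_{\eta_1}$, writes the difference as a sum over each cell $I_k$ plus its at most eight neighbors, and makes the measures of the discrepancy sets $\Phi_{\eta_2}\circ\Phi_{\eta_1}^{-1}(I_k)\setminus I_k$ and $\Phi_{\eta_2}\circ\Phi_{\eta_1}^{-1}(I_k\cap\Omega_{\mathrm{in}})\cap I_{k(i)}$ small relative to $\rho=\min_k|I_k\cap\Omega_{\mathrm{in}}|$, so that the contribution is dominated by $\tilde\ep\|f_N\|_{L^p}^p$. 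You instead approximate by $g\in C(\T\times[0,1])$ and invoke uniform continuity of $g$, which collapses the middle term to a pointwise bound $\omega_g(\delta)$ over a finite-measure domain. Both approaches rely on the same two structural facts — density in $L^p$ of a class stable under small perturbations of $\Phi_\eta$, plus two-sided Jacobian bounds to handle the outer terms — but your version avoids the combinatorics over grid cells and is closer to the standard proof of $L^p$-continuity of translation. The one thing worth noting explicitly (which you did) is that $f\circ\Phi_{\eta_i}$ is only defined a.e.\ for $f\in L^p$, and that this is justified because $\Phi_{\eta_i}$ and its inverse both map null sets to null sets thanks to the uniform Jacobian bounds on $B_{\ep/4}$; the paper implicitly uses the same fact.
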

\begin{proof}
We can approximate $f$ by  piecewise constant functions $f_N=\sum_{n=1}^N c_n \chi_{I_n}$, where $\{I_n\}_{n=1}^N$ is a collection of rectangles that cover $\T\times [0,a)\cup (b,1]$, in such a way that
\begin{align*}
\lim_{N\to \infty} ||f-f_N||_{L^p(\T\times [0,a)\cup(b,1])}=0.
\end{align*}

\begin{align*}
||f_N||_{L^p}^p&=\int_I \left|\sum_{n=1}^N c_n \chi_{I_n}\right|^p dxdy \\
&=\sum_{k=1}^N\int_{I_k}\left|\sum_{n=1}^N c_n \chi_{I_n}\right|^p dxdy\\
&=\sum_{k=1}^N |c_k|^p\int_{I_k}dxdy=\sum_{k=1}^N|c_k|^p |I_k|.
\end{align*}

Adding and subtracting appropriate terms, we split the term of interest as follows
\begin{align*}
|| f\circ\Phi_{\eta_1}-f\circ\Phi_{\eta_2}||_{L^p(\T\times [0,a)\cup (b,1])}\leq & \sum^2_{i=1}||f\circ\Phi_{\eta_i}-f_N\circ \Phi_{\eta_i}||_{L^p(\T\times [0,a)\cup (b,1])}\\&+ ||f_N\circ\Phi_{\eta_1}-f_N\circ\Phi_{\eta_2}||_{L^p(\T\times [0,a)\cup (b,1])}.
\end{align*}
Here, we can bound
\begin{align*}
\int_{\T\times[0,a)\cup(b,1]}\left|f\circ \Phi_{\eta_i}-f_N\circ\Phi_{\eta_i}\right|^pdydx&=\int_{\T\times[0,a+\eta_i(x,a))\cup(b+\eta_i(x,b),1]}\left|f-f_N\right|^p
\frac{1}{1+\pa_y\eta_i\circ\Phi^{-1}_{\eta_i}}dydx\\
&\leq C ||f-f_N||^p_{L^p(\T\times [0,1])}.
\end{align*}
Now, we take $N$ large enough to have that $||f-f_N||_{L^p(\T\times [0,1])}\leq \tilde{\ep}/(100C)$. It remains to estimate
\begin{align*}
&||f_N\circ\Phi_{\eta_1}-f_N\circ\Phi_{\eta_2}||^p_{L^p(\T\times [0,a)\cup (b,1])}=\int_{\T\times [0,a)\cup (b,1]} \left|\sum_{n=1}^N c_n (\chi_{I_n}\circ\Phi_{\eta_1}-\chi_{I_n}\circ\Phi_{\eta_2})\right|^pdy dx\\
& \leq \int_{\Omega_{\text{in}}\cup \Omega_{\text{out}}} \left|\sum_{n=1}^N c_n (\chi_{I_n}-\chi_{I_n}\circ\Phi_{\eta_2}\circ \Phi^{-1}_{\eta_1})\right|^p J_{\Phi_{\eta_1}} dy dx\\
&\leq C\int_{\Omega_{\text{in}}\cup \Omega_{\text{out}}} \left|\sum_{n=1}^N c_n (\chi_{I_n}-\chi_{I_n}\circ\Phi_{\eta_2}\circ \Phi^{-1}_{\eta_1})\right|^p  dy dx.
\end{align*}

We deal with the integral over $\Omega_{\text{in}}$ and $ \Omega_{\text{out}}$ separately. Here, we recall that
\begin{align*}
    \Omega_{\text{in}}&=\{x\in \T,\,\, 0\leq y<a+\eta_1(x,a)\},\\
    \Omega_{\text{out}}&=\{x\in \T,\,\, b+\eta_1(x,b)<y\leq 1\}.
\end{align*}
We have that
\begin{align*}
&\int_{\Omega_{\text{in}}} \left|\sum_{n=1}^N c_n (\chi_{I_n}-\chi_{I_n}\circ\Phi_{\eta_2}\circ \Phi^{-1}_{\eta_1})\right|^p  dy dx\\
&=\sum_{k=1}^N \int_{\Omega_{\text{in}}\cap I_k}\left|\sum_{n=1}^N c_n (\chi_{I_n}-\chi_{I_n}\circ\Phi_{\eta_2}\circ \Phi^{-1}_{\eta_1})\right|^p  dy dx\\
&=\sum_{k\in S_{in}}  \int_{\Omega_{\text{in}}\cap I_k}\left|\sum_{n=1}^N c_n (\chi_{I_n}-\chi_{I_n}\circ\Phi_{\eta_2}\circ \Phi^{-1}_{\eta_1})\right|^p  dy dx,
\end{align*}
where
$$S_{in}=\{ n=1,..., N \,\, :\,\, I_n \cap \Omega_{\text{in}}\neq \emptyset\}.$$
In the integral
\begin{align*}
\int_{\Omega_{\text{in}}\cap I_k}\left|\sum_{n=1}^N c_n (\chi_{I_n}-\chi_{I_n}\circ\Phi_{\eta_2}\circ \Phi^{-1}_{\eta_1})\right|^p  dy dx,
\end{align*}
we just have the contribution of, at most, nine terms, for $\delta$ small enough. Indeed,
\begin{align*}
&\int_{\Omega_{\text{in}}\cap I_k}\left|\sum_{n=1}^N c_n (\chi_{I_n}-\chi_{I_n}\circ\Phi_{\eta_2}\circ \Phi^{-1}_{\eta_1})\right|^p  dy dx\\
&=\int_{\Omega_{\text{in}}\cap I_k}\left| c_k (\chi_{I_k}-\chi_{I_k}\circ\Phi_{\eta_2}\circ \Phi^{-1}_{\eta_1})
-\sum_{i=1}^8c_{k(i)} \chi_{I_k(i)}\circ\Phi_{\eta_2}\circ \Phi^{-1}_{\eta_1}\right|^p  dy dx,
\end{align*}
where $\{I_k(i)\}_{i=1}^{8}$ are the neighbor rectangles  of $I_k$. It is clear that each rectangle $I_k$ has at most $8$ neighboring rectangles, in case of having less we simply assume that $I_k(i)=\emptyset$ for the remaining ones.

Thus,
\begin{align*}
&\int_{\Omega_{\text{in}}\cap I_k}\left|\sum_{n=1}^N c_n (\chi_{I_n}-\chi_{I_n}\circ\Phi_{\eta_2}\circ \Phi^{-1}_{\eta_1})\right|^p  dy dx\\
&\leq C\int_ {\Omega_{\text{in}}\cap I_k} \left| c_k (\chi_{I_k}-\chi_{I_k}\circ\Phi_{\eta_2}\circ \Phi^{-1}_{\eta_1})
\right|^p+\sum_{i=1}^8 \left|c_{k(i)}\chi_{I_k(i)}\circ\Phi_{\eta_2}\circ \Phi^{-1}_{\eta_1}\right|^p  dy dx.
\end{align*}
Let $$\rho=\min_{k\in S_{in}} |I_k\cap \Omega_{\text{in}}|.$$

On the one hand, the term
$$|\chi_{I_k}-\chi_{I_k}\circ\Phi_{\eta_2}\circ \Phi^{-1}_{\eta_1}|,$$ over $I_k\cap \Omega_{\text{in}}$,  is just different from zero in the set $\Phi_{\eta_2}\circ \Phi^{-1}_{\eta_1}(I_k)\setminus I_k$. The measure of this term can be made as small as we want taking $\delta$ small enough. Let us say that

$$|\Phi_{\eta_2}\circ \Phi^{-1}_{\eta_1}(I_k)\setminus I_k|\leq \tilde{\ep} \rho,$$
for every $k\in S_{in}$.

On the other hand, the terms $\chi_{I_k(i)}\circ\Phi_{\eta_2}\circ \Phi^{-1}_{\eta_1}$, over $I_k\cap \Omega_{\text{in}}$, are just different from zero in the set $\Phi_{\eta_2}\circ\Phi^{-1}_{\eta_1}(I_{k}\cap \Omega_{\text{in}})\cap I_{k(i)}$. The measure of this terms can be made as small as we want taking $\delta$ small enough. We will take
$$|\Phi_{\eta_2}\circ\Phi^{-1}_{\eta_1}(I_{k}\cap \Omega_{\text{in}})\cap I_{k(i)}|\leq \tilde{\ep}\rho,$$
for every $k\in S_{in}$.

Therefore
\begin{align*}
&\int_{\Omega_{\text{in}}\cap I_k}\left|\sum_{n=1}^N c_n (\chi_{I_n}-\chi_{I_n}\circ\Phi_{\eta_2}\circ \Phi^{-1}_{\eta_1})\right|^p  dy dx\\
&\leq C \left(|c_k|^p+\sum_{i=1}^8 |c_{k(i)}|^p \right)\tilde{\ep}\rho\\
&\leq C\tilde{\ep}\left(|c_k|^p|I_k|+\sum_{i=1}^8 |c_{k(i)}|^p |I_{k(i)}|\right).
\end{align*}
Thus
\begin{align*}
\sum_{k\in S_{in}}  \int_{\Omega_{\text{in}}\cap I_k}\left|\sum_{n=1}^N c_n (\chi_{I_n}-\chi_{I_n}\circ\Phi_{\eta_2}\circ \Phi^{-1}_{\eta_1})\right|^p  dy dx\leq C \tilde{\ep}\sum_{k\in S_{in}}|c_k|^p |I_k| \leq  C\tilde{\ep}||f_N||_{L^p}^p.
\end{align*}
We achieve the conclusion of the Lemma by making a similar argument in $\Omega_{\text{out}}$.

\end{proof}

\begin{Lemma} Let $h\in \S$. The differential
\begin{align*}
D_f\overline{\Psi}_{(\cdot)}h\,:\, B_{\ep/4} \to C^{1+\alpha}(\T\times [0,a)\cup (b,1])
\end{align*}
is continuous in $B_{\ep/4}$.
\end{Lemma}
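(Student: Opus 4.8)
The plan is to work directly from the explicit formula \eqref{dpsibarra}, which we may rewrite as
\[
D_f\overline{\Psi}_\eta h = h\cdot\big((\pa_y\Psi_\eta)\circ\Phi_\eta\big)+(D_f\Psi_\eta h)\circ\Phi_\eta .
\]
Since $h\in\S$ is fixed, $C^{1+\alpha}(\T\times[0,a)\cup(b,1])$ is a Banach algebra, and the composition operator $u\mapsto u\circ\Phi_\eta$ is bounded from $C^{1+\alpha}(\T\times[0,1])$ into $C^{1+\alpha}(\T\times[0,a)\cup(b,1])$ with norm controlled only by $\|\eta\|_{C^{1+\alpha}}$ (hence uniformly on $B_{\ep/4}$), it suffices to show that the maps $\eta\mapsto(\pa_y\Psi_\eta)\circ\Phi_\eta$ and $\eta\mapsto(D_f\Psi_\eta h)\circ\Phi_\eta$ are continuous from $B_{\ep/4}$ into $C^{1+\alpha}(\T\times[0,a)\cup(b,1])$. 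Fix $\eta\in B_{\ep/4}$ and $\eta_n\to\eta$; writing $g_n,g$ for the corresponding inner functions (that is, $\pa_y\Psi_{\eta_n},\pa_y\Psi_\eta$ in the first case and $D_f\Psi_{\eta_n}h, D_f\Psi_\eta h$ in the second), I would split
\[
g_n\circ\Phi_{\eta_n}-g\circ\Phi_\eta=(g_n-g)\circ\Phi_{\eta_n}+\big(g\circ\Phi_{\eta_n}-g\circ\Phi_\eta\big)
\]
and treat the two summands separately.

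For the first summand, uniform boundedness of the composition operator reduces everything to proving $g_n\to g$ in $C^{1+\alpha}(\T\times[0,1])$. When $g=D_f\Psi_\eta h$ this is precisely \lem{continuitydPsi}. When $g=\pa_y\Psi_\eta$ I would note that $\Psi_{\eta_n}-\Psi_\eta$ solves the Poisson equation on $\T\times[0,1]$ with zero boundary data and right-hand side $\omega_{\eta_n}-\omega_\eta$, so by the Schauder estimate $\|\Psi_{\eta_n}-\Psi_\eta\|_{C^{2+\alpha}}\le C\|\omega_{\eta_n}-\omega_\eta\|_{C^\alpha}$; and since $\varpi$ is Lipschitz one has $\|\omega_{\eta_n}-\omega_\eta\|_{L^\infty}\le C\|\eta_n-\eta\|_{C^{1+\alpha}}$ (the only delicate point being the transition layers $\Omega_2,\Omega_4$, whose width is $O(\|\eta_n-\eta\|)$ and on which both vorticities are $O(\|\eta_n-\eta\|)$-close to the constant $-2a$), while $\{\omega_{\eta_n}-\omega_\eta\}$ stays bounded in $W^{1,\infty}(\T\times[0,1])$ because $\|\nabla\omega_{\eta_n}\|_{L^\infty}\le\|\varpi'\|_{L^\infty}/(1-\ep/4)$ uniformly. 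Interpolating gives $\|\omega_{\eta_n}-\omega_\eta\|_{C^\alpha}\to0$, hence $\pa_y\Psi_{\eta_n}\to\pa_y\Psi_\eta$ in $C^{1+\alpha}(\T\times[0,1])$.

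The second summand is the hard part, because composition with $\Phi$ is not continuous on $C^{1+\alpha}$ under $C^{1+\alpha}$-perturbations of $\Phi$; the remedy is to exploit that the limit $g$ is strictly more regular: \lem{regpsi} gives $\Psi_\eta\in C^{2+\beta}$ for every $\beta<1$, so $\pa_y\Psi_\eta\in C^{1+\beta}$, and the proof of \lem{l:gateaux} shows $d\omega_\eta\in L^p$ for all $p$, so $D_f\Psi_\eta h\in W^{2,p}\subset C^{1+\beta}$ for every $\beta<1$. Fixing $\beta\in(\alpha,1)$, the $C^0$ norm of $g\circ\Phi_{\eta_n}-g\circ\Phi_\eta$ and of its first derivatives tend to $0$ by straightforward estimates using $g\in C^{1+\beta}$, $\Phi_{\eta_n}(x,y)=(x,y+\eta_n(x,y))$, and $\eta_n\to\eta$ in $C^{1+\alpha}$. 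For the remaining $C^\alpha$-seminorm of the first derivative I would use $\nabla(g\circ\Phi_\mu)=(\nabla g\circ\Phi_\mu)D\Phi_\mu$ and split
\[
\nabla(g\circ\Phi_{\eta_n})-\nabla(g\circ\Phi_\eta)=(\nabla g\circ\Phi_{\eta_n})(D\Phi_{\eta_n}-D\Phi_\eta)+(\nabla g\circ\Phi_{\eta_n}-\nabla g\circ\Phi_\eta)D\Phi_\eta ;
\]
the first term tends to $0$ in $C^\alpha$ because $\nabla g\circ\Phi_{\eta_n}$ is bounded in $C^\alpha$ and $\|D\Phi_{\eta_n}-D\Phi_\eta\|_{C^\alpha}\le C\|\nabla(\eta_n-\eta)\|_{C^\alpha}\to0$, and for the second term it suffices to prove $\nabla g\circ\Phi_{\eta_n}\to\nabla g\circ\Phi_\eta$ in $C^\alpha$. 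This last fact holds because the sequence is bounded in $C^\beta$ (since $[\nabla g\circ\Phi_{\eta_n}]_{C^\beta}\le[\nabla g]_{C^\beta}\Lip(\Phi_{\eta_n})^\beta$ is uniformly bounded) and converges in $L^p$ for every $p<\infty$ by \lem{aux}, so interpolating the $L^p$ convergence against the uniform $C^\beta$ bound gives $C^\alpha$ convergence.

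Putting the two summands together for both choices of $g$ yields the claimed continuity of $D_f\overline{\Psi}_{(\cdot)}h$ on $B_{\ep/4}$. I expect the single genuinely delicate point to be the composition estimate in the last paragraph: bare $C^{1+\alpha}$ regularity of the inner function is not enough, and one must feed in the extra Hölder smoothness that elliptic regularity provides for $\pa_y\Psi_\eta$ and $D_f\Psi_\eta h$, with \lem{aux} serving as the tool that converts the uniform higher-regularity bound into actual $C^\alpha$ convergence under the moving change of variables $\Phi_{\eta_n}$.
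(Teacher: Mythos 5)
Your proposal is correct and follows the same overall strategy as the paper: starting from the formula $D_f\overline{\Psi}_\eta h = h\cdot(\pa_y\Psi_\eta\circ\Phi_\eta)+(D_f\Psi_\eta h)\circ\Phi_\eta$, split each composition into a ``moving function, fixed $\Phi$'' piece $(g_n-g)\circ\Phi_{\eta_n}$ and a ``fixed function, moving $\Phi$'' piece $g\circ\Phi_{\eta_n}-g\circ\Phi_\eta$, handle the first via \lem{continuitydPsi} (and its analogue for $\pa_y\Psi$) and the second via \lem{aux}. Where you diverge from the paper is in the technical control of the two problematic terms. For the convergence $\pa_y\Psi_{\eta_n}\to\pa_y\Psi_\eta$ in $C^{1+\alpha}(\T\times[0,1])$ you go through Schauder estimates plus a H\"older interpolation: $L^\infty$-smallness of $\omega_{\eta_n}-\omega_\eta$ against a uniform Lipschitz bound gives $C^\alpha$-smallness, hence $C^{2+\alpha}$-smallness of $\Psi_{\eta_n}-\Psi_\eta$. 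The paper instead invokes the $L^p$-elliptic estimate $\|\Psi_{\eta_1}-\Psi_{\eta_2}\|_{W^{3,p}}\lesssim\|\omega_{\eta_1}-\omega_{\eta_2}\|_{W^{1,p}}$ together with Morrey, and then performs an explicit piecewise estimate of $\nabla\omega_{\eta_1}-\nabla\omega_{\eta_2}$ in $L^p$; your interpolation route bypasses that computation at the cost of needing the uniform $W^{1,\infty}$ bound, which you correctly flag. For the moving-composition term you feed in the extra regularity $g\in C^{1+\beta}$ with $\beta\in(\alpha,1)$ and interpolate the $L^p$-smallness of $\nabla g\circ\Phi_{\eta_n}-\nabla g\circ\Phi_\eta$ (from \lem{aux} applied to $\nabla g\in L^p$) against the uniform $C^\beta$ bound; the paper instead lifts the $C^\alpha$ target to $W^{1,p}$ by Morrey and applies \lem{aux} one derivative higher, to $\nabla^2\pa_y\Psi_{\eta_2}\in L^p$. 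Both routes are valid; yours makes the role of the extra H\"older smoothness explicit and treats the two cases $g=\pa_y\Psi_\eta$ and $g=D_f\Psi_\eta h$ in a unified way, while the paper's keeps the entire argument inside the $W^{k,p}$ framework it uses elsewhere.
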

\begin{proof}
According to \eqref{dpsibarra}, we have that
\begin{align*}
D_f\overline{\Psi}_\eta h = \left(\pa_y\Psi_\eta\circ \Phi_{\eta}\right)\cdot h+D_f\Psi_{\eta}h \circ\Phi_{\eta}.
\end{align*}
We will show that, fixed $\eta_2\in B_{\ep/4}$, for all $\tilde{\ep}>0$ there exists $\delta$ such that
\begin{align*}
||D_f\overline{\Psi}_{\eta_1} h-D_f\overline{\Psi}_{\eta_2}h||_{C^\alpha(\T\times[0,a)\cup (b,1])}\leq \tilde{\ep}
\end{align*}
if
\begin{align*}
||\eta_1-\eta_2||_{C^{1+\alpha}}<\delta.
\end{align*}
We will divide the proof into two parts. Firstly we will prove that
\begin{align}\label{l:parte1}
||\pa_y\Psi_{\eta_1}\circ\Phi_{\eta_1}-\pa_y\Psi_{\eta_2}\circ \Phi_{\eta_2}||_{C^{1+\alpha}}<\tilde{\ep}.
\end{align}
The most delicate term in this norm is
\begin{align*}
||\nabla \left(\pa_y\Psi_{\eta_1}\circ\Phi_{\eta_1}-\pa_y\Psi_{\eta_2}\circ \Phi_{\eta_2}\right)||
_{C^\alpha(\T\times[0,a)\cup (b,1])}.
\end{align*}
Since $\nabla \left(\pa_y\Psi_{\eta_i}\circ\Phi_{\eta_i}\right)=\nabla \Phi_{\eta_i}\cdot\nabla \pa_y \Psi_{\eta_i}\circ\Phi_{\eta_i}$ we can split
\begin{align*}
&\nabla \left(\pa_y\Psi_{\eta_1}\circ\Phi_{\eta_1}-\pa_y\Psi_{\eta_2}\circ \Phi_{\eta_2}\right)\\&
= (\nabla \Phi_{\eta_1}-\nabla\Phi_{\eta_2})\cdot \nabla \pa_y\Psi_{\eta_1}\circ \Phi_{\eta_1}+ \nabla\Phi_{\eta_2}\cdot(\nabla \pa_y \Psi_{\eta_1}\circ\Phi_{\eta_1}-\nabla\pa_y\Psi_{\eta_2}\circ\Phi_{\eta_2}).
\end{align*}
Then, using the fact that $C^\alpha$ is a Banach algebra, we find that
\begin{align*}
&||\nabla \left(\pa_y\Psi_{\eta_1}\circ\Phi_{\eta_1}-\pa_y\Psi_{\eta_2}\circ \Phi_{\eta_2}\right)||_{C^\alpha}\\
&\leq ||\eta_1-\eta_2||_{C^{1+\alpha}} ||\nabla \pa_y\Psi_{\eta_1}\circ \Phi_{\eta_1}||_{C^\alpha}+
||\nabla\Phi_{\eta_2}||_{C^\alpha}||\nabla \pa_y \Psi_{\eta_1}\circ\Phi_{\eta_1}-\nabla\pa_y\Psi_{\eta_2}\circ\Phi_{\eta_2}||_{C^\alpha}.
\end{align*}
The first term of the above expression can be made as small as we want since $\Psi_{\eta_1}\in C^{2+\beta}(\T\times[0,1])$ for all $\beta \in(0,1)$ and by hypothesis $||\eta_1-\eta_2||_{C^{1+\alpha}}< \delta$ with $\delta$ a sufficiently small free parameter.

Therefore, we only need to focus our attention on the other term.
In addition, the last factor of the above expression can be split as
\begin{align}\label{aux:delicate1+2}
&||\nabla \pa_y \Psi_{\eta_1}\circ\Phi_{\eta_1}-\nabla\pa_y\Psi_{\eta_2}\circ\Phi_{\eta_2}||_{C^\alpha}\\
&\leq ||\nabla\pa_y \Psi_{\eta_1}\circ \Phi_{\eta_1}-\nabla\pa_y \Psi_{\eta_2}\circ \Phi_{\eta_1}||_{C^\alpha}
+||\nabla\pa_y \Psi_{\eta_2}\circ \Phi_{\eta_1}-\nabla\pa_y \Psi_{\eta_2}\circ \Phi_{\eta_2}||_{C^\alpha}.\nonumber
\end{align}
For the last term of the above expression \eqref{aux:delicate1+2}, we  apply Morrey's inequality with $p=2/(1-\alpha)$.
\[
||\nabla\pa_y \Psi_{\eta_2}\circ \Phi_{\eta_1}-\nabla\pa_y \Psi_{\eta_2}\circ \Phi_{\eta_2}||_{C^\alpha} \leq ||\nabla\pa_y \Psi_{\eta_2}\circ \Phi_{\eta_1}-\nabla\pa_y \Psi_{\eta_2}\circ \Phi_{\eta_2}||_{W^{1,p}}.
\]
Thus, the delicate term in this norm is
\begin{align*}
&||\nabla^2\pa_y\Psi_{\eta_2}\circ \Phi_{\eta_1}\cdot \nabla \Phi_{\eta_1} - \nabla^2\pa_y\Psi_{\eta_2}\circ \Phi_{\eta_2}\cdot \nabla \Phi_{\eta_2}||_{L^p}\\
&\leq ||\nabla^2\pa_y \Psi_{\eta_2}\circ\Phi_{\eta_1}\cdot \nabla (\Phi_{\eta_1}-\Phi_{\eta_2})||_{L^p}  +|| \nabla^2  \pa_y  \Psi_{\eta_2} \circ (\Phi_{\eta_1}-\Phi_{\eta_2})\cdot \nabla\Phi_{\eta_2}||_{L^p}\\
&\leq ||\nabla^2\pa_y \Psi_{\eta_2}\circ\Phi_{\eta_1}||_{L^p}|| \nabla (\Phi_{\eta_1}-\Phi_{\eta_2})||_{L^\infty} + || \nabla^2  \pa_y  \Psi_{\eta_2} \circ \Phi_{\eta_1}-\nabla^2  \pa_y  \Psi_{\eta_2} \circ\Phi_{\eta_2}||_{L^p} || \nabla\Phi_{\eta_2}||_{L^\infty}.
\end{align*}
The first factor can be handle very easily. Note that
\[
 ||\nabla^2\pa_y \Psi_{\eta_2}\circ\Phi_{\eta_1}||_{L^p(\T\times[0,a)\cup(b,1])}|| \nabla (\Phi_{\eta_1}-\Phi_{\eta_2})||_{L^\infty}\leq ||\nabla^2\pa_y \Psi_{\eta_2}||_{L^p(\T\times[0,1])} ||\eta_1-\eta_2||_{C^1}.
\]
For the other one, we just need to apply Lemma \ref{aux} with $f\equiv\nabla^2\pa_y \Psi_{\eta_2}$, since $\Psi_{\eta_i}\in W^{3,p}(\T\times[0,1])$  and consequently $f\equiv\nabla^2\pa_y \Psi_{\eta_2}\in L^p(\T\times[0,1])$, to get the desired result.

For the first term of \eqref{aux:delicate1+2}, we have that
\begin{multline*}
||\nabla\pa_y \Psi_{\eta_1}\circ \Phi_{\eta_1}-\nabla\pa_y \Psi_{\eta_2}\circ \Phi_{\eta_1}||_{C^\alpha(\T\times[0,a)\cup(b,1])}\\
\leq ||\nabla\Phi_{\eta_1}||^\alpha_{L^\infty(\T\times[0,a)\cup(b,1])}||\nabla\pa_y \Psi_{\eta_1}-\nabla\pa_y \Psi_{\eta_2}||_{C^\alpha(\T\times[0,1])}.
\end{multline*}
Consequently, we have
\begin{align*}
||\nabla\pa_y \Psi_{\eta_1}\circ \Phi_{\eta_1}-\nabla\pa_y \Psi_{\eta_2}\circ \Phi_{\eta_1}||_{C^\alpha(\T\times[0,a)\cup(b,1])}&\leq ||\nabla\pa_y \left(\Psi_{\eta_1} -\Psi_{\eta_2}\right)||_{C^\alpha(\T\times[0,1]])}\\
&\leq ||\nabla\pa_y \left(\Psi_{\eta_1} -\Psi_{\eta_2}\right)||_{W^{1,p}}\\
&\leq ||\Psi_{\eta_1} -\Psi_{\eta_2}||_{W^{3,p}}\leq ||\omega_{\eta_1}-\omega_{\eta_2}||_{W^{1,p}},
\end{align*}
%\begin{align*}
%\frac{|F_1\circ\Phi-F_2\circ\Phi|}{|x-y|^\alpha}
%=\frac{|F_1\circ\Phi-F_2\circ\Phi|}{|\Phi(x)-\Phi(y)|^\alpha}\frac{|\Phi(x)-\Phi(y)|^\alpha}{|x-y|}
%\leq ||\nabla\phi||^\alpha_{L^\infty} ||F_1-F_2||_{C^\alpha(\T\times [0,1])}
%\end{align*}
where we have used Morrey's inequality with $p=2/(1-\alpha)$ and elliptic regularity theory.

To do small this last term, we just need to start proceeding as in the Lemma \ref{l:gateaux} to control $||\omega_{\eta_1}-\omega_{\eta_2}||_{L^p}$. For the other term, i.e. $||\omega_{\eta_1}-\omega_{\eta_2}||_{\dot{W}^{1,p}}=|| \nabla w_{\eta_1}-\nabla w_{\eta_2}||_{L^p}$ we start noting that
\[
\nabla w_{\eta_i}=\begin{cases}
\frac{(-\pa_x \eta_i,1)}{1+\pa_y \eta_i}\varpi' \circ \Phi_{\eta_i}^{-1}, \qquad &\text{over }  \Omega_{\eta_i},\\
0, \qquad &\text{otherwise},
\end{cases}
\]
with
\begin{align*}
\Omega_{\eta_i}\equiv\{ (\bar{x},\bar{y})\in \T \times [0,1]\, :\, (\bar{x},\bar{y})=\Phi_{\eta_i}(x,y)  \text{ with } x\in \T \text{ and } y\in \supp(\varpi')\}.
\end{align*}
Thus
\begin{align*}
\nabla\omega_{\eta_1}-\nabla\omega_{\eta_2}=\left\{
\begin{array}{cc}
\frac{(-\pa_x \eta_1,1)}{1+\pa_y \eta_1}\varpi' \circ \Phi_{\eta_1}^{-1}-\frac{(-\pa_x \eta_2,1)}{1+\pa_y \eta_2}\varpi' \circ \Phi_{\eta_2}^{-1}, \qquad &\text{over } \Omega_{\eta_1}\cap\Omega_{\eta_2},\\
\frac{(-\pa_x \eta_1,1)}{1+\pa_y \eta_1}\varpi' \circ \Phi_{\eta_1}^{-1}, \qquad &\text{over }  \Omega_{\eta_1}\cap\Omega_{\eta_2}^c,\\
-\frac{(-\pa_x \eta_2,1)}{1+\pa_y \eta_2}\varpi' \circ \Phi_{\eta_2}^{-1}, \qquad &\text{over }  \Omega_{\eta_1}^c\cap\Omega_{\eta_2},\\
0,\qquad &\text{over }  \Omega_{\eta_1}^c\cap\Omega_{\eta_2}^c.
\end{array}\right.
\end{align*}
The terms $\frac{(-\pa_x \eta_i,1)}{1+\pa_y \eta_i}\varpi' \circ \Phi_{\eta_i}^{-1}$ for $i=1,2$ can be bounded in $L^\infty$ such that
\begin{align*}
\left|\left| \frac{(-\pa_x \eta_i,1)}{1+\pa_y \eta_i}\varpi' \circ \Phi_{\eta_i}^{-1} \right|\right|_{L^p}^p\leq C \iint_{\left(\Omega_{\eta_1}^c\cap\Omega_{\eta_2}\right)\cup \left(\Omega_{\eta_1}\cap\Omega_{\eta_2}^c\right)}dy dx \leq C ||\eta_1-\eta_2||_{C^{1+\a}}.
\end{align*}
Over the region $\Omega_{\eta_1}\cap\Omega_{\eta_2}$ we have
\begin{multline*}
\frac{(-\pa_x \eta_1,1)}{1+\pa_y \eta_1}\varpi' \circ \Phi_{\eta_1}^{-1}-\frac{(-\pa_x \eta_2,1)}{1+\pa_y \eta_2}\varpi' \circ \Phi_{\eta_2}^{-1}\\
=\left(\frac{(-\pa_x \eta_1,1)}{1+\pa_y \eta_1}-\frac{(-\pa_x \eta_2,1)}{1+\pa_y \eta_2}\right)\varpi' \circ \Phi_{\eta_1}^{-1}+\frac{(-\pa_x \eta_2,1)}{1+\pa_y \eta_2}\left(\varpi' \circ \Phi_{\eta_1}^{-1}-\varpi' \circ \Phi_{\eta_2}^{-1}\right).
\end{multline*}
We will now study each of these terms in more detail. For the first, by adding and subtracting appropriate terms we have
\[
\frac{(-\pa_x \eta_1,1)}{1+\pa_y \eta_1}-\frac{(-\pa_x \eta_2,1)}{1+\pa_y \eta_2}=(-\pa_x \eta_1,1)\frac{\pa_y (\eta_2-\eta_1)}{(1+\pa_y\eta_1)(1+\pa_y\eta_2)}+\frac{(-\pa_x(\eta_1-\eta_2),0)}{1+\pa_y\eta_2}
\]
Since by hypothesis $\eta_1, \eta_2\in B_{\ep/4}$, in particular both belongs to $\S$ and consequently we get
\[
\left|\left|\left(\frac{(-\pa_x \eta_1,1)}{1+\pa_y \eta_1}-\frac{(-\pa_x \eta_2,1)}{1+\pa_y \eta_2}\right)\varpi' \circ \Phi_{\eta_1}^{-1}\right|\right|_{L^\infty}\leq C ||\eta_1-\eta_2||_{C^{1+\a}}.
\]
For the second term, we just need to note that $\varpi' \circ \Phi_{\eta_1}^{-1}-\varpi' \circ \Phi_{\eta_2}^{-1}=0$ over $\Omega_{\eta_1}\cap\Omega_{\eta_2}$. Then, combining the above argument yields
\[
|| \nabla w_{\eta_1}-\nabla w_{\eta_2}||_{L^p}\leq C ||\eta_1-\eta_2||_{C^{1+\a}}.
\]
Going back in our argument, we conclude that we have proven \eqref{l:parte1}.
Finally, we prove that
\begin{multline*}
|| D_f\Psi_{\eta_1}h \circ\Phi_{\eta_1}-D_f\Psi_{\eta_2}h \circ\Phi_{\eta_2}||_{C^\alpha(\T\times[0,a)\cup(b,1])}\\
\leq ||D_f\Psi_{\eta_1}h \circ(\Phi_{\eta_1}-\Phi_{\eta_2})||_{C^\alpha(\T\times[0,a)\cup(b,1])} + || (D_f \Psi_{\eta_1}h-D_f\Psi_{\eta_2}h)\circ\Phi_{\eta_2}||_{C^\alpha(\T\times[0,a)\cup(b,1])}\\
\leq ||D_f\Psi_{\eta_1}h||_{C^{1+\a}(\T\times[0,1])} ||\Phi_{\eta_1}-\Phi_{\eta_2}||_{C^\alpha(\T\times[0,a)\cup(b,1])}\\
+ ||\nabla\Phi_{\eta_2}||^\alpha_{L^{\infty}(\T\times[0,a)\cup(b,1])} ||D_f\Psi_{\eta_1}h -D_f\Psi_{\eta_2}h||_{C^\a(\T\times[0,1])}.
\end{multline*}
To prove smallness it suffices to apply the Lemma \ref{continuitydPsi}.
\end{proof}

By combining the above results, we can conclude the following result.
\begin{Lemma} The functional
\begin{align*}
F[f,\lambda]=\lambda f -\overline{\Psi}_f+\frac{1}{2\pi}\int_{\T} \overline{\Psi}_f(\bar{x},\cdot)d\bar{x}
\end{align*}
acting from $B_{\ep/4}$ to $\S$ is Frechet differentiable and its derivative is continuous.
\end{Lemma}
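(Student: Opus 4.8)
The plan is to obtain this as an essentially formal consequence of the Lemmas already proved, the genuine analysis (existence and continuity of the Gateaux derivative of $\Psi_{(\cdot)}$ and of $\overline{\Psi}_{(\cdot)}$) being already behind us. I would begin by recording the decomposition $F[f,\lambda]=\lambda f-(\Id-A)\overline{\Psi}_f$, where $A$ is the fixed bounded linear averaging operator $(Ag)(x,y)=\frac1{2\pi}\int_\T g(\bar x,y)\,d\bar x$ on $C^{1+\alpha}(\T\times[0,a)\cup(b,1])$. The bilinear piece $(f,\lambda)\mapsto\lambda f$ is bounded on $\S\times\R$, hence smooth, with $D_\lambda(\lambda f)=f$ and $D_f(\lambda f)h=\lambda h$; and $\Id-A$ is linear and bounded, hence its own derivative. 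So the statement reduces to showing that $f\mapsto\overline{\Psi}_f$ is continuously Fréchet differentiable from $B_{\ep/4}$ into $C^{1+\alpha}(\T\times[0,a)\cup(b,1])$, and that the composite $(\Id-A)\overline{\Psi}_{(\cdot)}$ (together with its derivative) takes values in $\S$.

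For the Fréchet differentiability of $\overline{\Psi}_{(\cdot)}$ I would argue as follows. Formula \eqref{dpsibarra} together with Lemma~\ref{l:gateaux} already exhibits the Gateaux derivative $D_f\overline{\Psi}_\eta h=(\pa_y\Psi_\eta\circ\Phi_\eta)\,h+D_f\Psi_\eta h\circ\Phi_\eta$ at every $\eta\in B_{\ep/4}$ and in every direction $h\in\S$. I would first check that for fixed $\eta$ the map $h\mapsto D_f\overline{\Psi}_\eta h$ is a bounded linear operator into $C^{1+\alpha}(\T\times[0,a)\cup(b,1])$: the first summand is multiplication by the fixed $C^{1+\alpha}$ function $\pa_y\Psi_\eta\circ\Phi_\eta$ (Lemma~\ref{regpsi}), and for the second the elliptic estimate used inside the proof of Lemma~\ref{l:gateaux} gives $\|D_f\Psi_\eta h\|_{C^{1+\alpha}}\leq C\|d\omega_\eta\|_{L^p}\leq C\|h\|_{C^{1+\alpha}}$. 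Then I would invoke the preceding Lemma, which says $\eta\mapsto D_f\overline{\Psi}_\eta h$ is continuous on $B_{\ep/4}$, and — reading off from the estimates there and in Lemma~\ref{continuitydPsi} that the modulus of continuity is controlled by $\|h\|_{C^{1+\alpha}}$ times a function of $\|\eta_1-\eta_2\|_{C^{1+\alpha}}$ alone — conclude that $\eta\mapsto D_f\overline{\Psi}_\eta$ is continuous into $\mathcal L\big(\S,\,C^{1+\alpha}(\T\times[0,a)\cup(b,1])\big)$. The standard upgrade (a map with everywhere-defined Gateaux derivative that is norm-continuous in the base point is continuously Fréchet differentiable) then finishes this step.

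To conclude, I would apply the chain rule to get that $(\Id-A)\overline{\Psi}_{(\cdot)}$ is continuously Fréchet differentiable on $B_{\ep/4}$ with derivative $h\mapsto(\Id-A)D_f\overline{\Psi}_\eta h$; since $(\Id-A)\overline{\Psi}_f=\tilde F_f$ takes values in the closed subspace $\S$ of $C^{1+\alpha}(\T\times[0,a)\cup(b,1])$ (verified in the subsection on the continuity of the functional) and all difference quotients defining the derivative again lie in $\S$, the derivative is $\S$-valued as well. Combining with the bilinear term, $F:B_{\ep/4}\times\R\to\S$ is Fréchet differentiable with $DF[f,\lambda](h,\mu)=\mu f+\lambda h-(\Id-A)D_f\overline{\Psi}_f h$, continuous in $(f,\lambda)$; as a bonus $D_\lambda F[f,\lambda]=f$ gives $D^2_{\lambda,f}F=\Id$, continuous, which is the form needed later for Theorem~\ref{th:CR}. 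The only genuinely non-bookkeeping point is the Gateaux-to-Fréchet passage in the second paragraph, i.e.\ extracting from the preceding two Lemmas the uniformity in the direction $h$ needed for operator-norm continuity of $\eta\mapsto D_f\overline{\Psi}_\eta$; this is the step I expect to require the most care.
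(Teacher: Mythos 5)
Your proof is correct and follows essentially the same route as the paper, whose ``proof'' of this Lemma is just the sentence ``by combining the above results''; what you have done is write out explicitly the bookkeeping the authors leave implicit (decompose $F$ into the bilinear piece $\lambda f$ and the bounded-linear post-composition $(\Id - A)$ of the nonlinear map $\overline{\Psi}_{(\cdot)}$, then run the standard Gateaux-to-Fr\'echet argument). One small caveat about the step you rightly flag as the delicate one: the paper's preceding Lemma does \emph{not} give a modulus of continuity that is a function of $\|\eta_1-\eta_2\|_{C^{1+\alpha}}$ alone, because the estimate for the term $\pa_y\Psi_{\eta}\circ\Phi_{\eta}$ uses Lemma~\ref{aux}, whose $\delta$ depends on $\eta_2$ through the fixed function $f=\nabla^2\pa_y\Psi_{\eta_2}$. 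What \emph{does} follow is precisely what you actually need for the Gateaux-to-Fr\'echet upgrade: for each fixed $\eta_2$, since $D_f\overline{\Psi}_\eta h = (\pa_y\Psi_\eta\circ\Phi_\eta)\,h + D_f\Psi_\eta h\circ\Phi_\eta$, the first summand contributes $\|\pa_y\Psi_{\eta_1}\circ\Phi_{\eta_1}-\pa_y\Psi_{\eta_2}\circ\Phi_{\eta_2}\|_{C^{1+\alpha}}\,\|h\|_{C^{1+\alpha}}$ with the first factor tending to zero independently of $h$, and the second summand is controlled by $\|h\|_{C^{1+\alpha}}\big(\|\eta_1-\eta_2\|^{1/p}_{C^{1+\alpha}}+\|\eta_1-\eta_2\|^{\alpha}_{C^{1+\alpha}}\big)$ via Lemma~\ref{continuitydPsi} and the composition estimates. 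That yields pointwise operator-norm continuity of $\eta\mapsto D_f\overline{\Psi}_\eta$, which is all the mean-value argument requires, so your conclusion stands; you should just avoid claiming the modulus is a function of $\|\eta_1-\eta_2\|$ alone.
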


\section{Analysis of the linear operator}\label{s:analysisLOP}
In order to compute the differential of the functional $F$ at the origin we will take $\eta=0$ in \eqref{dpsibarra}. Since $\Phi_0(x,y)=(x,y)$, we  have that \eqref{ecupsi1} translates into
\begin{equation}\label{ec:Psi0}
\left\{
\begin{aligned}
\Delta \Psi_0&= \omega_0\quad \text{in $\T\times [0,1],$}\\
\Psi_0\big|_{y=0}&= 0,\\
\Psi_0\big|_{y=1}&= - \tfrac{1}{3},
\end{aligned}
\right.
\end{equation}
and \eqref{eq:DfPsieta} translates into
\begin{equation}\label{laplacedif}
\left\{
\begin{aligned}
\Delta D_f\Psi_0 h = &d\omega_{0},\quad \text{ in $\T\times [0,1]$},\\
D_f\Psi_0 h\big|_{y=0}=&0,\\
D_f\Psi_0 h \big|_{y=1}=&0,
\end{aligned}
\right.
\end{equation}
where
\begin{align*}
d\omega_0=\left\{\begin{array}{cc}-\varpi'(y)h(x,y)&\qquad (x,y)\in \T\times [0,a),\\
0 &\qquad (x,y)\in \T\times [a,b],\\
-\varpi'(y)h(x,y) &\qquad (x,y)\in \T\times (b,1]. \end{array}\right.
\end{align*}

For $\Psi_0$, solving \eqref{ec:Psi0}, we obtain that
\begin{align*}
\Psi_0(y)=\int_{0}^y\int_{0}^{\bar{y}}\varpi(z)dzd\bar{y}+v_{\varpi}y,
\end{align*}
with
\begin{align}\label{vvarpi}
v_{\varpi}\equiv-\int_{0}^1\int_{0}^{\bar{y}}\varpi(z)dzd\bar{y}-\frac{1}{3}.
\end{align}
This means that
\begin{align*}
\Psi_0'(y)=\int_{0}^y\varpi(\bar{y})d\bar{y}+v_{\varpi}=u(y)+v_{\varpi},
\end{align*}
where $u$ is given by \eqref{velocidad}. Let us emphasis that the velocity of the flow corresponding with $\varpi$ will be given by
\begin{align*}
\bu_{\text{shear}}(y)=(-\Psi'_0(y),\, 0),
\end{align*}
which for $\ep=0$ is the Poiseuille flow. We find, using \eqref{dpsibarra}, that
\begin{align}\label{datzero}
D_f\overline{\Psi}_0 h (x,y)=h(x,y)\Psi_0'(y)+D_f\Psi_0 h (x,y).
\end{align}
Since the mean of $h$ is equal to zero, we obtain
\begin{align}\label{operadorlineal}
D_fF[0,\lambda]h =&\lambda h -\Psi_0' h-D_f\Psi_0 h+\frac{1}{2\pi}\int_{\T} \left(\Psi_0'(y)h(\bar{x},y)+D_f\Psi_0 h(\bar{x},y)\right)d\bar{x}\\
=&\left(\lambda  -\Psi_0' \right)h-D_f\Psi_0 h. \nonumber
\end{align}
Next, we analyze the term $D_f\Psi_0 h$. In order to do it, we need to introduce the expansions
\begin{align*}
h&=\sum_{n=1}^\infty h_n(y)\cos(nx),\\
\varphi&=\sum_{n=1}^\infty \varphi_n(y)\cos(nx),
\end{align*}
where, for simplicity, we used the notation $\varphi \equiv D_f\Psi_0 h$.

Introducing both expansions into \eqref{laplacedif}, we have the relation
\begin{equation*}
\left\{
\begin{aligned}
\pa^2_y\varphi_n-n^2\varphi_n=&-\varpi'h_n,\quad y\in [0,a)\cup (b,1],\\
\varphi_n(1)=&0,\\
\varphi_n(0)=&0.
\end{aligned}
\right.
\end{equation*}
Thus, solving the above system, we get
\begin{align*}
\varphi_n(y)=\int_{0}^1\varpi'(z)G_n(y,z)h_n(z)dz,
\end{align*}
where
\begin{align*}
G_n(y,z)=\frac{1}{n\sinh(n)}\left\{\begin{array}{cc}\sinh(n(1-y))\sinh(nz) &\quad  z<y,\\ \sinh(ny)\sinh(n(1-z)) &\quad  z>y.\end{array}\right.
\end{align*}
Notice that $G_n(\cdot,\cdot)$ satisfies the following properties
\begin{align}\label{greenfunction}
\pa_y^2G_{n}(y,z)-n^2G_n(y,z)=&-\delta(y-z),\\
G_n(0,z)=&0,\nonumber\\
G_n(1,z)=&0.\nonumber
\end{align}

To abbreviate the notation, let us write
\begin{align}\label{defiL}
\cL^\lambda h \equiv D_f F[0,\lambda] h.
\end{align}
We have obtained the following decomposition
\begin{align*}
\cL^\lambda h(x,y) =\sum_{n=1}^\infty \cL^\lambda_{n}h_n (y)\cos(nx),
\end{align*}
with
\begin{align*}
\cL_{n}^\lambda h_n(y)\equiv(\lambda-\Psi'_0(y))h_n(y)-\int_{0}^1\varpi'(z)G_n(y,z)h_n(z)dz.
\end{align*}
Taking into account that $\Psi_0'(y)=u(y)+v_\varpi$ with $u(y)$ given in \eqref{velocidad} and $v_\varpi$ given in \eqref{vvarpi} we have
\begin{align*}
\cL^\lambda_n h_n(y) = & (\lambda-v_{\varpi}+y^2)h_n(y)+2\int_{0}^1\chi(z)G_n(y,z)h_n(z)dz, \quad  &0\leq y<a,\\
\cL^\lambda_n h_n(y) = &\left(\lambda-v_{\varpi}+(y-b)^2+2ay-a^2\right)h_n(y)+2\int_{0}^1\chi(z)  G_n(y,z)h_n(z)dz, \quad  &b<y\leq 1,
\end{align*}
where $\chi(z)$ is the characteristic function of the domain $[0,a)\cup (b,1]$.

\section{Study of the linear operator kernel}\label{seis}
In sections \ref{seis} and \ref{s:dimkernel} we will prove the following Lemma.
\begin{theorem}\label{mainexistencelemma} There exists $\ep_0>0$ sufficiently small  such that for all $0<\ep<\ep_0$ there exists $\lambda^*=\lambda^*(\ep)\in \R$ such that the kernel of the operator
\begin{align*}
\cL^{\lambda^\ast}\, :\, \S \to \S
\end{align*}
is one dimensional. In addition, the element $h^*$ that spans the kernel of $\cL^{\lambda^\ast}$ is $C^\infty(\T\times [0,a]\cup[b,1])$ and, very importantly, the function $\lambda^*-\Psi'_0(y)$ has not zeros in $[0,a]\cup [b,1]$.
\end{theorem}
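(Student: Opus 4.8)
\emph{Strategy and the no-zeros claim.} The operator $\cL^\lambda$ is diagonal in the basis $\{\cos(nx)\}_{n\ge1}$, so $\cN(\cL^\lambda)\subset\S$ splits as the sum of the kernels of the scalar operators $\cL^\lambda_n$. The plan is to produce, for every small $\ep$, a value $\lambda^\ast=\lambda^\ast(\ep)$ for which $\cL^{\lambda^\ast}_1$ has a one–dimensional kernel spanned by a smooth function (done in the present section) and for which $\cL^{\lambda^\ast}_n$ is injective for all $n\ge2$ (done in Section~\ref{s:dimkernel}). Throughout, $\lambda^\ast$ will be sought of the form $\lambda^\ast=\Psi_0'(y^\ast)$ with $y^\ast\in(a,b)$. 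Since $u$ in \eqref{velocidad} has negative derivative on each of $[0,a)$, $(a,b)$, $(b,1]$, the map $\Psi_0'=u+v_\varpi$ is strictly decreasing on $[0,1]$; hence $\lambda^\ast-\Psi_0'(y)\le\Psi_0'(y^\ast)-\Psi_0'(a)<0$ on $[0,a]$ and $\lambda^\ast-\Psi_0'(y)\ge\Psi_0'(y^\ast)-\Psi_0'(b)>0$ on $[b,1]$. This is exactly the last assertion of the theorem, and it allows us to divide by $\lambda^\ast-\Psi_0'$ on the support of $\varpi'$.

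\emph{Reduction of the mode-$1$ kernel equation.} Using the factorizations $\lambda-\Psi_0'(y)=y^2-\mu^2$ on $[0,a)$ and $(y-\nu)(y-\rho)$ on $(b,1]$ (with $\mu,\nu,\rho$ as in the introduction and $\mu>a$, $\rho<\nu<b$) and setting $H\equiv(\lambda-\Psi_0')h$, the equation $\cL^\lambda_1h=0$ with $h(0)=h(1)=0$ becomes \eqref{compactintro}. Rescaling $[0,a)$ and $(b,1]$ onto $[0,1/2)$ and $(1/2,1]$ by the piecewise affine change of variables turns this into the $\ep$-free coupled integral system \eqref{inteqleft_intro}–\eqref{inteqright_intro} with boundary conditions \eqref{BCforFLRintro}, depending on the single parameter $\tilde\mu$ (with $\tilde\nu,\tilde\rho$ functions of $\tilde\mu$). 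So it suffices to produce $(F,\tilde\mu)$ solving \eqref{inteqleft_intro}–\eqref{BCforFLRintro} and respecting \eqref{rhonumu}.

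\emph{From the integral system to a scalar equation for $\tilde\mu$.} Since $\tilde G_\ep$ is, up to the Jacobian and a constant, the Green's function of $\pa_x^2-(1-\ep)^2$, any solution of \eqref{inteqleft_intro}–\eqref{inteqright_intro} solves the Frobenius-type ODEs \eqref{odeleft_intro} on $[0,1/2)$ and \eqref{oderight_intro} on $(1/2,1]$; by \eqref{rhonumu} the only regular–singular points $x=\tilde\mu$ and $x=\tilde\nu,\tilde\rho$ lie strictly outside these intervals, so the Frobenius series converge and give two linearly independent solutions of each ODE. One then fixes $f^L,f^R$ by $f^L(0)=0$, $f^R(1)=0$ and the normalisation \eqref{bccentro_intro}, and inserts the candidate \eqref{candidateintro} into the integral system; the jump at $x=1/2^\pm$ collapses to the $2\times2$ linear system \eqref{matrixintro} for $(A,B)$, whose solvability with $(A,B)\ne0$ is precisely the scalar equation \eqref{detsolintro}. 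Thus the whole construction reduces to solving \eqref{detsolintro} subject to \eqref{rhonumu}.

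\emph{The limiting problem and the fixed-point argument — the main obstacle.} I expect the analysis of \eqref{detsolintro} as $\ep\to0^+$ to be the crux. Inserting the ansatz \eqref{ansatzintro}, $\tilde\mu=\tfrac12+(\tfrac12+\mu_1)\ep+\tilde\mu_2(\ep)\,\ep^2\log^2\ep$, one must show that $f^L,f^R$ converge to the solution $F_0$ of the limiting ODE \eqref{eqf0intro} with $F_0(0)=F_0(1)=0$ and $F_0(1/2^\pm)=1$ (obtained again by Frobenius at the single singular point $x=1/2$), and that the singular integrals $I_1,I_2$ converge to their limiting principal-value counterparts; this demands uniform control of the Frobenius expansions and careful bookkeeping of the $\ep^2\log^2\ep$ corrections, which is where the Lipschitz profile costs us. In the limit \eqref{detsolintro} becomes the explicit equation for $\mu_1$ displayed after \eqref{eqf0intro}, which has a unique root in $(-1/2,1/2)$ because $\mu_1\mapsto\log\!\big(\tfrac{1+2\mu_1}{1-2\mu_1}\big)$ is strictly increasing and onto $\R$. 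Freezing this $\mu_1$, one recasts \eqref{detsolintro} as a fixed-point equation $\tilde\mu_2=\mathcal{G}[F_0,\mu_1,\ep]+o_\ep(1)\,\mathcal{F}[\tilde\mu_2,f[\tilde\mu_2]]$ — obtained by comparing $f[\tilde\mu_2]$ from \eqref{odeleft_intro}–\eqref{bcright_intro} with $F_0$ — and solves it for $\ep$ small via Brouwer's theorem on a fixed ball of values of $\tilde\mu_2$. This yields $\tilde\mu_\ep$, hence $\lambda^\ast(\ep)$ and a nontrivial $h^\ast\in\cN(\cL^{\lambda^\ast}_1)$; and since $h^\ast=H^\ast/(\lambda^\ast-\Psi_0')$ with $H^\ast$ built from convergent Frobenius series and $\lambda^\ast-\Psi_0'$ smooth and nonvanishing on $[0,a]\cup[b,1]$, the generator lies in $C^\infty(\T\times[0,a]\cup[b,1])$. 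It remains to check that $\cN(\cL^{\lambda^\ast}_1)$ is exactly one-dimensional (the matrix \eqref{matrixintro} has rank $1$) and that $\cL^{\lambda^\ast}_n$ is injective for $n\ge2$; the latter, carried out in Section~\ref{s:dimkernel}, will exploit the monotonicity of $G_n$ in $n$ to prevent any further kernel direction.
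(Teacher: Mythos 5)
Your proposal follows the paper's own proof essentially step for step: the restriction to mode $n=1$, the factorization via $\mu,\nu,\rho$, the piecewise rescaling to $[0,1/2)\cup(1/2,1]$, the passage to the Frobenius ODE system, the $2\times2$ matrix \eqref{matrixintro} and determinant equation \eqref{detsolintro}, the ansatz \eqref{ansatzintro} with the limiting problem \eqref{eqf0intro} fixing $\mu_1$, and the Brouwer fixed point for $\tilde\mu_2$, with the $n\ge 2$ injectivity deferred to Section~\ref{s:dimkernel}. One small correction: the exclusion of higher modes rests on the monotonicity in $n$ of the solutions $f_n$ of the scalar ODEs (Lemmas~\ref{l:positivityfn}--\ref{cor:positivoxstarright} and \eqref{fndecrece_n}), not on any monotonicity of the Green's function $G_n$ itself.
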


The proof of this Lemma is divided  in three different parts:

\begin{itemize}
    \item Combining the results of the sections \ref{existence}-\ref{s:closing}, we will prove
    the existence of an element in the kernel of the linear operator for $\ep$ small enough. That is, we prove
    the existence of a nontrivial  couple $(\lambda^*, h^*)\in \R\times \S$ such that $$\cL^{\lambda^\ast}h^*=0.$$ The element $h^*$ will be actually smooth. The fact that $\lambda^*-\Psi'_0$ has not zeros in $[0,a]\cup [b,1]$ will developed a very important role in the proof.

 \item In section \ref{s:regularidad} and  section \ref{s:dimkernel}, we will prove that the kernel of the linear operator $\cL^{\lambda^\ast}$ is one dimensional. That is, we prove that for that particular  value of $\lambda^*\in\R$ if $f\in\S$ solves $\cL^{\lambda^\ast} f=0$ then $f$ is proportional to $h^*$, i.e., both are equal modulo a multiplication by a constant.

\end{itemize}

%In section \eqref{existence} we will prove the existence of a nontrivial  couple $(\lambda^*, h^*)\in \R\times \S$ such that $\cL^{\lambda^\ast}h^*=0$. In section \eqref{uniqueness} will be proven that for that value $\lambda^*$ if $f\in\S$ solves $\cL^{\lambda^\ast} f=0$ then $f$ is proportional to $h^*$. Finally, the section \eqref{regularity} contains the proof of that $h^*\in C^\infty$.

\subsection{Integral equations for an element in the kernel }\label{existence}
We have to consider the following equations for $(\bar{\lambda}\equiv v_\varpi-\lambda, \{h_n\}_{n=0}^\infty)$:
\begin{align*}
\left(y^2-\bar{\lambda}\right)h_n(y) +2\int_{0}^1\chi(z)G_n(y,z)h_n(z)dz = &0, \quad 0\leq y<a,\\
\left((y-b)^2+2ay-a^2-\bar{\lambda}\right)h_n(y) +2\int_{0}^1\chi(z)G_n(y,z)h_n(z)dz = &0, \quad b<y\leq 1,
\end{align*}
together with the boundary conditions
\begin{align*}
h_n(0)=&0,\\
h_n(1)=&0,
\end{align*}
for all $n=1,2,\ldots$ Notice that if we found such a value $\bar{\lambda}$ then $\lambda^*=-\bar{\lambda}+v_\varpi$.
We will take
\begin{align*}
h_n= 0 , \quad  n\neq m
\end{align*}
for some fixed integer $m>0$. Then, we just have to solve
\begin{align}
\left(y^2-\bar{\lambda}\right)h_m(y) +2\int_{0}^1\chi(z)G_m(y,z)h_m(z)dz = &0, \quad 0\leq y<a,\label{kernel10}\\
\left(y^2-2\ep y +\ep -\bar{\lambda}\right)h_m(y)+2\int_{0}^1\chi(z)G_m(y,z)h_m(z)dz = &0, \label{kernel20}\quad b<y\leq 1,
\end{align}
where we did some algebra and introduced $a=1/2-\ep/2$ and $b=1/2+\ep/2$.

We could work with any integer $m>0$ but, for sake of simplicity, we will take $m=1$. However at this point is still useful to keep the letter $m$.

We now define the auxiliary parameters $\mu$, $\nu$ and $\rho$ in terms of $\ep$ and $\bar{\lambda}$ as follows
\begin{align*}
\mu &\equiv  \sqrt{\bar{\lambda}},\\
\nu &\equiv  \ep +\sqrt{\ep^2-\ep+\bar{\lambda}},\\
\rho &\equiv  \ep -\sqrt{\ep^2-\ep+\bar{\lambda}},
\end{align*}
and we make the very important assumption of
\begin{align}\label{rangolambda}
a^2<\bar{\lambda}< b^2-\ep^2.
\end{align}
Notice that \eqref{rangolambda} is equivalent to
\begin{align*}
-u(a)<\bar{\lambda} <-u(b),
\end{align*}
or
\begin{align*}
-u(a)<-\lambda+v_\varpi<-u(b),
\end{align*}
or equivalently
\begin{align*}
u_{\text{shear}\, 1}(a)=-(u(a)+v_{\varpi})<-\lambda<-(u(b)+v_\varpi)=u_{\text{shear}\, 1}(b).
\end{align*}
This means that $-\lambda=u_{\text{shear 1}}(y^*)$, for some $y^*\in (a,b)$. In fact, \eqref{rangolambda} also implies that there is no zeros for the functions
\begin{align*}
y^2-\bar{\lambda}=&(y-\mu)(y+\mu), \qquad 0\leq y <a,\\
y^2-2\ep y +\ep-\bar{\lambda}=&(y-\nu)(y-\rho),\qquad b<y\leq 1,
\end{align*}
and that $\mu, \nu, \rho \in \R$. Indeed, we have that
\begin{equation}\label{imposeAB}
\mu>a \quad \text{and}\quad \rho<\nu< b.
\end{equation}

Next we introduce the function
\[
H(y)\equiv\begin{cases}
h_m(y)(y-\mu)(y+\mu),\qquad 0\leq y<a,\\
h_m(y)(y-\nu)(y-\rho),\qquad b<y\leq 1.
\end{cases}
\]
In terms of $H$, the equations \eqref{kernel10} and \eqref{kernel20} become
\begin{align}\label{eq1d}
H(y)&+C_m \sinh(m(1-y))\int_{0}^{y}\sinh(mz)\frac{H(z)}{z^2-\mu^2}dz\\&+
C_m\sinh(my)\int_{y}^{a}\sinh(m(1-z))\frac{H(z)}{z^2-\mu^2}dz\nonumber\\
&+C_m \sinh(my)\int_{b}^1 \sinh(m(1-z))\frac{H(z)}{(z-\nu)(z-\rho)}dz=0, \qquad 0\leq y<a, \nonumber
\end{align}
and
\begin{align}\label{eq2d}
H(y)&+C_m\sinh(m(1-y))\int_{0}^a\sinh(mz)\frac{H(z)}{z^2-\mu^2}dz\\&+C_m \sinh(m(1-y))\int_{b}^y \sinh(mz)\frac{H(z)}{(z-\nu)(z-\rho)}dz\nonumber\\
&+C_m\sinh(ny)\int_{y}^1\sinh(m(1-z))\frac{H(z)}{(z-\nu)(z-\rho)}dz=0,\qquad b<y\leq 1\nonumber.
\end{align}
with $$C_m\equiv\frac{2}{m\sinh(m)}.$$

Equations \eqref{eq1d} and \eqref{eq2d} are equivalent to
\begin{align}\label{compacta}
H(y)+2\int_{0}^1G_m(y,z)\left(\frac{\chi_{[0,a)}(z)}{z^2-\mu^2}+\frac{\chi_{(b,1]}(z)}{(z-\nu)(z-\rho)}\right)H(z)dz=0, \qquad y\in[0,a)\cup(b,1].
\end{align}

\subsubsection{Rescaling}
Let us rescale the equations \eqref{eq1d} and \eqref{eq2d}. To do so, we consider the following rescaled variable $x$ introduced below, which allows us to fix the domain,
\[
x=\begin{cases}
\frac{y}{1-\ep},\qquad \text{for $0\leq y<a$}, \\
\frac{y-\ep}{1-\ep}, \qquad \text{for $b<y\leq 1$},
\end{cases}
\]
%\begin{align*}
%x=&\frac{y}{1-\ep},\quad \text{for $0<y<a$},\\
%x=&\frac{y-\ep}{1-\ep}, \quad \text{for $b<y<1$},
%\end{align*}
and the rescale function
\[
F(x)=\begin{cases}
H((1-\ep)x),\qquad &\text{for $0\leq x<\frac{1}{2}$},\\
H((1-\ep)x+\ep), \qquad &\text{for $\frac{1}{2}<x\leq 1$}.
\end{cases}
\]
%\begin{align*}
%F(x)=&H((1-\ep)x),\quad \text{for $0<x<\frac{1}{2}$},\\
%F(x)=&H((1-\ep)x+\ep), \quad \text{for $\frac{1}{2}<x<1$}.
%\end{align*}

We recall and emphasize that  $F(x)$ does not have to be continuous at $x=1/2$.

In addition, we introduce the auxiliary parameters
\begin{align*}
\tilde{\mu}=\frac{\mu}{1-\ep},&& \tilde{\nu}=\frac{\nu-\ep}{1-\ep},&&\tilde{C}_n=\frac{C_n}{1-\ep}, &&\tilde{\rho}=\frac{ \rho-\ep}{1-\ep}.
\end{align*}
Here, we notice that $\tilde{\rho}=-\tilde{\nu}.$

Thus, for $0\leq x<1/2$, we have
\begin{align}\label{F1}
F(x)&+\tilde{C}_m\sinh(m(1-(1-\ep)x))\int_{0}^{x}\frac{\sinh(m(1-\ep)w)F(w)}{w^2-\tilde{\mu}^2}dw\\
&+\tilde{C}_m\sinh(m(1-\ep)x)\int_{x}^\frac{1}{2} \frac{\sinh(m(1-(1-\ep)w))F(w)}{w^2-\tilde{\mu}^2}dw\nonumber\\
&+\tilde{C}_m\sinh(m(1-\ep)x)\int_{\frac{1}{2}}^1\frac{\sinh(m(1-\ep)(1-w))F(w)}{w^2-\tilde{\nu}^2}dw\nonumber=0.
\end{align}
And for $1/2<x\leq 1$, we have
\begin{align}\label{F2}
F(x)&+\tilde{C}_m \sinh(m(1-\ep)(1-x))\int_{0}^\frac{1}{2} \frac{\sinh(m(1-\ep)w)F(w)}{w^2-\tilde{\mu}^2}dw\\
&+\tilde{C}_m \sinh(m(1-\ep)(1-x))\int_{\frac{1}{2}}^x \frac{\sinh(m((1-\ep)w+\ep))F(w)}{w^2-\tilde{\nu}^2}dw\nonumber\\
&+\tilde{C}_m \sinh(m((1-\ep)x+\ep))\int_{x}^1\frac{\sinh(m(1-\ep)(1-w))F(w)}{w^2-\tilde{\nu}^2}dw\nonumber.
\end{align}

\subsection{From integral equations to an ODE system}
If $F(x)$ is smooth and solves \eqref{F1} then $F(x)$ solves the following   ordinary differential equation
\begin{align*}
F''(x)-\left(\frac{2}{x^2-\tilde{\mu}^2}+(1-\ep)^2m^2\right)F(x)&=0, \qquad \text{for }0\leq x< 1/2,\\
F(0)&=0.
\end{align*}
In order to check it, we just take two derivatives in \eqref{F1} and then use again \eqref{F1} (you can also take two derivatives in \eqref{compacta}, use \eqref{greenfunction} and finally rescale). Similarly, if $F(x)$ is smooth and solves \eqref{F2} then $F(x)$ solves
\begin{align*}
F''(x)-\left(\frac{2}{x^2-\tilde{\nu}^2}+(1-\e)^2 m^2 \right)F(x)&=0, \qquad \text{for }1/2<x\leq 1,\\
F(1)&=0.
\end{align*}
%\begin{remark}
%Here, it is important to emphasize that we have gone from a coupled system of integral equations \eqref{F1}-\eqref{F2} to two ODEs defined in disjoint domains, which can be solved separately. Here, it is crucial to note that the coupling between the two ODEs is hidden in the parameters $\tilde{\mu},\tilde{\nu}$ and $\tilde{\rho}$, which at the end of the day depend only on $\tilde{\mu}$.
%\end{remark}

We will call $f_m$ the solution of
\begin{align}
f_m''(x)-\left(\frac{2}{x^2-\tilde{\mu}^2}+(1-\ep)^2m^2\right)f_m(x)&=0,\qquad \text{for } 0\leq x<1/2, \label{f_left} \\
f_m''(x)-\left(\frac{2}{x^2-\tilde{\nu}^2}+(1-\e)^2 m^2 \right)f_m(x)&=0, \qquad \text{for }1/2<x\leq 1, \label{f_right}
\end{align}
with
\begin{equation}\label{tapas_fm}
f_m(0)=0=f_m(1),
\end{equation}
and with ``extra'' boundary conditions
\begin{equation}\label{extrabc_fm}
\lim_{x\to 1/2^{-}}f_m(x)=1=\lim_{x\to 1/2+}f_m(x).
\end{equation}
Notice that $f_m$ depends on $\ep$ and $\tilde{\mu}$. We will not make this dependency explicit unless necessary.
When $m=1$, we denote the solution of \eqref{f_left}-\eqref{f_right} with boundary conditions \eqref{tapas_fm}-\eqref{extrabc_fm} just by $f$, that solves
\begin{align}
f''(x)-\left(\frac{2}{x^2-\tilde{\mu}^2}+(1-\ep)^2\right)f(x)&=0,\qquad \text{for } 0\leq x<1/2, \label{f_leftconm=1} \\
f''(x)-\left(\frac{2}{x^2-\tilde{\nu}^2}+(1-\e)^2  \right)f(x)&=0, \qquad \text{for }1/2<x\leq 1, \label{f_rightconm=1}
\end{align}
with
\begin{equation}\label{bc:fconm=1}
f(0)=0=f(1), \qquad \text{and} \lim_{x\to 1/2^{-}}f(x)=1=\lim_{x\to 1/2+}f(x).
\end{equation}
In addition, we denote by $f_0$ the solution of the limiting problem of \eqref{f_leftconm=1} and \eqref{f_rightconm=1} resulting from making $\ep\to 0$. Here, it is important to emphasize that $\tilde{\mu},\tilde{\nu}\to 1/2$ as $\ep$ goes to zero.
That is, $f_0$ solves
\begin{align}\label{g}
f_0''(x)-\left(\frac{2}{x^2-1/4}+1\right)f_0(x)&=0, \quad \text{for }0\leq x<1/2 \text{ and } 1/2 <x\leq 1,
\end{align}
with
\begin{equation}\label{bc:g}
f_0(0)=0=f_0(1), \quad \text{and} \quad \lim_{x\to 1/2^{-}}f_0(x)=1=\lim_{x\to 1/2+}f_0(x).
\end{equation}

 In the rest of this section, for the sake of brevity,  we will focus our attention on the domain $[0,1/2)$ since the calculations follows by similar arguments for the case $(1/2,1].$

Before we start solving \eqref{g}, we provide some information about the ansatz of $\tilde{\mu}$ and consequently of $\tilde{\nu}$. Here, we recall (see \eqref{imposeAB}) that, on one hand $\mu$ has to be bigger that $a=(1-\ep)/2$ and on the other hand we have that $\nu$ has to be smaller that $b=(1+\ep)/2$. Thus,
\begin{align}
    &\mu>a \quad \Longleftrightarrow \quad \tilde{\mu}>\frac{1}{2}, \label{tildemumayorque}\\
    &\nu<b \quad \Longleftrightarrow \quad \frac{\sqrt{\ep^2-\ep + \mu^2}}{1-\ep}<\frac{1}{2} \quad \Longleftrightarrow \quad \sqrt{\tilde{\mu}^2-\frac{\ep}{1-\ep}}<\frac{1}{2}. \label{tildemumenorque}
\end{align}

We will take
\begin{align*}
\tilde{\mu}=\frac{1}{2}+\tilde{\mu}_1(\ep)\ep=\frac{1}{2}+\left(\frac{1}{2}+\mu_1\right)\ep +\ep^2\log^2(\ep)\tilde{\mu}_2(\ep),
\end{align*}
where $-\tfrac{1}{2}<\mu_1<+\tfrac{1}{2}$ and $\tilde{\mu}_2(\ep)=O(1)$ in terms of parameter $\ep.$

Thus both \eqref{tildemumayorque} and \eqref{tildemumenorque}  are satisfied for sufficiently small $\ep$.

\subsection{Regular-Singular points and the method of Frobenius }\label{frobenius}

\subsubsection{Solving the equation \eqref{g}-\eqref{bc:g}}\label{gg1}

Let us consider the ordinary differential equation
\begin{align}\label{principal}
f_0''(x)-\left(\frac{2}{x^2-1/4}+1\right)f_0(x)=0, \quad \text{for } x\in[0,1/2)\cup(1/2,1],
\end{align}
with
\begin{equation}\label{bc:principal}
f_0(0)=0=f_0(1), \qquad \text{and} \qquad f_0(1/2^{-})=1=f_0(1/2^+).
\end{equation}

\begin{remark}
In the rest of the manuscript, for the sake of brevity we will use the notation $$g(x_0^{\pm})\equiv \lim_{x\to x_0^{\pm}}g(x),$$
for an arbitrary function $g$.
\end{remark}

In the domain $[0,1]$, the only regular-singular point is $x=1/2$.  In order to study the solutions of \eqref{principal} around that point we will use the Frobenius method (see, for example, chapter X in \cite{whittaker}). Following the notation in \cite{whittaker}, we can write \eqref{principal} as
\begin{align*}
f_0''(x)+p(x)f_0'(x)+q(x)f_0(x)=0,
\end{align*}
with
\begin{align*}
p(x)=\frac{P(x-x_0)}{x-x_0}, && q(x)=\frac{Q(x-x_0)}{(x-x_0)^2}.
\end{align*}
In our case
\begin{align}\label{pandq}
P(x-x_0)=0, && Q(x-x_0)=-(x-x_0)^2\left(\frac{2}{(x-x_0)(x+1/2)}+1\right),&& x_0=1/2,
\end{align}
and we have that the indicial polynomial
$$I(r)=r(r-1)+P(0)r+Q(0),$$
is given by
$$I(r)=r(r-1),$$ with the roots $r=1$ and $r=0$.\\

Therefore we have two linear independent solutions:
\begin{itemize}
    \item Associated to the larger root $r=1$ we have an analytic solution  on $(-1/2,3/2)$, which we denote by $g_1(x)$,  (notice that the  only singular point apart from $1/2$ is $-1/2$ and then the radius of convergence of the series around $x=1/2$ is 1) which can be obtained from the expansion
\begin{align}\label{g1}
g_1(x)=\sum_{k=1}^\infty g_{1}^{(k)}(x-1/2)^k,
\end{align}
where $g_{1}^{(k)}\in \R$. We choose, without loss of generality, $g_1^{(1)}=1$.

    \item If the roots differs by a integer number, as in our case, then the second solution can be found through the expression
    \begin{align*}
g_2(x)=g_{2}^{(-1)} g_1(x)\log(|x-1/2|)+\sum_{k=0}^\infty g_{2}^{(k)}(x-1/2)^k,
\end{align*}
where $g_{2}^{(k)}\in \R$ for $k=-1,0,1,...$, with $g_2^{(0)}\neq 0$ (see  \cite{whittaker}, pag. 200-201).
The series converges for $x\in (-1/2,3/2).$
For our equation the coefficient $g_{2}^{(-1)}$ is given by $g_2^{(-1)}=2g_2^{(0)}/g_1^{(1)}\neq 0$, then $g_2(x)$ has a singularity at $x=1/2$ of the type $(x-1/2)\log(|x-1/2|)$.
\end{itemize}

\begin{comment}
Using \textsc{Mathematica} it is not difficult to check that
\begin{align*}
    g_1^{(1)}&=1, \quad g_1^{(2)}=1, \quad g_1^{(3)}=1/6, \quad g_1^{(4)}=1/9,\quad  \ldots\\
    g_2^{(-1)}&=2, \quad g_2^{(0)}=1, \quad g_2^{(1)}=-1,\quad g_2^{(2)}=-81/18, \quad g_2^{(3)}=-23/18, \quad \ldots
\end{align*}
\end{comment}

\subsubsection{Frobenius and the boundary conditions}

From $g_1$ and $g_2$ we can construct $f_0$,  satisfying also the boundary condition given by $\eqref{bc:principal}$, as a linear combination of them. That is,
\[
f_0(x)=\begin{cases}
    A g_1(x)+B g_2(x),\qquad 0\leq x<1/2,\\
    C g_1(x)+D g_2(x),\qquad 1/2<x\leq 1,
\end{cases}
\]
with $(A,B)$ and $(C,D)$ solving respectively
\[
\begin{cases}
    A g_1(1/2)+B g_2(1/2)=1,\\
    A g_1(0)+B g_2(0)=0,
\end{cases}
\]
and
\[
\begin{cases}
    C g_1(1/2)+D g_2(1/2)=1,\\
    Cg_1(1)+Dg_2(1)=0.
\end{cases}
\]
In fact, we have
\[
A=-\frac{g_2(0)}{g_1(0)g_2(1/2)}, \qquad B=\frac{1}{g_2(1/2)},
\]
and
\[
C=-\frac{g_2(1)}{g_1(1)g_2(1/2)}, \qquad D=\frac{1}{g_2(1/2)}.
\]
Notice that, in order to have $A,B,C$ and $D$ perfectly well defined, we just need to check that $g_1(0)$ and $g_1(1)$ are both non-zero, since we have seen before that $ g_2(1/2)\neq 0$ due to the fact that $g_2^{(0)}\neq 0$.

In order to prove it, we will use the following  argument by contradiction. For simplicity, we focus our attention only on the case $g_1(0)\neq 0$ since the case $g_1(1)\neq 0$ follows by straightforward modifications. Let us assume that $g_1(0)=0$. Then, we have that $g_1'(0)$ may be positive, negative or equal to zero. The  last possibility is ruled out by uniqueness since $x=0$ is a ordinary point. Now, in the other two possibilities, we combine the fact that $g_1(1/2)=0$ with the next lemma to obtain a contradiction, which allow us to conclude.

%\myr{METER LA PRUEBA GENERAL AQUI}

\begin{Lemma} \label{positivog} Let $f_0$ be a solution of equation \eqref{principal} satisfying $f_0(0)=0$ and $f_0'(0)>0$ (or $f_0'(0)<0$). Then $f_0(x)>0$ (or $f_0(x)<0$) for $0<x\leq 1/2$.
\end{Lemma}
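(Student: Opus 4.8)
My plan is to reduce the singular equation \eqref{principal} to an elementary Wronskian comparison by exploiting a lucky explicit solution of its ``reduced'' version. First I note that $\phi(x):=\tfrac14-x^2$ satisfies $\phi''(x)=-2=\tfrac{2(1/4-x^2)}{x^2-1/4}=\tfrac{2}{x^2-1/4}\,\phi(x)$, i.e. $\phi''-\tfrac{2}{x^2-1/4}\phi=0$, and --- the key point --- $\phi>0$ on $[0,\tfrac12)$. Replacing $f_0$ by $-f_0$ if necessary I may assume $f_0'(0)>0$; this is legitimate because $x=0$ is an ordinary point of \eqref{principal}, so $f_0$ is analytic near $0$ and $f_0'(0)$ is well defined.

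\emph{Step 1: positivity on $(0,\tfrac12)$.} I would introduce the Wronskian-type quantity $W(x):=f_0'(x)\phi(x)-f_0(x)\phi'(x)$. A direct computation using the two differential equations gives $W'=f_0''\phi-f_0\phi''=\bigl[(\tfrac{2}{x^2-1/4}+1)-\tfrac{2}{x^2-1/4}\bigr]f_0\phi=f_0\phi$, together with $W(0)=\tfrac14 f_0'(0)>0$. Since $f_0(0)=0$ and $f_0'(0)>0$, we have $f_0>0$ on some interval $(0,\delta)$. If $f_0$ vanished somewhere in $(0,\tfrac12)$, let $x_1\in(0,\tfrac12)$ be the first zero, so that $f_0>0$ on $(0,x_1)$ and $f_0(x_1)=0$. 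On $(0,x_1)$ one has $W'=f_0\phi>0$, hence $W$ is strictly increasing and $W(x_1)>W(0)>0$; but $W(x_1)=f_0'(x_1)\phi(x_1)$ (the $f_0\phi'$ term drops out) with $\phi(x_1)>0$, which forces $f_0'(x_1)>0$ --- impossible, since a function positive on $(0,x_1)$ and vanishing at $x_1$ must have $f_0'(x_1)\le 0$. This contradiction shows $f_0>0$ on $(0,\tfrac12)$, and consequently $W(x)\ge W(0)=\tfrac14 f_0'(0)>0$ for all $x\in(0,\tfrac12)$.

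\emph{Step 2: strict positivity at $x=\tfrac12$.} This is the step I expect to be the main technical nuisance, because \eqref{principal} is singular there. From $f_0>0$ and $\tfrac{2}{x^2-1/4}+1<0$ on $(0,\tfrac12)$ I get $f_0''<0$, so $f_0$ is concave with $f_0(0)=0$; hence $0<f_0(x)\le f_0'(0)\,x$ on $(0,\tfrac12)$, in particular $f_0$ is bounded and $A:=\lim_{x\to1/2^-}f_0(x)$ exists with $A\ge 0$. Feeding the bound $f_0(x)\le f_0'(0)x$ back into \eqref{principal} gives $|f_0''(x)|\le C/(\tfrac12-x)$ on $[\tfrac14,\tfrac12)$ for some constant $C$, whence $|f_0'(x)|\le C'\,\bigl|\log(\tfrac12-x)\bigr|$ near $\tfrac12$, and therefore $(\tfrac14-x^2)f_0'(x)\to 0$ as $x\to\tfrac12^-$. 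Writing $W(x)=(\tfrac14-x^2)f_0'(x)+2x f_0(x)$ and letting $x\to\tfrac12^-$, the first term tends to $0$ and the second to $A$, so $A=\lim_{x\to1/2^-}W(x)\ge W(0)=\tfrac14 f_0'(0)>0$.

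Thus $f_0(\tfrac12)=A>0$, which combined with Step 1 yields $f_0>0$ on $(0,\tfrac12]$; the statement for $f_0'(0)<0$ follows by applying everything above to $-f_0$. The main obstacle, as indicated, is Step 2: controlling the behaviour of $f_0$ and $f_0'$ at the regular–singular point $x=\tfrac12$, where one must be careful that the explicit solution $\phi$ vanishes; everything else is soft once the identity $W'=f_0\phi$ and the sign of $\phi$ are in hand.
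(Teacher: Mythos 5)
Your proof is correct, and it takes a genuinely different route from the paper's. The paper defers Lemma \ref{positivog} to the more general Lemma \ref{l:positivityfn}, whose proof rewrites the ODE as a Volterra integral equation $f_n(x)=\frac{\sinh((1-\ep)nx)}{(1-\ep)n}+\frac{1}{(1-\ep)n}\int_{0}^x \sinh[(1-\ep)n(x-z)]\frac{2f_n(z)}{z^2-\tilde{\mu}^2}dz$, iterates once, and reduces positivity to the elementary inequality $\sinh(\xi)>\int_{0}^{\xi}\sinh(\xi-\bar{z})\sinh(\bar{z})\frac{2}{\xi^2-\bar{z}^2}d\bar{z}$, proved by a piecewise bound on $\sinh(\xi-\bar z)\sinh(\bar z)$. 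Your argument instead compares $f_0$ to the explicit polynomial $\phi(x)=\tfrac14-x^2$ solving the ``potential-only'' equation $\phi''-\tfrac{2}{x^2-1/4}\phi=0$, and exploits the monotonicity of the Wronskian $W=f_0'\phi-f_0\phi'$ (with $W'=f_0\phi$) in a Sturm-type comparison; Step 2 then handles the limit at the regular-singular endpoint by extracting a logarithmic bound on $f_0'$ from the concavity estimate $0<f_0\le f_0'(0)x$. Your route is shorter and more elementary for this particular lemma, and it in fact also generalizes to Lemma \ref{l:positivityfn} (with $\phi_{\tilde\mu}(x)=\tilde\mu^2-x^2$ one gets $W'=(1-\ep)^2n^2f_n\phi_{\tilde\mu}\ge0$, and the same endpoint analysis); the paper's integral-equation route has the separate advantage of yielding the explicit, $n$-uniform lower bound $f_n(x)\ge c\,\sinh((1-\ep)nx)/((1-\ep)n)$ recorded in \eqref{pex}, whereas the Wronskian argument gives positivity (and $A\ge W(0)=\tfrac14 f_0'(0)$) but not that $\sinh$-shaped bound directly.
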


%\begin{proof}
%Before we begin, we will assume without loss of generality that $f_0'(0)=1$. Note that in another case, since it is a linear equation we can simply normalize the solution by $f_0(x)/f_0'(0)$. Then, we write \eqref{g} in a more convenient way as
%\begin{align}\label{um}
%f_0''(x)-f_0(x)= \frac{2}{x^2-1/4}f_0(x).
%\end{align}
%We find that their solution is given by
%\begin{align*}
%f_0(x)=\sinh(x)+\int_{0}^x \sinh(x-z)\frac{2}{z^2-1/4}f_0(z) dz.
%\end{align*}
%Iterating once this formula, we have
%\begin{align*}
%f_0(x)=&\sinh(x)+\int_{0}^x\sinh(x-z)\sinh(z)\frac{2}{z^2-1/4}dz\\
%&+\int_0^x\sinh(x-z)\frac{2}{z^2-1/4}\int_{0}^z \sinh(z-y)\frac{2}{y^2-1/4}f_0(y)dydz.
%\end{align*}
%We have that, as far as $f_0(x)>0$ we obtain
%
%
%
%\begin{align*}
%f_0(x)\geq & \sinh(x)+\int_{0}^x\sinh(x-z)\sinh(z)\frac{2}{z^2-1/4}dz\\
%\geq & \sinh(x)+2^2\sinh^2\left(\frac{1}{2}\right)\int_0^x\frac{2(x-z)z}{z^2-1/4}dz,
%\end{align*}
%where we have used the inequality $\sinh(x)\leq 2\sinh(1/2)x$ for $x\in[0,1/2]$. In addition, computing the last integral, we have
%\[
%f_0(x)\geq \sinh(x)+2^2\sinh^2\left(\frac{1}{2}\right)\left[\arctanh(2x)+x(\log(1-4x^2)-2)\right],
%\]
%which can be easily verified that is strictly positive for $0<x\leq 1/2$.
%
%\end{proof}

\begin{proof}
A more general proof of this fact has been postponed to Section \ref{s:dimkernel}, see Lemma \ref{l:positivityfn}.
\end{proof}

Then, we have proved that there exists  a solution for \eqref{principal} in $(0,1/2)$ satisfying \eqref{bc:principal}  and that it can be written as
\begin{align}\label{gdefi}
f_0(x)=\sum_{k=1}^\infty f_0^{(k)}(x-1/2)^k\log(|x-1/2|)+\sum_{k=1}^\infty \overline{f}_0^{(k)}(x-1/2)^k+1,
\end{align}
for some coefficients $f_0^{(k)},$ $\overline{f}_0^{(k)}\in \R$.

In addition, we show that the solution $f_0$ is unique. If there are two different solutions $f^1_0$ and $f^2_0$, the difference $d=f^1_0-f^2_0$ solve the sam edifferential equation with boundary condition $d(0)=d(1/2)=0$. But then, by Lemma \ref{positivog}, $d'(0)=0$. And since $x=0$ is an ordinary point, we obtain that $d=0$.

Moreover, $f_0(x)$ and $g_1(x)$ are linearly independent with Wronskiano
$$W[f_0,g_1](x)=f_0(x)g'_1(x)-f_0'(x)g_1(x).$$
In fact, as a consequence of Abel-Liouville's formula we  have that
\begin{equation}
W[f_0,g_1](x)=W[f_0,g_1](1/2)=1, \quad \forall x\in[0,1/2].
\end{equation}

To deal with the right part we use a Lemma analogous to \ref{positivog} in $(1/2,1]$ (see Lemma \ref{l:positivityfnright}). In $(0,1/2)\cup (1/2,1)$, we get a solution $f_0$ with a singularity of the type $(x-1/2)\log(|x-1/2|)$ from both sides of $x=1/2$ and satisfying

\begin{equation}\label{Wrosk_cte}
W[f_0,g_1](x)=W[f_0,g_1](1/2)=1, \quad \forall x\in[0,1].
\end{equation}

\subsubsection{Solving the equation \eqref{f_leftconm=1}-\eqref{f_rightconm=1}-\eqref{bc:fconm=1}.}

Now we consider the equation
\begin{align}
f''(x)-\left(\frac{2}{x^2-\tilde{\mu}^2}+(1-\ep)^2\right)f(x)&=0,\qquad \text{for } 0\leq x<1/2, \label{principalep} \\
f''(x)-\left(\frac{2}{x^2-\tilde{\nu}^2}+(1-\e)^2  \right)f(x)&=0, \qquad \text{for }1/2<x\leq 1, \label{rightprincipalep}
\end{align}
with
\[
f(0)=0=f(1), \quad \text{and} \quad f(1/2^-)=1=f(1/2^+).
\]
Recall that $$\tilde{\mu}=\frac{1}{2}+\tilde{\mu}_1(\ep)\ep=\frac{1}{2}+\left(\frac{1}{2}+\mu_1\right)\ep +\ep^2\log^2(\ep)\tilde{\mu}_2(\ep), \qquad \text{where} \quad -\frac{1}{2}<\mu_1<\frac{1}{2}$$
in such a way that $\tilde{\mu}>\frac{1}{2}$ and $\tilde{\nu}<\tfrac{1}{2}.$

Obtaining either, a solution for \eqref{principalep} with $f^\sharp(0)=0$ and $f^\sharp(\tilde{\mu})=1$ or a solution for \eqref{rightprincipalep} with $f^\sharp(1)=0$ and $f^\sharp(\tilde{\nu})=1$,
follows the same steps than the construction of $f_0$ in Section \ref{gg1}. In the first case, we have that the analogous of Lemma \ref{positivog} is the following result.

\begin{Lemma}\label{positivofm=1} Let $f$ be a solution of equation \eqref{principalep} with $f(0)=0$ and $f'(0)>0$ (or $f'(0)<0$). Then $f(x)>0$ (or $f(x)<0$) for $0<x\leq \tilde{\mu}$.
\end{Lemma}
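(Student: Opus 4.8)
The plan is to run a Sturm-type comparison of $f$ against the explicit solution $p(x)\equiv x^2-\tilde\mu^2$ of the ``frequency-free'' equation, exploiting the definite sign of the lower-order coefficient $(1-\ep)^2$; this parallels the proof of Lemma \ref{positivog} (Lemma \ref{l:positivityfn}).

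First I would record the algebraic fact that $p(x)=x^2-\tilde\mu^2$ solves $p''-\tfrac{2}{x^2-\tilde\mu^2}p=0$, since $p''=2=\tfrac{2}{x^2-\tilde\mu^2}(x^2-\tilde\mu^2)$. Rewriting \eqref{principalep}—understood, as in Section \ref{gg1}, as the solution continued past $x=1/2$ up to the regular-singular point $x=\tilde\mu$—in the form $f''=\big(\tfrac{2}{x^2-\tilde\mu^2}+(1-\ep)^2\big)f$, the Wronskian $W(x)\equiv f'(x)p(x)-f(x)p'(x)$ then satisfies $W'(x)=f''p-fp''=(1-\ep)^2 f(x)\,p(x)$. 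On $[0,\tilde\mu)$ one has $p<0$, $p'=2x\ge0$ and $p(0)=-\tilde\mu^2<0$, which are the only facts about $p$ that I will use.

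Next I would argue by contradiction, treating the case $f'(0)>0$ (the case $f'(0)<0$ being identical after replacing $f$ by $-f$). Since $x=0$ is an ordinary point, $f(x)=f'(0)x+o(x)>0$ for small $x>0$; if $f$ had a zero in $(0,\tilde\mu)$, let $x_1$ be the least one, so $f>0$ on $(0,x_1)$, $f(x_1)=0$, and $f'(x_1)<0$ by uniqueness at the ordinary point $x_1$. On $(0,x_1)$ we have $fp<0$, hence $W'<0$ and $W$ is strictly decreasing there; but $W(0)=-\tilde\mu^2 f'(0)<0$ while $W(x_1)=f'(x_1)p(x_1)>0$ (a product of two negative numbers), contradicting $W(x_1)<W(0)$. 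Hence $f>0$ on $(0,\tilde\mu)$. To reach the endpoint $x=\tilde\mu$, I would observe that $W$ is then strictly decreasing on all of $(0,\tilde\mu)$ and $W'=(1-\ep)^2 fp$ is bounded there, so $W$ extends continuously with $W(\tilde\mu^-)< W(0)<0$; using the Frobenius structure of $f$ at $\tilde\mu$ (so $f'(x)=O(\log|x-\tilde\mu|)$ while $p(x)=O(x-\tilde\mu)$, whence $f'p\to0$) one gets $W(\tilde\mu^-)=-2\tilde\mu\,f(\tilde\mu)$, and therefore $f(\tilde\mu)>0$.

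The step I expect to be the main obstacle is this last one: controlling $W$ up to the singular endpoint $x=\tilde\mu$, where $f'$ may blow up logarithmically, so that one must use that $f$ is a genuine Frobenius solution there in order to conclude $f'(x)\,p(x)\to 0$. Everything else is the standard monotone-Wronskian comparison. The same scheme, with comparison function $p(x)=x^2-\tilde\nu^2$, yields the analogous statement for \eqref{rightprincipalep} on $(1/2,1]$.
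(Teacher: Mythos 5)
Your proof is correct, and it takes a genuinely different route from the one in the paper. The paper defers this lemma to Lemma \ref{l:positivityfn}, whose proof converts the ODE into the Volterra integral equation $f_n(x)=\frac{\sinh((1-\ep)nx)}{(1-\ep)n}+\frac{1}{(1-\ep)n}\int_0^x\sinh[(1-\ep)n(x-z)]\frac{2f_n(z)}{z^2-\tilde\mu^2}\,dz$, iterates it once, and then verifies by a direct computation that $\sinh(\xi)-\int_0^\xi\sinh(\xi-\bar z)\sinh(\bar z)\frac{2}{\xi^2-\bar z^2}\,d\bar z>0$ for $\xi>0$, the numerical margin being $1-\log(64/27)\approx 0.13$. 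You instead run a monotone-Wronskian (Sturm-Picone) comparison against the explicit supersolution $p(x)=x^2-\tilde\mu^2$, using only that $p<0$, $p(0)=-\tilde\mu^2$, $p'(0)=0$, and $W'=(1-\ep)^2fp$. The two arguments buy slightly different things: yours is more structural and avoids any numerical inequality, extends verbatim to all frequencies $n$ (since replacing $(1-\ep)^2$ by $(1-\ep)^2n^2$ only scales $W'$ by a positive constant) and to the shifted-basepoint variants of Lemmas \ref{cor:positivoxstar} and \ref{cor:positivoxstarright}; the paper's iteration argument produces in addition the quantitative lower bound $f_n(x)\geq c\,\sinh((1-\ep)nx)/((1-\ep)n)$ recorded in \eqref{pex}, which a Wronskian sign argument does not immediately yield. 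Your treatment of the endpoint $x=\tilde\mu$ is sound: $f'$ grows at most like $\log|x-\tilde\mu|$ by the Frobenius structure while $p$ vanishes linearly, so $f'p\to 0$ and $W(\tilde\mu^-)=-2\tilde\mu\,f(\tilde\mu)$, and strict monotonicity of $W$ on $(0,\tilde\mu)$ then forces $f(\tilde\mu)>0$. One small point worth making explicit when you invoke "uniqueness at the ordinary point $x_1$": you need $f(x_1)=f'(x_1)=0$ to force $f\equiv 0$, contradicting $f'(0)>0$, which is indeed what rules out $f'(x_1)=0$ and gives the strict inequality $f'(x_1)<0$.
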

\begin{proof}
A more general proof of this fact has been postponed to Section \ref{s:dimkernel}, see Lemma \ref{l:positivityfn}.
\end{proof}

In particular, this Lemma also shows that $f^\sharp(1/2)\neq 0$ and then,  to get a solution of \eqref{principalep} for $x\in[0,1/2)$ with the boundary conditions $f(0)=0$ and $f(1/2^-)=1$,
we just take $f(x)=f^\sharp(x)/f^\sharp(1/2).$ Uniqueness is obtained from Lemma \ref{positivofm=1}.

Similarly, we can solve \eqref{rightprincipalep} for $x\in(1/2,1]$ with boundary conditions $f(1)=0$ and $f(1/2^+)=1$ (see Lemma \ref{l:positivityfnright}).

In fact, we find a solution $f$ that can be written as
%\begin{align*}
%f(x)=\sum_{k=1} f^{(k)}\log(|\tilde{\mu}-x|)(x-\tilde{\mu})^k+\sum_{k=0}^\infty \overline{f}^{(k)}(x-\tilde{\mu})^k,
%\end{align*}
\[
f(x)=\begin{cases}
\sum_{k=1} f_1^{(k)}\log(|\tilde{\mu}-x|)(x-\tilde{\mu})^k+\sum_{k=0}^\infty \overline{f}_1^{(k)}(x-\tilde{\mu})^k, \qquad 0\leq x < 1/2,\\
\sum_{k=1} f_2^{(k)}\log(|\tilde{\nu}-x|)(x-\tilde{\nu})^k+\sum_{k=0}^\infty \overline{f}_2^{(k)}(x-\tilde{\nu})^k, \qquad 1/2< x \leq 1,
\end{cases}
\]
where $f_i^{(k)}$, $\overline{f}_i^{(k)}\in \R$.

\begin{comment}
This series can be also written as
\begin{align*}
f(x)=\begin{cases}
1+\sum_{k=1} \mathfrak{f}_1^{(k)}\log(|x-1/2|)(x-1/2)^k+\sum_{k=1}^\infty \overline{\mathfrak{f}}_1^{(k)}(x-1/2)^k,\qquad 0\leq x < 1/2,\\
1+\sum_{k=1} \mathfrak{f}_2^{(k)}\log(|x-1/2|)(x-1/2)^k+\sum_{k=1}^\infty \overline{\mathfrak{f}}_2^{(k)}(x-1/2)^k,\qquad 1/2< x \leq 1,
\end{cases}
\end{align*}
where $\mathfrak{f}_i^{(k)}$, $\overline{\mathfrak{f}}_i^{(k)}\in \R$ and all the series converge for $x\in (-1/2,3/2)$.
\end{comment}

\subsection{Finding a solution in the kernel}

In the following section we will study in detail the difference between the solution of the original system \eqref{f_leftconm=1}-\eqref{f_rightconm=1}-\eqref{bc:fconm=1} and the solution of the limit system \eqref{g}-\eqref{bc:g} obtained by taking $\ep \to 0$. Since the calculations are completely the same for one interval as for the other, we will focus for simplicity on the interval $(0,1/2).$

\subsubsection{The difference on $(0,1/2)$}

Taking $d\equiv f-f_{0}$, with $f$ solving \eqref{principalep} and $f_0$ solving \eqref{principal}, satisfies
\begin{align*}
d''(x)-\left(\frac{2}{x^2-\left(1/2\right)^2}+1\right)d(x)&
=\left(\frac{2}{x^2-\tilde{\mu}^2}-\frac{2}{x^2-(1/2)^2}
+\left((1-\ep)^2-1\right)\right)f(x)\\
&= 2\tilde{\mu}_1(\ep)\ep \frac{\tilde{\mu}+1/2}{(x^2-\tilde{\mu}^2)(x^2-(1/2)^2)}f(x)
+\left((1-\ep)^2-1\right)f(x)\\
&\equiv \ep E(x),
\end{align*}
for $0< x < 1/2$ with boundary conditions
\[
d(0)=0,\qquad d(1/2)=0.
\]
First we note that, from Lemma \ref{positivog}, if $f^\sharp $ solves the homogeneous equation
\begin{align*}
(f^\sharp)''(x)-\left(\frac{2}{x^2-(1/2)^2}+1\right)f^\sharp(x)&=0,\quad 0< x<1/2\\
f^\sharp(0)&=0,\\
f^\sharp(1/2)&=0,
\end{align*}
then $f^\sharp$ is the trivial solution, i.e., $f^\sharp(x)=0$  in all $0< x<1/2$. Thus the method of the variation of constants yields
\begin{align}\label{diferencia}
d(x)&=\ep f_0(x)\int_{x}^\frac{1}{2}\frac{g_1(w)}{W[f_0,g_1](w)}E(w)dw +\ep g_1(x)\int_{0}^x\frac{f_0(w)}{W[f_0,g_1](w)}E(w)dw.
\end{align}
with $f_0(x)$ and $g_1(x)$ as in  section \ref{gg1}. Recall (see \eqref{Wrosk_cte})   that $W[f_0,g_1](w)=1$ for all $w\in[0,1]$. Then, the above expression reduces to
\begin{align*}
d(x)&=\ep f_0(x)\int_{x}^\frac{1}{2} g_1(w) E(w)dw +\ep g_1(x)\int_{0}^x f_0(w) E(w)dw\\
&\equiv  d_1(x)+d_2(x).
\end{align*}

Recall that we have assumed that
\begin{equation}\label{tildemuansatz}
\tilde{\mu}=\frac{1}{2}+\tilde{\mu}_1(\ep)\ep=\frac{1}{2}+\left(\frac{1}{2}+\mu_1\right)\ep +\tilde{\mu}_2(\ep) \ep^2 \log^2(\ep),
\end{equation}
where $ -\frac{1}{2}<\mu_1<\frac{1}{2}$ and $\tilde{\mu}_2(\ep)=O(1)$ in terms of parameter $\ep$. We will assume that $|\tilde{\mu}_2|\leq \tilde{\mu}_{2,max}$ for some $\tilde{\mu}_{2,max}>0$ independent of $\ep$.

\begin{Lemma} \label{Linfty} Let $\tilde{\mu}$ be as in \eqref{tildemuansatz} with $\ep$ small enough such that
\begin{align*}
\left(\frac{1}{2}+\mu_1\right)+ \tilde{\mu}_2(\ep)\ep\log^2(\ep)=\tilde{\mu}_1(\ep)\geq \tilde{\mu}_{1,min}>0, \qquad \text{where} \quad -1/2<\mu_1<1/2.
\end{align*}
Then, we have
\begin{align*}
||d||_{L^\infty} \leq C\ep\log(1/\ep),
\end{align*}
where $C$ depends on  $||f||_{L^\infty}$, $||f_0||_{L^\infty}$, $||g_1||_{L^\infty}$, $||g'_1||_{L^\infty}$, $\mu_1$, $\tilde{\mu}_{2,max}$ and $\tilde{\mu}_{1,min}$, but it does not depend on $\ep$.
\end{Lemma}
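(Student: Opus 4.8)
The plan is to estimate the two terms in the variation-of-constants representation \eqref{diferencia}. Since $W[f_0,g_1]\equiv 1$ by \eqref{Wrosk_cte}, we have $d=d_1+d_2$ with
\[
d_1(x)=\ep\, f_0(x)\int_x^{1/2} g_1(w)E(w)\dw, \qquad d_2(x)=\ep\, g_1(x)\int_0^x f_0(w)E(w)\dw ,
\]
so everything reduces to controlling these two integrals. First I would record the structure of $E$ by splitting $E=E_{\mathrm s}+E_{\mathrm r}$, where
\[
E_{\mathrm s}(x)=\frac{2\,\tilde{\mu}_1(\ep)\,(\tilde{\mu}+1/2)}{(x^2-\tilde{\mu}^2)(x^2-1/4)}\,f(x), \qquad E_{\mathrm r}(x)=\frac{(1-\ep)^2-1}{\ep}\,f(x)=(\ep-2)\,f(x).
\]
The remainder is harmless, $|E_{\mathrm r}|\le C\|f\|_{L^\infty}$. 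For $E_{\mathrm s}$, on $[0,1/2]$ and for $\ep$ small the factors $x+\tilde{\mu}$, $x+1/2$, $\tilde{\mu}+1/2$ and $\tilde{\mu}_1(\ep)$ are bounded above and below by positive constants — here I use the ansatz \eqref{tildemuansatz} together with $|\tilde{\mu}_2|\le\tilde{\mu}_{2,max}$ and the hypothesis of the lemma $\tilde{\mu}_1(\ep)\ge\tilde{\mu}_{1,min}>0$ — while $|x-1/2|=1/2-x$ and $|x-\tilde{\mu}|=(1/2-x)+\tilde{\mu}_1(\ep)\ep\ge(1/2-x)+\tilde{\mu}_{1,min}\ep$. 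This yields the pointwise bound
\[
|E(x)|\le \frac{C\,\|f\|_{L^\infty}}{(1/2-x)\big((1/2-x)+\tilde{\mu}_{1,min}\ep\big)}+C\,\|f\|_{L^\infty}, \qquad 0\le x<1/2 .
\]

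The decisive observation — and the only genuinely delicate point — is that this bound for $E$ fails to be integrable up to $x=1/2$, but in \eqref{diferencia} the singular weight always appears multiplied by $g_1$, which vanishes linearly there: $g_1(1/2)=0$, $g_1'(1/2)=1$, so by the mean value theorem $|g_1(w)|\le C\,(1/2-w)$ for $w$ in a fixed one-sided neighbourhood of $1/2$, with $C$ controlled by $\|g_1'\|_{L^\infty}$. For $d_1$ the weight sits inside the integral against $g_1(w)$, so $|g_1(w)E(w)|\le C\|f\|_{L^\infty}\big((1/2-w)+\tilde{\mu}_{1,min}\ep\big)^{-1}$ near $w=1/2$ and $|g_1E|\le C$ away from $1/2$; integrating and using $\int_x^{1/2}\big((1/2-w)+\tilde{\mu}_{1,min}\ep\big)^{-1}\dw\le\log\!\big((1/2+\tilde{\mu}_{1,min}\ep)/(\tilde{\mu}_{1,min}\ep)\big)\le C\log(1/\ep)$, we obtain $\big|\int_x^{1/2}g_1(w)E(w)\dw\big|\le C\log(1/\ep)$, and multiplying by $\ep\,\|f_0\|_{L^\infty}$ gives $\|d_1\|_{L^\infty}\le C\,\ep\log(1/\ep)$.

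For $d_2$ the integrand $f_0(w)E(w)$ is again not integrable up to $1/2$ (now $f_0(1/2^-)=1\neq0$), but this time the taming factor $g_1$ stands outside the integral. Writing $\big|\int_0^x f_0(w)E(w)\dw\big|\le C\|f_0\|_{L^\infty}\,J(x)+C$ with $J(x):=\int_0^x\big((1/2-w)((1/2-w)+\tilde{\mu}_{1,min}\ep)\big)^{-1}\dw$, a partial-fraction computation gives $J(x)\le(\tilde{\mu}_{1,min}\ep)^{-1}\log\!\big(1+\tilde{\mu}_{1,min}\ep/(1/2-x)\big)$; combining with $|g_1(x)|\le C(1/2-x)$ near $1/2$ and the elementary inequality $t^{-1}\log(1+t)\le1$ for $t>0$ (applied with $t=\tilde{\mu}_{1,min}\ep/(1/2-x)$), this yields $|g_1(x)|\,J(x)\le C$ uniformly for $x$ near $1/2$, while for $x$ bounded away from $1/2$ one has trivially $J(x)\le C$ and $|g_1(x)|\le C$. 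Hence $\|d_2\|_{L^\infty}\le C\,\ep$. Adding the bounds for $d_1$ and $d_2$ gives $\|d\|_{L^\infty}\le C\,\ep\log(1/\ep)$, with $C$ depending only on $\|f\|_{L^\infty}$, $\|f_0\|_{L^\infty}$, $\|g_1\|_{L^\infty}$, $\|g_1'\|_{L^\infty}$, $\mu_1$, $\tilde{\mu}_{2,max}$ and $\tilde{\mu}_{1,min}$. I expect the routine but slightly fussy bookkeeping at the interface between the ``near $x=1/2$'' regime and the ``away from $1/2$'' regime to be the most tedious part, but there is no real obstacle beyond the cancellation described above.
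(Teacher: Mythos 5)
Your proof is correct and follows essentially the same route as the paper: variation of constants with $W\equiv 1$, the split $E=E_{\mathrm{s}}+E_{\mathrm{r}}$ (the paper calls them $E_1,E_2$), using $|g_1(w)|\le\|g_1'\|_{L^\infty}(1/2-w)$ inside the $d_1$ integral to turn the double pole into a simple one and get $\log(1/\ep)$, and using the same vanishing of $g_1$ outside the $d_2$ integral to kill the $(1/2-x)^{-1}$ growth. The only cosmetic divergence is in the $d_2$ estimate: the paper simply uses $\tilde\mu>1/2$ to bound $(\tilde\mu-w)^{-1}\le(1/2-w)^{-1}$ and then $\int_0^x(1/2-w)^{-2}\dw\le(1/2-x)^{-1}$, which is a touch shorter than your partial-fraction computation with $t^{-1}\log(1+t)\le1$; also note that the mean-value bound $|g_1(w)|\le\|g_1'\|_{L^\infty}(1/2-w)$ holds uniformly on $[0,1/2]$, so the ``near $1/2$'' versus ``away from $1/2$'' case split you flag as potentially fussy is not actually needed.
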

\begin{proof}
We bound $||d_1||_{L^\infty}$ and $||d_2||_{L^\infty}$ separately. For $d_1$, we have that
\[
d_1(x)= \ep f_0(x)\int_{x}^\frac{1}{2} g_1(w) \left(E_1(w)+ E_2(w)\right)dw,
\]
with
\[
E_1(x)\equiv 2\tilde{\mu}_1(\ep) \frac{\tilde{\mu}+1/2}{(x^2-\tilde{\mu}^2)(x^2-(1/2)^2)}f(x),
\qquad E_2(x)\equiv -(2-\ep)f(x).
\]
We can directly estimate
\begin{align}\label{aux:d12}
\left|\ep f_0(x)\int_{x}^\frac{1}{2} g_1(w) E_2(w)dw\right|
 \leq \ep \, C  ||f_0||_{L^\infty}||f||_{L^\infty}||g_1||_{L^\infty}.
\end{align}
In addition, we have
\begin{align}\label{aux:d11}
\left|\ep f_0(x)\int_{x}^\frac{1}{2} g_1(w) E_1(w)dw\right|&\leq \ep\, C  ||f_0||_{L^\infty}||f||_{L^\infty}\int_{x}^\frac{1}{2}
\frac{|g_1(w)|}{(\tilde{\mu}-w)(1/2-w)}dw\\ &\leq \ep\,|\log(\tilde{\mu}_1(\ep) \ep)| \, C ||f_0||_{L^\infty}||f||_{L^\infty}||g_1'||_{L^\infty}, \nonumber
\end{align}
where in the last step we have used the fact that
$$|g_1(w)|=|g_1(w)-g_1(1/2)|\leq ||g_1'||_{L^\infty}(1/2-w).$$
Therefore, combining \eqref{aux:d11} and \eqref{aux:d12} we get
\begin{align*}
||d_1||_{L^\infty}& \leq C  ||g_1'||_{L^\infty}||f||_{L^\infty}||f_0||_{L^\infty} \ep \log(1/\ep)\\
&\leq C \ep \log(1/\ep).
\end{align*}

For $d_2$ we have that
\begin{align*}
d_2=\ep g_1(x)\int_0^x f_0(w) \left(E_1(w)+ E_2(w)\right)dw,
\end{align*}
and again we can directly estimate
\begin{align}\label{aux:d22}
\left|\ep g_1(x)\int_{0}^x f_0(w) E_2(w)dw\right|\leq \ep\, C  ||f_0||_{L^\infty}||g_1||_{L^\infty}||f||_{L^\infty}.
\end{align}
For the last term, using that $\tilde{\mu}>1/2$, we get
\begin{align}\label{aux:d21}
&\left|\ep g_1(x)\int_{0}^x f_0(w) E_1(w)dw\right|\leq \ep C ||f_0||_{L^\infty}||f||_{L^\infty} |g_1(x)|\int_{0}^x \frac{1}{(\tilde{\mu}-w)(1/2-w)}dw\\ \nonumber
&\leq \ep C ||f_0||_{L^\infty}||f||_{L^\infty} |g_1(x)|\int_{0}^x \frac{1}{(1/2-w)^2}dw\\ \nonumber
&\leq \ep C ||f_0||_{L^\infty}||f||_{L^\infty} \frac{|g_1(x)|}{1/2-x}\\ \nonumber
&\leq \ep C ||f_0||_{L^\infty}||f||_{L^\infty} ||g'_1||_{L^\infty}. \nonumber
\end{align}
Therefore, combining \eqref{aux:d21} and \eqref{aux:d22} we get
\begin{align*}
||d_2||_{L^\infty}&\leq C \ep.
\end{align*}

\end{proof}

\begin{Lemma} \label{firstLemma}Let $\tilde{\mu}$ be as Lemma \ref{Linfty}.
Then, we have
\begin{align*}
\left|\int_{0}^\frac{1}{2}\frac{\sinh((1-\ep)w)d(w)}{(w+\tilde{\mu})(w-\tilde{\mu})}dw\right|\leq C\ep \log^2(\ep),
\end{align*}
where $C$ depends on  $||f||_{L^\infty}$, $||f_0||_{L^\infty}$, $||g_1||_{L^\infty}$, $||g'_1||_{L^\infty}$, $\mu_1$, $\tilde{\mu}_{2,max}$ and $\tilde{\mu}_{1,min}$, but it does not depend on $\ep$.
\end{Lemma}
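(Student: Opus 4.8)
The plan is elementary: combine the uniform bound $\|d\|_{L^\infty}\leq C\ep\log(1/\ep)$ furnished by Lemma \ref{Linfty} with the observation that the weight $1/(w^2-\tilde\mu^2)$ is only \emph{logarithmically} large in $\ep$ once integrated over $[0,1/2]$. No finer structural information about $d$ is needed.

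First I would record two elementary facts about the weight, using $0\leq w<1/2<\tilde\mu$. On the one hand $w+\tilde\mu\geq\tilde\mu>1/2$, so $1/(w+\tilde\mu)\leq 2$. On the other hand $\tilde\mu-w\geq\tilde\mu-\tfrac12=\tilde\mu_1(\ep)\ep\geq\tilde\mu_{1,min}\ep>0$, so $w\mapsto 1/(\tilde\mu-w)$ is integrable on $[0,1/2]$ with
\[
\int_0^{1/2}\frac{dw}{(w+\tilde\mu)(\tilde\mu-w)}\leq 2\int_0^{1/2}\frac{dw}{\tilde\mu-w}=2\log\frac{\tilde\mu}{\tilde\mu-1/2}\leq 2\bigl(\log\tilde\mu-\log(\tilde\mu_{1,min}\ep)\bigr)\leq C\log(1/\ep),
\]
the last inequality holding for $\ep$ small since $\tilde\mu$ stays bounded. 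I would also use $0\leq\sinh((1-\ep)w)\leq\sinh(1/2)$ on $[0,1/2]$.

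The estimate then follows by the triangle inequality. Writing $(w+\tilde\mu)(w-\tilde\mu)=-(w+\tilde\mu)(\tilde\mu-w)$ and pulling $\sinh(1/2)$ and $\|d\|_{L^\infty}$ outside the integral,
\[
\left|\int_0^{1/2}\frac{\sinh((1-\ep)w)\,d(w)}{(w+\tilde\mu)(w-\tilde\mu)}\,dw\right|\leq\sinh(1/2)\,\|d\|_{L^\infty}\int_0^{1/2}\frac{dw}{(w+\tilde\mu)(\tilde\mu-w)}\leq C\,\ep\log(1/\ep)\cdot\log(1/\ep),
\]
and since $\log^2(1/\ep)=\log^2(\ep)$ the right-hand side is at most $C\ep\log^2(\ep)$, with $C$ depending on exactly the same quantities ($\|f\|_{L^\infty}$, $\|f_0\|_{L^\infty}$, $\|g_1\|_{L^\infty}$, $\|g_1'\|_{L^\infty}$, $\mu_1$, $\tilde\mu_{2,max}$, $\tilde\mu_{1,min}$) as the constant of Lemma \ref{Linfty} and not on $\ep$.

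The one point that needs care — and really the whole content of the argument — is that one must \emph{not} bound the denominator below by its minimal value $\sim\ep$ and take it outside the integral: that would only produce a bound of order $\log(1/\ep)$, which is far larger than the asserted $\ep\log^2(\ep)$. Keeping the weight inside and exploiting that $1/(\tilde\mu-w)$ is logarithmically (not $\ep^{-1}$) integrable is precisely what yields the crucial extra factor of $\ep$.
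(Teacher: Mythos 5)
Your argument is correct, and it is genuinely simpler than the paper's. You treat $d$ as a black box — all you use is the $L^\infty$ bound $\|d\|_{L^\infty}\le C\ep\log(1/\ep)$ from Lemma~\ref{Linfty} — and then pay one extra logarithm from $\int_0^{1/2}\frac{dw}{(w+\tilde\mu)(\tilde\mu-w)}\le C\log(1/\ep)$, arriving at $C\ep\log^2(1/\ep)=C\ep\log^2(\ep)$. The dependences of the constant are exactly those in the statement, since the integral bound costs only $\tilde\mu_{1,\min}$ and everything else is inherited from Lemma~\ref{Linfty}. The paper instead opens up the variation-of-constants representation $d=d_{11}+d_{12}+d_{21}+d_{22}$ and estimates the weighted integral of each piece separately: the $d_{11}$ contribution carries the $\ep$ explicitly and produces the $\log^2$ via $\int_0^{1/2}\frac{\log((\tilde\mu-w)/(\tilde\mu_1\ep))}{\tilde\mu-w}\,dw$, while the other pieces are of lower order. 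For the bare inequality asserted in Lemma~\ref{firstLemma} the decomposition is overkill and your route is cleaner. What the paper's finer treatment buys is the template for the next result, Lemma~\ref{mainLemma}, where the same integral must be split into a \emph{main term} $M[f_0,\mu_1]$ (independent of $f$ and $\tilde\mu_2$) of size $O(\ep\log^2\ep)$ plus a remainder $o(\ep\log^2\ep)$; that refinement cannot be read off from the crude bound $\|d\|_{L^\infty}\cdot\|\text{weight}\|_{L^1}$, so the term-by-term analysis here is really preparation for the fixed-point argument rather than a necessity for the present estimate.
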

\begin{proof}
Recall that $d(x)=d_1(x)+d_2(x)$ with
\[
d_1(x)=\ep f_0(x)\int_{x}^\frac{1}{2} g_1(w) \left(E_1(w)+ E_2(w)\right)dw=:d_{11}(x)+d_{12}(x),
\]
and
\[
d_2(x)= \ep g_1(x)\int_0^x f_0(w)\left(E_1(w)+ E_2(w)\right)dw=:d_{21}(x)+d_{22}(x).
\]
We will now study each of the terms separately.

\underline{Integral term with $d_{12}$:} In first place, we recall that
\begin{align*}
\left|\int_{w}^\frac{1}{2} g_1(z)E_2(z)dz\right|\leq C ||f||_{L^\infty}||g_1||_{L^\infty}(1/2-w),
\end{align*}
for $0\leq w \leq 1/2$ and where the $L^\infty$ norm is taken in $[0,\frac{1}{2}]$.
Therefore
\begin{align*}
||d_{12}||_{L^\infty}&\leq \ep\left|\int_{0}^\frac{1}{2}\frac{\sinh((1-\ep)w)}{w^2-\tilde{\mu}^2}f_0(w)
\int_{w}^\frac{1}{2}g_1(z) E_2(z)dzdw\right|\\
&\leq \ep C ||f||_{L^\infty}||g_1||_{L^\infty}||f_0||_{L^\infty}\int_{0}^\frac{1}{2}\frac{1/2-w}{\tilde{\mu}-w}dw\\
&\leq \ep \frac{C}{2} ||f||_{L^\infty}||g_1||_{L^\infty}||f_0||_{L^\infty}.
\end{align*}
Here, we have used $\tilde{\mu}>1/2$ and the fact that
\begin{align*}
\frac{1/2-w}{\tilde{\mu}-w}\leq 1, \quad  \forall  w\in[0, 1/2],
\end{align*}
for
\begin{align*}
\left(\frac{1}{2}+\mu_1\right)+\ep\log^2(\ep)\tilde{\mu}_2(\ep)=\tilde{\mu}_1(\ep)>0.
\end{align*}

\underline{Integral term with $d_{11}$:} Secondly, we use that
\begin{align*}
\left|\int_{w}^\frac{1}{2} g_1(z) E_1(z)dz\right|
\leq C  \tilde{\mu}_1(\ep)||f||_{L^\infty}\int_{w}^\frac{1}{2} \frac{|g_1(z)|}{(\tilde{\mu}-z)(1/2-z)} dz.
\end{align*}
Since $0<\tilde{\mu}_1(\ep)<2$ and
$$|g_1(z)|\leq ||g_1'||_{L^\infty}|z-1/2|,$$ we find that
\begin{align*}
\left|\int_{w}^\frac{1}{2} g_1(z) E_1(z)dz\right|\leq C  ||g_1'||_{L^\infty}||f||_{L^\infty}\int_{w}^\frac{1}{2}\frac{dz}{\tilde{\mu}-z}\leq C  ||g_1'||_{L^\infty}
||f||_{L^\infty}\log\left(\frac{\tilde{\mu}-w}{\tilde{\mu}_1(\ep)\ep}\right).
\end{align*}
Therefore
\begin{align*}
||d_{11}||_{L^\infty}&\leq \ep\left|\int_{0}^\frac{1}{2}\frac{\sinh((1-\ep)w)}{w^2-\tilde{\mu}^2}f_0(w)
\int_{w}^\frac{1}{2} g_1(z) E_1(z)dzdw\right|\\
&\leq \ep C  ||g_1'||_{L^\infty} ||f||_{L^\infty}||f_0||_{L^\infty}\frac{1}{2}\log^2\left(\frac{\tilde{\mu_1}(\ep)\ep}{\tilde{\mu}}\right),
\end{align*}
where in the last step we used
\begin{align}\label{intlog^2ep}
\int_{0}^\frac{1}{2}\frac{\log(\frac{\tilde{\mu}-w}{\tilde{\mu}_1(\ep)\ep})}{\tilde{\mu}-w}dw=
\frac{1}{2}\log^2\left(\frac{\tilde{\mu_1}(\ep)\ep}{\tilde{\mu}}\right).
\end{align}

\underline{Integral terms $d_{21}$ and $d_{22}$:} Next, we have to control
\begin{align*}
\left|\int_{0}^\frac{1}{2}\frac{\sinh((1-\ep)w)d_2(w)}{w^2-\tilde{\mu^2}}dw\right|
=\ep\left|\int_{0}^\frac{1}{2}\frac{\sinh((1-\ep)w)}{w^2-\tilde{\mu}^2}g_1(w)
\int_{0}^w f_0(z) E(z)dzdw\right|.
\end{align*}
%In addition, as $\tilde{\mu}>1/2$ we notice that
%\begin{align}\label{notice}
%\left|\frac{g_1(w)}{\tilde{\mu}-w}\right|=\left|\frac{g_1(w)}{1/2-w}\right| \left|\frac{1/2-w}{\tilde{\mu}-w}\right|\leq ||g_1'||_{L^\infty}, \qquad \forall w\in[0,1/2].\end{align}
Then, for $E_2$ we bound
\begin{align*}
\left|\int_{0}^w f_0(z) E_2(z)dz\right|\leq C   ||f_0||_{L^\infty}||f||_{L^\infty},
\end{align*}
and for the term coming from $E_1$ we have that
\begin{align*}
\left|\int_{0}^w f_0(z) E_1(z)dz\right|&\leq C  ||f_0||_{L^\infty}||f||_{L^\infty} \int_0^w \frac{1}{(\tilde{\mu}-z)(1/2-z)} dz\\
&\leq C  ||f_0||_{L^\infty}||f||_{L^\infty} \int_0^w \frac{1}{(1/2-z)^2} dz\leq  C  ||f_0||_{L^\infty}||f||_{L^\infty}(1/2-w)^{-1}.
\end{align*}
Therefore
\begin{align*}
||d_{2}||_{L^\infty}&\leq \ep\left|\int_{0}^\frac{1}{2}\frac{\sinh((1-\ep)w)g_1(w)}{w^2-\tilde{\mu}^2}\int_{0}^w f_0(z) E(z)dz dw\right|
\\&\leq\ep  C  ||f_0||_{L^\infty}||f||_{L^\infty} \left|\int_{0}^\frac{1}{2}\frac{1}{\tilde{\mu}-w}\frac{g_1(w)}{1/2-w}dw \right|  \\
&\leq C  ||g_1'||_{L^\infty} ||f||_{L^\infty}||f_0||_{L^\infty}\ep\,\left|\log\left(\frac{\tilde{\mu_1}(\ep)\ep}{\tilde{\mu}}\right)\right|.
\end{align*}
\end{proof}

Below is a more refined version of the above result. Which will be very useful for determining $\tilde{\mu}_2(\ep)$ later through a fixed point argument.

\begin{Lemma}\label{mainLemma} Let $\tilde{\mu}$ be as in \eqref{tildemuansatz}. That is, $\tilde{\mu}=\tfrac{1}{2}+\tilde{\mu}_1(\ep)\ep$. Let assume that
\begin{align*}
\left(\frac{1}{2}+\mu_1\right)+ \tilde{\mu}_2(\ep)\ep\log^2(\ep)=\tilde{\mu}_1(\ep)>0,\qquad \text{and} \quad -1/2<\mu_1<1/2. \end{align*}
Then the integral
\begin{align*}
\int_{0}^\frac{1}{2}\frac{\sinh((1-\ep)w)d(w)}{w^2-\tilde{\mu}^2}dw = M[f_0,\mu_1]+R[f_0,\mu_1;f,\tilde{\mu}_2],
\end{align*}
 where $M[f_0,\mu_1]$, $R[f_0,\mu_1;f,\tilde{\mu}_2]\in \R$ and
 \begin{align*}
M[f_0,\mu_1]&=O(\ep\log^2(\ep))\\
R[f_0,\mu_1;f,\tilde{\mu}_2]&=o(\ep\log^2(\ep)).
 \end{align*}
\begin{remark}
We emphasize that $M[f_0,\mu_1]$ does not depend on either $f$ or $\tilde{\mu}_2(\ep)$.
\end{remark}
\end{Lemma}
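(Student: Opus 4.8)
The plan is to sharpen \lem{firstLemma} by singling out, inside $I:=\int_{0}^{1/2}\frac{\sinh((1-\ep)w)d(w)}{w^2-\tilde{\mu}^2}\,dw$, the one contribution that is of size $\ep\log^{2}(\ep)$, and then replacing the data and the parameters occurring in it by quantities that no longer depend on $f$ or $\tilde{\mu}_2$. I would start from the decomposition $d=d_{11}+d_{12}+d_{21}+d_{22}$ used in the proof of \lem{firstLemma}; the bounds obtained there already give
\[
\Big|\int_{0}^{1/2}\frac{\sinh((1-\ep)w)\,(d_{12}+d_{21}+d_{22})(w)}{w^2-\tilde{\mu}^2}\,dw\Big|\le C\ep\log(1/\ep)=o\big(\ep\log^{2}(\ep)\big),
\]
so these three terms are absorbed into $R$. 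For the remaining term the point is the algebraic identity $\ep E_1(z)=\dfrac{2(\tilde{\mu}^2-1/4)\,f(z)}{(z^2-\tilde{\mu}^2)(z^2-1/4)}$, immediate from $\tilde{\mu}_1(\ep)\ep=\tilde{\mu}-\tfrac12$, which rewrites the $d_{11}$-contribution as $2\big(\tilde{\mu}^2-\tfrac14\big)\,\cD(\tilde{\mu};f)$, where
\[
\cD(\tilde s;g):=\int_{0}^{1/2}\frac{\sinh((1-\ep)w)\,f_0(w)}{w^2-\tilde s^{2}}\Big(\int_{w}^{1/2}\frac{g_1(z)\,g(z)}{(z^2-\tilde s^{2})(z^2-1/4)}\,dz\Big)\,dw .
\]

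Next I would replace $f$ by $f_0$ inside $\cD$. Since $\cD(\tilde s;\cdot)$ is linear in its second slot, the error is $2(\tilde{\mu}^2-\tfrac14)\,\cD(\tilde{\mu};d)$; using $\|d\|_{L^\infty}\le C\ep\log(1/\ep)$ from \lem{Linfty}, the bound $|g_1(z)|\le C|z-\tfrac12|$ (so $|g_1(z)/(z^2-1/4)|\le 2C$, $g_1$ vanishing at $\tfrac12$ being crucial here), and the elementary estimates $\int_{w}^{1/2}\frac{dz}{\tilde{\mu}^2-z^2},\ \int_{0}^{1/2}\frac{dw}{\tilde{\mu}^2-w^2}=O(\log(1/\ep))$, one gets $|\cD(\tilde{\mu};d)|\le C\ep\log^{3}(1/\ep)$, hence the error is $O(\ep^{2}\log^{3}(1/\ep))=o(\ep\log^{2}(\ep))$ and goes into $R$.

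Then I would replace $\tilde{\mu}$ by $\tilde{\mu}_0:=\tfrac12+(\tfrac12+\mu_1)\ep$, the value of $\tilde{\mu}$ corresponding to $\tilde{\mu}_2\equiv 0$, and set $M[f_0,\mu_1]:=2(\tilde{\mu}_0^{2}-\tfrac14)\,\cD(\tilde{\mu}_0;f_0)$. Since $\mu_1\in(-\tfrac12,\tfrac12)$ is fixed and $|\tilde{\mu}_2|\le\tilde{\mu}_{2,\max}$, for $\ep$ small one has $\tilde{\mu}-\tfrac12=\tilde{\mu}_1(\ep)\ep\ge\tilde{\mu}_{1,\min}\ep$ with $\tilde{\mu}_{1,\min}:=\tfrac12(\tfrac12+\mu_1)>0$ (so the hypotheses of \lem{Linfty} and \lem{firstLemma} indeed hold), while $|\tilde{\mu}^2-\tilde{\mu}_0^{2}|=O(\ep^{2}\log^{2}(\ep))$. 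For $w\in[0,1/2]$ we have $\tilde{\mu}^2-w^2\ge\tilde{\mu}^2-\tfrac14\ge\tilde{\mu}_{1,\min}\ep$, hence $\big|\frac1{w^2-\tilde{\mu}^2}-\frac1{w^2-\tilde{\mu}_0^{2}}\big|=\big|\frac{\tilde{\mu}^2-\tilde{\mu}_0^{2}}{(w^2-\tilde{\mu}^2)(w^2-\tilde{\mu}_0^{2})}\big|\le C\ep\log^{2}(\ep)\,\frac1{\tilde{\mu}_0^{2}-w^2}$ uniformly, and likewise in $z$; feeding this into $\cD$ and using once more the $O(\log^{2}(1/\ep))$ bound for the double integral of the $\tilde{\mu}_0$-weights gives $|\cD(\tilde{\mu};f_0)-\cD(\tilde{\mu}_0;f_0)|=O(\ep\log^{4}(\ep))$. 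Multiplying the last two error bounds by the prefactor $2(\tilde{\mu}^2-\tfrac14)=O(\ep)$, and accounting for $|\tilde{\mu}^2-\tilde{\mu}_0^{2}|\cdot\cD(\tilde{\mu}_0;f_0)=O(\ep^{2}\log^{4}(\ep))$ when the prefactor itself is changed, shows $2(\tilde{\mu}^2-\tfrac14)\cD(\tilde{\mu};f_0)=M[f_0,\mu_1]+o(\ep\log^{2}(\ep))$. Finally $M[f_0,\mu_1]$ depends only on $\mu_1$ (through $\tilde{\mu}_0$), on $\ep$, and on the fixed functions $f_0$ (the limiting problem \eqref{g}--\eqref{bc:g} is $\ep$-independent) and $g_1$, not on $f$ or $\tilde{\mu}_2$; and $|M[f_0,\mu_1]|\le 2(\tilde{\mu}_0^{2}-\tfrac14)\,C\log^{2}(1/\ep)=O(\ep\log^{2}(\ep))$ by exactly the estimate behind the ``$d_{11}$'' bound of \lem{firstLemma} (essentially \eqref{intlog^2ep}). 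Collecting all the $o(\ep\log^{2}(\ep))$ errors into $R[f_0,\mu_1;f,\tilde{\mu}_2]$ finishes the proof.

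The main obstacle is the error accounting in these two replacement steps: the safety margin is essentially one power of $\ep$ together with a controllable number of extra logarithms, and it is available only because the $d_{11}$-term already carries the factor $2(\tilde{\mu}^2-\tfrac14)=O(\ep)$, because $g_1(1/2)=0$ makes $g_1(z)/(z^2-1/4)$ bounded near $z=1/2$, and because $\tilde{\mu}^2-\tfrac14\gtrsim\ep$ uniformly on $[0,1/2]$ keeps every remaining singular weight integrable with only logarithmic loss.
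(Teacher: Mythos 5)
Your proof is correct and follows essentially the same strategy as the paper's: isolate the $d_{11}$-contribution as the only one of size $\ep\log^2(\ep)$, rewrite it via the identity $\tilde{\mu}_1(\ep)\ep(\tilde{\mu}+\tfrac12)=\tilde{\mu}^2-\tfrac14$, and then replace $f\to f_0$ and $\tilde{\mu}\to\tilde{\mu}_0$, collecting the resulting errors into $R$. The only difference is presentational: you package the $\tilde{\mu}\to\tilde{\mu}_0$ replacement as one uniform step on the operator $\cD(\tilde s;\cdot)$, whereas the paper carries it out factor by factor through the terms $K_1$, $K_{21}$, $K_{22}$; the resulting $M[f_0,\mu_1]$ agrees with the paper's principal term in \eqref{simplifactionLemma} up to an $O(\ep)$ multiplicative factor, which is harmlessly absorbed into $R$.
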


\begin{proof}
Since the only term that yields a factor $\ep\log^2(\ep)$ is that of $d_1$ coming from $E_1$, we only have to consider the term coming from $d_{11}$. That is, the integral
\begin{align*}
&\ep\int_{0}^\frac{1}{2}\frac{\sinh((1-\ep)w)}{w^2-\tilde{\mu}^2}f_0(w)
\int_{w}^\frac{1}{2}g_1(z) E_1(z) dzdw,
\end{align*}
where
\begin{align*}
E_1(z)=2\tilde{\mu}_1(\ep)\frac{\tilde{\mu}+1/2}{(\tilde{\mu}^2-z^2)(1/4-z^2)}f(z).
\end{align*}

We will split $E_1(z)$, using $d=f-f_0$, as follow
\begin{align*}
E_1(z)&=2\tilde{\mu}_1(\ep)\frac{\tilde{\mu}+1/2}{(\tilde{\mu}^2-z^2)(1/4-z^2)}d(z)+2\tilde{\mu}_1(\ep)\frac{\tilde{\mu}+1/2}{(\tilde{\mu}^2-z^2)(1/4-z^2)}f_0(z)\\
&\equiv E_{11}(z)+E_{12}(z).
\end{align*}

To bound
\begin{align*}
&\ep\int_{0}^\frac{1}{2}\frac{\sinh((1-\ep)w)}{w^2-\tilde{\mu}^2}f_0(w)
\int_{w}^\frac{1}{2}g_1(z) E_{11}(z)dzdw,
\end{align*}
we proceed as in Lemma \ref{firstLemma} to get
\begin{align*}
\leq  C  ||g_1'||_{L^\infty}||d||_{L^\infty}||f_0||_{L^\infty}\ep\log^2\left(\frac{\tilde{\mu_1}(\ep)\ep}{\tilde{\mu}}\right).
\end{align*}
From Lemma \ref{Linfty} we know that $||d||_{L^\infty}\leq C \ep \log(1/\ep)$. Thus, the term coming from $E_{11}$ behaves like $\ep^2\log^3(\ep)=o(\ep\log^2(\ep))$. As a consequence, we just have to take care about
\begin{align*}
&\ep\int_{0}^\frac{1}{2}\frac{\sinh((1-\ep)w)}{w^2-\tilde{\mu}^2}f_0(w)
\int_{w}^\frac{1}{2} g_1(z) E_{12}(z)dzdw\\
=&2\tilde{\mu}_1(\ep)\ep\int_{0}^\frac{1}{2}\frac{\sinh((1-\ep)w)}{w^2-\tilde{\mu}^2}f_0(w)
\int_{w}^\frac{1}{2} g_1(z) \frac{\tilde{\mu}+1/2}{(\tilde{\mu}^2-z^2)(1/4-z^2)}f_0(z)dzdw,\\
=&2\left(\tfrac{1}{2}+\mu_1\right)\ep\int_{0}^\frac{1}{2}\frac{\sinh((1-\ep)w)}{w^2-\tilde{\mu}^2}f_0(w)
\int_{w}^\frac{1}{2} g_1(z) \frac{\tilde{\mu}+1/2}{(\tilde{\mu}^2-z^2)(1/4-z^2)}f_0(z)dzdw\\
+&o(\ep\log^2(\ep)).
\end{align*}
In addition, adding and subtracting  appropriate terms we obtain
\begin{multline}\label{simplifactionLemma}
= 2\left(\tfrac{1}{2}+\mu_1\right)\ep\int_{0}^\frac{1}{2}\frac{\sinh((1-\ep)w)}{w^2-(1/2+(1/2+\mu_1)\ep)^2}f_0(w)\\
 \times \int_{w}^\frac{1}{2} g_1(z) \frac{f_0(z)}{(1/2+(1/2+\mu_1)\ep+z)(1/2+(1/2+\mu_1)\ep-z)(1/4-z^2)}dzdw+o(\ep\log^2(\ep)).
\end{multline}
More specifically, we expand below the calculations to get \eqref{simplifactionLemma}. First we note that
\begin{align*}
&2\left(\tfrac{1}{2}+\mu_1\right)\ep\int_{0}^\frac{1}{2}\frac{\sinh((1-\ep)w)}{w^2-\tilde{\mu}^2}f_0(w)
\int_{w}^\frac{1}{2} g_1(z) \frac{\tilde{\mu}+1/2}{(\tilde{\mu}^2-z^2)(1/4-z^2)}f_0(z)dzdw\\
&=2\left(\tfrac{1}{2}+\mu_1\right)\ep\int_{0}^\frac{1}{2}\sinh((1-\ep)w)\frac{1}{w+\tilde{\mu}}
\left(\frac{1}{w-\tilde{\mu}}-\frac{1}{w-1/2-(1/2+\mu_1)\ep}\right)f_0(w)\\ &
\hspace{2cm}\times \int_{w}^\frac{1}{2} g_1(z) \frac{\tilde{\mu}+1/2}{(\tilde{\mu}^2-z^2)(1/4-z^2)}f_0(z)dzdw\\
&+2\left(\tfrac{1}{2}+\mu_1\right)\ep\int_{0}^\frac{1}{2}\sinh((1-\ep)w)\frac{1}{w+\tilde{\mu}}
\frac{1}{w-1/2-(1/2+\mu_1)\ep}f_0(w)\\ &
\hspace{2cm}\times \int_{w}^\frac{1}{2} g_1(z) \frac{\tilde{\mu}+1/2}{(\tilde{\mu}^2-z^2)(1/4-z^2)}f_0(z)dzdw\\
&\equiv K_1+K_2.
\end{align*}

Now, we turn our attention to the first summand above. For the difference appearing in $K_1$, it is straightforward to check that
$$\left(\frac{1}{w-\tilde{\mu}}-\frac{1}{w-1/2-(1/2+\mu_1)\ep}\right)=
\frac{\tilde{\mu}_2(\ep) \ep^2\log^2(\ep)}{(w-\tilde{\mu})(w-1/2-(1/2+\mu_1)\ep)}.$$
Thus,
\begin{align*}
K_1&=2\left(\tfrac{1}{2}+\mu_1\right)\tilde{\mu}_2(\ep) \ep^3 \log^2(\ep)\int_{0}^\frac{1}{2}\sinh((1-\ep)w)\frac{1}{w+\tilde{\mu}}
\frac{1}{(w-\tilde{\mu})(w-1/2-(1/2+\mu_1)\ep)}f_0(w)\\ &
\, \hspace{2cm}\times \int_{w}^\frac{1}{2} g_1(z) \frac{\tilde{\mu}+1/2}{(\tilde{\mu}^2-z^2)(1/4-z^2)}f_0(z)dzdw,
\end{align*}
and proceeding as we did before in Lemma \ref{firstLemma}, we arrive to
\begin{align*}
K_1 &\leq C \ep^3 \log^2(\ep)\int_0^{\frac{1}{2}} \frac{1}{(w-\tilde{\mu})(w-1/2-(1/2+\mu_1)\ep)} \log\left(\frac{\tilde{\mu}-w}{\tilde{\mu}_1(\ep)\ep}\right)dw\\
&\leq  C\ep^2\log^3(\ep)=o(\ep\log^2(\ep)).
\end{align*}
where in the last step we have used a slight modification of \eqref{intlog^2ep}.

To handle the remaining term $K_2$,  for brevity, we just focus our attention in the most delicate one. This is the term that appears when we replace $(w+\tilde{\mu})^{-1}$ and $(z+\tilde{\mu})^{-1}$ by $(w+1/2+(\mu_1+1/2)\ep)^{-1}$ and  $(z+1/2+(\mu_1+1/2)\ep)^{-1}$ respectively. That is,
\begin{align*}
&2\left(\tfrac{1}{2}+\mu_1\right)\ep\int_{0}^\frac{1}{2}
\frac{\sinh((1-\ep)w)}{w^2-(1/2+(1/2+\mu_1)\ep)^2}f_0(w)\\
&\hspace{2cm}\times \int_{w}^\frac{1}{2} g_1(z) \frac{1}{(z+1/2+(1/2+\mu_1)\ep)(\tilde{\mu}-z)(1/4-z^2)}f_0(z)dzdw.
\end{align*}
One more time, adding and subtracting appropriate terms we obtain
\begin{align*}
&2\left(\tfrac{1}{2}+\mu_1\right)\ep\int_{0}^\frac{1}{2}
\frac{\sinh((1-\ep)w)}{w^2-(1/2+(1/2+\mu_1)\ep)^2}f_0(w)\\
&\hspace{2 cm}\times \int_{w}^\frac{1}{2} g_1(z) \frac{1}{z+1/2+(1/2+\mu_1)\ep} \left(\frac{1}{\tilde{\mu}-z}-\frac{1}{(1/2+(\mu_1+1/2)\ep)-z} \right) \frac{1}{1/4-z^2}f_0(z)dzdw\\
&+ 2\left(\tfrac{1}{2}+\mu_1\right)\ep\int_{0}^\frac{1}{2}
\frac{\sinh((1-\ep)w)}{w^2-(1/2+(1/2+\mu_1)\ep)^2}f_0(w)\\
&\hspace{2 cm}\times \int_{w}^\frac{1}{2} g_1(z) \frac{1}{(z+1/2+(1/2+\mu_1)\ep)(1/2+(\mu_1+1/2)\ep)-z) (1/4-z^2)} f_0(z)dzdw\\
&\equiv K_{21}+K_{22}.
\end{align*}
Then, the second summand $K_{22}$ is exactly the principal term of \eqref{simplifactionLemma}. To conclude we only need to check that $K_{21}=o(\ep\log^2(\ep))$. To do so, proceeding in the same way as before, for the difference appearing in $K_{21}$, we have
\[
\frac{1}{\tilde{\mu}-z}-\frac{1}{1/2+(\mu_1+1/2)\ep-z}=-\frac{\tilde{\mu}_2(\ep)\ep^2 \log^2(\ep)}{(\tilde{\mu}-z)(1/2+(\mu_1+1/2)\ep-z)},
\]
and consequently, we get
\begin{align*}
K_{21} &= 2\left(\tfrac{1}{2}+\mu_1\right)\tilde{\mu}_2(\ep)\ep^3\log^2(\ep)\int_{0}^\frac{1}{2}
\frac{\sinh((1-\ep)w)}{w^2-(1/2+(1/2+\mu_1)\ep)^2}f_0(w)\\
&\qquad \times \int_{w}^\frac{1}{2} g_1(z) \frac{1}{(z^2-(1/2+(1/2+\mu_1)\ep)^2)}\frac{1}{(1/4-z^2)(\tilde{\mu}-z)}f_0(z)dzdw\\
&\leq C\ep^2\log^5(\ep)=o(\ep\log^2(\ep)).
\end{align*}
%And we have that
%\begin{align*}
%M[f_0,\mu_1]&\equiv
%2\left(\tfrac{1}{2}+\mu_1\right)\ep\int_{0}^\frac{1}{2}\frac{\sinh((1-\ep)w)}{w-1/2-(1/2+\mu_1)\ep}f_0(w)\\
%&\int_{w}^\frac{1}{2} g_1(z) \frac{f_0(z)}{(1/2+(1/2+\mu_1)\ep+z)(1/2+(1/2+\mu_1)\ep-z)(1/4-z^2)}dzdw.
%\end{align*}
\end{proof}

Proceeding in a completely similar way as we did before with the interval $[0,1/2)$ we can obtain the same type of result for the interval $(1/2,1]$ by obtaining the analogues of the Lemmas \ref{Linfty}, \ref{firstLemma} which allow us to conclude the following result. The only difference in this case is that we have to assure that $\tilde{\nu}>1/2$, i.e.,

\begin{align*}
\-\frac{1}{2}+\mu_1+ \tilde{\mu}_2(\ep)\ep\log^2(\ep)+\tilde{\mu}_1(\ep)^2\ep^2-\frac{\ep^2}{1-\ep}=\tilde{\mu}_1(\ep)\geq \tilde{\mu}_{1,min}>0, \qquad \text{where} \quad -1/2<\mu_1<1/2.
\end{align*}

\begin{Lemma}\label{mainLemmaright}Let $\tilde{\mu}$ be as in \eqref{tildemuansatz}. That is, $\tilde{\mu}=\tfrac{1}{2}+\tilde{\mu}_1(\ep)\ep$. Let assume that
\begin{align*}
\mu_1-1/2+ \tilde{\mu}_2(\ep)\ep\log^2(\ep)+\tilde{\mu}_1(\ep)^2\ep^2-\frac{\ep^2}{1-\ep}<0,\qquad \text{and} \quad -1/2<\mu_1<1/2. \end{align*}
Then the integral
\begin{align*}
\int_{\frac{1}{2}}^1 \frac{\sinh((1-\ep)(1-w))d(w)}{w^2-\tilde{\nu}^2}dw= M[f_0,\mu_1]+R[f_0,\mu_1;f,\tilde{\mu}_2],
\end{align*}
where $M[f_0,\mu_1]$, $R[f_0,\mu_1;f,\tilde{\mu}_2]\in \R$ and
 \begin{align*}
M[f_0,\mu_1]&=O(\ep\log^2(\ep))\\
R[f_0,\mu_1;f,\tilde{\mu}_2]&=o(\ep\log^2(\ep)).
 \end{align*}
We insist once again that $M[f_0,\mu_1]$ does not depend on either $f$ or $\tilde{\mu}_2(\ep)$
\end{Lemma}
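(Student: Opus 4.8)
\textbf{Proof plan for Lemma \ref{mainLemmaright}.} The strategy is to mirror, step by step, the analysis carried out on $[0,1/2)$ in Lemmas \ref{Linfty}, \ref{firstLemma} and \ref{mainLemma}, transporting every estimate to the right interval $(1/2,1]$. First I would set $d\equiv f-f_0$ on $(1/2,1]$, where now $f$ solves \eqref{rightprincipalep} and $f_0$ solves \eqref{principal} on that interval; as on the left, $d$ satisfies an inhomogeneous ODE whose right-hand side is $\ep E(x)$ with
\[
E(x)= 2\tilde{\nu}_1(\ep)\frac{\tilde{\nu}+1/2}{(x^2-\tilde{\nu}^2)(x^2-1/4)}f(x)+\frac{(1-\ep)^2-1}{\ep}f(x),
\]
after expanding $\tilde{\nu}=\tilde{\rho}\cdot(-1)$ and using $\tilde\nu\to 1/2$. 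The homogeneous problem on $(1/2,1]$ with vanishing data at $x=1/2$ and $x=1$ has only the trivial solution, by the right-sided analogue of Lemma \ref{positivog}, namely Lemma \ref{l:positivityfnright}; this is exactly what licenses the variation-of-constants representation
\[
d(x)=\ep\, f_0(x)\int_{1/2}^{x} g_1(w)E(w)\,dw+\ep\, g_1(x)\int_{x}^{1} f_0(w)E(w)\,dw,
\]
where $g_1$ is the Frobenius solution associated to the larger indicial root, and $W[f_0,g_1]\equiv 1$ by \eqref{Wrosk_cte} on all of $[0,1]$.

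Next I would split $d=d_1+d_2$ exactly as before and, inside each, split $E=E_1+E_2$ with $E_1$ carrying the singular factor $\big((\tilde{\nu}-w)(1/2-w)\big)^{-1}$-type weight and $E_2$ the bounded remainder. The $L^\infty$ bound $\|d\|_{L^\infty}\le C\ep\log(1/\ep)$ is obtained verbatim as in Lemma \ref{Linfty}, using $|g_1(w)|\le \|g_1'\|_{L^\infty}|w-1/2|$ to absorb one factor of the singularity and the hypothesis that $\tilde{\nu}_1(\ep)$ stays bounded below by a positive constant — which is precisely the inequality
\[
\mu_1-1/2+ \tilde{\mu}_2(\ep)\ep\log^2(\ep)+\tilde{\mu}_1(\ep)^2\ep^2-\frac{\ep^2}{1-\ep}<0
\]
rewritten (it forces $\tilde{\nu}<1/2$, hence $\tilde{\nu}-w$ does not vanish for $w\in[1/2,1]$, and keeps the relevant distance $\gtrsim \ep$). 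Then the weighted integral $\int_{1/2}^1 \sinh((1-\ep)(1-w))\,d(w)/(w^2-\tilde{\nu}^2)\,dw$ is estimated term by term as in Lemma \ref{firstLemma}, giving the crude bound $O(\ep\log^2\ep)$, and finally refined as in Lemma \ref{mainLemma}: one isolates the single contribution — the piece $d_{11}$ coming from $E_1$ with $f$ further decomposed via $f=f_0+d$ — that produces the full $\ep\log^2(\ep)$ size. The part involving $d$ is $o(\ep\log^2\ep)$ because $\|d\|_{L^\infty}=O(\ep\log(1/\ep))$, and the part involving $f_0$, after replacing $\tilde{\nu}$ by $1/2+(\mu_1+1/2)\ep$ at the cost of $o(\ep\log^2\ep)$ errors (the differences carry an extra $\ep^2\log^2\ep$), yields the main term $M[f_0,\mu_1]$, which manifestly depends only on $f_0$ and $\mu_1$; everything else is collected into $R[f_0,\mu_1;f,\tilde{\mu}_2]=o(\ep\log^2\ep)$.

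The one genuinely new bookkeeping point — and the step I expect to be the mild obstacle — is the change of singular structure: on $[0,1)$ the denominator factored as $(w-\mu)(w+\mu)$ with a single singular point $\tilde\mu>1/2$ to the right of the domain, whereas on $(1/2,1]$ the quadratic is $(w-\tilde{\nu})(w-\tilde{\rho})$ with $\tilde{\rho}=-\tilde{\nu}<0$, so $w^2-\tilde{\nu}^2=(w-\tilde{\nu})(w+\tilde{\nu})$ and the ``dangerous'' factor is $(w-\tilde{\nu})^{-1}$ with $\tilde{\nu}$ slightly below $1/2$ (approaching $1/2$ from below rather than above). One must check that $\tilde{\nu}-w$ stays comparable to $\ep + (1/2-w)$ uniformly for $w\in[1/2,1]$, which is where the stated sign condition on $\mu_1,\tilde{\mu}_1,\tilde{\mu}_2$ enters; with that in hand, every logarithmic integral estimate $\int \log((\tilde{\nu}-w)/(\tilde{\nu}_1\ep))\,dw/(\tilde{\nu}-w)=\tfrac12\log^2(\cdots)$ of the type \eqref{intlog^2ep} goes through unchanged, and the conclusion follows. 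No new ideas beyond the left-interval argument are required; the proof is a careful transcription, so I would state it as such and only spell out the points where the $\tilde{\rho}=-\tilde{\nu}$ identity and the revised positivity of $\tilde{\nu}_1$ are used.
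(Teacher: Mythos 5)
Your plan is correct and matches the paper's own (unwritten) argument: the paper states Lemma \ref{mainLemmaright} as a direct analogue of Lemma \ref{mainLemma} obtained by transposing the estimates from $[0,1/2)$ to $(1/2,1]$, which is exactly what you do, with the right substitutions ($\tilde\mu\mapsto\tilde\nu$ approaching $1/2$ from below, Lemma \ref{l:positivityfnright} replacing Lemma \ref{positivog}, and $|g_1(w)|\le\|g_1'\|_{L^\infty}|w-1/2|$ absorbing the endpoint singularity as before). Two small bookkeeping points to fix in a final write-up: your expression for $E$ should carry a minus sign, since $\frac{2}{x^2-\tilde\nu^2}-\frac{2}{x^2-1/4}=\frac{2(\tilde\nu-1/2)(\tilde\nu+1/2)}{(x^2-\tilde\nu^2)(x^2-1/4)}$ and $\tilde\nu-1/2<0$; and the variation-of-constants representation on $(1/2,1]$ is
\[
d(x)=-\ep\, f_0(x)\int_{1/2}^{x} g_1(w)E(w)\,dw-\ep\, g_1(x)\int_{x}^{1} f_0(w)E(w)\,dw,
\]
because the Wronskian ordering flips ($W[g_1,f_0]=-W[f_0,g_1]=-1$), though since only absolute values enter, the estimates go through unchanged.
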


\subsection{Solving the original system of integral equations}
Up to this point, we have learned how to handle the system of second order differential equations associated with the original system of integral equations. In the following we are going to see how to use this to actually solve our system of integral equations.

In this section we show the existence of a solution $(\tilde{\mu},\, F)$ to the equations \eqref{F1} and \eqref{F2} with $m=1$. We must solve, for $0\leq x<1/2$,

\begin{align}\label{F11}
&F(x)+\tilde{C}_1\sinh(1-(1-\ep)x)\int_{0}^{x}\frac{\sinh((1-\ep)w)F(w)}{w^2-\tilde{\mu}^2}dw\\
&+\tilde{C}_1\sinh((1-\ep)x)\int_{x}^\frac{1}{2} \frac{\sinh(1-(1-\ep)w)F(w)}{w^2-\tilde{\mu}^2}dw\nonumber\\
&+\tilde{C}_1\sinh((1-\ep)x)\int_{\frac{1}{2}}^1\frac{\sinh((1-\ep)(1-w))F(w)}{w^2-\tilde{\nu}^2}dw\nonumber=0,
\end{align}
and for $1/2<x\leq 1$,
\begin{align}\label{F21}
&F(x)+\tilde{C}_1 \sinh((1-\ep)(1-x))\int_{0}^\frac{1}{2} \frac{\sinh((1-\ep)w)F(w)}{w^2-\tilde{\mu}^2}dw\\
&+\tilde{C}_1 \sinh((1-\ep)(1-x))\int_{\frac{1}{2}}^x \frac{\sinh((1-\ep)w+\ep)F(w)}{w^2-\tilde{\nu}^2}dw\nonumber\\
&+\tilde{C}_1 \sinh((1-\ep)x+\ep)\int_{x}^1\frac{ \sinh((1-\ep)(1-w)) F(w)}{w^2-\tilde{\nu}^2}dw=0\nonumber.
\end{align}

Now, we take
\begin{align}
F(x)&=Af(x),\quad \text{for $0\leq x<1/2$,}\label{defF_left}\\
F(x)&=Bf(x),\quad \text{for $1/2<x\leq 1$,} \label{defF_right}
\end{align}
where $f$ solves
\begin{align*}
f''(x)-\left(\frac{2}{x^2-\tilde{\mu}^2}+(1-\ep)^2\right)f(x)&=0\quad \text{for $0\leq x<1/2$},\\
f(0)&=0,    \\
f(1/2^-)&=1,
\end{align*}
and
\begin{align*}
f''(x)-\left(\frac{2}{x^2-\tilde{\nu}^2}+(1-\ep)^2\right)f(x)&=0\quad \text{for $1/2<x\leq 1$},\\
f(1)&=0,\\
f(1/2^+)&=1,
\end{align*}
and where $A, B\in \mathbb{R}$. For this specific choice of $F$, given by \eqref{defF_left}-\eqref{defF_right}, we see that it is sufficient to solve \eqref{F11}  in $x=1/2^-$ and \eqref{F21} in  $x=1/2^+$. Indeed, the function
\begin{align*}
N(x)\equiv  A f(x)&+\tilde{C}_1\sinh(1-(1-\ep)x)\int_{0}^{x}\frac{\sinh((1-\ep)w)Af(w)}{w^2-\tilde{\mu}^2}dw\nonumber\\
&+\tilde{C}_1\sinh((1-\ep)x)\int_{x}^\frac{1}{2} \frac{\sinh(1-(1-\ep)w)Af(w)}{w^2-\tilde{\mu}^2}dw\nonumber\\
&+\tilde{C}_1\sinh((1-\ep)x)\int_{\frac{1}{2}}^1\frac{\sinh((1-\ep)(1-w))Bf(w)}{w^2-\tilde{\nu}^2}dw\nonumber,
\end{align*}
satisfies
\begin{align*}
N''(x)-(1-\ep)^2N(x)&=0,\quad 0< x<1/2\\
N(0)&=0.
\end{align*}
Therefore, if we impose $N(1/2^-)=0$ then $N(x)=0$ for all $[0,1/2]$ and we have solved \eqref{F11}.   The same argument applies in the interval $[1/2,1]$ simply by imposing $N(1/2^+)=0$.

Now, we impose the previous conditions $N(1/2^{-})=0=N(1/2^+)$. That is, taking limits we find
\begin{align}\label{matrix}
\left(\begin{array}{cc}1+\sinh(b)I_1 & \sinh(a)I_2\\ \sinh(a)I_1  &  1+\sinh(b)I_2 \end{array}\right)\left(\begin{array}{cc} A \\ B\end{array}\right)=0,
\end{align}
with
\begin{align*}
I_1&\equiv \tilde{C}_1\int_{0}^\frac{1}{2}\sinh((1-\ep)w)\frac{f(w)}{w^2-\tilde{\mu}^2}dw,\\
I_2&\equiv \tilde{C}_1\int_{\frac{1}{2}}^1\sinh((1-\ep)(1-w))\frac{f(w)}{w^2-\tilde{\nu}^2}dw.
\end{align*}

Thus, if $(A,B)\neq (0,0)$ we obtain that
\begin{align*}
\det\left(\begin{array}{cc}1+\sinh(b)I_1 & \sinh(a)I_2\\ \sinh(a)I_1  &  1+\sinh(b)I_2 \end{array}\right)=0.
\end{align*}
The above is equivalent to
\begin{align}\label{detsol}
1
+\sinh(b)\left(I_1+I_2\right)
+\left(\sinh^2(b)-\sinh^2(a)\right)I_1I_2 =0.
\end{align}
We understand this last expression as an equation for $\tilde{\mu}$ and introduce our ansatz \eqref{tildemuansatz}. That is,
\begin{align*}
\tilde{\mu}= \frac{1}{2}+\tilde{\mu}_1(\ep)\ep =\frac{1}{2}+\left(\frac{1}{2}+\mu_1\right)\ep +\tilde{\mu}_2(\ep)\ep^2\log^2(\ep).
\end{align*}
Until now the parameter $\mu_1\in(-1/2,1/2)$ has remained free. We will fix it now. Indeed, we take $\mu_1$ as the only solution of
\begin{align}\label{eculimit}
&1+C_1\sinh(1/2)\int_{0}^\frac{1}{2}\frac{\sinh(x)f_0(x)-\sinh(1/2)}{x^2-1/4}dx\\
&+C_1\sinh(1/2)
\int_{1/2}^1\frac{\sinh(1-x)f_0(x)-\sinh(1/2)}{x^2-1/4}dx
\nonumber\\ &+C_1\sinh^2(1/2)\log\left(\frac{1+2\mu_1}{1-2\mu_1}\right)-C_1\sinh^2(1/2)\log(3)=0\nonumber.
\end{align}

The reason for defining  $\mu_1$ in this precise way becomes clearer below.

Here, we recall that  $f_0$ solves
\begin{align*}
f_0''(x)-\left(\frac{2}{x^2-1/4}+1\right)f_0(x)=0, \quad \text{for }0\leq x<1/2 \text{ and } 1/2 <x\leq 1,
\end{align*}
with
\[
f_0(0)=0=f_0(1), \quad \text{and} \quad f_0(1/2^-)=1=f_0(1/2^+).
\]
Notice that the integrals in \eqref{eculimit} are finite ($f_0$ behaves as $1+ (x-1/2)\log(|x-1/2|)$ around $x=1/2$). Therefore, there is only one $\mu_1\in(-1/2,1/2)$ that solves \eqref{eculimit}. Then,   $\tilde{\mu}_2(\ep)$ is our unknown quantity.

We will look for an equation of the type
\begin{align*}
\tilde{\mu}_2(\ep)=M[f_0,\mu_1,\ep]+ R[f_0,\mu_1, f,\tilde{\mu}_2,\ep],
\end{align*}
where $M$ is $O(1)$ and $R$ is $o(1)$ in $\ep$.

Notice that $f$ depends on $\tilde{\mu}_2(\ep)$. We will make this dependence explicit later when we needed it.

\subsubsection{The main term $I_1+I_2$}

We know study
\begin{align}
\label{I1masI2entreC}\frac{I_1+I_2}{\tilde{C}_1}&=\int_{0}^\frac{1}{2}\frac{\sinh((1-\ep)w)f(w)}{w^2-\tilde{\mu}^2}dw
+\int_{1/2}^1\frac{\sinh((1-\ep)(1-w))f(w)}{w^2-\tilde{\nu}^2}dw\\
&=\int_{0}^\frac{1}{2}\frac{\sinh((1-\ep)w)f(w)-\sinh((1-\ep)/2)}{w^2-\tilde{\mu}^2}dw\nonumber \\
&\quad +\int_{1/2}^1\frac{\sinh((1-\ep)(1-w))f(w)-\sinh((1-\ep)/2)}{w^2-\tilde{\nu}^2}dw\nonumber\\
&\quad +\sinh((1-\ep)/2)\left(\int_{0}^\frac{1}{2}\frac{1}{w^2-\tilde{\mu}^2}dw
+\int_{\frac{1}{2}}^1\frac{1}{w^2-\tilde{\nu}^2}dw\right)\nonumber\\
&\equiv J_1+J_2 \nonumber\\
&\quad +\sinh((1-\ep)/2)\left(\frac{1}{2\tilde{\mu}}\log\left(\frac{\tilde{\mu}-1/2}{\tilde{\mu}+1/2}\right)+\frac{1}{2\tilde{\nu}}
\log\left(\frac{(1-\tilde{\nu})(1/2+\tilde{\nu})}{(1+\tilde{\nu})(1/2-\tilde{\nu})}\right)\right).\nonumber
\end{align}
Here we note that the last factor of \eqref{I1masI2entreC} can be more conveniently decomposed as
\begin{align*}
&\frac{1}{2\tilde{\mu}}\log\left(\frac{\tilde{\mu}-1/2}{\tilde{\mu}+1/2}\right)+\frac{1}{2\tilde{\nu}}
\log\left(\frac{(1-\tilde{\nu})(1/2+\tilde{\nu})}{(1+\tilde{\nu})(1/2-\tilde{\nu})}\right)\\
&=\frac{1}{2\tilde{\mu}}\log(\tilde{\mu}-1/2)+\frac{1}{2\tilde{\nu}}\log\left(\frac{1}{1/2-\tilde{\nu}}\right)\\
&\quad +\frac{1}{2\tilde{\mu}}\log\left(\frac{1}{\tilde{\mu}+1/2}\right)
+\frac{1}{2\tilde{\nu}}\log\left(\frac{(1-\tilde{\nu})(1/2+\tilde{\nu})}{1+\tilde{\nu}}\right)\\
&=\frac{1}{2\tilde{\mu}}\log\left(\frac{\tilde{\mu}-1/2}{1/2-\tilde{\nu}}\right) +\left(\frac{1}{2\tilde{\nu}}-\frac{1}{2\tilde{\mu}}\right)\log\left(\frac{1}{1/2-\tilde{\nu}}\right)\\
&\quad +\frac{1}{2\tilde{\mu}}\log\left(\frac{1}{\tilde{\mu}+1/2}\right) +\frac{1}{2\tilde{\nu}}\log\left(\frac{(1-\tilde{\nu})(1/2+\tilde{\nu})}{1+\tilde{\nu}}\right).
\end{align*}

These terms will be analyzed in the following Lemma.

\begin{Lemma} \label{tedious}Let $|\tilde{\mu}_2(\ep)|<\tilde{\mu}_{2,\text{max}}<\infty$ and $-1/2<\mu_1<1/2$. Then  the following estimates hold:
\begin{enumerate}

\item \begin{align*}&\frac{1}{2\tilde{\mu}}\log\left(\frac{\tilde{\mu}-1/2}{1/2-\tilde{\nu}}\right)\\
&=\log\left(\frac{1/2+\mu_1}{1/2-\mu_1}\right)+\frac{\tilde{\mu}_2}{1/4-\mu_1^2}\ep\log^2(\ep)+o(\ep\log^2(\ep)),\end{align*}

\item \begin{align*}
\left(\frac{1}{2\tilde{\nu}}-\frac{1}{2\tilde{\mu}}\right)\log\left(\frac{1}{1/2-\tilde{\nu}}\right)=O(\ep\log(\ep))=o(\ep\log^2(\ep)),
\end{align*}

\item
\begin{align*}
\frac{1}{2\tilde{\nu}}\log\left(\frac{(1-\tilde{\nu})(1/2+\tilde{\nu})}{1+\tilde{\nu}}\right)=-\log(3)+O(\ep)=-\log(3)+o(\ep\log^2(\ep)),\end{align*}

\item
\[
\frac{1}{2\tilde{\mu}}\log\left(\frac{1}{\tilde{\mu}+1/2}\right)=o(\ep\log^2(\ep)).
\]

\end{enumerate}
\end{Lemma}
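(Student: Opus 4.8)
The plan is to reduce the whole statement to elementary Taylor expansions, once $\tilde\nu$ has been expressed in terms of $\tilde\mu$ and $\ep$. First I would record the identity
\[
\tilde\nu^{\,2}=\tilde\mu^{2}-\frac{\ep}{1-\ep},
\]
which follows at once from the definitions $\mu=(1-\ep)\tilde\mu$, $\nu-\ep=\sqrt{\ep^{2}-\ep+\mu^{2}}$ and $\tilde\nu=(\nu-\ep)/(1-\ep)$ (it is exactly what underlies \eqref{tildemumenorque}). Together with the ansatz $\tilde\mu=\tfrac12+\tilde\mu_1(\ep)\ep$, $\tilde\mu_1(\ep)=\big(\tfrac12+\mu_1\big)+\tilde\mu_2(\ep)\ep\log^{2}(\ep)$, and the hypotheses $-\tfrac12<\mu_1<\tfrac12$, $|\tilde\mu_2(\ep)|<\tilde\mu_{2,\max}$, this gives, for $\ep$ small, that $\tilde\mu_1(\ep)$ is positive, bounded, and converges to $\tfrac12+\mu_1\in(0,1)$, and yields the basic expansions
\begin{align*}
\tilde\mu-\tfrac12&=\tilde\mu_1(\ep)\ep, \qquad \tilde\mu+\tfrac12=1+O(\ep),\\
\tfrac12-\tilde\nu&=\frac{\tfrac{\ep}{1-\ep}-(\tilde\mu-\tfrac12)(\tilde\mu+\tfrac12)}{\tfrac12+\tilde\nu}
=\ep\Big[\big(\tfrac12-\mu_1\big)-\tilde\mu_2(\ep)\ep\log^{2}(\ep)+o(\ep\log^{2}(\ep))\Big],\\
\tilde\nu-\tfrac12&=O(\ep).
\end{align*}
The one bookkeeping fact used throughout is that $\ep=o(\ep\log^{2}(\ep))$ as $\ep\to0^{+}$: every honest $O(\ep)$ error (in particular the contributions of $\tfrac{1}{1-\ep}$, of $\tilde\mu+\tfrac12$, and of $\tfrac12+\tilde\nu=1+O(\ep)$) is absorbed into the $o(\ep\log^{2}(\ep))$ remainder.

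With these in hand, items (2)--(4) are immediate. For (4), $\frac{1}{2\tilde\mu}\log\frac1{\tilde\mu+1/2}=-\frac{1}{2\tilde\mu}\log\!\big(1+\tilde\mu_1(\ep)\ep\big)=O(\ep)=o(\ep\log^{2}(\ep))$. For (3), the map $t\mapsto\frac1{2t}\log\frac{(1-t)(1/2+t)}{1+t}$ is real-analytic near $t=\tfrac12$ with value $-\log3$ there, so evaluating at $t=\tilde\nu=\tfrac12+O(\ep)$ gives $-\log3+O(\ep)=-\log3+o(\ep\log^{2}(\ep))$. For (2), $\frac1{2\tilde\nu}-\frac1{2\tilde\mu}=\frac{\tilde\mu-\tilde\nu}{2\tilde\mu\tilde\nu}=O(\ep)$, since $\tilde\mu-\tilde\nu=(\tilde\mu-\tfrac12)+(\tfrac12-\tilde\nu)=O(\ep)$ and $2\tilde\mu\tilde\nu$ stays bounded away from $0$, while $\log\frac1{1/2-\tilde\nu}=\log(1/\ep)+O(1)$; the product is $O(\ep\log(1/\ep))=o(\ep\log^{2}(\ep))$.

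The only delicate point is (1), which I expect to be the main obstacle precisely because there the $\ep\log^{2}(\ep)$-order coefficient must be identified exactly, not merely bounded. I would split
\[
\frac1{2\tilde\mu}\log\!\Big(\frac{\tilde\mu-1/2}{1/2-\tilde\nu}\Big)=\frac1{2\tilde\mu}\Big[\log\tilde\mu_1(\ep)-\log\Big(\frac{1/2-\tilde\nu}{\ep}\Big)\Big],
\]
and expand each logarithm using $\log(1+x)=x+O(x^{2})$ together with the expansions above:
\[
\log\tilde\mu_1(\ep)=\log\!\big(\tfrac12+\mu_1\big)+\frac{\tilde\mu_2(\ep)}{\tfrac12+\mu_1}\,\ep\log^{2}(\ep)+o(\ep\log^{2}(\ep)),
\]
\[
\log\!\Big(\frac{1/2-\tilde\nu}{\ep}\Big)=\log\!\big(\tfrac12-\mu_1\big)-\frac{\tilde\mu_2(\ep)}{\tfrac12-\mu_1}\,\ep\log^{2}(\ep)+o(\ep\log^{2}(\ep)).
\]
Subtracting, the $O(1)$ part is $\log\frac{1/2+\mu_1}{1/2-\mu_1}$ and the $\ep\log^{2}(\ep)$ part carries the factor $\frac{1}{1/2+\mu_1}+\frac{1}{1/2-\mu_1}=\frac{1}{1/4-\mu_1^{2}}$; finally multiplying by $\frac1{2\tilde\mu}=1+O(\ep)$ changes neither term up to $o(\ep\log^{2}(\ep))$, which is exactly the assertion in (1). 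Everywhere the strict inequalities $-\tfrac12<\mu_1<\tfrac12$ and the positivity and boundedness of $\tilde\mu_1(\ep)$ ensure all logarithms are taken of quantities bounded away from $0$ and $\infty$ (after the indicated division by $\ep$), so the expansions hold uniformly in the admissible parameter range.
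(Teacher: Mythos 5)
Your proof is correct and follows essentially the same route as the paper's: you use the same identity $\tilde\nu^2=\tilde\mu^2-\tfrac{\ep}{1-\ep}$, the same expansion of $\tfrac14-\tilde\nu^2$ to first order in $\ep$ with the $\ep^2\log^2(\ep)$ correction, and the same bookkeeping rule $O(\ep)=o(\ep\log^2(\ep))$; in part (1) your split $\log\tilde\mu_1(\ep)-\log\big((1/2-\tilde\nu)/\ep\big)$ is just a rescaled version of the paper's ratio $(\tilde\mu-\tfrac12)(\tfrac12+\tilde\nu)/(\tfrac14-\tilde\nu^2)$ and leads to the same cancellation yielding $1/(1/4-\mu_1^2)$. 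The only genuine difference is in (3), where you invoke analyticity of $t\mapsto\tfrac{1}{2t}\log\frac{(1-t)(1/2+t)}{1+t}$ near $t=\tfrac12$ instead of computing the explicit first-order coefficient as the paper does; that is a small but welcome simplification.
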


\begin{proof}[Proof of (1)]
We will start by estimating
\begin{align*}
\log\left(\frac{\tilde{\mu}-1/2}{1/2-\tilde{\nu}}\right)=\log\left(\frac{(\tilde{\mu}-1/2)(1/2+\tilde{\nu})}{1/4-\tilde{\nu}^2}\right).
\end{align*}
Since $\tilde{\nu}^2=\tilde{\mu}^2-\ep/(1-\ep)$ and $\tilde{\mu}=1/2+\tilde{\mu}_1(\ep)\ep$ we obtain that the above denominator can be written as
\begin{align*}
1/4-\tilde{\nu}^2&=1/4-(\tilde{\mu}^2-\frac{\ep}{1-\ep})=(1/2-\tilde{\mu})(1/2+\tilde{\mu})+\frac{\ep}{1-\ep}=-\tilde{\mu}_1(\ep)\ep-\tilde{\mu}_1(\ep)^2\ep^2+\frac{\ep}{1-\ep}\\
&=(1-\tilde{\mu}_1(\ep))\ep +\ep^2\left(-\tilde{\mu}_1(\ep)^2+\frac{1}{1-\ep}\right)\\
&=(1/2-\mu_1)\ep-\tilde{\mu}_2(\ep)\ep^2\log^2(\ep) +\ep^2\left(-\tilde{\mu}_1(\ep)^2+\frac{1}{1-\ep}\right),
\end{align*}
where in the last step we have used that $\tilde{\mu}_1(\ep)=\left(\frac{1}{2}+\mu_1\right) +\tilde{\mu}_2(\ep)\ep\log^2(\ep)$.

Therefore
\begin{align*}
\frac{\tilde{\mu}-1/2}{1/2-\tilde{\nu}}&=\frac{(\tilde{\mu}-1/2)(1/2+\tilde{\nu})}{1/4-\tilde{\nu}^2}\\
&=\frac{\left(1/2+\mu_1\right)\ep +\tilde{\mu}_2(\ep)\ep^2\log^2(\ep)}{(1/2-\mu_1)\ep-\tilde{\mu}_2(\ep)\ep^2\log^2(\ep) +\ep^2\left(-\tilde{\mu}_1(\ep)^2+\frac{1}{1-\ep}\right)}(1/2+\tilde{\nu})\\
&=\frac{1/2+\mu_1}{1/2-\mu_1}
\frac{1+\frac{\tilde{\mu}_2(\ep)}{1/2+\mu_1}\ep\log^2(\ep)}{1-\frac{\tilde{\mu}_2(\ep)}{1/2-\mu_1}\ep\log^2(\ep)+O(\ep)}
(1/2+\tilde{\nu}),
\end{align*}
and we obtain that
\begin{align*}
\log\left(\frac{\tilde{\mu}-1/2}{1/2-\tilde{\nu}}\right)=&\log\left(\frac{1/2+\mu_1}{1/2-\mu_1}\right)\\
&+ \log\left(\frac{1+\frac{\tilde{\mu}_2(\ep)}{1/2+\mu_1}\ep\log^2(\ep)}{1-\frac{\tilde{\mu}_2(\ep)}{1/2-\mu_1}\ep\log^2(\ep)+O(\e)}\right)+\log(1/2+\tilde{\nu}).
\end{align*}
We can easily calculate that
\begin{align*}
1/2+\tilde{\nu}=1+\tilde{\nu}-1/2=1+\frac{\tilde{\nu}^2-1/4}{\tilde{\nu}+1/2}
=1+\frac{(\tilde{\mu}_1(\ep)-1)\ep+\ep^2\left(\tilde{\mu}_1(\ep)^2-\frac{1}{1-\ep}\right)}{\tilde{\nu}+1/2},
\end{align*}
which means
\begin{align*}
\log\left(1/2+\tilde{\nu}\right)=O(\ep)=o(\ep\log^2(\ep)).
\end{align*}
In addition, since
\[
\frac{1+\frac{\tilde{\mu}_2(\ep)}{1/2+\mu_1}\ep\log^2(\ep)}{1-\frac{\tilde{\mu}_2(\ep)}{1/2-\mu_1}\ep\log^2(\ep)+O(\ep)}=1+\frac{\frac{\tilde{\mu}_2(\ep)}{1/4-\mu_1^2}\ep\log^2(\ep)+O(\e)}{1-\frac{\tilde{\mu}_2(\ep)}{1/2-\mu_1}\ep\log^2(\ep)+O(\ep)},
\]
we can conclude that
\begin{align*}
\log\left(\frac{1+\frac{\tilde{\mu}_2(\ep)}{1/2+\mu_1}\ep\log^2(\ep)}{1-\frac{\tilde{\mu}_2(\ep)}{1/2-\mu_1}\ep\log^2(\ep)+O(\ep)}\right)=\frac{\tilde{\mu}_2(\ep)}{1/4-\mu_1^2}\ep\log^2(\ep)+ o(\ep\log^2(\ep)).
\end{align*}
Finally, since
\begin{align*}
\frac{1}{2\tilde{\mu}}-1=\frac{1-2\tilde{\mu}}{2\tilde{\mu}}=O(\ep),
\end{align*}
we find that
\begin{align*}&\frac{1}{2\tilde{\mu}}\log\left(\frac{\tilde{\mu}-1/2}{1/2-\tilde{\nu}}\right)\\
&=\log\left(\frac{1/2+\mu_1}{1/2-\mu_1}\right)+\frac{\tilde{\mu}_2}{1/4-\mu_1^2}\ep\log^2(\ep)+o(\ep\log^2(\ep)).
\end{align*}
\end{proof}

\begin{proof}[Proof of (2)] We begin by noting that
\begin{align*}
\left(\frac{1}{2\tilde{\nu}}-\frac{1}{2\tilde{\mu}}\right)\log\left(\frac{1}{1/2-\tilde{\nu}}\right)&=\frac{\tilde{\mu}-\tilde{\nu}}{2\tilde{\mu}\tilde{\nu}}\log\left(\frac{1/2+\tilde{\nu}}{1/4-\tilde{\nu}^2}\right)\\
&=\frac{\tilde{\mu}-\tilde{\nu}}{2\tilde{\mu}\tilde{\nu}}\left[\log\left(1/2+\tilde{\nu}\right)-\log\left(1/4-\tilde{\nu}^2\right)\right].
\end{align*}
Since we have seen before that $\log\left(1/2+\tilde{\nu}\right)=O(\ep)$ and $\log\left(1/4-\tilde{\nu}^2\right)=O(\log(\ep))$ together with the fact that
\[
\tilde{\mu}-\tilde{\nu}=\tilde{\mu}-\sqrt{\tilde{\mu}^2-\frac{\ep}{1-\ep}}=\frac{\frac{\ep}{1-\ep}}{\left(\tilde{\mu}+\sqrt{\tilde{\mu}^2-\frac{\ep}{1-\ep}}\right)}=O(\ep),
\]
we can conclude that this term behaves like $O(\ep\log(\ep))$.
\end{proof}

\begin{proof}[Proof of (3)] We have
\begin{align*}
\frac{(1-\tilde{\nu})(1/2+\tilde{\nu})}{1+\tilde{\nu}}&=\frac{1}{3}+\frac{5}{9}(1-\tilde{\mu}_1(\ep))\ep + O(\ep^2)\\
&=\frac{1}{3}+\frac{5}{9}(1/2-\mu_1)\ep -\tilde{\mu}_2(\ep)\ep^2\log^2(\ep)+ O(\ep^2).
\end{align*}
Since $2\tilde{\nu}=O(1),$ we get
\begin{align*}
\frac{1}{2\tilde{\nu}}\log\left(\frac{(1-\tilde{\nu})(1/2+\tilde{\nu})}{1+\tilde{\nu}}\right)=-\log(3)+O(\ep)=-\log(3)+o(\ep\log^2(\ep)).
\end{align*}
\end{proof}
\begin{proof}[Proof of (4)] This case follows trivially since
\begin{align*}
\frac{1}{2\tilde{\mu}}\log\left(\frac{1}{1/2+\tilde{\mu}}\right)&=-\log(1+\tilde{\mu}_1(\ep)\ep)+ O(\ep\log(\ep))\\
&=O(\ep)=o(\ep\log^2(\ep)).
\end{align*}
\end{proof}

Therefore, combining all the computations of Lemma \ref{tedious} we finally arrive to
\begin{align*}
&\frac{1}{2\tilde{\mu}}\log\left(\frac{\tilde{\mu}-1/2}{\tilde{\mu}+1/2}\right)+\frac{1}{2\nu}
\log\left(\frac{(1-\tilde{\nu})(1/2+\tilde{\nu})}{(1+\tilde{\nu})(1/2-\tilde{\nu})}\right)\\
& = \log\left(\frac{1/2+\mu_1}{1/2-\mu_1}\right)-\log(3)+ \frac{\tilde{\mu}_2(\ep)}{1/4-\mu_1^2}\ep\log^2(\ep)+o(\ep\log^2(\ep)).
\end{align*}

In addition, as an immediate consequence we obtain the next result.

\begin{Lemma}\label{tedious2}The following estimate holds:
\begin{align*}
&\sinh((1-\ep)/2)\left(\frac{1}{2\tilde{\mu}}\log\left(\frac{\tilde{\mu}-1/2}{\tilde{\mu}+1/2}\right)+\frac{1}{2\nu}
\log\left(\frac{(1-\tilde{\nu})(1/2+\tilde{\nu})}{(1+\tilde{\nu})(1/2-\tilde{\nu})}\right)\right)\\
& = \sinh(1/2)\log\left(\frac{1/2+\mu_1}{1/2-\mu_1}\right)-\sinh(1/2)\log(3)+ \sinh(1/2)\frac{\tilde{\mu}_2(\ep)}{1/4-\mu_1^2}\ep\log^2(\ep)+o(\ep\log^2(\ep)).
\end{align*}
\end{Lemma}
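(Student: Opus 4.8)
The statement of Lemma~\ref{tedious2} is an immediate consequence of Lemma~\ref{tedious}, so the plan is simply to assemble the four estimates of Lemma~\ref{tedious} and then multiply through by $\sinh((1-\ep)/2)$. First I would recall that the expression inside the large parentheses is precisely the quantity decomposed just before the statement of Lemma~\ref{tedious}, namely
\[
\frac{1}{2\tilde{\mu}}\log\left(\frac{\tilde{\mu}-1/2}{\tilde{\mu}+1/2}\right)+\frac{1}{2\tilde{\nu}}\log\left(\frac{(1-\tilde{\nu})(1/2+\tilde{\nu})}{(1+\tilde{\nu})(1/2-\tilde{\nu})}\right)=\frac{1}{2\tilde{\mu}}\log\left(\frac{\tilde{\mu}-1/2}{1/2-\tilde{\nu}}\right)+\left(\frac{1}{2\tilde{\nu}}-\frac{1}{2\tilde{\mu}}\right)\log\left(\frac{1}{1/2-\tilde{\nu}}\right)+\frac{1}{2\tilde{\mu}}\log\left(\frac{1}{\tilde{\mu}+1/2}\right)+\frac{1}{2\tilde{\nu}}\log\left(\frac{(1-\tilde{\nu})(1/2+\tilde{\nu})}{1+\tilde{\nu}}\right).
\]
Then I would substitute the four items of Lemma~\ref{tedious}: the first term equals $\log\!\big(\tfrac{1/2+\mu_1}{1/2-\mu_1}\big)+\tfrac{\tilde{\mu}_2}{1/4-\mu_1^2}\ep\log^2(\ep)+o(\ep\log^2(\ep))$, the second is $o(\ep\log^2(\ep))$, the fourth is $o(\ep\log^2(\ep))$, and the third is $-\log(3)+o(\ep\log^2(\ep))$. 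Adding these and collecting the $o(\ep\log^2(\ep))$ remainders gives that the parenthesized quantity equals $\log\!\big(\tfrac{1/2+\mu_1}{1/2-\mu_1}\big)-\log(3)+\tfrac{\tilde{\mu}_2(\ep)}{1/4-\mu_1^2}\ep\log^2(\ep)+o(\ep\log^2(\ep))$, which is exactly the displayed conclusion reached just above the statement of Lemma~\ref{tedious2}.

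The only remaining point is to multiply by the prefactor $\sinh((1-\ep)/2)$. Here I would use the Taylor expansion $\sinh((1-\ep)/2)=\sinh(1/2)-\tfrac{\ep}{2}\cosh(1/2)+O(\ep^2)=\sinh(1/2)+O(\ep)$. Multiplying a quantity of the form $c_0+c_1\ep\log^2(\ep)+o(\ep\log^2(\ep))$ (with $c_0=\log\!\big(\tfrac{1/2+\mu_1}{1/2-\mu_1}\big)-\log(3)$ bounded and $c_1=\tfrac{\tilde{\mu}_2(\ep)}{1/4-\mu_1^2}$ bounded uniformly in $\ep$ since $|\tilde{\mu}_2|<\tilde{\mu}_{2,\max}$ and $-1/2<\mu_1<1/2$) by $\sinh(1/2)+O(\ep)$ produces $\sinh(1/2)\,c_0+\sinh(1/2)\,c_1\ep\log^2(\ep)+O(\ep)\cdot c_0+O(\ep)\cdot c_1\ep\log^2(\ep)+o(\ep\log^2(\ep))$. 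Since $O(\ep)=o(\ep\log^2(\ep))$ and $O(\ep)\cdot\ep\log^2(\ep)=o(\ep\log^2(\ep))$, all the cross terms are absorbed into the error, and one obtains precisely
\[
\sinh(1/2)\log\left(\frac{1/2+\mu_1}{1/2-\mu_1}\right)-\sinh(1/2)\log(3)+\sinh(1/2)\frac{\tilde{\mu}_2(\ep)}{1/4-\mu_1^2}\ep\log^2(\ep)+o(\ep\log^2(\ep)),
\]
as claimed.

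There is essentially no obstacle here: the lemma is a bookkeeping corollary, and the only thing to be slightly careful about is that the implied constants in the $o(\ep\log^2(\ep))$ and $O(\ep)$ symbols are uniform in the admissible range of parameters, which is guaranteed by the standing hypotheses $|\tilde{\mu}_2(\ep)|<\tilde{\mu}_{2,\max}$ and $-1/2<\mu_1<1/2$ inherited from Lemma~\ref{tedious}. I would therefore simply write ``the result follows by combining the four estimates of Lemma~\ref{tedious} and multiplying by $\sinh((1-\ep)/2)=\sinh(1/2)+O(\ep)$, absorbing all lower-order contributions into the error term.''
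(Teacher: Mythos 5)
Your proposal is correct and matches the paper's approach exactly: the paper records the combined conclusion of the four items of Lemma~\ref{tedious} in the display immediately preceding Lemma~\ref{tedious2} and then states the lemma as an ``immediate consequence,'' i.e.\ the same multiplication by $\sinh((1-\ep)/2)=\sinh(1/2)+O(\ep)$ with the $O(\ep)$ cross-terms absorbed into $o(\ep\log^2(\ep))$ that you spell out.
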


Next we bound $J_1$ and $J_2$ in \eqref{I1masI2entreC}. To do so, we begin by pointing out that
\begin{align*}
J_1=&\int_{0}^\frac{1}{2}\frac{\sinh((1-\ep)w)f(w)-\sinh((1-\ep)/2)}{w^2-\tilde{\mu}^2}dw\\
=&\int_{0}^\frac{1}{2}\frac{\sinh((1-\ep)w)f_0(w)-\sinh((1-\ep)/2)}{w^2-\tilde{\mu}^2}dw +\int_{0}^\frac{1}{2}\frac{\sinh((1-\ep)w)(f(w)-f_0(w))}{w^2-\tilde{\mu}^2}dw,
\end{align*}
and
\begin{align*}
J_2&=\int_{1/2}^1\frac{\sinh((1-\ep)(1-w))f(w)-\sinh((1-\ep)/2)}{w^2-\tilde{\nu}^2}dw\\
&=\int_{1/2}^1\frac{\sinh((1-\ep)(1-w))f_0(w)-\sinh((1-\ep)/2)}{w^2-\tilde{\nu}^2}dw\\
&\quad + \int_{1/2}^1\frac{\sinh((1-\ep)(1-w))(f(w)-f_0(w))}{w^2-\tilde{\nu}^2}dw.
\end{align*}
Applying Lemmas \ref{mainLemma} and \ref{mainLemmaright} into the last integral terms of $J_1, J_2$ we obtain the following result.

\begin{Lemma} \label{J1}It holds that
\begin{align*}
J_1=&\int_{0}^\frac{1}{2}\frac{\sinh((1-\ep)w)f_0(w)-\sinh((1-\ep)/2)}{w^2-\tilde{\mu}^2}dw+M[f_0,\mu_1]+ R[f_0,\mu_1;f,\tilde{\mu}_2],\\
J_2=&\int_{\frac{1}{2}}^1\frac{\sinh((1-\ep)(1-w))f_0(w)-\sinh((1-\ep)/2)}{w^2-\tilde{\nu}^2}dw+M[f_0,\mu_1]+ R[f_0,\mu_1;f,\tilde{\mu}_2],
\end{align*}
with
\begin{align*}
M[f_0,\mu_1]&=O(\ep\log^2(\ep)),\\
R[f_0,\mu_1;f,\tilde{\mu}_2]&=o(\ep\log^2(\ep)).
\end{align*}
\end{Lemma}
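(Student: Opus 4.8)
The statement decomposes each of $J_1$ and $J_2$ into a "limiting" integral built from $f_0$ plus a remainder that is controlled by the earlier Lemmas \ref{mainLemma} and \ref{mainLemmaright}. The plan is therefore to split $J_1$ and $J_2$ exactly as already displayed just above the statement, namely writing $f=f_0+d$ inside the numerator and using linearity of the integral, and then to identify each resulting piece with one of the quantities that has already been estimated.

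First I would treat $J_1$. Write
\[
J_1=\int_{0}^{1/2}\frac{\sinh((1-\ep)w)f_0(w)-\sinh((1-\ep)/2)}{w^2-\tilde{\mu}^2}\,dw
+\int_{0}^{1/2}\frac{\sinh((1-\ep)w)\,d(w)}{w^2-\tilde{\mu}^2}\,dw,
\]
where $d=f-f_0$ is exactly the function studied in Section 6.5 (it solves the inhomogeneous ODE with $d(0)=d(1/2)=0$, so the variation-of-constants formula \eqref{diferencia} applies, with $W[f_0,g_1]\equiv 1$ by \eqref{Wrosk_cte}). The first integral is the main term appearing in the statement; it is finite because $f_0(w)=1+O((1/2-w)\log(1/2-w))$ near $w=1/2$, so the numerator vanishes to the right order against the simple zero of $w^2-\tilde\mu^2$ only at $w=1/2$ — and in fact $\tilde\mu>1/2$ keeps the denominator from vanishing on $[0,1/2]$ at all, by \eqref{tildemumayorque}. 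For the second integral, Lemma \ref{mainLemma} gives exactly
\[
\int_{0}^{1/2}\frac{\sinh((1-\ep)w)\,d(w)}{w^2-\tilde{\mu}^2}\,dw = M[f_0,\mu_1]+R[f_0,\mu_1;f,\tilde{\mu}_2],
\]
with $M=O(\ep\log^2(\ep))$ and $R=o(\ep\log^2(\ep))$, which is precisely the claimed form. The bound $J_2$ is handled identically, splitting the numerator as $\sinh((1-\ep)(1-w))f_0(w)-\sinh((1-\ep)/2)$ plus $\sinh((1-\ep)(1-w))\,d(w)$, invoking the right-interval analogue (here $\tilde{\rho}=-\tilde\nu$ and $\tilde\nu>1/2$ plays the role of $\tilde\mu$, and one must check the sign condition in Lemma \ref{mainLemmaright}), and applying Lemma \ref{mainLemmaright} to the $d$-integral.

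There is essentially no obstacle left at this stage: the whole content of the Lemma has been front-loaded into Lemmas \ref{Linfty}, \ref{firstLemma}, \ref{mainLemma} and \ref{mainLemmaright}. The only points requiring a line of justification are (i) that the first, $f_0$-only, integral in each of $J_1,J_2$ is indeed convergent — which follows from the logarithmic behavior of $f_0$ at $x=1/2$ recorded in \eqref{gdefi} together with $\tilde\mu,\tilde\nu>1/2$ — and (ii) that the numerator-splitting is legitimate, i.e.\ each of the two resulting integrals converges separately so that linearity may be applied; this is immediate since $d$ is bounded (Lemma \ref{Linfty}) and vanishes at the endpoints while $w^2-\tilde\mu^2$ (resp.\ $w^2-\tilde\nu^2$) is bounded away from zero. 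Assembling these observations yields the stated identities, completing the proof.
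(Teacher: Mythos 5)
Your proposal matches the paper's argument exactly: the decomposition of $J_1$ and $J_2$ into an $f_0$-integral plus a $d$-integral is already written just above the Lemma in the paper, and the proof consists precisely of invoking Lemma~\ref{mainLemma} (resp.\ Lemma~\ref{mainLemmaright}) for the $d$-integrals, as you do. One small slip in your write-up: you state that "$\tilde\nu>1/2$ plays the role of $\tilde\mu$," but in fact $\tilde\rho<\tilde\nu<1/2<\tilde\mu$ (see~\eqref{rhonumu}); what is relevant is that $\tilde\nu$ lies \emph{below} the interval $[1/2,1]$ just as $\tilde\mu$ lies \emph{above} $[0,1/2]$, which keeps each denominator sign-definite on the corresponding integration domain.
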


Putting together Lemmas \ref{tedious2} and \ref{J1} we get that \eqref{I1masI2entreC} can be written as:

\begin{align}\label{casi}
\frac{I_1+I_2}{\tilde{C}_1}=&  \int_{0}^\frac{1}{2}\frac{\sinh((1-\ep)w)f_0(w)-\sinh((1-\ep)/2)}{w^2-\tilde{\mu}^2}dw\\
&+\int_{1/2}^1\frac{\sinh((1-\ep)(1-w))f_0(w)-\sinh((1-\ep)/2)}{w^2-\tilde{\nu}^2}dw \nonumber\\
&+\sinh(1/2)\log\left(\frac{1/2+\mu_1}{1/2-\mu_1}\right)-\sinh(1/2)\log(3) \nonumber\\
&+M[f_0,\mu_1] +\sinh(1/2)\frac{\tilde{\mu}_2(\ep)}{1/4-\mu_1^2}\ep\log^2(\ep)\nonumber\\
&+ R[f_0,\mu_1;f,\tilde{\mu}_2], \nonumber
\end{align}
with
\begin{align*} M[f_0,\mu_1]&=O(\ep\log^2(\ep)),\\
R[f_0,\mu_1;f,\tilde{\mu}_2]&=o(\ep\log(\ep)).
\end{align*}

Since
\begin{align*}
\sinh((1+\ep)/2)\tilde{C}_1&= \sinh(1/2)C_1+ O(\ep)\\
&=\sinh(1/2)C_1+ o(\ep\log^2(\ep)),
\end{align*} the expression \eqref{casi} shows
\begin{align}\label{postcasi1}
\sinh((1+\ep)/2)(I_1+I_2)&=C_1\sinh(1/2) \int_{0}^\frac{1}{2}\frac{\sinh((1-\ep)w)f_0(w)-\sinh((1-\ep)/2)}{w^2-\tilde{\mu}^2}dw\\
&+C_1\sinh(1/2)\int_{1/2}^1\frac{\sinh((1-\ep)(1-w))f_0(w)-\sinh((1-\ep)/2)}{w^2-\tilde{\nu}^2}dw\nonumber\\
&+C_1\sinh^2(1/2)\log\left(\frac{1/2+\mu_1}{1/2-\mu_1}\right)-C_1\sinh^2(1/2)\log(3)\nonumber\\
&+M[f_0,\mu_1]+\sinh^2(1/2)C_1\frac{\tilde{\mu}_2(\ep)}{1/4-\mu_1^2}\ep\log^2(\ep)\nonumber \nonumber \\
&+ R[f_0,\mu_1;f,\tilde{\mu}_2]. \nonumber
\end{align}

In addition, we have
\begin{Lemma}The following estimates hold:
\begin{align}\label{postcasi2}
&\int_{0}^\frac{1}{2}\frac{\sinh((1-\ep)w)f_0(w)-\sinh((1-\ep)/2)}{w^2-\tilde{\mu}^2}dw\\
&=\int_{0}^\frac{1}{2}\frac{\sinh(w)f_0(w)-\sinh(1/2)}{w^2-1/4}dw+M[f_0,\mu_1]+R[f_0,\mu_1;f,\tilde{\mu}_2],\nonumber
\end{align}
and
\begin{align}\label{postcasi3}
&\int_\frac{1}{2}^1\frac{\sinh((1+\ep)w)f_0(w)-\sinh((1+\ep)/2)}{w^2-\tilde{\nu}^2}dw\\
&=\int_{\frac{1}{2}}^1\frac{\sinh(w)f_0(w)-\sinh(1/2)}{w^2-1/4}dw+M[f_0,\mu_1]+R[f_0,\mu_1;f,\tilde{\mu}_2],\nonumber
\end{align}
where
\begin{align*}
M[f_0,\mu_1]&=O(\ep\log^2(\ep),\\
R[f_0,\mu_1;f,\tilde{\mu}_2]&= o(\ep\log^2(\ep)).
\end{align*}
\end{Lemma}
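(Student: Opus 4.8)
\emph{Strategy.} Both identities compare an integral of the fixed integrand $\big(\sinh(w)f_0(w)-\sinh(1/2)\big)/(w^2-1/4)$ on $[0,1/2]$ (resp.\ on $[1/2,1]$) with the same integral after two changes: (i) the numerator is replaced by an $O(\ep)$ smooth perturbation of it, and (ii) the denominator is shifted from $w^2-1/4$ to $w^2-\tilde\mu^2$ (resp.\ $w^2-\tilde\nu^2$). I will show that change (i) contributes only $O(\ep)=o(\ep\log^2(\ep))$ and is dumped into $R$, while change (ii) is the sole source of the $\ep\log^2(\ep)$ main term; moreover the part of (ii) depending on $\tilde\mu_2(\ep)$ is $o(\ep\log^2(\ep))$, which is what lets $M$ be chosen $\tilde\mu_2$-independent.

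\emph{Step 1: the numerator.} For \eqref{postcasi2} set $N_\ep(w)=\sinh((1-\ep)w)f_0(w)-\sinh((1-\ep)/2)$ and $N_0(w)=\sinh(w)f_0(w)-\sinh(1/2)$. Writing $N_\ep-N_0=g_\ep(w)f_0(w)-g_\ep(1/2)$ with $g_\ep(w)=\sinh((1-\ep)w)-\sinh(w)$ of size $O(\ep)$ in $C^1([0,1/2])$, and using $f_0(1/2^-)=1$, this difference vanishes at $w=1/2$; the Frobenius expansion $f_0(w)=1+O\big((w-1/2)\log|w-1/2|\big)$ from Section \ref{frobenius} then gives $|N_\ep(w)-N_0(w)|\le C\ep\,(1/2-w)\big(1+\log\tfrac1{1/2-w}\big)$. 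Since $\tilde\mu>1/2$ with $\tilde\mu-1/2=\tilde\mu_1(\ep)\ep$, one has $|w^2-\tilde\mu^2|\ge c\big((1/2-w)+\ep\big)$ on $[0,1/2]$, whence $\int_0^{1/2}\frac{|N_\ep-N_0|}{|w^2-\tilde\mu^2|}\,dw\le C\ep=o(\ep\log^2(\ep))$. The numerator change in \eqref{postcasi3} ($\sinh((1+\ep)w)f_0(w)-\sinh((1+\ep)/2)$ versus $N_0$, again vanishing at $w=1/2$) is estimated identically.

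\emph{Steps 2--3: the denominator and the main term.} It remains to analyze
\begin{align*}
D:=\int_0^{1/2}N_0(w)\Big(\frac1{w^2-\tilde\mu^2}-\frac1{w^2-1/4}\Big)\,dw=(\tilde\mu^2-\tfrac14)\,I_{\tilde\mu},\qquad I_{\tilde\mu}:=\int_0^{1/2}\frac{N_0(w)}{(w^2-\tilde\mu^2)(w^2-1/4)}\,dw.
\end{align*}
Here $\tilde\mu^2-\tfrac14=\tilde\mu_1(\ep)\ep\,(1+\tilde\mu_1(\ep)\ep)=O(\ep)$; and from $|N_0(w)/(w^2-1/4)|\le C\big(1+\log\tfrac1{|w-1/2|}\big)$ (again the Frobenius behavior of $f_0$) together with $|w^2-\tilde\mu^2|\ge c((1/2-w)+\ep)$ one gets $|I_{\tilde\mu}|\le C\int_0^{1/2}\frac{1+\log\frac1{1/2-w}}{(1/2-w)+\ep}\,dw\le C\log^2(1/\ep)$, so $D=O(\ep\log^2(\ep))$. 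Now set $\bar\mu:=\tfrac12+(\tfrac12+\mu_1)\ep$, depending only on $\mu_1,\ep$, so $\tilde\mu-\bar\mu=\tilde\mu_2(\ep)\ep^2\log^2(\ep)$ and $\tilde\mu^2-\bar\mu^2=O(\ep^2\log^2(\ep))$; with $I_{\bar\mu}$ the analogue of $I_{\tilde\mu}$ (also $O(\log^2(1/\ep))$, since $\bar\mu>1/2$, $\bar\mu-1/2=O(\ep)$), write
\begin{align*}
D=\underbrace{(\bar\mu^2-\tfrac14)\,I_{\bar\mu}}_{=:\,M[f_0,\mu_1]}+(\tilde\mu^2-\tfrac14)(I_{\tilde\mu}-I_{\bar\mu})+(\tilde\mu^2-\bar\mu^2)\,I_{\bar\mu}.
\end{align*}
Since $I_{\tilde\mu}-I_{\bar\mu}=(\tilde\mu^2-\bar\mu^2)\int_0^{1/2}\frac{N_0}{(w^2-\tilde\mu^2)(w^2-\bar\mu^2)(w^2-1/4)}\,dw=O(\ep^2\log^2(\ep))\cdot O(\ep^{-1}\log(1/\ep))$, both last terms are $o(\ep\log^2(\ep))$ (of orders $\ep^2\log^3(\ep)$ and $\ep^2\log^4(\ep)$). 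Thus $M[f_0,\mu_1]$ depends only on $f_0,\mu_1,\ep$, is $O(\ep\log^2(\ep))$, and all discarded terms (including those of Step 1) go into $R[f_0,\mu_1;f,\tilde\mu_2]=o(\ep\log^2(\ep))$. This proves \eqref{postcasi2}. For \eqref{postcasi3} one repeats the argument with $\tilde\nu$ in place of $\tilde\mu$: from $\tilde\nu^2=\tilde\mu^2-\ep/(1-\ep)$ one has $|\tilde\nu-\tfrac12|=O(\ep)$ and $\tilde\nu<\tfrac12$, so the singularity $w=\tilde\nu$ stays off $[1/2,1]$ and $|w^2-\tilde\nu^2|\ge c\big((w-\tfrac12)+\ep\big)$ there, which is all that was used.

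\emph{Main obstacle.} The delicate point is the size bookkeeping in Steps 2--3: one must certify that $I_{\tilde\mu}$ is only $O(\log^2(\ep))$ — not $O(\ep^{-1})$ — which hinges on the precise indicial/Frobenius behavior $f_0(w)=1+O((w-1/2)\log|w-1/2|)$ near $w=1/2$ from Section \ref{frobenius}, ensuring $N_0$ vanishes at exactly the right rate so that a factor of $(w-1/2)$ cancels one near-singular denominator. The second subtlety is confirming that the $\tilde\mu_2$-dependent corrections enter only at order $\ep^2\log^k(\ep)$, hence are absorbed into $R$, which is precisely what legitimizes a $\tilde\mu_2$-independent $M$.
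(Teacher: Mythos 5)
Your proof is correct, but it follows a genuinely different route from the paper's. The paper applies partial fractions to the denominator, writing $\tfrac{1}{w^2-\tilde\mu^2}=\tfrac{1}{2\tilde\mu}\bigl(\tfrac{1}{w-\tilde\mu}-\tfrac{1}{w+\tilde\mu}\bigr)$, and then for each resulting piece performs a chain of add-and-subtract manipulations ($A_1=A_{11}+A_{12}+A_{13}$, $A_{11}=\ep\tilde\mu_1(\ep)(A_{111}+A_{112})$, etc.), isolating the target $\int\frac{\sinh(w)f_0(w)-\sinh(1/2)}{w^2-1/4}\,dw$ as $A_{13}-A_{23}$ and extracting the $\tilde\mu_2$-independent leading part by expanding $\log(\ep\tilde\mu_1(\ep))=\log\bigl((\tfrac12+\mu_1)\ep\bigr)+\log\bigl(1+\tfrac{\tilde\mu_2}{1/2+\mu_1}\ep\log^2\ep\bigr)$ inside the explicit computations. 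You instead split the perturbation into two orthogonal moves — numerator change (estimated directly as $O(\ep)$ and absorbed into $R$) and denominator change $D=(\tilde\mu^2-\tfrac14)I_{\tilde\mu}$ — and then decouple $M$ from $\tilde\mu_2$ by comparing to a reference shift $\bar\mu=\tfrac12+(\tfrac12+\mu_1)\ep$ that is $\tilde\mu_2$-independent by construction, showing $D-(\bar\mu^2-\tfrac14)I_{\bar\mu}=o(\ep\log^2\ep)$ via the resolvent-type identity $I_{\tilde\mu}-I_{\bar\mu}=(\tilde\mu^2-\bar\mu^2)\int N_0/[(w^2-\tilde\mu^2)(w^2-\bar\mu^2)(w^2-1/4)]$. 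Your version is more structured and avoids most of the explicit integral evaluations (it relies only on the size estimates $|N_0(w)/(w^2-1/4)|\lesssim 1+\log\tfrac{1}{|w-1/2|}$ and $|w^2-\tilde\mu^2|\gtrsim (1/2-w)+\ep$), whereas the paper's version produces explicit formulas for the leading coefficients — information that happens not to be needed here since only the $M+R$ size bookkeeping is used downstream. One minor remark: your $M$, defined as $(\bar\mu^2-\tfrac14)I_{\bar\mu}$, depends on $\ep$ as well as on $f_0,\mu_1$; this is consistent with the paper (whose $M$ is also implicitly $\ep$-dependent, as the later notation $M[f_0,\mu_1,\ep]$ makes explicit), and the only property that matters for the subsequent Brouwer argument is independence of $f$ and $\tilde\mu_2$, which your construction secures transparently.
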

\begin{proof}
Since the proofs of \eqref{postcasi2} and \eqref{postcasi3} follow similar ideas, we will just  give all the details of the former.

Let us start with \eqref{postcasi2}. Since
\begin{align*}
\frac{1}{w^2-\tilde{\mu}^2}=\frac{1}{2\tilde{\mu}}\left(\frac{1}{w-\tilde{\mu}}-\frac{1}{w+\tilde{\mu}}\right),
\end{align*}
we have that
\begin{align*}
\int_{0}^\frac{1}{2}\frac{\sinh((1-\ep)w)f_0(w)-\sinh((1-\ep)/2)}{w^2-\tilde{\mu}^2}dw=
&\frac{1}{2\tilde{\mu}}\int_0^{\frac{1}{2}}\frac{\sinh((1-\ep)w)f_0(w)-\sinh((1-\ep)/2)}{w-\tilde{\mu}}dw\\
&-\frac{1}{2\tilde{\mu}}\int_0^\frac{1}{2}\frac{\sinh((1-\ep)w)f_0(w)-\sinh((1-\ep)/2)}{w+\tilde{\mu}}dw\\
\equiv & \frac{1}{2\tilde{\mu}}\left(A_1-A_2\right).
\end{align*}
Here, we observe that the prefactor is only
\begin{align*}
\frac{1}{2\tilde{\mu}}=1-\frac{2\tilde{\mu}_1(\ep)}{2\tilde{\mu}}=1-
O(\ep).
\end{align*}
Now we analyze $A_1$ and $A_2$ separately. For $A_1$ we have that
\begin{align*}
A_1=&\int_{0}^\frac{1}{2}\frac{\sinh((1-\ep)w)f_0(w)-\sinh((1-\ep)/2)}{w-\tilde{\mu}}dw\\
=& \int_{0}^\frac{1}{2}\sinh((1-\ep)w)f_0(w)-\sinh((1-\ep)/2)\left(\frac{1}{w-\tilde{\mu}}-\frac{1}{w-1/2}\right)dw\\
&+ \int_{0}^\frac{1}{2}\frac{\left(\sinh((1-\ep)w)f_0(w)-\sinh((1-\ep)/2)\right)-\left(\sinh(w)f_0(w)-\sinh(1/2)\right)}{w-1/2}dw\\
&+ \int_{0}^\frac{1}{2}\frac{\sinh(w)f_0(w)-\sinh(1/2)}{w-1/2}dw\\
\equiv & A_{11}+A_{12}+A_{13}.
\end{align*}
For $A_2$ doing the same we have
\begin{align*}
A_2=&\int_{0}^\frac{1}{2}\frac{\sinh((1-\ep)w)f_0(w)-\sinh((1-\ep)/2)}{w+\tilde{\mu}}dw\\
=& \int_{0}^\frac{1}{2}\sinh((1-\ep)w)f_0(w)-\sinh((1-\ep)/2)\left(\frac{1}{w+\tilde{\mu}}-\frac{1}{w+1/2}\right)dw\\
&+ \int_{0}^\frac{1}{2}\frac{\left(\sinh((1-\ep)w)f_0(w)-\sinh((1-\ep)/2)\right)-\left(\sinh(w)f_0(w)-\sinh(1/2)\right)}{w+1/2}dw\\
&+ \int_{0}^\frac{1}{2}\frac{\sinh(w)f_0(w)-\sinh(1/2)}{w+1/2}dw\\
\equiv & A_{21}+A_{22}+A_{23}.
\end{align*}
In particular, we have that
\begin{align*}
\frac{1}{2\tilde{\mu}}\left(A_1-A_2\right)=\frac{1}{2\tilde{\mu}}\left(A_{11}+A_{12}-A_{21}-A_{22}\right)+(1+O(\ep))(A_{13}-A_{23}),
\end{align*}
with
\[
A_{13}-A_{23}=\int_{0}^\frac{1}{2}\frac{\sinh(w)f_0(w)-\sinh(1/2)}{w^2-1/4}dw.
\]
Moreover, since the above integral is finite, we also have that $O(\ep)(A_{13}-A_{23})=O(\ep)=o(\ep\log^2(\ep))$.
To conclude, we must study each of the remaining terms in detail.

With $A_{11}$ we can proceed as follows
\begin{align*}
A_{11}=\int_{0}^\frac{1}{2}\sinh((1-\ep)w)f_0(w)-\sinh((1-\ep)/2)\frac{\ep\tilde{\mu}_1(\ep)}{(w-\tilde{\mu})(w-1/2)}dw.
\end{align*}
Now we use that
$$f_0(w)=1+f_0^{(1)}(w-1/2)\log(1/2-w)+f_0^R(w),$$
where $f_0^R(w)=O((w-1/2)^2\log(1/2-w))$. We get that
 \begin{align*}
A_{11}=&\ep\tilde{\mu}_1(\ep)\int_{0}^\frac{1}{2}\frac{\sinh((1-\ep)w)f_0(w)-\sinh((1-\ep)/2)}{(w-\tilde{\mu})(w-1/2)}dw\\
=&\ep \tilde{\mu}_1(\ep)\int_{0}^\frac{1}{2}\frac{\sinh((1-\ep)w)-\sinh((1-\ep)/2)}{(w-\tilde{\mu})(w-1/2)}dw\\
&+\ep\tilde{\mu}_1(\ep)\int_{0}^\frac{1}{2}\sinh((1-\ep)w) \frac{\log(1/2-w)f_0^{(1)}+\frac{f_0^R(w)}{w-1/2}}{w-\tilde{\mu}}dw\\
\equiv & \ep\tilde{\mu}_1(\ep)\left(A_{111}+A_{112}\right).
\end{align*}
For $A_{111}$ we have that
\begin{align*}
A_{111}=&\int_{0}^\frac{1}{2}\frac{A_{111}(w)-A_{111}(1/2)}{w-\tilde{\mu}}dw
+A_{111}(1/2)\int_{0}^\frac{1}{2}\frac{1}{w-\tilde{\mu}}dw\\
=&\int_{0}^\frac{1}{2}\frac{A_{111}(w)-A_{111}(1/2)}{w-1/2}\frac{w-1/2}{w-\tilde{\mu}}dw
+A_{111}(1/2)\left(\log(\ep\tilde{\mu}_1(\ep))-\log(\tilde{\mu})\right)\\
=&O(1)+A_{111}(1/2)\log(\ep\tilde{\mu}_1(\ep)),
\end{align*}
where, abusing the notation a little, we have used
\begin{align*}
A_{111}(w)\equiv \frac{\sinh((1-\ep)w)-\sinh((1-\ep)/2)}{w-1/2}.
\end{align*}
Since
\begin{align*}
\log(\ep\tilde{\mu}_1(\ep))=&\log\left(\left(\frac{1}{2}+\mu_1\right)\ep+\tilde{\mu}_2(\ep)\ep^2\log^2(\ep)\right)\\
=&\log\left(\left(\frac{1}{2}+\mu_1\right)\ep\right)
+\log\left(1+\frac{\tilde{\mu}_2(\ep)}{\frac{1}{2}+\mu_1}\ep\log^2(\ep)\right).
\end{align*}
Thus
\begin{equation}\label{aux:A111}
\ep \tilde{\mu}_1(\ep)A_{111}=M[f_0,\mu_1]+R[f_0,\mu_1;\tilde{\mu}_2],
\end{equation}
where
\begin{align*}
M[f_0,\mu_1]&=O(\ep\log^2(\ep)),\\
R[f_0,\mu_1;\tilde{\mu}_2]&=o(\ep\log^2(\ep)).
\end{align*}
For $A_{112}$ we have that
\begin{align*}
A_{112}=&\int_{0}^\frac{1}{2}\sinh((1-\ep)w)\frac{\log(1/2-w)f_0^{(1)}}{w-\tilde{\mu}}dw+
\int_{0}^\frac{1}{2}\sinh((1-\ep)w)\frac{f_0^R(w)}{(w-1/2)^2}\frac{w-\frac{1}{2}}{w-\tilde{\mu}}dw\\
=&f_0^{(1)}\int_{0}^\frac{1}{2}\sinh((1-\ep)w)\frac{\log(1/2-w)}{w-\tilde{\mu}}dw+O(1),
\end{align*}
since $\left|\sinh((1-\ep)w) \frac{f_0^R(w)}{(w-1/2)^2}\right|$ is integrable in $[0,1/2]$.

The remaining integral can be handled as
\begin{multline*}
\int_{0}^\frac{1}{2}\sinh((1-\ep)w)\frac{\log(1/2-w)}{w-\tilde{\mu}}dw=\int_{0}^\frac{1}{2}\frac{\sinh((1-\ep)w)-\sinh((1-\ep)/2)}{w-1/2}\frac{w-1/2}{w-\tilde{\mu}} \log(1/2-w)dw\\
+\sinh((1-\ep)/2)\int_{0}^\frac{1}{2}\frac{\log(1/2-w)}{w-\tilde{\mu}}dw.
\end{multline*}
Here we pay special attention to the most delicate term. This is
\begin{align}\label{aux:remainingINT}
\int_{0}^\frac{1}{2}\frac{\log(1/2-w)}{w-\tilde{\mu}}dw=O(\log^2(\ep\tilde{\mu}_1(\ep)).
\end{align}
%\begin{Lemma}
%\begin{align*}
%\int_{0}^\frac{1}{2}\frac{\log(1/2-w)}{1/2+\tilde{\mu}_1-w}dw = O(\log^2(\tilde{\mu}_1).
%\end{align*}
%\end{Lemma}

\begin{proof}[Proof of \eqref{aux:remainingINT}]By making the appropriate changes of variable we obtain
\begin{align*}
\int_{0}^\frac{1}{2}\frac{\log(1/2-w)}{1/2+\ep\tilde{\mu}_1(\ep)-w}dw=&\int_0^\frac{1}{2}\frac{\log(w)}{w+\ep \tilde{\mu}_1(\ep)}dw=
\int_{0}^\frac{1}{2\ep \tilde{\mu}_1(\ep)}\frac{\log(\ep\tilde{\mu}_1(\ep)w)}{w+1}dw\\
&= \log(\ep\tilde{ \mu}_1(\ep))\log\left(1+\frac{1}{2\ep\tilde{\mu}_1(\ep)}\right)+\int_{0}^\frac{1}{2\ep\tilde{\mu}_1(\ep)}\frac{\log(w)}{1+w}dw.
\end{align*}
Notice that
\[
\log(\ep\tilde{ \mu}_1(\ep))\log\left(1+\frac{1}{2\ep\tilde{\mu}_1(\ep)}\right)
=\frac{1}{\ep}\left(M[f_0,\mu_1]+R[f_0,\mu_1;f,\tilde{\mu}_2]\right).
\]
In addition
\begin{align*}
\int_{0}^\frac{1}{2\ep \tilde{\mu}_1(\ep)}\frac{\log(w)}{w+1}dw= \int_{0}^1\frac{\log(w)}{w+1}dw
+\int_{1}^\frac{1}{2\ep\tilde{\mu}_1(\ep)}\frac{\log(w)}{w+1}dw,
\end{align*}
where
\begin{align*}
\int_{1}^\frac{1}{2\ep\tilde{\mu}_1(\ep)}\frac{\log(w)}{w+1}dw = \int_{1}^\frac{1}{2\ep\tilde{\mu}_1(\ep)}\left(\frac{1}{w+1}-\frac{1}{w}\right)\log(w)dw+\int_{1}^\frac{1}{2\ep\tilde{\mu}_1(\ep)}\frac{\log(w)}{w}dw.
\end{align*}
Applying integration by parts we obtain that
\begin{align*}
\int_{1}^\frac{1}{2\ep\tilde{\mu}_1(\ep)}\frac{\log(w)}{w}dw=\log^2(2\ep\tilde{\mu}_1(\ep))-\int_{1}^\frac{1}{2\ep\tilde{\mu}_1(\ep)}\frac{\log(w)}{w}dw,
\end{align*}
which yields
\begin{align*}
\int_{1}^\frac{1}{2\ep\tilde{\mu}_1(\ep)}\frac{\log(w)}{w}dw=\frac{1}{2}\log^2(2\ep\tilde{\mu}_1(\ep)).
\end{align*}
\end{proof}

Thus
\begin{equation}\label{aux:A112}
\ep\tilde{\mu}_1(\ep)A_{112}=M[f_0,\mu_1]+R[f_0,\mu_1;f,\tilde{\mu}_2],
\end{equation}
where
\begin{align*}
M[f_0,\mu_1]&=O(\ep\log^2(\ep)),\\
R[f_0,\mu_1;f,\tilde{\mu}_2]&=o(\ep\log^2(\ep)).
\end{align*}

Consequently, combining \eqref{aux:A111} and \eqref{aux:A112} we have proved the desired computation for the term $A_{11}$.
The estimation of the other integral term $A_{12}$ follows similar steps. Finally, it is easy to check that $A_{21}, A_{22}=O(1)$.
\end{proof}

Then, combining \eqref{postcasi1}-\eqref{postcasi2}-\eqref{postcasi3} we obtain
\begin{align*}
\sinh(b)(I_1+I_2)&=C_1\sinh(1/2)\int_{0}^\frac{1}{2}\frac{\sinh(w)f_0(w)-\sinh(1/2)}{w^2-1/4}dw\\
&+C_1\sinh(1/2)\int_{\frac{1}{2}}^1\frac{\sinh(1-w)f_0(w)-\sinh(1/2)}{w^2-1/4}dw\\
&+C_1\sinh^2(1/2)\log\left(\frac{1/2+\mu_1}{1/2-\mu_1}\right)-C_1\sinh^2(1/2)\log(3)\nonumber\\
&+M[f_0,\mu_1]+\sinh^2(1/2)C_1\frac{\tilde{\mu}_2(\ep)}{1/4-\mu_1^2}\ep\log^2(\ep)\nonumber \nonumber \\
&+ R[f_0,\mu_1;f,\tilde{\mu}_2].
\end{align*}
Now, using expression \eqref{eculimit}, we achieve the desired result which we collect in the following Lemma.

\begin{Lemma}\label{auxl1} The following relation holds:
\begin{align*}
1+\sinh(b)(I_1+I_2)=\sinh^2(1/2)C_1\frac{\tilde{\mu}_2(\ep)}{1/4-\mu_1^2}\ep\log^2(\ep)+M[f_0,\mu_1]+ R[f_0,\mu_1;f,\tilde{\mu}_2],
\end{align*}
with
\begin{align*}
 M[f_0,\mu_1]=&O(\ep\log^2(\ep)),\\
 R[f_0,\mu_1;f,\tilde{\mu}_2]=&o(\ep\log^2(\ep)).
 \end{align*}
 \end{Lemma}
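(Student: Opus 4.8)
The plan is to assemble the ingredients established in the preceding subsection; no new estimate is needed, only careful bookkeeping. The starting point is the decomposition \eqref{I1masI2entreC}, which writes $(I_1+I_2)/\tilde C_1$ as the sum of the two regularized integrals $J_1,J_2$ and an explicit logarithmic term built from $\tilde\mu$ and $\tilde\nu$. I would handle these three groups one at a time, then recombine, multiply through by $\sinh(b)\tilde C_1$, add $1$, and invoke the defining relation \eqref{eculimit} for $\mu_1$ to produce the stated cancellation.

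For the logarithmic group, Lemma \ref{tedious2} — itself a consequence of the four estimates collected in Lemma \ref{tedious} — already isolates the only $O(\ep\log^2(\ep))$ contribution, namely $\sinh(1/2)\frac{\tilde\mu_2(\ep)}{1/4-\mu_1^2}\ep\log^2(\ep)$, together with the two $\ep$-independent constants $\sinh(1/2)\log\bigl(\frac{1/2+\mu_1}{1/2-\mu_1}\bigr)$ and $-\sinh(1/2)\log(3)$, the rest being $o(\ep\log^2(\ep))$. For $J_1$ and $J_2$, Lemma \ref{J1} (which in turn relies on Lemmas \ref{mainLemma} and \ref{mainLemmaright} to control the $f-f_0$ part) rewrites each of them as the corresponding $f_0$-integral with kernel $1/(w^2-\tilde\mu^2)$, respectively $1/(w^2-\tilde\nu^2)$, modulo $M[f_0,\mu_1]+R[f_0,\mu_1;f,\tilde\mu_2]$; then \eqref{postcasi2}--\eqref{postcasi3} let me replace $\tilde\mu,\tilde\nu$ by $1/2$ and $(1-\ep)$ by $1$ inside those integrals, again at the cost of $M+R$. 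After this step $(I_1+I_2)/\tilde C_1$ is the sum of the two limiting integrals $\int_0^{1/2}\frac{\sinh(w)f_0(w)-\sinh(1/2)}{w^2-1/4}dw$ and $\int_{1/2}^1\frac{\sinh(1-w)f_0(w)-\sinh(1/2)}{w^2-1/4}dw$ — precisely the ones appearing in \eqref{eculimit} — plus the two constants, plus $\sinh(1/2)\frac{\tilde\mu_2(\ep)}{1/4-\mu_1^2}\ep\log^2(\ep)$, plus $M[f_0,\mu_1]+R[f_0,\mu_1;f,\tilde\mu_2]$; this is \eqref{casi}.

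To finish I would multiply by $\sinh(b)\tilde C_1=\sinh((1+\ep)/2)\tilde C_1=\sinh(1/2)C_1+O(\ep)$, absorbing the $O(\ep)$ into $R$, to reach \eqref{postcasi1}, then add $1$ and apply \eqref{eculimit}: by the very choice of $\mu_1$, the quantity $1+C_1\sinh(1/2)\bigl[\text{the two limiting integrals}\bigr]+C_1\sinh^2(1/2)\log\bigl(\frac{1/2+\mu_1}{1/2-\mu_1}\bigr)-C_1\sinh^2(1/2)\log(3)$ vanishes identically, so what survives of $1+\sinh(b)(I_1+I_2)$ is exactly $\sinh^2(1/2)C_1\,\frac{\tilde\mu_2(\ep)}{1/4-\mu_1^2}\ep\log^2(\ep)$ plus the accumulated $M$ and $R$, which is the assertion. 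The one delicate point — and the main obstacle — is the bookkeeping of errors: I must verify that every $O(\ep\log^2(\ep))$ contribution produced along the way carries a coefficient depending only on $f_0$ and $\mu_1$, never on $f$ or $\tilde\mu_2$, so that it may legitimately be placed in $M[f_0,\mu_1]$, while every $\tilde\mu_2$-dependent piece is genuinely $o(\ep\log^2(\ep))$ and hence belongs in $R[f_0,\mu_1;f,\tilde\mu_2]$. This is precisely the structure already extracted in Lemmas \ref{mainLemma}, \ref{mainLemmaright} and in the proof of Lemma \ref{tedious}, where the differences carrying the marker $\tilde\mu_2(\ep)\ep^2\log^2(\ep)$ were each shown to contribute only at order $\ep^2\log^k(\ep)=o(\ep\log^2(\ep))$; so the proof reduces to tracking these markers carefully through the recombination.
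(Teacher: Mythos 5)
Your proposal is correct and follows essentially the same route as the paper: you start from the decomposition \eqref{I1masI2entreC}, collect the logarithmic group via Lemma \ref{tedious2}, handle $J_1,J_2$ via Lemma \ref{J1} and \eqref{postcasi2}--\eqref{postcasi3}, multiply through by $\sinh(b)\tilde{C}_1$, and invoke the defining relation \eqref{eculimit} for $\mu_1$ to cancel the $O(1)$ part. The only cosmetic difference is that you apply \eqref{postcasi2}--\eqref{postcasi3} before rather than after the multiplication by $\sinh(b)\tilde{C}_1$, which is harmless, and you correctly flag the one genuinely delicate point, namely that the $M$-terms must be $(f,\tilde{\mu}_2)$-independent while the $\tilde{\mu}_2$-dependent pieces must be $o(\ep\log^2(\ep))$.
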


\subsubsection{The term $I_1I_2$}

We have that
%\begin{align*}
%&\left|\frac{I_1}{\tilde{C}_1}\right|=\left|\int_{0}^\frac{1}{2} \frac{\sinh((1-\ep)w)f(w)}{w^2-\tilde{\mu}^2}\right|
%\\&\leq C||f||_{L^\infty}\int_{0}^\frac{1}{2}\frac{1}{\mu-w}dw \\&\leq C||f||_{L^\infty}\log\left(\frac{\tilde{\mu}-1/2}{\tilde{\mu}}\right)\\
%&\leq C||f||_{L^\infty}\log\left((1/2+\mu_1)\ep+\tilde{\mu}_2(\ep)\ep^2\log^2(\ep)\right)\\
%&\leq C ||f||_{L^\infty}\log(1/\ep)+C||f||_{L^\infty}\log\left(1+\frac{\tilde{\mu}_2(\ep)}{1/2+\mu_1}\ep\log^2(\ep)\right) \\
%&\leq C||f||_{L^\infty}\log(1/\ep)+C||f||_{L^\infty}\ep\log^2(\ep)\\
%&\leq C ||f_0||_{L^\infty}\log(1/\ep)+ C(||d||_{L^\infty}+||f||_{L^\infty})\ep\log^2(\ep)
%\end{align*}

\begin{align*}
\frac{I_1}{\tilde{C}_1}&=\int_{0}^\frac{1}{2} \frac{\sinh((1-\ep)w)f(w)}{w^2-\tilde{\mu}^2}dw\\
&=\int_{0}^\frac{1}{2} \frac{\sinh((1-\ep)w)(f(w)-f_0(w))}{w^2-\tilde{\mu}^2}dw+\int_{0}^\frac{1}{2} \frac{\sinh((1-\ep)w)f_0(w)}{w^2-\tilde{\mu}^2}dw.
\end{align*}
For the first integral, we get
\begin{align*}
\left|\int_{0}^\frac{1}{2} \frac{\sinh((1-\ep)w)(f(w)-f_0(w))}{w^2-\tilde{\mu}^2}dw\right|\leq C||f-f_0||_{L^\infty}\log(1/\ep)\leq C \ep\log^2(\ep),
\end{align*}
where, in the last step, we have used Lemma \ref{Linfty}.
In addition
\begin{align*}
\left|\int_{0}^\frac{1}{2} \frac{\sinh((1-\ep)w)f_0(w)}{w^2-\tilde{\mu}^2}\right|
&\leq C||f_0||_{L^\infty}\int_{0}^\frac{1}{2}\frac{1}{\tilde{\mu}-w}dw \\&\leq C||f_0||_{L^\infty}\log\left(\frac{\tilde{\mu}-1/2}{\tilde{\mu}}\right)\\
&\leq C||f_0||_{L^\infty}\log\left((1/2+\mu_1)\ep+\tilde{\mu}_2(\ep)\ep^2\log^2(\ep)\right)\\
&\leq C ||f_0||_{L^\infty}\log(1/\ep)+C||f_0||_{L^\infty}\log\left(1+\frac{\tilde{\mu}_2(\ep)}{1/2+\mu_1}\ep\log^2(\ep)\right) \\
&\leq C||f_0||_{L^\infty}\log(1/\ep)+C||f_0||_{L^\infty}\ep\log^2(\ep).
\end{align*}
That is,
\begin{align*}
\int_{0}^\frac{1}{2} \frac{\sinh((1-\ep)w)f_0(w)}{w^2-\tilde{\mu}^2}dw=M_1[f_0,\mu_1]+R_1[f_0,\mu_1;f,\tilde{\mu}_2]\end{align*}
where
\begin{align*}
M_1[f_0,\mu_1]=&O(\log(1/\ep)),\\
R_1[f_0,\mu_1;f,\tilde{\mu}_2]=& O(\ep\log^2(\ep)).
\end{align*}

Thus,
\begin{align*}
\frac{I_1}{C_1}=M_1[f_0,\mu_1]+R_1[f_0,\mu_1;f,\tilde{\mu}_2]=& O(\log(1/\ep)),
\end{align*}
and proceeding similarly
\begin{align*}
\frac{I_2}{C_1}=M_1[f_0,\mu_1]+R_1[f_0,\mu_1;f,\tilde{\mu}_2]=& O(\log(1/\ep)).
\end{align*}
Notice that we have eliminated the tilde in $C_1$. In addition, multiplying both, we arrive to
\begin{align*}
\frac{I_1}{C_1}\frac{I_2}{C_1}=M_2[f_0,\mu_1]+R_2[f_0,\mu_1;f,\tilde{\mu}_2]=& O(\log^2(\ep)),
\end{align*}
where
\begin{align*}
M_2[f_0,\mu_1]&=O(\log^2(\ep))\\
R_2[f_0,\mu_1;f,\tilde{\mu}_2]&=O(\ep\log^3(\ep)).
\end{align*}
Since $\sinh^2(b)-\sinh^2(a)=O(\ep)$, we get the desired result, which is shown in the following Lemma.
\begin{Lemma}\label{auxl2}
The following relation holds:
\begin{align*}
(\sinh^2(b)-\sinh^2(a))I_1I_2=M[f_0,\mu_1]+R[f_0,\mu_1;f,\tilde{\mu}_2].
\end{align*}
with
\begin{align*}
 M[f_0,\mu_1]&=O(\ep\log^2(\ep)),\\
 R[f_0,\mu_1;f,\tilde{\mu}_2]&=o(\ep\log^2(\ep)).
\end{align*}
\end{Lemma}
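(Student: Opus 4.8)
The plan is to obtain the statement as a direct consequence of the per-factor estimates for $I_1$ and $I_2$ established just above, since the product introduces no new singular integral. Recall that we have shown $I_j/C_1 = M_1^{(j)}+R_1^{(j)}$ for $j=1,2$, where each $M_1^{(j)}=O(\log(1/\ep))$ carries no dependence on $f$ or $\tilde\mu_2(\ep)$ --- after replacing $\tilde\mu,\tilde\nu$ by $\tfrac12+(\tfrac12+\mu_1)\ep$ via the elementary identity $\frac{1}{w-\tilde\mu}-\frac{1}{w-1/2-(1/2+\mu_1)\ep}=\frac{\tilde\mu_2(\ep)\ep^2\log^2(\ep)}{(w-\tilde\mu)(w-1/2-(1/2+\mu_1)\ep)}$ exactly as in the proof of \lem{tedious} --- while each $R_1^{(j)}=O(\ep\log^2(\ep))$ absorbs the $f$- and $\tilde\mu_2$-dependent pieces, the $f$-dependent ones being controlled through $\|f-f_0\|_{L^\infty}\le C\ep\log(1/\ep)$ from \lem{Linfty}. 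Here $C_1=2/\sinh(1)$ is a fixed constant (the harmless replacement of $\tilde C_1$ by $C_1$ costs only an $O(\ep)$ correction, tracked as in the preceding computation).

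Multiplying the two factors gives
\[
I_1 I_2 = C_1^2\Big(M_1^{(1)}M_1^{(2)} + M_1^{(1)}R_1^{(2)} + R_1^{(1)}M_1^{(2)} + R_1^{(1)}R_1^{(2)}\Big).
\]
The first summand $C_1^2 M_1^{(1)}M_1^{(2)} =: M_2[f_0,\mu_1]$ is $O(\log^2(\ep))$ and depends only on $f_0$ and $\mu_1$; the three remaining summands are each bounded by $O(\log(1/\ep))\cdot O(\ep\log^2(\ep)) = O(\ep\log^3(\ep))$, and together form $R_2[f_0,\mu_1;f,\tilde\mu_2]=O(\ep\log^3(\ep))$. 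Thus $I_1 I_2 = M_2 + R_2$ with the stated orders.

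It remains to insert the prefactor. Since $a=\tfrac{1-\ep}2$, $b=\tfrac{1+\ep}2$, the identity $\sinh^2(b)-\sinh^2(a) = \sinh(a+b)\sinh(b-a) = \sinh(1)\sinh(\ep)$ shows it is a smooth function of $\ep$ vanishing at $\ep=0$, so $\sinh^2(b)-\sinh^2(a) = c\,\ep + O(\ep^2)$ with $c=\sinh(1)$. Therefore
\[
(\sinh^2(b)-\sinh^2(a))I_1 I_2 = c\,\ep\, M_2[f_0,\mu_1] + \Big( O(\ep^2)M_2 + c\,\ep\, R_2 + O(\ep^2)R_2\Big).
\]
The leading term $c\,\ep\,M_2[f_0,\mu_1] = O(\ep\log^2(\ep))$ depends only on $f_0,\mu_1$ and is the desired $M[f_0,\mu_1]$; the bracket contributes $O(\ep^2\log^2(\ep)) + O(\ep^2\log^3(\ep)) = o(\ep\log^2(\ep))$ --- the extra power of $\ep$ supplied by the prefactor is precisely what turns the $\ep\log^3(\ep)$ growth of $R_2$ into a genuine $o(\ep\log^2(\ep))$ remainder --- and this is $R[f_0,\mu_1;f,\tilde\mu_2]$. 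The only point requiring attention is the bookkeeping that, already at the level of $I_1$ and $I_2$, isolates the part independent of $f$ and $\tilde\mu_2(\ep)$; there is no analytic obstacle, every bound ultimately reducing to $\int_0^{1/2}\frac{dw}{\tilde\mu-w} = O(\log(1/\ep))$.
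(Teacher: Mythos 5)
Your proof is correct and follows essentially the same route as the paper's: decompose $I_j/C_1$ into an $f_0,\mu_1$-dependent main term of size $O(\log(1/\ep))$ and a remainder of size $O(\ep\log^2(\ep))$, multiply to get $M_2+R_2$ with $R_2=O(\ep\log^3(\ep))$, and let the $O(\ep)$ prefactor promote the remainder to $o(\ep\log^2(\ep))$. The only cosmetic difference is that you make the prefactor explicit via $\sinh^2(b)-\sinh^2(a)=\sinh(1)\sinh(\ep)$, where the paper simply invokes $\sinh^2(b)-\sinh^2(a)=O(\ep)$; both suffice.
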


\subsection{Closing the proof by a fixed point argument}\label{s:closing}
By Lemmas \ref{auxl1} and \ref{auxl2} we have that
\begin{align*}
\tilde{\mu}_2(\ep)=M[f_0,\mu_1,\ep]+R[f_0,\mu_1,\tilde{\mu}_2(\ep),f[\tilde{\mu}_2(\ep)],\ep]
\end{align*}
where $M[f_0,\mu_1,\ep]$ is $O(1)$ in $\ep$. Let us consider the unit ball $B_1(M)\equiv |\mu-M[f_0,\mu_1,\ep]|< 1$. This gives us an a priori bound for $\mu$.

Then, for any $\mu\in B_1(M)$, we have $R[f_0,\mu_1,\mu,f[\mu],\ep]=o(1)$ in $\ep$. In addition, the application
\begin{align*}
\mathcal{B} (\mu)=M[f_0,\mu_1,\ep]+R[f_0,\mu_1,\mu,f[\mu],\ep]
\end{align*}
is continuous in $\mu$ for $\mu\in B_1(M)$ and $\ep$ small enough. It also  maps $B_1(M)$ to $B_{1/2}(M)$, for $\ep$ small enough, since
\begin{align*}
|\mathcal{B}(\mu)-M[f_0,\mu_1,\ep]|\leq o(1)C(M).
\end{align*}

Then, by Brouwer's fixed point theorem, we obtain a solution $\tilde{\mu}_2(\ep)$ of equation \eqref{detsol} satisfying
\begin{align*}
\tilde{\mu}&>1/2, \\ \tilde{\nu}&< 1/2.
\end{align*}

\subsection{Undoing the changes and recovering the solution}

To sum up, we have proved that the solution of the original integral system \eqref{F11}-\eqref{F21} is given by $$F(x)=\begin{cases}
A f(x), \qquad \text{for } 0\leq x < 1/2,\\
B f(x), \qquad \text{for } 1/2 < x \leq 1,
\end{cases}$$
with $f$ solving \eqref{principalep}-\eqref{rightprincipalep} and $(A,B)$ the only (up to multiplication by constant) non-trivial solution of \eqref{matrix}
for
\begin{equation}\label{eq:finalansatz}
\tilde{\mu}= \frac{1}{2}+\tilde{\mu}_1(\ep)\ep =\frac{1}{2}+\left(\frac{1}{2}+\mu_1\right)\ep +\tilde{\mu}_2(\ep)\ep^2\log^2(\ep),
\end{equation}
where $\mu_1$ is the unique solution of \eqref{eculimit} and $\tilde{\mu}_2(\ep)$ given by the fixed point argument.

After undoing the rescaling, we obtain that solution of \eqref{kernel10}-\eqref{kernel20} is given by
\begin{equation}\label{solafterrescalin}
    h(y)=\begin{cases}
        \frac{1}{(y-\mu)(y+\mu)}F\left(\frac{y}{1-\ep}\right), \qquad \text{for } 0\leq y < a,\\
        \frac{1}{(y-\nu)(y-\rho)}F\left(\frac{y-\ep}{1-\ep}\right), \qquad \text{for } b< y \leq 1.
    \end{cases}
\end{equation}

Then, we have $h^\ast(x,y)=h(y)\cos(x)$ and
 $\lambda^\ast=v_\varpi-\bar{\lambda}=v_\varpi-\mu^2=v_\varpi-(1-\ep)^2\tilde{\mu}^2.$

\subsection{Uniqueness of the solution}\label{s:regularidad}

Let us fix some $(\ep, \tilde{\mu}(\ep))$, from the previous section for which we know that there exists a solution $F$ that solves \eqref{F1} and \eqref{F2}. Let us assume that we have a solution $G=G_{in}\chi_{[0,1/2]} +G_{out}\chi_{[1/2,1]}$ with $G_{in}\in C^{1+\delta}([0,1/2])$ and $G_{out}\in C^{1+\delta}([1/2,1])$ of \eqref{F1} and \eqref{F2}.  By bootstrapping we can prove that, in fact,  $G_{in}\in C^{\infty}([0,1/2])$ and $G_{out}\in C^{\infty}([1/2,1])$. Thus, taking derivatives, we can show that $G$ satisfies \eqref{f_leftconm=1}, \eqref{f_rightconm=1} and $G(0)=G(1)=0$. By Lemmas  \ref{l:positivityfn} and \ref{l:positivityfnright} with $n=1$ we get that $G_{in}=A F$ in $[0,1/2]$ and $G_{out}=B F$ in $[1/2,1]$, for some real parameters $A$ and $B$. Then, $A$ and $B$ have to satisfies \eqref{matrix}. Since all the coefficients in the matrix in \eqref{matrix} cannot be zero at the same time, we get that $G$ must be proportional to $F$ in $[0,1]$.

\section{One dimensionality of the kernel of the linear operator}\label{s:dimkernel}
In this section we will show that once we have fixed $\tilde{\mu}$ the kernel of the linear operator is one-dimensional. For this we will use an argument by contradiction. That is, we will start by assuming that there is another element in the kernel and we will obtain a contradiction.

Let $\tilde{\mu}$ be given by \eqref{eq:finalansatz}. Because of section \ref{s:regularidad} we just need to focus on frequencies different from one.  For $n\neq 1$ let $F=F_{in}\chi_{[0,1/2]}+F_{out}\chi_{[1/2,1]}$, with $F_{in}\in C^{1+\alpha}([0,1/2])$ and $F_{out}\in C^{1+\alpha}([1/2,1])$, be solution of

\begin{align}\label{F1conN}
F(x)&+\tilde{C}_n\sinh(n(1-(1-\ep)x))\int_{0}^{x}\frac{\sinh(n(1-\ep)w)F(w)}{w^2-\tilde{\mu}^2}dw\\
&+\tilde{C}_n\sinh(n(1-\ep)x)\int_{x}^\frac{1}{2} \frac{\sinh(n(1-(1-\ep)w))F(w)}{w^2-\tilde{\mu}^2}dw\nonumber\\
&+\tilde{C}_n\sinh(n(1-\ep)x)\int_{\frac{1}{2}}^1\frac{\sinh(n(1-\ep)(1-w))F(w)}{w^2-\tilde{\nu}^2}dw=0, \qquad \text{for } 0\leq x<1/2, \nonumber
\end{align}
and
\begin{align}\label{F2conN}
F(x)&+\tilde{C}_n \sinh(n(1-\ep)(1-x))\int_{0}^\frac{1}{2} \frac{\sinh(n(1-\ep)w)F(w)}{w^2-\tilde{\mu}^2}dw\\
&+\tilde{C}_n \sinh(n(1-\ep)(1-x))\int_{\frac{1}{2}}^x \frac{\sinh(n((1-\ep)w+\ep))F(w)}{w^2-\tilde{\nu}^2}dw\nonumber\\
&+\tilde{C}_n \sinh(n((1-\ep)x+\ep))\int_{x}^1\frac{\sinh(n(1-\ep)(1-w))F(w)}{w^2-\tilde{\nu}^2}dw, \qquad \text{for } 1/2<x\leq 1.\nonumber
\end{align}

Notice that, since we have fixed $\tilde{\mu}$, the only unknown is $F$.

Then, since actually $F_{in}$ and $F_{out}$ have to be smooth in $(0,1/2)$ and $(1/2,1)$ respectively,  we know that $F$ satisfies
\begin{align*}
F''(x)-\left(\frac{2}{x^2-\tilde{\mu}^2}+(1-\ep)^2 n^2\right)F(x)&=0, \qquad \text{for }0\leq x< 1/2,\\
F''(x)-\left(\frac{2}{x^2-\tilde{\nu}^2}+(1-\e)^2 n^2 \right)F(x)&=0, \qquad \text{for }1/2<x\leq 1,\\
F(0)&=0,\\
F(1)&=0.
\end{align*}

Next we prove Lemmas \ref{l:positivityfn} and \ref{l:positivityfnright} which play a very important role in our analysis. They are a generalization of Lemmas \ref{positivog} and \ref{positivofm=1} and give us positivity and uniqueness of solutions of equations \eqref{fn_L}-\eqref{fn_R:bc12}.
\begin{Lemma}\label{l:positivityfn}
Let $f_n$ be a solution of
\begin{equation}\label{eq:fn}
f_n''(x)-\left(\frac{2}{x^2-\tilde{\mu}^2}+(1-\ep)^2 n^2\right)f_n(x)=0,
\end{equation}
satisfying $f_n(0)=0$ and $f_n'(0)>0$ (or $f_n'(0)<0$). Then $f_n(x)>0$ (or $f_n(x)<0$) for $0<x\leq  \tilde{\mu}$.
\end{Lemma}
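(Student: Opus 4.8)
The plan is to remove the singular potential by a change of unknown that puts \eqref{eq:fn} into self-adjoint Sturm--Liouville form with a non-negative weight, from which monotonicity and positivity are immediate. Write $c:=(1-\ep)n>0$ and set $p(x):=\tilde{\mu}^2-x^2$, which is strictly positive on $[0,\tilde{\mu})$ and satisfies $p''\equiv -2$, so that $\tfrac{p''}{p}=\tfrac{2}{x^2-\tilde{\mu}^2}$. Hence \eqref{eq:fn} reads $f_n''-\bigl(\tfrac{p''}{p}+c^2\bigr)f_n=0$. Substituting $f_n=p\,u$ (note $u$ is well defined and $C^2$ on $[0,\tilde{\mu})$ since $p$ does not vanish there and $x=0$ is a regular point of \eqref{eq:fn}), a direct computation gives $p\,u''+2p'u'-c^2p\,u=0$; multiplying by $p$ yields
\begin{equation*}
\bigl(p^2u'\bigr)'=c^2 p^2 u \qquad \text{on } (0,\tilde{\mu}),
\end{equation*}
a Sturm--Liouville equation with non-negative weight $c^2p^2$. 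The initial conditions $f_n(0)=0$, $f_n'(0)>0$ translate into $u(0)=0$ and $u'(0)=f_n'(0)/\tilde{\mu}^2>0$, so that $\bigl(p^2u'\bigr)(0)=\tilde{\mu}^2 f_n'(0)=:m>0$.

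Next I would show $u>0$ on all of $(0,\tilde{\mu})$ by a bootstrap argument. Suppose not; since $u(0)=0$ and $u'(0)>0$ one has $u>0$ just to the right of $0$, so there is a first zero $x_0\in(0,\tilde{\mu})$ of $u$, with $u>0$ on $(0,x_0)$. On $(0,x_0)$ we have $\bigl(p^2u'\bigr)'=c^2p^2u\ge 0$, hence $p^2u'$ is non-decreasing and therefore $p^2u'\ge m>0$ on $(0,x_0]$; as $p^2>0$ this forces $u'>0$ on $(0,x_0]$, so $u$ is strictly increasing there and $u(x_0)>u(x)>0$ for $x\in(0,x_0)$, contradicting $u(x_0)=0$. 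Thus $u>0$ on $(0,\tilde{\mu})$, and since $p>0$ there we conclude $f_n=p\,u>0$ on $(0,\tilde{\mu})$.

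For the endpoint $x=\tilde{\mu}$ I would combine the blow-up of $u$ with the Frobenius structure near $\tilde{\mu}$. From the previous step $p^2u'\ge m$ on $(0,\tilde{\mu})$, so $u'(x)\ge m/p(x)^2$; using $p(x)=(\tilde{\mu}-x)(\tilde{\mu}+x)\le 2\tilde{\mu}(\tilde{\mu}-x)$ and integrating, $\int^{\tilde{\mu}} p(x)^{-2}\,dx=+\infty$, so $u(x)\to+\infty$ as $x\to\tilde{\mu}^-$. On the other hand $\tilde{\mu}$ is a regular--singular point of \eqref{eq:fn} with indicial roots $0$ and $1$, so every solution extends continuously to $\tilde{\mu}$ and is, near $\tilde{\mu}$, a linear combination of the $r=1$ Frobenius solution $y_1(x)=(x-\tilde{\mu})(1+\cdots)$ and an $r=0$ solution that does not vanish at $\tilde{\mu}$. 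Were $f_n(\tilde{\mu})=0$, then necessarily $f_n=\alpha\,y_1$ near $\tilde{\mu}$ with $\alpha\neq 0$ (otherwise $f_n\equiv 0$), and hence $u=f_n/p\to -\alpha/(2\tilde{\mu})$, a finite limit --- contradicting $u\to+\infty$. Therefore $f_n(\tilde{\mu})\neq 0$, and since $f_n>0$ on $(0,\tilde{\mu})$ and $f_n$ is continuous at $\tilde{\mu}$, we get $f_n(\tilde{\mu})>0$. The case $f_n'(0)<0$ follows by applying the above to $-f_n$.

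The routine parts are the algebra of the substitution and the Sturm-type comparison; the main obstacle is the endpoint: one must argue that the auxiliary function $u$ necessarily blows up at $\tilde{\mu}$ and that this, together with the Frobenius classification of solutions at the regular--singular point, rules out $f_n(\tilde{\mu})=0$, thereby upgrading the a priori bound $f_n(\tilde{\mu})\ge 0$ to the strict inequality claimed.
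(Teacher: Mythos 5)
Your proof is correct, but it takes a genuinely different route from the paper's. The paper rewrites the ODE as $f_n'' - c^2 f_n = \frac{2}{x^2-\tilde{\mu}^2}f_n$ (with $c=(1-\ep)n$), converts it into an integral equation via the $\sinh$ kernel, iterates once, and then proves the positivity of the lower bound
\[
\sinh(\xi)-\int_{0}^{\xi}\sinh(\xi-\bar{z})\sinh(\bar{z})\frac{2}{\xi^2-\bar{z}^2}\,d\bar{z}>0,\qquad \xi>0,
\]
by the pointwise bound $\sinh(\xi-\bar z)\sinh(\bar z)\leq\sinh(\xi)\min(\bar z,\xi-\bar z)$ and the explicit numerical estimate $1-\log(4/3)-\log(16/9)>0$; the case $x=\tilde{\mu}$ is automatically included since the singularity $1/(\tilde{\mu}^2-z^2)$ is dominated by $1/(x^2-z^2)$ for $x\leq\tilde{\mu}$, and the endpoint integral remains convergent. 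You instead observe that $p(x)=\tilde{\mu}^2-x^2$ satisfies $p''/p=2/(x^2-\tilde{\mu}^2)$, make the Liouville-type substitution $f_n=pu$ to arrive at $(p^2u')'=c^2p^2u$, and read off strict positivity of $u$ (hence of $f_n$) on $(0,\tilde{\mu})$ from the monotonicity of the flux $p^2u'$. That part is clean and bypasses any numerical bound. The price you pay is the endpoint $x=\tilde{\mu}$: the divergence argument $u\to+\infty$ combined with the Frobenius classification (indicial roots $0,1$, so any solution vanishing at $\tilde{\mu}$ is a multiple of the analytic $r=1$ solution, for which $f_n/p$ stays bounded) is correct, but it invokes the regular--singular analysis that the paper's iterated--kernel approach does not need for this lemma. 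On balance, your route is arguably more transparent and uniform in the parameters $n,\ep,\tilde{\mu}$, while the paper's is self-contained at the cost of a small numerical check; both establish the statement.
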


\begin{proof}
We will assume, without loss of generality, that $f_n'(0)=1$. We start writing \eqref{eq:fn} in a more convenient way as
\begin{align*}
f_n''(x)-(1-\ep)^2 n^2 f_n(x)= \frac{2}{x^2-\tilde{\mu}^2}f_n(x).
\end{align*}
We find that
\begin{align*}
f_n(x)=\frac{\sinh((1-\ep)nx)}{(1-\ep)n}+\frac{1}{(1-\ep)n}\int_{0}^x \sinh[(1-\ep)n(x-z)]\frac{2f_n(z)}{z^2-\tilde{\mu}^2}dz.
\end{align*}
Iterating once this formula, we have
\begin{align*}
f_n(x)=&\frac{\sinh((1-\ep)nx)}{(1-\ep)n}+\frac{1}{(1-\ep)^2 n^2}\int_{0}^x\sinh[(1-\ep)n(x-z)]\sinh((1-\ep)nz)\frac{2}{z^2-\tilde{\mu}^2}dz\\
&+\frac{1}{(1-\ep)^2 n^2}\int_0^x\sinh[(1-\ep)n(x-z)]\frac{2}{z^2-\tilde{\mu}^2}\int_{0}^z \sinh[(1-\ep)n(z-y)]\frac{2}{y^2-\tilde{\mu}^2}f_n(y)dydz.
\end{align*}
We have that, as far as $f_n(x)>0$ and $x\leq \tilde{\mu}$ we obtain
\begin{align*}
f_n(x)\geq & \frac{\sinh((1-\ep)nx)}{(1-\ep)n}+\frac{1}{(1-\ep)^2 n^2}\int_{0}^x\sinh[(1-\ep)n(x-z)]\sinh((1-\ep)nz)\frac{2}{z^2-\tilde{\mu}^2}dz.
\end{align*}
Defining the auxiliary parameter $p\equiv (1-\ep)n\in \R^{+}$, the proof can be reduced to checking that
\begin{equation}\label{eq:auxpositiviy}
\frac{\sinh(px)}{p}-\frac{1}{p^2}\int_{0}^x\sinh[p(x-z)]\sinh(p z)\frac{2}{\tilde{\mu}^2-z^2}dz>0, \qquad\text{for } 0<x\leq \tilde{\mu}.
\end{equation}
In first place, we note that the above follows from
\begin{equation*}
\frac{\sinh(px)}{p}-\frac{1}{p^2}\int_{0}^x\sinh[p(x-z)]\sinh(p z)\frac{2}{x^2-z^2}dz>0, \qquad\text{for } 0<x\leq \tilde{\mu}.
\end{equation*}
In addition, doing the change of variables $z=\bar{z}/p$ and introducing the auxiliary variable $\xi=px$ on the above expression, we obtain that \eqref{eq:auxpositiviy} follows from
\begin{equation}\label{aux:corolario}
\sinh(\xi)-\int_{0}^{\xi}\sinh(\xi-\bar{z})\sinh(\bar{z})\frac{2}{\xi^2-\bar{z}^2}d\bar{z}>0, \qquad \text{for }\xi>0.
\end{equation}
To conclude, we just need to note that
\[
0\leq \sinh(\xi-\bar{z})\sinh(\bar{z}) \leq \begin{cases}
\sinh(\xi)\bar{z} \qquad &0\leq \bar{z} \leq \frac{\xi}{2},\\
\sinh(\xi)(\xi-\bar{z}) \qquad &\frac{\xi}{2}\leq \bar{z} \leq \xi.
\end{cases}
\]
This allows us to say that
\begin{multline*}
\sinh(\xi)-\int_{0}^{\xi}\sinh(\xi-\bar{z})\sinh(\bar{z})\frac{2}{\xi^2-\bar{z}^2}d\bar{z} \\
\geq \sinh(\xi)\left(1-\int_0^{\xi/2} \frac{2\bar{z}}{\xi^2-\bar{z}^2}d\bar{z}-\int_{\xi/2}^\xi \frac{2(\xi-\bar{z})}{\xi^2-\bar{z}^2}d\bar{z}\right) \\
=\sinh(\xi)\left(1-\log\left(\frac{4}{3}\right)-\log\left(\frac{16}{9}\right)\right)=\sinh(\xi)(1-0.87)>0.
\end{multline*}
Going back in the argument, we have proved \eqref{eq:auxpositiviy} and then the positivity of the solution. In fact

\begin{align}\label{pex}
f_n(x)\geq c \frac{\sinh((1-\ep)nx)}{(1-\ep)n}, \qquad \text{for} \quad 0<c < 0.13.
\end{align}

\end{proof}

A minor modification of the previous argument also allows us to ensure the same result by setting the problem at any arbitrary point of the interval.

\begin{Lemma}\label{cor:positivoxstar}
Fixed a point $x^\ast\in(0,\tilde{\mu})$. Let $f_n$ be a solution of
\begin{equation*}
f_n''(x)-\left(\frac{2}{x^2-\tilde{\mu}^2}+(1-\ep)^2 n^2\right)f_n(x)=0,
\end{equation*}
satisfying $f_n(x^\ast)=0$ and $f_n'(x^\ast)>0$. Then $f_n(x)>0$  for $x^\ast<x\leq  \tilde{\mu}$.
In a similar manner to the previous case, when $f_n'(x^\ast)<0$, it follows that $f_n(x)<0$ for all $x^\ast< x\leq \mu$.
\end{Lemma}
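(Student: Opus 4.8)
The plan is to follow the proof of \lem{l:positivityfn} almost verbatim, replacing the base point $0$ by $x^\ast$; the only genuinely new ingredient is a one‑variable integral estimate that plays the role of the numerical bound ``$1-0.87>0$'' used there. We treat the case $f_n'(x^\ast)>0$ and normalize $f_n'(x^\ast)=1$ by linearity (the case $f_n'(x^\ast)<0$ follows by applying the result to $-f_n$). Writing $p\equiv(1-\ep)n>0$ and treating $\frac{2}{x^2-\tilde{\mu}^2}f_n$ as a forcing term, variation of parameters against the homogeneous solution $\frac{1}{p}\sinh(p(x-x^\ast))$, which vanishes at $x^\ast$ with unit derivative, gives
\[
f_n(x)=\frac{\sinh(p(x-x^\ast))}{p}+\frac{1}{p}\int_{x^\ast}^x\sinh[p(x-z)]\,\frac{2f_n(z)}{z^2-\tilde{\mu}^2}\,dz .
\]
Then I would run the same bootstrap: assume for contradiction that $f_n$ vanishes somewhere on $(x^\ast,\tilde{\mu}]$ and let $x_1$ be its first zero there, so $f_n>0$ on $(x^\ast,x_1)$. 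Iterating the identity once, and using $0<x^\ast\le z<x_1\le\tilde{\mu}$ (hence $z^2<\tilde{\mu}^2$) together with $f_n>0$ on $(x^\ast,z)$, the doubly iterated term is nonnegative, so for $x\in(x^\ast,x_1)$,
\[
f_n(x)\ \ge\ \frac{\sinh(p(x-x^\ast))}{p}-\frac{1}{p^2}\int_{x^\ast}^x\sinh[p(x-z)]\,\sinh[p(z-x^\ast)]\,\frac{2}{\tilde{\mu}^2-z^2}\,dz .
\]

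The heart of the matter is to show this lower bound stays strictly positive up to $x=x_1$, uniformly in $\ep$, $n$ and in the base point $x^\ast$. As in the excerpt I would use $\sinh(pa)\sinh(pb)\le\min(pa,pb)\,\sinh(p(a+b))$ for $a,b\ge0$ (which follows from $\sinh(pb)\le pb\cosh(pb)$ and $\sinh(pa)\cosh(pb)=\frac12[\sinh(p(a+b))+\sinh(p(a-b))]\le\sinh(p(a+b))$), apply it with $a=x-z$, $b=z-x^\ast$, and bound $\frac{1}{\tilde{\mu}^2-z^2}\le\frac{1}{x_1^2-z^2}$ since $z<x_1\le\tilde{\mu}$. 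After the rescaling $z=x_1s$, $x^\ast=x_1s^\ast$ the claim reduces to the scale‑invariant inequality
\[
g(s^\ast)\ :=\ \int_{s^\ast}^{1}\min(1-s,\,s-s^\ast)\,\frac{2}{1-s^2}\,ds\ <\ 1,\qquad s^\ast\in[0,1).
\]

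Finally I would evaluate $g$ explicitly by splitting at the midpoint $\frac{1+s^\ast}{2}$ and integrating the two rational pieces; the logarithms collapse to the closed form $g(s^\ast)=6\log2+(1+s^\ast)\log(1+s^\ast)-(3+s^\ast)\log(3+s^\ast)$, so $g'(s^\ast)=\log\frac{1+s^\ast}{3+s^\ast}<0$, hence $g$ is strictly decreasing and $g(s^\ast)\le g(0)=\log(64/27)<1$ (at $s^\ast=0$ this is precisely the constant $1-\log(4/3)-\log(16/9)>0$ of \lem{l:positivityfn}). Plugging back, for $x\in(x^\ast,x_1)$ we get $f_n(x)\ge\frac{1}{p}\sinh(p(x-x^\ast))\bigl(1-g(x^\ast/x_1)\bigr)$, and letting $x\to x_1^-$ forces $f_n(x_1)\ge\frac{1}{p}\sinh(p(x_1-x^\ast))\bigl(1-g(x^\ast/x_1)\bigr)>0$, contradicting $f_n(x_1)=0$; so no such zero exists and $f_n>0$ on all of $(x^\ast,\tilde{\mu}]$, the endpoint $x=\tilde{\mu}$ handled by the same estimate on $(x^\ast,\tilde{\mu})$ and continuity. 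The one nontrivial step is the base‑point–uniform version of the key estimate, and that is exactly what the monotonicity $g'(s^\ast)<0$ supplies; everything else is the argument of \lem{l:positivityfn} with $0$ replaced by $x^\ast$, so I do not expect further obstacles.
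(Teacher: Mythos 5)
Your proof is correct, and it reaches the goal by a genuinely different, though closely related, route. You and the paper share everything up to the one‑step iterated lower bound
\[
f_n(x)\ \ge\ \frac{\sinh(p(x-x^\ast))}{p}-\frac{1}{p^{2}}\int_{x^\ast}^{x}\sinh\bigl[p(x-z)\bigr]\sinh\bigl[p(z-x^\ast)\bigr]\frac{2}{\tilde{\mu}^{2}-z^{2}}\,dz,
\]
but you then diverge in how you control the subtracted integral. The paper changes variables to $\xi=p x$, $\bar\xi=p x^\ast$, $w=\bar z-\bar\xi$, and uses the pointwise kernel domination $(\xi^{2}-(w+\bar\xi)^{2})^{-1}\le\bigl((\xi-\bar\xi)^{2}-w^{2}\bigr)^{-1}$ for $0\le w\le\xi-\bar\xi$; this shifts the singularity to the origin and reduces the inequality verbatim to \eqref{aux:corolario} of \lem{l:positivityfn}, so no new computation is needed. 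You instead bound $\frac{1}{\tilde\mu^{2}-z^{2}}\le\frac{1}{x_{1}^{2}-z^{2}}$ at the (hypothetical) first zero $x_{1}$, use $\sinh(pa)\sinh(pb)\le\min(pa,pb)\sinh(p(a+b))$, and after rescaling land on a one‑parameter family of inequalities $g(s^\ast)<1$ with $g(0)=\log(64/27)$ recovering the paper's constant $1-\log(4/3)-\log(16/9)$; closing your argument requires the explicit evaluation $g(s^\ast)=6\log2+(1+s^\ast)\log(1+s^\ast)-(3+s^\ast)\log(3+s^\ast)$ and the monotonicity $g'(s^\ast)=\log\frac{1+s^\ast}{3+s^\ast}<0$, which I checked and which are correct. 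The paper's reduction is slicker in that it avoids any new computation by reusing \eqref{aux:corolario} directly; your version is a bit longer but yields a clean, scale‑invariant generalization $g(s^\ast)<1$ of the key estimate, with $s^\ast=0$ being exactly the earlier lemma, and makes the uniformity in the base point completely transparent.
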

\begin{proof}
We find that solution of the above ODE with boundary conditions $f_n(x^\ast)=0$ and $f_n'(x^\ast)=1$ is given by
\begin{align*}
f_n(x)=\frac{\sinh((1-\ep)n(x-x^\ast))}{(1-\ep)n}+\frac{1}{(1-\ep)n}\int_{x^\ast}^x \sinh[(1-\ep)n(x-z)]\frac{2f_n(z)}{z^2-\tilde{\mu}^2}dz.
\end{align*}
Iterating the formula, we have that as fas as $f_n(x)\geq 0$  and $ x^\ast <x \leq \tilde{\mu}$ we obtain
\begin{multline*}
f_n(x)\geq  \frac{\sinh((1-\ep)n(x-x^\ast))}{(1-\ep)n}\\
-\frac{1}{(1-\ep)^2 n^2}\int_{x^\ast}^x\sinh[(1-\ep)n(x-z)]\sinh((1-\ep)n(z-x^\ast))\frac{2}{x^2-z^2}dz.
\end{multline*}
Introducing the auxiliary variables $\xi=(1-\ep)n x$ and $\bar{\xi}=(1-\ep)n x^\ast$ and doing the change of variable $(1-\ep)n z = \bar{z}$ we get that we have reduced the problem to check that
\[
\sinh(\xi-\bar{\xi})-\int_{\bar{\xi}}^{\xi}\sinh(\xi-\bar{z})\sinh(\bar{z}-\bar{\xi})\frac{2}{\xi^2-\bar{z}^2}d\bar{z}>0, \qquad \text{for }\xi>\bar{\xi}.
\]
We make the change $w=\bar{z}-\bar{\xi}$ to obtain
\[
\sinh(\xi-\bar{\xi})-\int_{0}^{\xi-\bar{\xi}}\sinh(\xi-\bar{\xi}-w)\sinh(w)\frac{2}{\xi^2-{(w+\bar{\xi})^2}}dw>0.
\]
Since $(\xi^2-(z+\bar{\xi})^2)^{-1}\leq ((\xi-\bar{\xi})^2-z^2)^{-1}$ for $z\leq \xi -\bar{\xi}$, the above reduces to show that
\[
\sinh(\xi-\bar{\xi})-\int_{0}^{\xi-\bar{\xi}}\sinh(\xi-\bar{\xi}-w)\sinh(w)\frac{2}{(\xi-\bar{\xi})^2-w^2}dw>0,  \qquad \text{for }\xi-\bar{\xi}>0.
\]
Then we have translated our problem to \eqref{aux:corolario}, which allow us to apply the last part of the previous Lemma and conclude the proof.
\end{proof}

\begin{Lemma}\label{l:positivityfnright}
Let $f_n$ be a solution of
\begin{equation*}
    f_n''(x)-\left(\frac{2}{x^2-\tilde{\nu}^2}+(1-\ep)^2 n^2\right)f_n(x)=0.
\end{equation*}
satisfying $f_n(1)=0$ and $f_n'(1)<0$ (or $f_n'(1)>0$). Then $f_n(x)>0$ (or $f_n(x)<0$) for $ \tilde{\nu}\leq x< 1$.
\end{Lemma}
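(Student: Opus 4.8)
The statement is the right-endpoint analogue of Lemma~\ref{l:positivityfn}, so the plan is to mirror that proof but now integrating from $x=1$ backwards towards $x=\tilde{\nu}$. Without loss of generality normalize $f_n'(1)=-1$ (the case $f_n'(1)>0$ follows by replacing $f_n$ with $-f_n$). First I would rewrite the ODE as
\begin{align*}
f_n''(x)-(1-\ep)^2n^2 f_n(x)=\frac{2}{x^2-\tilde{\nu}^2}f_n(x),
\end{align*}
treat the right-hand side as a source term, and use the variation-of-constants formula based at $x=1$ with the given Cauchy data. Since we are moving to the left, the relevant Green's kernel for the operator $\partial_x^2-(1-\ep)^2n^2$ on an interval ending at $1$ produces, after one iteration, a representation of the form
\begin{align*}
f_n(x)=\frac{\sinh((1-\ep)n(1-x))}{(1-\ep)n}+\frac{1}{(1-\ep)^2n^2}\int_x^1 \sinh[(1-\ep)n(z-x)]\sinh((1-\ep)n(1-z))\frac{2}{z^2-\tilde{\nu}^2}\,dz+\cdots,
\end{align*}
where the omitted term is the doubly-iterated integral that is manifestly sign-favorable as long as $f_n>0$ on the relevant range.

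**Key steps.** (1) Derive the integral representation just described and iterate it once, exactly as in Lemma~\ref{l:positivityfn}. (2) Observe that on $\tilde{\nu}\le x<1$ one has $z^2-\tilde{\nu}^2\ge 0$ for $z\in[x,1]$, so the first iterated integral carries a definite sign; hence, as long as $f_n$ stays positive, $f_n(x)$ is bounded below by the ``free'' term minus that first integral. (3) Reduce the positivity to the scalar inequality obtained after the substitutions $z\mapsto$ rescaled variable and $\xi=(1-\ep)n(1-x)$: it suffices to show
\begin{align*}
\sinh(\xi)-\int_0^\xi \sinh(\xi-\bar z)\sinh(\bar z)\frac{2}{\xi^2-\bar z^2}\,d\bar z>0,\qquad \xi>0,
\end{align*}
after first bounding $z^2-\tilde{\nu}^2\ge (1-z)^2\cdot(\text{something})$ — more precisely, one uses $\tfrac{1}{z^2-\tilde{\nu}^2}\le \tfrac{1}{(1-z)(1+z-2\tilde{\nu})}$ and then $\le \tfrac{1}{(1-x)^2-(1-z)^2}$ type estimates to land on exactly the same quantity \eqref{aux:corolario} already proved in Lemma~\ref{l:positivityfn}. (4) Invoke \eqref{aux:corolario} and the elementary bound $0\le\sinh(\xi-\bar z)\sinh(\bar z)\le \sinh(\xi)\min(\bar z,\xi-\bar z)$ used there to close the bootstrap: the set where $f_n>0$ is open, nonempty near $x=1$, and the lower bound prevents $f_n$ from reaching zero before $x=\tilde{\nu}$, so $f_n>0$ on all of $[\tilde{\nu},1)$. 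One also records, as in \eqref{pex}, the quantitative lower bound $f_n(x)\ge c\,\dfrac{\sinh((1-\ep)n(1-x))}{(1-\ep)n}$ for a small absolute constant $c$.

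**Main obstacle.** The only genuinely new point compared to Lemma~\ref{l:positivityfn} is handling the singular weight $\tfrac{2}{z^2-\tilde{\nu}^2}$ on the interval $[\tilde{\nu},1)$ rather than $\tfrac{2}{\tilde{\mu}^2-z^2}$ on $[0,\tilde{\mu})$: near $z=\tilde{\nu}$ the weight blows up, so I must check that the comparison estimates used to reduce to \eqref{aux:corolario} remain valid uniformly, i.e. that replacing $z^2-\tilde{\nu}^2$ by the ``distance-to-endpoint squared'' quantity appearing in \eqref{aux:corolario} is legitimate over the whole range and not just away from the singularity. This is the step to be careful with; everything else is a transcription of the argument already given. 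Since the excerpt explicitly says this lemma is a generalization of Lemmas~\ref{positivog} and \ref{positivofm=1} and is proved ``in a similar manner,'' I expect the write-up to be short, pointing to Lemma~\ref{l:positivityfn} for the scalar inequality \eqref{aux:corolario} and only spelling out the left/right reflection and the weight comparison.
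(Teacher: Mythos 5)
You correctly identified the key observation in your step (2) — that $z^2-\tilde{\nu}^2\ge 0$ for $z\in[x,1]$ with $x\ge\tilde{\nu}$ — but then misread its consequence, and this led you to transcribe the whole machinery of Lemma~\ref{l:positivityfn} (iteration, reduction to \eqref{aux:corolario}, weight comparison) when none of it is needed. The variation-of-constants formula based at $x=1$ with $f_n(1)=0$, $f_n'(1)=-1$ reads
\begin{align*}
f_n(x)=\frac{\sinh((1-\ep)n(1-x))}{(1-\ep)n}+\frac{1}{(1-\ep)n}\int_{x}^1 \sinh[(1-\ep)n(z-x)]\,\frac{2f_n(z)}{z^2-\tilde{\nu}^2}\,dz,
\end{align*}
and since $z^2-\tilde{\nu}^2\ge 0$ on the integration range, the integral term is \emph{nonnegative} as long as $f_n\ge 0$, so it \emph{reinforces} the free term rather than competing with it. Your phrase ``bounded below by the `free' term minus that first integral'' is a sign slip: there is no subtraction, and hence nothing to control. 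One gets directly $f_n(x)\ge \dfrac{\sinh((1-\ep)n(1-x))}{(1-\ep)n}>0$ as long as $f_n$ remains nonnegative, and the usual continuity/openness bootstrap starting from a neighborhood of $x=1$ closes the argument. This is exactly what the paper does, and it is why this lemma is genuinely shorter than Lemma~\ref{l:positivityfn}: there the weight $\frac{2}{z^2-\tilde{\mu}^2}$ with $z<\tilde{\mu}$ is \emph{negative}, so the integral term fights the free term and one really needs the iterated representation plus the scalar inequality \eqref{aux:corolario}. Here the weight is positive because $\tilde{\nu}<1/2<\tilde{\mu}$ puts the singular point to the \emph{left} of the interval $[1/2,1]$. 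Your longer route is not wrong in the sense that the bounds you propose are in the favorable direction, but it is unnecessary, and the ``main obstacle'' you flagged (uniform validity of the weight comparison near $z=\tilde{\nu}$) simply does not arise.
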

\begin{proof}

We will write the previous ODE in a more convenient way as
\begin{align*}
f_n''(x)-(1-\ep)^2 n^2 f_n(x)= \frac{2}{x^2-\tilde{\nu}^2}f_n(x).
\end{align*}
Without loss of generality, we can assume that $f_n'(1)=-1$. We find that their solution is given by
\begin{align*}
f_n(x)=\frac{\sinh((1-\ep)n(1-x))}{(1-\ep)n}+\frac{1}{(1-\ep)n}\int_{x}^1 \sinh[(1-\ep)n(z-x)]\frac{2f_n(z)}{z^2-\tilde{\nu}^2}dz.
\end{align*}
By hypothesis $f_n(1)=0$ and $f_n'(1)<0$, then we know that there exist $x^\ast<1$ such that $f_n(x)>0$ for all $x\in(x^\ast,1).$  Since $\tilde{\nu}<1/2<\tilde{\mu}$,  we have that, as far as $f_n(x) > 0$ we obtain
\[
f_n(x)\geq \frac{\sinh((1-\ep)n(1-x))}{(1-\ep)n}.
\]
Then, we can iterate this argument to conclude that $f_n(x)>0$ for all $\tilde{\nu}\leq x < 1.$
\end{proof}

Finally, we include without proof the analogous result of the Lemma \ref{cor:positivoxstar} for the case of setting the problem at any arbitrary point $x^\ast \in(\tilde{\nu},1)$.
\begin{Lemma}\label{cor:positivoxstarright}
Fixed a point $x^\ast\in(\tilde{\nu},1)$. Let $f_n$ be a solution of
\begin{equation*}
f_n''(x)-\left(\frac{2}{x^2-\tilde{\nu}^2}+(1-\ep)^2 n^2\right)f_n(x)=0,
\end{equation*}
satisfying $f_n(x^\ast)=0$ and $f_n'(x^\ast)>0$. Then $f_n(x)<0$  for   $\tilde{\nu}\leq x < x^\ast$.
In a similar manner to the previous case, when $f_n'(x^\ast)<0$, it follows that $f_n(x)>0$ for all $\tilde{\nu}\leq x < x^\ast$.
\end{Lemma}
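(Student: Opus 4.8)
The plan is to reduce the statement to the mechanism already used in the proof of Lemma~\ref{l:positivityfnright}, after reversing the orientation of the interval. By linearity it suffices to treat the case $f_n'(x^\ast)=1$; the general $f_n'(x^\ast)>0$ follows by scaling, and the case $f_n'(x^\ast)<0$ by applying the conclusion to $-f_n$. Writing the equation as $f_n''-(1-\ep)^2n^2f_n=\frac{2}{x^2-\tilde{\nu}^2}f_n$ and solving by variation of parameters with Cauchy data prescribed at $x=x^\ast$, one gets, with $p\equiv(1-\ep)n>0$,
\[
f_n(x)=\frac{\sinh(p(x-x^\ast))}{p}+\frac{1}{p}\int_{x^\ast}^{x}\sinh\big(p(x-z)\big)\,\frac{2f_n(z)}{z^2-\tilde{\nu}^2}\,dz .
\]

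First I would specialize this identity to $x<x^\ast$. Using $\sinh(p(x-x^\ast))=-\sinh(p(x^\ast-x))$ and $\sinh(p(x-z))=-\sinh(p(z-x))$ for $x<z<x^\ast$, and setting $g_n\equiv-f_n$, the formula turns into
\[
g_n(x)=\frac{\sinh(p(x^\ast-x))}{p}+\frac{1}{p}\int_{x}^{x^\ast}\sinh\big(p(z-x)\big)\,\frac{2g_n(z)}{z^2-\tilde{\nu}^2}\,dz ,\qquad \tilde{\nu}\le x<x^\ast ,
\]
with $g_n(x^\ast)=0$ and $g_n'(x^\ast)=-1<0$, so that $g_n>0$ on a left neighbourhood of $x^\ast$. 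The decisive observation is that on the whole interval of interest $z>\tilde{\nu}$, hence $z^2-\tilde{\nu}^2>0$: we sit on the regular side of the singular point $x=\tilde{\nu}$, so — unlike in Lemma~\ref{l:positivityfn} and Lemma~\ref{cor:positivoxstar}, where the coefficient has the opposite sign and a delicate numerical inequality is required — every factor in the integral above is nonnegative as soon as $g_n\ge0$ on $(x,x^\ast)$.

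Next I would close the argument by continuation. Let $x_0\equiv\inf\{x\in[\tilde{\nu},x^\ast):\ g_n>0 \text{ on } (x,x^\ast)\}$ and assume, for contradiction, that $x_0>\tilde{\nu}$. Then $g_n\ge0$ on $(x_0,x^\ast)$ and $g_n(x_0)=0$ by continuity, while for every $x\in(x_0,x^\ast)$ the displayed formula gives $g_n(x)\ge\sinh(p(x^\ast-x))/p>0$, since there $z>x_0>\tilde{\nu}$ and $g_n(z)\ge0$. Letting $x\to x_0^+$, which is legitimate because $1/(z^2-\tilde{\nu}^2)$ stays bounded on $(x_0,x^\ast)$, yields $g_n(x_0)>0$, a contradiction; hence $x_0=\tilde{\nu}$, i.e. $g_n>0$ on $(\tilde{\nu},x^\ast)$, that is $f_n<0$ there. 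To obtain strictness also at $x=\tilde{\nu}$, one notes that $\sinh(p(z-\tilde{\nu}))/(z^2-\tilde{\nu}^2)\sim p/(z+\tilde{\nu})$ is bounded near $z=\tilde{\nu}$, so $\int_{\tilde{\nu}}^{x^\ast}$ converges and the identity is valid at $x=\tilde{\nu}$, forcing $g_n(\tilde{\nu})\ge\sinh(p(x^\ast-\tilde{\nu}))/p>0$. Finally the statement with $f_n'(x^\ast)<0$ follows by applying the above to $-f_n$.

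I expect the only non-routine point to be the bookkeeping of signs when the orientation of the interval is reversed, together with the (easy) justification of the passage to the limits $x\to x_0^+$ and $x\to\tilde{\nu}^+$ under the integral sign; both are harmless because the weight $1/(z^2-\tilde{\nu}^2)$ is locally bounded away from the singular point $x=\tilde{\nu}$ and the simple vanishing of $\sinh(p(z-\tilde{\nu}))$ at $\tilde{\nu}$ cancels the only possible singularity.
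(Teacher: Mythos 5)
Your proof is correct and follows the same route as the paper's intended argument (the variation-of-constants identity anchored at $x^\ast$, combined with the observation that $z^2-\tilde{\nu}^2>0$ on the regular side $z>\tilde{\nu}$ of the singular point, so the kernel is pointwise nonnegative and the argument closes by continuation with no need for the numerical estimate used in Lemma~\ref{l:positivityfn}); the sign bookkeeping and the limit $x\to\tilde{\nu}^+$ are handled carefully. This is precisely the mechanism behind Lemma~\ref{l:positivityfnright}, transplanted to the anchor point $x^\ast$.
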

\begin{comment}
\begin{proof}
Solo hay que usar que la solucion  con condicones de borde $f_n(x^*)=0$ y $f_n'(x^+)=-1$ viene dada por
\[
f_n(x)=\frac{\sinh((1-\ep)n(x^*-x))}{(1-\ep)n}+\frac{1}{(1-\ep)n}\int_{x}^{x^*} \sinh[(1-\ep)n(z-x)]\frac{2f_n(z)}{z^2-\tilde{\nu}^2}dz.
\]
\end{proof}
\end{comment}

Next we show that $F(1/2^-)\neq 0$. Indeed, if $F(1/2^-)=0$ then   $F'(0)=0$, by Lemma \ref{l:positivityfn}. Since $x=0$ is an ordinary  point we would obtain that $F=0$ in $(0,1/2)$. By making a similar argument in the interval $(1/2,1)$, using Lemma \ref{l:positivityfnright} instead of \ref{l:positivityfn}, we can write that

\begin{align*}
F(x)&=Af_n(x) \quad\text{for $0\leq x<1/2,$}\\
F(x)&=Bf_n(x) \quad \text{for $1/2< x \leq 1$},
\end{align*}
where $A$ and $B$ are real numbers and
\begin{align}
f_n''(x)-\left(\frac{2}{x^2-\tilde{\mu}^2}+(1-\ep)^2 n^2\right)f_n(x)&=0, \qquad \text{for }0\leq x< 1/2, \label{fn_L}\\
f_n''(x)-\left(\frac{2}{x^2-\tilde{\nu}^2}+(1-\e)^2 n^2 \right)f_n(x)&=0, \qquad \text{for }1/2<x\leq 1,\label{fn_R}\\
f_n(0)&=0,\label{fn_L:bc0}\\
f_n(1)&=0,\label{fn_R:bc1}\\
f_n(1/2^+)&=1, \label{fn_L:bc12}\\
f_n(1/2^-)&=1.\label{fn_R:bc12}
\end{align}

In addition, by taking the limits $x\to  1/2^{\pm}$ in the equations \eqref{F1conN} and \eqref{F2conN} we find that
\begin{align*}
\left(\begin{array}{cc} 1+\sinh(nb)I_1 & \sinh(na)I_2\\ \sinh(na)I_1  &  1+\sinh(nb)I_2 \end{array}\right)\left(\begin{array}{cc} A \\ B\end{array}\right)=0,
\end{align*}
with
\begin{align*}
I_1&\equiv \tilde{C}_n\int_{0}^\frac{1}{2}\sinh(n(1-\ep)w)\frac{f_n(w)}{w^2-\tilde{\mu}^2}dw,\\
I_2&\equiv \tilde{C}_n\int_{\frac{1}{2}}^1\sinh(n(1-\ep)(1-w))\frac{f_n(w)}{w^2-\tilde{\nu}^2}dw.
\end{align*}
Thus if $(A,B)\neq 0$ we obtain that
\begin{align*}
\det\left(\begin{array}{cc} 1+\sinh(nb)I_1 & \sinh(na)I_2\\ \sinh(na)I_1  &  1+\sinh(nb)I_2 \end{array}\right)=0,
\end{align*}
which implies that
\begin{equation}\label{eq:contradiccion}
1+\sinh(n b)(I_1+I_2)+(\sinh^2(n b))-\sinh^2(n a))I_1 I_2=0.
\end{equation}
In the following we are going to study in detail each of the terms that appear in the above expression. This will allow us to conclude that it is impossible to be satisfied and from there we will be able to obtain the contradiction we are looking for.

\subsubsection{Analyzing equation \eqref{eq:contradiccion}}

In the following, we will assume that $||f_n||_{L^\infty([0,1])}\leq K$  where $K$ does not depend on either $n$ or $\ep$. We can be sure of this as an immediate consequence of Lemmas \ref{l:positivityfn} and \ref{l:positivityfnright}, where we showed that $f_n$ is non-negative and \eqref{fndecrece_n}, where we will prove that
\begin{align*}
&f_1(x)\geq f_2(x)\geq f_3(x)\geq \ldots \geq 0, \qquad \text{for } 0\leq x \leq 1.
\end{align*}

Then
\begin{align*}
    |I_1|&\leq K \tilde{C}_n\int_{0}^\frac{1}{2}\frac{\sinh(n(1-\ep)w)}{(\tilde{\mu}-w)(\tilde{\mu}+w)}dw\\
    &\leq 2 K\tilde{C}_n \sinh(na) \int_0^\frac{1}{2}\frac{1}{1/2+\tilde{\mu}_1(\ep)\ep-w}dw\\
    &= 2K \tilde{C_n}\sinh(na)\log\left(\frac{\frac{1}{2}+\tilde{\mu}_1(\ep)\ep}{\tilde{\mu}_1(\ep)\ep}\right),
\end{align*}
and
\begin{align*}
|I_2|&\leq  K \tilde{C}_n\int_{1/2}^1\frac{\sinh(n(1-\ep)(1-w))}{(w-\tilde{\nu})(w+\tilde{\nu})}dw\\
&\leq 2K \tilde{C}_n \sinh(na) \int_{1/2}^1 \frac{1}{w-\tilde{\nu}}dw\\
&=2K \tilde{C}_n \sinh(na) \log\left(\frac{1-\tilde{\nu}}{1/2-\tilde{\nu}} \right)
\end{align*}
where we have used that $\sinh(\cdot)$ is strictly increasing and the upper bound $(w+\tilde{\mu})^{-1},(w+\tilde{\nu})^{-1}\leq 2$.

In addition, using definition of $\tilde{\mu}$ it is clear that
\[
\log\left(\frac{\tilde{\mu}}{\tilde{\mu}-1/2}\right)=\log\left(\frac{\frac{1}{2}+\tilde{\mu}_1(\ep)\ep}{\tilde{\mu}_1(\ep)\ep}\right)=O(\log(1/\ep)).
\]
Similarly, by definition of $\tilde{\nu}$ and the computations we have seen in Lemma \ref{tedious}, we have that
\[
\log\left(\frac{1-\tilde{\nu}}{1/2-\tilde{\nu}} \right)= \log\left(\frac{(1-\tilde{\nu})(1/2+\tilde{\nu})}{1/4-\tilde{\nu}^2} \right)=\log\left(\frac{(1-\tilde{\nu})(1/2+\tilde{\nu})}{(1-\tilde{\mu}_1(\ep))\ep + O(\e^2)} \right)=O(\log(1/\ep)).
\]

Therefore, by all the above, we have seen that
%\[
%|I_1+I_2|\leq  \tilde{C}_n \sinh(n a)O(\log(1/\ep)),
%\]
%and
\[
|I_1 I_2|\leq \tilde{C}_n^2 \sinh^2(n a) O(\log^2(1/\ep)).
\]

%In order to conclude with our result we will have to make a much more detailed study of the terms involved. On the one hand, using that
%\begin{align}\label{aux:boundsinh}
%\sinh(nb)\sinh(na)=\frac{1}{2}\cosh(n)-\frac{1}{2}\cosh(n\ep)
%\end{align}
%and since $\tilde{C}_n=\frac{2}{(1-\ep)n\sinh(n)}$ we have that
%%\begin{align*}
%%|\sinh(nb)(I_1+I_2)|&\leq  \tilde{C}_n \sinh(n a)\sinh(nb)O(\log(1/\ep))\\
%%&\leq  \frac{1}{n\sinh(n)}(\cosh(n)-\cosh(n\ep))O(\log(1/\ep)\\
%%&\leq \frac{1}{n\tanh(n)}O(\log(1/\ep)\leq \frac{1}{n}O(\log(1/\ep)),
%%\end{align*}
%%where $O(\log(1/\ep))$ does not depend on $n$.
%
%On the other hand,

Using that
\begin{align*}
\sinh^2(nb)-\sinh^2(na)&=(\sinh(nb)+\sinh(na))(\sinh(nb)-\sinh(na))\\
&\leq 2 \sinh(nb)(\sinh(nb)-\sinh(na)),
\end{align*}
and the fact that
\begin{align*}
\sinh(nb)-\sinh(na)&=\int_{-1}^1 \frac{d}{ds} \sinh\left(n\left(\frac{1+s\ep}{2}\right)\right)ds
=\frac{n\ep}{2}\int_{-1}^1\cosh\left(n\left(\frac{1+s\ep}{2}\right)\right)ds\\
&\leq n\ep \cosh(nb),
\end{align*}
we get
\begin{align*}
\sinh^2(nb)-\sinh^2(na)\leq 2n\ep \sinh(nb)\cosh(nb).
\end{align*}
Therefore, recalling that $\tilde{C}_n=\frac{2}{(1-\ep)n\sinh(n)}$ we have that,
\begin{align*}
\left(\sinh^2(nb)-\sinh^2(na)\right)|I_1I_2|\leq \frac{\ep}{n}\frac{\sinh(nb)\sinh(na)\cosh(nb)\sinh(na))}{\sinh(n)\sinh(n)}O(\log^2(1/\ep)),
\end{align*}
where $O(\log^2(\ep))$ does not depend on $n$.

Since
\begin{align}\label{aux:boundsinh}
\sinh(nb)\sinh(na)=\frac{1}{2}\cosh(n)-\frac{1}{2}\cosh(n\ep),
\end{align}
we have
\begin{align*}
\frac{\sinh(nb)\sinh(na)}{\sinh(n)}\leq C
\end{align*}
uniformly in $n$ and $\ep$.  In addition, since
\begin{align*}
\cosh(nb)\sinh(na)=\frac{1}{2}\sinh(n(b+a))-\frac{1}{2}\sinh(n(b-a))=\frac{1}{2}\sinh(n)-\frac{1}{2}\sinh(n\ep),
\end{align*}
we also have that
\begin{align*}
\frac{\cosh(nb)\sinh(na)}{\sinh(n)}\leq C
\end{align*}
uniformly in $n$ and $\ep$.

Then, we finally have
\begin{align}\label{goodbound}
\left(\sinh^2(nb)-\sinh^2(na)\right)|I_1I_2|\leq \frac{1}{n}O(\ep\log^2(\ep)),
\end{align}
where $O(\ep\log^2(1/\ep))$ does not depend on $n$.

Thus, applying \eqref{goodbound} into \eqref{eq:contradiccion} we have learned that
\begin{align}\label{detr1}
1+\sinh(nb)(I_1+I_2)= \frac{1}{n}O(\ep\log^2(\ep)),
\end{align}
where $O(\ep\log^2(\ep))$ does not depend on $n$.

The hardest term to estimate is $I_1+I_2$. To handle it, we have to be much more careful with it. We first consider the integral
\begin{align*}
\frac{I_1}{\tilde{C}_n}&=\int_{0}^\frac{1}{2}\sinh(n(1-\ep)w)\frac{f_n(w)}{w^2-\tilde{\mu}^2}dw,
\end{align*}
and recall that $f_n$ solves \eqref{fn_L:bc0} with boundary conditions \eqref{fn_L:bc0}, \eqref{fn_L:bc12}. In particular, we have
\begin{align*}
\frac{I_1}{\tilde{C}_n}&=  \frac{1}{2}\int_{0}^\frac{1}{2}\sinh(n(1-\ep)w)\left(f_n''(w)-(1-\ep)^2 n^2 f_n(w)\right)dw.
\end{align*}
Applying integration by parts and using the above boundary conditions we get
\begin{equation}\label{I1:computation}
\frac{I_1}{\tilde{C}_n}=\frac{1}{2}\sinh(na)f_n'(1/2^{-})-\frac{n(1-\ep)}{2}\cosh(na).
\end{equation}
Proceeding similarly with the other integral
\[
\frac{I_2}{\tilde{C}_n}=\int_{\frac{1}{2}}^1 \sinh(n(1-\ep)(1-w))\frac{f_n(w)}{w^2-\tilde{\nu}^2}dw,
\]
and recalling that now $f_n$ solves \eqref{fn_R} with boundary conditions \eqref{fn_R:bc1}, \eqref{fn_R:bc12} and applying integration by parts we obtain
\begin{align}\label{I2:computation}
\frac{I_2}{\tilde{C}_n}&=\frac{1}{2}\int_{\frac{1}{2}}^1 \sinh(n(1-\ep)(1-w))\left(f_n''(w)-(1-\ep)^2 n^2 f_n(w)\right)dw\\
&=-\frac{1}{2}\sinh(na)f_n'(1/2^+)-\frac{n(1-\ep)}{2}\cosh(na).\nonumber
\end{align}

Therefore, combining \eqref{I1:computation} and \eqref{I2:computation} we  arrive to
\begin{equation}\label{presalto}
    \frac{I_1+I_2}{\tilde{C}_n}= \frac{1}{2}\sinh(na)\left(f_n'(1/2^-)-f_n'(1/2^+)\right)-n(1-\ep)\cosh(na).
\end{equation}

It is now necessary to study in detail the jump that occurs at the point $x=1/2$. To do this we remember that on the left of this point we have seen that
\[
f_1(x)>f_2(x)>f_3(x)>  \ldots > 0, \qquad \text{for } 0 < x < 1/2,
\]
with the boundary condition  $f_n(1/2^-)=1$ for all $n\geq 1$. Then, we have that
\[
f_1'(1/2^-)\leq f_2'(1/2^-)\leq f_3'(1/2^-)<\ldots
\]
and consequently, by Lemma \ref{desizq},
\begin{equation}\label{fnprimaleft}
f_1'(1/2^-)<f_2'(1/2^-)\leq f_n'(1/2^-), \qquad \text{for all } n>1.
\end{equation}
Similarly, on the right we also have that
\[
f_1(x)>f_2(x)>f_3(x)>  \ldots > 0, \qquad \text{for } 1/2 < x < 1,
\]
with the boundary condition $f_n(1/2^+)=1$ for all $n\geq 1$. Then, we have that
\[
f_1'(1/2^+)\geq f_2'(1/2^+)\geq f_3'(1/2^+)>\ldots
\]
and consequently, by Lemma \ref{desder},
\begin{equation}\label{fnprimaright}
f_1'(1/2^+)>f_2'(1/2^+)\geq f_n'(1/2^+), \qquad \text{for all } n>1.
\end{equation}
Thus, combining \eqref{fnprimaleft} and \eqref{fnprimaright},
the jump at the point $x=1/2$  can be bounded by below as
\[
f_n'(1/2^-)-f_n'(1/2^+) \geq f_2'(1/2^-)-f_2'(1/2^+)
\]
In addition, by lemmas \ref{desizq} and \eqref{desder}
\begin{equation}\label{salto+delta}
f_n'(1/2^-)-f_n'(1/2^+) >f_1'(1/2^-)-f_1'(1/2^+)+\delta/2.
\end{equation}
uniformly in $\ep$, for $\ep$ small enough.

In this way, by combining \eqref{presalto} and \eqref{salto+delta} we have
\begin{equation}\label{presaltof1}
    \frac{I_1+I_2}{\tilde{C}_n}> \frac{1}{2}\sinh(na)\left(f_1'(1/2^-)-f_1'(1/2^+)\right)-n(1-\ep)\cosh(na)+\frac{\delta}{4}\sinh(na).
\end{equation}
Now it is time to remember that we have obtained an element in the kernel of the linear operator for the particular case $n=1$. This has been the main part of the manuscript. In summary, we have solved
\begin{align*}
1
+\sinh(b)\left(I_1^\ast+I_2^\ast\right)
+\left(\sinh^2(b)-\sinh^2(a)\right)I_1^\ast I_2^\ast =0,
\end{align*}
with
\begin{align*}
I_1^\ast&= \tilde{C}_1\int_{0}^\frac{1}{2}\sinh((1-\ep)w)\frac{f_1(w)}{w^2-\tilde{\mu}^2}dw,\\
I_2^\ast&= \tilde{C}_1\int_{\frac{1}{2}}^1\sinh((1-\ep)(1-w))\frac{f_1(w)}{w^2-\tilde{\nu}^2}dw.
\end{align*}
Proceeding exactly as before, we can conclude that
\[
\left(\sinh^2(b)-\sinh^2(a)\right)I_1^\ast I_2^\ast=O(\ep\log^2(\ep)),
\]
and
\[
\frac{I_1^\ast+I_2^\ast}{\tilde{C}_1}=\frac{1}{2}\sinh(a)\left(f_1'(1/2^-)-f_1'(1/2^+)\right)-(1-\ep)\cosh(a).
\]
Then,
\begin{align}\label{saltof1}
\frac{1}{2}\left(f_1'(1/2^-)-f_1'(1/2^+)\right)&=\frac{1}{\sinh(a)}\frac{I_1^\ast+I_2^\ast}{\tilde{C}_1}+(1-\ep)\frac{\cosh(a)}{\sinh(a)}\\
&=-\frac{1}{\sinh(a)\sinh(b)\tilde{C}_1}+(1-\ep)\frac{\cosh(a)}{\sinh(a)}+O(\ep\log^2(\ep)).\nonumber
\end{align}

Moreover, combining \eqref{presaltof1} and \eqref{saltof1} we obtain
\begin{multline*}
\frac{I_1+I_2}{\tilde{C}_n}> -\frac{\sinh(na)}{\sinh(a)\sinh(b)\tilde{C}_1}+(1-\ep)\frac{\sinh(na)\cosh(a)}{\sinh(a)} -n(1-\ep)\cosh(na)\\
+\frac{\delta}{4}\sinh(na)+\sinh(na)O(\ep\log^2(\ep)).
\end{multline*}
At this point we already have all the ingredients to conclude that
\begin{align*}
\text{det}&=1+\sinh(n b)(I_1+I_2)+(\sinh^2(n b))-\sinh^2(n a))I_1 I_2\\
&>1+\sinh(n b)(I_1+I_2)+\frac{1}{n}O(\ep\log^2(\ep))\\
&>1-\frac{\sinh(na)\sinh(nb)}{\sinh(a)\sinh(b)}\frac{\tilde{C_n}}{\tilde{C}_1}+(1-\ep)\tilde{C}_n \frac{\sinh(na)\sinh(nb)\cosh(a)}{\sinh(a)}\\
&\quad -n(1-\ep)\tilde{C}_n \sinh(nb)\cosh(na)+\sinh(nb)\tilde{C}_n\left(\frac{\delta}{4}\sinh(na)+\sinh(na)O(\ep\log^2(\ep))\right)\\
&\quad +\frac{1}{n}O(\ep\log^2(\ep)).
\end{align*}
Now, recalling that $\tilde{C}_n=\frac{2}{(1-\ep)n\sinh(n)}$, we obtain
\begin{multline}\label{detentrozos}
\text{det}>1-2\frac{\sinh(nb)\cosh(na)}{\sinh(n)}\\
+\frac{\sinh(na)\sinh(nb)}{n\sinh(n)\sinh(a)}\left(2\cosh(a)-\frac{\sinh(1)}{\sinh(b)}\right)\\
+\frac{\sinh(nb)\sinh(na)}{n\sinh(n)}\frac{\delta(1-\ep)}{2} \\
+\frac{\sinh(na)\sinh(nb)}{n\sinh(n)}O(\ep\log^2(\ep))+\frac{1}{n}O(\ep\log^2(\ep)).
\end{multline}
The next step is to study in detail each of the terms that appear in order to obtain a lower bound. In first place, since
\begin{align*}
\sinh(nb)\cosh(na)=\frac{1}{2}\sinh(n(a+b))-\frac{1}{2}\sinh(n(a-b))=\frac{1}{2}\sinh(n)+\frac{1}{2}\sinh(n\ep),
\end{align*}
we have that
\[
1-2\frac{\sinh(nb)\cosh(na)}{\sinh(n)}=1-\frac{\sinh(n)+\sinh(n\ep)}{\sinh(n)}=-\frac{\sinh(n\ep)}{\sinh(n)}.
\]
In addition,
\[
-\frac{\sinh(n\ep)}{\sinh(n)}=-\frac{\int_0^1\left(\sinh(n\ep s)\right)'ds}{\sinh(n)}=-(n\ep)\frac{\int_0^1\cosh(n\ep s)ds}{\sinh(n)}=-\frac{n\ep}{e^{n/2}}\frac{\int_0^1(e^{n\ep s}+ e^{-n\ep s}))ds}{e^{n/2}-e^{-3n/2}}.
\]
Consequently, as parameter $\ep>0$ is small enough, we have
\begin{equation}\label{trozodet1}
\left|1-2\frac{\sinh(nb)\cosh(na)}{\sinh(n)}\right|\leq C\frac{\ep}{n}.
\end{equation}
For the following term, we just have to remember that we have already proved (see \eqref{aux:boundsinh}) that
\begin{align*}
0\leq \frac{\sinh(nb)\sinh(na)}{\sinh(n)}\leq C
\end{align*}
uniformly in $n$ and $\ep$. Then, we have
\begin{equation}\label{trozodet2}
0\leq \frac{\sinh(na)\sinh(nb)}{n\sinh(n)\sinh(a)}\left(2\cosh(a)-\frac{\sinh(1)}{\sinh(b)}\right) \leq C\frac{\ep}{n},
\end{equation}
and for free we also have
\[
\frac{\sinh(na)\sinh(nb)}{n\sinh(n)}O(\ep\log^2(\ep)) \leq \frac{1}{n}O(\ep\log^2(\ep)).
\]
Finally, for the last term, we have
\begin{equation}\label{trozodet3}
\frac{\sinh(nb)\sinh(na)}{n\sinh(n)}\frac{\delta(1-\ep)}{2} +\frac{1}{n}O(\ep\log^2(\ep)) > c \frac{\delta}{n}+ \frac{1}{n}O(\ep\log^2(\ep)).
\end{equation}
Putting together \eqref{trozodet1}, \eqref{trozodet2} and \eqref{trozodet3} into \eqref{detentrozos} we finally arrive to
\[
\text{det}> \frac{\delta+o(1)}{n} ,
\]
which is strictly positive and gives us the desired contradiction.

\subsubsection{Monotonicity of $f_n$}
The only thing left  to show is that
\begin{equation}\label{fndecrece_n}
f_1(x)>f_2(x)>f_3(x)>  \ldots > 0, \qquad \text{for } x\in(0,1/2)\cup(1/2,1).
\end{equation}

\begin{remark}
By definition we know that $f_n(0)=0=f_n(1)$ and $f_n(1/2^-)=1=f_n(1/2^+), \,\, \forall n\geq 1.$
\end{remark}

To prove \eqref{fndecrece_n}, we distinguish two different cases:

\noindent
\underline{Case $x\in(0,1/2)$:}
Here, we recall that $f_n$ solves
\begin{align*}
f_n''(x)-\left(\frac{2}{x^2-\tilde{\mu}^2}+(1-\ep)^2 n^2\right)f_n(x)&=0, \qquad \text{for }0< x< 1/2,\\
f_n(0)&=0, \\
f_n(1/2^-)&=1.
\end{align*}
We will show that if $n_1< n_2$ with $n_1,n_2\in \N$ then $f_{n_1}(x)> f_{n_2}(x)$ for all $x\in(0,1/2)$.
To compare $f_{n_1}$ and $f_{n_2}$ we will use the method of variation of constants. We have that
\begin{align*}
(f_{n_1}-f_{n_2})''(x)-\left(\frac{2}{x^2-\tilde{\mu}^2}+(1-\ep)^2 n_1^2\right)(f_{n_1}-f_{n_2})(x)&=(1-\ep)^2(n_1^2-n_2^2)f_{n_2}(x),\\
(f_{n_1}-f_{n_2})(0)&=0,\\
(f_{n_1}-f_{n_2})(1/2^-)&=0.
\end{align*}

Next we consider the solution  $\bar{f}_{n_1}(x)$ of the equation
\begin{align*}
\bar{f}''_{n_1}(x)-\left(\frac{2}{x^2-\tilde{\mu}^2}+(1-\ep)^2n_1^2\right)\bar{f}_{n_1}(x)&=0, \qquad \text{for }0< x< 1/2,\\
\bar{f}_{n_1}(1/2)&=0\\
\bar{f}'_{n_1}(1/2)&=1.
\end{align*}
This solution exists and  is unique and  smooth since there are no singular points in $[0,1/2]$. Also, it is linearly independent of $f_{n_1}$. In addition, $\bar{f}_{n_1}(x)<0$ in $[0,1/2)$. Indeed, since $\bar{f}_{n_1}'(1/2)=1$ we have that $\bar{f}_{n_1}(x)<0$ for some interval $(x^*,1/2)$.  If there exists $x^\sharp\in(0,1/2)$ such that $\bar{f}_{n_1}(x^\sharp)=0$ and $\bar{f}_{n_1}(x)<0$ in $(x^\sharp,1/2)$, we must have that $\bar{f}_{n_1}'(x^\sharp)<0$ because $x=x^\sharp$ is an ordinary point. But then, by Lemma \ref{cor:positivoxstar}, $\bar{f}_{n_1}(1/2)<0$, which is also a contradiction. Then, we have proved that $\bar{f}_{n_1}(x)<0$ over the entire interval $[0,1/2)$.

Because Lemma \ref{l:positivityfn} and the method of variation of constants the solution satisfies
\begin{align}\label{eq:fn1-fn2}
(f_{n_1}-f_{n_2})(x)&=(1-\ep)^2(n_1^2-n_2^2)f_{n_1}(x)\int_{x}^\frac{1}{2}\frac{\bar{f}_{n_1}(w)}{W[f_{n_1},\bar{f}_{n_1}](w)}f_{n_2}
(w)dw \\
&\quad +(1-\ep)^2(n_1^2-n_2^2) \bar{f}_{n_1}(x)\int_{0}^x\frac{f_{n_1}(w)}{W[f_{n_1},\bar{f}_{n_1}](w)}f_{n_2}(w)dw. \nonumber
\end{align}

We recall that Wronskian is given by $$W[f_{n_1},\bar{f}_{n_1}](x)=f_{n_1}(x)\bar{f}_{n_1}'(x)-f_{n_1}'(x)\bar{f}_{n_1}(x).$$
Thus, we get
\begin{equation*}
W[f_{n_1},\bar{f}_{n_1}](1/2)= f_{n_1}(1/2)=1,
\end{equation*}
Therefore, as a consequence of Abel-Liouville's formula we  have that
$$W[f_{n_1},\bar{f}_{n_1}](x)=W[f_{n_1},\bar{f}_{n_1}](1/2) \qquad \text{for } 0 \leq x \leq 1/2.$$
So, in particular, we have seen that
\begin{equation*}
W[f_{n_1},\bar{f}_{n_1}](x)>0, \qquad \text{for } 0 \leq x \leq 1/2.
\end{equation*}

Combining everything, we  have seen that $f_{n_1}(x)$ is positive,  $\bar{f}_{n_1}(x)$ is negative and $W$ is positive, so that
\begin{align*}
f_{n_1}(x)\frac{\bar{f}_{n_1}(w)}{W[f_{n_1},\bar{f}_{n_1}](w)}< 0 \qquad \text{and} \qquad \bar{f}_{n_1}(x)\frac{f_{n_1}(w)}{W[f_{n_1},\bar{f}_{n_1}](w)}< 0,
\end{align*}
for all $w,x\in(0,1/2)$.

Since $f_{n_2}(x)$ is also positive and $n_1^2-n_2^2 <0$ we deduce from \eqref{eq:fn1-fn2} that $(f_{n_1}-f_{n_2})(x) > 0$.
As we have proved this for any $n_1,n_2\in\N$ satisfying $n_1< n_2$ we have finally proved that
\[
f_1(x)> f_2(x)> f_3(x)> \ldots>0, \qquad  \text{for $0< x < 1/2$}.
\]

Another important fact that can be deduced from the same  argument is the next one.
\begin{lemma}\label{desizq}
The following strict inequality holds:
\[
f_1'(1/2^-)-f_2'(1/2^-)<-\delta <0.
\]
uniformly in $\ep$, for $\ep$ small enough.
\end{lemma}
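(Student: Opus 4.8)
\textit{Proof strategy.} The plan is to differentiate the variation-of-constants representation \eqref{eq:fn1-fn2} of $d:=f_1-f_2$ (the case $n_1=1$, $n_2=2$), evaluate at $x=1/2^-$, observe a cancellation, and then bound the single surviving term from below \emph{uniformly} in $\ep$.

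First I would recall that on $(0,1/2)$, using \eqref{eq:fn1-fn2} with $n_1=1,n_2=2$ and the fact that the relevant Wronskian is constant, $W[f_1,\bar{f}_1](x)\equiv W[f_1,\bar{f}_1](1/2)=f_1(1/2)=1$, one has
\[
d(x)=-3(1-\ep)^2\left(f_1(x)\int_x^{1/2}\bar{f}_1(w)f_2(w)\,dw+\bar{f}_1(x)\int_0^x f_1(w)f_2(w)\,dw\right).
\]
Differentiating, the two contributions coming from the variable limits of integration, namely $-f_1(x)\bar{f}_1(x)f_2(x)$ and $+\bar{f}_1(x)f_1(x)f_2(x)$, cancel, so
\[
d'(x)=-3(1-\ep)^2\left(f_1'(x)\int_x^{1/2}\bar{f}_1(w)f_2(w)\,dw+\bar{f}_1'(x)\int_0^x f_1(w)f_2(w)\,dw\right).
\]
Letting $x\to 1/2^-$ the first integral vanishes, and since $\bar{f}_1'(1/2)=1$ by the definition of $\bar{f}_1$,
\[
f_1'(1/2^-)-f_2'(1/2^-)=d'(1/2^-)=-3(1-\ep)^2\int_0^{1/2}f_1(w)f_2(w)\,dw,
\]
which is already strictly negative because $f_1,f_2>0$ on $(0,1/2)$ by Lemma \ref{l:positivityfn}. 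It remains to make this quantitative and independent of $\ep$.

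Second, for the uniform lower bound on $\int_0^{1/2}f_1f_2$: by the monotonicity \eqref{fndecrece_n} we have $f_1\ge f_2$ on $(0,1/2)$, so $\int_0^{1/2}f_1f_2\ge\int_0^{1/2}f_2^2$. The pointwise lower bound \eqref{pex}, combined with the elementary upper bound read off from the integral equation in the proof of Lemma \ref{l:positivityfn} (where $2/(z^2-\tilde{\mu}^2)<0$ makes the correction term nonpositive on $[0,1/2]\subset[0,\tilde{\mu})$), yields after renormalization at $x=1/2$ a two-sided estimate of the form
\[
c\,\frac{\sinh(2(1-\ep)x)}{\sinh(1-\ep)}\le f_2(x)\le\frac{\sinh(2(1-\ep)x)}{\sinh(1-\ep)},\qquad 0\le x\le 1/2,
\]
for an absolute constant $c\in(0,0.13)$. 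Hence $\int_0^{1/2}f_2^2\ge c^2\,I(\ep)$ with $I(\ep):=\int_0^{1/2}\sinh^2(2(1-\ep)x)/\sinh^2(1-\ep)\,dx$, and $I(\ep)$ is continuous in $\ep$ with $I(0)>0$, so $I(\ep)\ge I(0)/2$ for $\ep$ small. Therefore $f_1'(1/2^-)-f_2'(1/2^-)\le-\tfrac{3}{2}(1-\ep)^2 c^2 I(0)\le-\delta$ for a fixed $\delta>0$ and all $\ep$ small enough.

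The main obstacle is precisely this uniformity: one must check that the positivity estimate \eqref{pex}, which is stated for the solution normalized by $f_n'(0)=1$, transfers after division by $f_n(1/2)$ to a lower bound for the solution normalized by $f_n(1/2^-)=1$ that does \emph{not} degenerate as $\ep\to0$ (equivalently, as the singular point $\tilde{\mu}\to1/2$ approaches the endpoint of the interval). The algebraic part — the cancellation in $d'$ and the sign bookkeeping using $W\equiv1$ and $\bar{f}_1<0$ on $[0,1/2)$ — is routine. An alternative to the explicit estimate would be a compactness argument passing to the $\ep=0$ limiting equation, where the corresponding integral is strictly positive; but the explicit bound above is cleaner and self-contained.
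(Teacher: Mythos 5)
Your algebraic core — differentiate the variation-of-constants formula \eqref{eq:fn1-fn2} for $n_1=1$, $n_2=2$, observe that the boundary terms cancel, use $W\equiv 1$ and $\bar f_1'(1/2)=1$, and obtain
\[
f_1'(1/2^-)-f_2'(1/2^-)=-3(1-\ep)^2\int_0^{1/2}f_1(w)f_2(w)\,dw
\]
— is exactly what the paper does. Where you diverge is the quantitative lower bound on $\int_0^{1/2} f_1 f_2$. The paper's route is a limit comparison: it constructs the $\ep=0$ analogues $f_0$ and $f_0^\sharp$ and invokes the $L^\infty$-closeness $\|f_n-f_0^{(\cdot)}\|_{L^\infty}=O(\ep\log(1/\ep))$ (from Lemma \ref{Linfty} and its $m=2$ analogue) to write $\int f_1 f_2 = \int f_0 f_0^\sharp + O(\ep\log(1/\ep))$, and the fixed, $\ep$-independent integral $\int_0^{1/2}f_0 f_0^\sharp>0$ gives the uniform $\delta$. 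Your route is to bound $\int f_1 f_2\ge\int f_2^2$ via \eqref{fndecrece_n} and then use the explicit $\sinh$-bound \eqref{pex}. The ``main obstacle'' you flag — transferring \eqref{pex} from the $f_n'(0)=1$ normalization to the $f_n(1/2^-)=1$ normalization without degeneration as $\ep\to 0$ — you have in fact already dispatched: since the integral kernel in the Volterra representation is nonpositive on $[0,1/2)\subset[0,\tilde\mu)$, the $f_n'(0)=1$ solution $\hat f_n$ satisfies $c\,\sinh((1-\ep)nx)/((1-\ep)n)\le \hat f_n(x)\le \sinh((1-\ep)nx)/((1-\ep)n)$, and dividing the lower bound at $x$ by the upper bound at $x=1/2$ gives the non-degenerate lower bound $f_n(x)\ge c\,\sinh((1-\ep)nx)/\sinh((1-\ep)n/2)$ for the renormalized solution. (One small slip: the resulting \emph{upper} bound for $f_2$ in your display should carry a constant $1/c$, not $1$; this is immaterial since you only use the lower bound.) Both arguments are correct; yours is more elementary and self-contained, while the paper's reuses the $\ep\to 0$ comparison machinery it has already built and is therefore shorter in context.
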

\begin{proof}
First of all, let us remark that we can construct, for $m=2$ in \eqref{f_left}, solutions $f_2$, $f_0^\sharp=f_2|_{\ep=0}$, and $g_1^\sharp$, analogous to $f=f_1$, $f_0=f_1|_{\ep=0}$ and $g_1$ respectively. These functions $f$, $f_0$ and $g_1$ can be found in section \ref{frobenius}. In addition, we have the same type of estimate for $d^\sharp=f_2-f_0^\sharp$, than for $d=f-f_0$ in Lemma $\ref{Linfty}$. Notice that $f_1$ and $f_2$ depend on $\ep$. We just write \eqref{eq:fn1-fn2} for the particular case $n_1=1$ and $n_2=2$. This implies that
\begin{align*}
(f_{1}-f_{2})(x)&=(1-\ep)^2(-3)f_{1}(x)\int_{x}^\frac{1}{2}\frac{\bar{f}_{1}(w)}{W[f_{1},\bar{f}_{1}](w)}f_{2}
(w)dw \\
&\quad +(1-\ep)^2(-3) \bar{f}_{1}(x)\int_{0}^x\frac{f_{1}(w)}{W[f_{1},\bar{f}_{1}](w)}f_{2}(w)dw. \nonumber
\end{align*}
Taking one derivate we get
\begin{align*}
(f_{1}-f_{2})'(x)&=(1-\ep)^2(-3)f_{1}'(x)\int_{x}^\frac{1}{2}\frac{\bar{f}_{1}(w)}{W[f_{1},\bar{f}_{1}](w)}f_{2}
(w)dw \\
&\quad +(1-\ep)^2(-3) \bar{f}_{1}'(x)\int_{0}^x\frac{f_{1}(w)}{W[f_{1},\bar{f}_{1}](w)}f_{2}(w)dw. \nonumber
\end{align*}
Then, evaluating the above expression and recalling that $\bar{f}_{1}'(1/2)=1$  and that $W[f_{1},\bar{f}_{1}](w)=1 $ we obtain

\begin{align*}
(f_{1}-f_{2})'(1/2^-)=&-3(1-\ep)^2\int_{0}^{1/2}f_{1}(w)f_{2}(w)dw\\=&-3(1-\ep)^2\int_{0}^{1/2}f_{0}(w)f^\sharp_{0}(w)dw+O(\ep\log(1/\ep)).
\end{align*}

%Because of \eqref{pex} we have that
%\begin{align*}
%f_1(x)\geq cf'_1(0)\frac{\sinh((1-\ep)x)}{(1-\ep)}, && f_2(x)\geq cf'_2(0)\frac{\sinh(2(1-\ep)x)}{2(1-\ep)}.
%\end{align*}
%Notice that in Lemma \ref{l:positivityfn} we have normalized to have derivative equal to one at zero. Therefore
%
%\[
%(f_{1}-f_{2})'(1/2^-)=-\frac{3}{2}f_1'(0)f_2'(0)\int_{0}^{1/2}\sinh((1-\ep)w)\sinh(2(1-\ep)w)dw.
%\]
%Taking a derivative in \eqref{diferencia} we have that
%
%\begin{align*}
%f_1'(0)-f_1'|_{\ep=0}(0)=\ep f'_1|_{\ep=0}(0)\int_0^\frac{1}{2}g_1(w)E(w)dw\leq C\ep \log(1/\ep)f_1'|_{\ep=0}(0)
%\end{align*}
%Since $f'_1|_{\ep=0}>0$, we find that $f'_1(0)\geq c>0$ uniformly in $\ep$, for $\ep$ small enough. We can reproduce this argument for $f_2$

Thus, we find that
\begin{align*}
(f_{1}-f_{2})'(1/2^-)<-\delta,
\end{align*}
uniformly in $\ep$, for $\ep$ small enough.
\end{proof}

\noindent
\underline{Case $x\in(1/2,1)$:}
Here, we recall that $f_n$ solves
\begin{align*}
f_n''(x)-\left(\frac{2}{x^2-\tilde{\nu}^2}+(1-\ep)^2 n^2\right)f_n(x)&=0, \qquad \text{for } 1/2< x\leq 1,\\
f_n(1)&=0, \\
f_n(1/2^+)&=1.
\end{align*}
We will show that if $n_1< n_2$ with $n_1,n_2\in \N$ then $f_{n_1}(x)> f_{n_2}(x)$ for all $x\in(1/2,1)$.
To compare $f_{n_1}$ and $f_{n_2}$ we will use the method of variation of constants. We have that
\begin{align*}
(f_{n_1}-f_{n_2})''(x)-\left(\frac{2}{x^2-\tilde{\nu}^2}+(1-\ep)^2 n_1^2\right)(f_{n_1}-f_{n_2})(x)&=(1-\ep)^2(n_1^2-n_2^2)f_{n_2}(x),\\
(f_{n_1}-f_{n_2})(1)&=0,\\
(f_{n_1}-f_{n_2})(1/2^+)&=0.
\end{align*}

As we did before, we can construct $\bar{f}_{n_1}(x)$,  satisfying
\begin{align*}
\bar{f}''_{n_1}(x)-\left(\frac{2}{x^2-\tilde{\nu}^2}+(1-\ep)^2n_1^2\right)\bar{f}_{n_1}(x)&=0, \qquad \text{for }1/2< x< 1,\\
\bar{f}_{n_1}(1/2)&=0,\\
\bar{f}'_{n_1}(1/2)&=1.
\end{align*}
%\begin{comment}
This solution exists and  is unique and  smooth since there are no singular points in $[1/2,1]$. Also, it is linearly independent of $f_{n_1}$. In addition, $\bar{f}_{n_1}(x)>0$ in $(1/2,1]$. Indeed, since $\bar{f}_{n_1}'(1/2)=1$ we have that $\bar{f}_{n_1}(x)>0$ for some interval $(1/2,x^*)$.  If there exists $x^\sharp\in(1/2,1)$ such that $\bar{f}_{n_1}(x^\sharp)=0$ and $\bar{f}_{n_1}(x)>0$ in $(1/2, x^\sharp)$, we must have that $\bar{f}_{n_1}'(x^\sharp)<0$ because $x=x^\sharp$ is an ordinary point. But then, by Lemma \ref{cor:positivoxstarright}, $\bar{f}_{n_1}(1/2)>0$, which is also a contradiction. Then, we have proved that $\bar{f}_{n_1}(x)>0$ over the entire interval $(1/2,1]$.
%\end{comment}

Then, using the method of variation of constants, the solution satisfies
\begin{align}\label{eq:fn1-fn2right}
(f_{n_1}-f_{n_2})(x)&=(1-\ep)^2(n_1^2-n_2^2)f_{n_1}(x)\int_\frac{1}{2}^x\frac{(-1)\bar{f}_{n_1}(w)}{W[f_{n_1},\bar{f}_{n_1}](w)}f_{n_2}
(w)dw \\
&\quad +(1-\ep)^2(n_1^2-n_2^2) \bar{f}_{n_1}(x)\int_x^1\frac{(-1)f_{n_1}(w)}{W[f_{n_1},\bar{f}_{n_1}](w)}f_{n_2}(w)dw. \nonumber
\end{align}

\begin{comment}
Since $\bar{f}_{n_1}(1/2)=0$ and $x=1/2$ is an ordinary point, we have that $\bar{f}_{n_1}'(1/2)\neq 0$. If $\bar{f}_{n_1}'(1/2)<0 $, $\bar{f}_{n_1}(x)<0$ in some interval $(1/2,x^*)$. But, because $\bar{f}_{n_1}(1)=1$ there must be a point $x^\sharp$ such that $\bar{f}_{n_1}(x)<0$ in $(1/2,x^\sharp)$ and $\bar{f}_{n_1}(x^\sharp)=0$. Since $x=x^\sharp$ is an ordinary point we get that $\bar{f}'_{n_1}(x^\sharp)>0$ and then we obtain, by Lemma \ref{cor:positivoxstarright}, that $\bar{f}_{n_1}(1/2)<0$ which is a contradiction.
Therefore $\bar{f}_{n_1}'(1/2)>0$ and $\bar{f}_{n_1}(x)>0$ for some interval $(1/2,x^*)$.  If there exists $x^\sharp\in(1/2,1)$ such that $\bar{f}_{n_1}(x^\sharp)=0$ and $\bar{f}_{n_1}(x)>0$ in $(1/2,x^\sharp)$, we must have that $\bar{f}_{n_1}'(x^\sharp)<0$ because $x=x^\sharp$ is an ordinary point. But then, by Lemma \ref{cor:positivoxstarright}, $\bar{f}_{n_1}(1/2)>0$, which is also a contradiction. Then, we have proved that $\bar{f}_{n_1}(x)>0$ over the entire interval $(1/2,1)$.
\end{comment}

%Since $\bar{f}_{n_1}$ satisfies that $\bar{f}_{n_1}(\tilde{\nu})=0$ and $\bar{f}_{n_1}'(\tilde{\nu})=1$ then we conclude that $\bar{f}_{n_1}(x)>0$ for $\tilde{\nu}<x<1.$ The proof of this fact follows by the same argument we made in $(0,\tilde{\mu})$ using  Lemma \ref{cor:positivoxstarright} instead of \ref{cor:positivoxstar}.

We recall that Wronskian is given by $$W[f_{n_1},\bar{f}_{n_1}](x)=f_{n_1}(x)\bar{f}_{n_1}'(x)-f_{n_1}'(x)\bar{f}_{n_1}(x).$$
Thus,  we get
\begin{equation*}
W[f_{n_1},\bar{f}_{n_1}](1)= -f'_{n_1}(1)\bar{f}_{n_1}(1)>0.
\end{equation*}
since we saw earlier that $f'_{n_1}(1)<0$. Therefore, as a consequence of Abel-Liouville's formula we  have that
$$W[f_{n_1},\bar{f}_{n_1}](x)>0,\quad 1/2\leq x\leq 1$$

Combining everything, we  have seen that $f_{n_1}(x)$ is positive, $\bar{f}_{n_1}(x)$ is  positive and $W$ is positive, so that
\begin{align*}
f_{n_1}(x)\frac{(-1)\bar{f}_{n_1}(w)}{W[f_{n_1},\bar{f}_{n_1}](w)}< 0 \qquad \text{and} \qquad \bar{f}_{n_1}(x)\frac{(-1)f_{n_1}(w)}{W[f_{n_1},\bar{f}_{n_1}](w)}< 0,
\end{align*}
for all $w,x\in(0,1/2)$.

Since $f_{n_2}(x)$ is also positive and $n_1^2-n_2^2 > 0$ we deduce from \eqref{eq:fn1-fn2right} that $(f_{n_1}-f_{n_2})(x) > 0$.
As we have proved this for any $n_1,n_2\in\N$ satisfying $n_1< n_2$ we have finally proved that
\[
f_1(x)> f_2(x)> f_3(x)> \ldots>0, \qquad  \text{for $1/2< x < 1$}.
\]

Moreover, proceeding as we did in Lemma \ref{desizq}, it is straightforward to check that

\begin{lemma}\label{desder}
The following strict inequality holds:
\[
f_1'(1/2^+) > f_2'(1/2^+)+\delta,
\]
for some $\delta>0$, uniformly in $\ep$, for $\ep$ small enough.
\end{lemma}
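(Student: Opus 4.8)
The plan is to mimic exactly the proof of Lemma~\ref{desizq}, working this time on the interval $(1/2,1]$ and using the positivity and comparison machinery already developed there. Recall that on $(1/2,1)$ we have $f_{n_1}>f_{n_2}>0$ whenever $n_1<n_2$, with the boundary normalization $f_n(1/2^+)=1$ and $f_n(1)=0$, and that the auxiliary function $\bar f_{n_1}$ solving the homogeneous equation with $\bar f_{n_1}(1/2)=0$, $\bar f_{n_1}'(1/2)=1$ satisfies $\bar f_{n_1}(x)>0$ on $(1/2,1]$ and $W[f_{n_1},\bar f_{n_1}]\equiv W[f_{n_1},\bar f_{n_1}](1/2)$. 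The key identity is the variation-of-constants formula \eqref{eq:fn1-fn2right} specialized to $n_1=1$, $n_2=2$:
\begin{align*}
(f_{1}-f_{2})(x)&=-3(1-\ep)^2 f_{1}(x)\int_\frac{1}{2}^x\frac{(-1)\bar{f}_{1}(w)}{W[f_{1},\bar{f}_{1}](w)}f_{2}(w)dw \\
&\quad -3(1-\ep)^2 \bar{f}_{1}(x)\int_x^1\frac{(-1)f_{1}(w)}{W[f_{1},\bar{f}_{1}](w)}f_{2}(w)dw.
\end{align*}

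First I would differentiate this identity in $x$ and evaluate at $x=1/2^+$. Using $\bar f_1(1/2)=0$, the first term's boundary contribution drops out, and since $\bar f_1'(1/2)=1$ and $W[f_1,\bar f_1](1/2)=f_1(1/2)\bar f_1'(1/2)=1$, only the second term survives at the endpoint. This yields, after undoing the sign,
\[
(f_{1}-f_{2})'(1/2^+)=3(1-\ep)^2\int_{1/2}^1 f_{1}(w)f_{2}(w)dw.
\]
Next I would pass to the limit $\ep\to 0$ inside the integral. Here I use that $f_1=f_1|_{\ep=0}+O(\ep\log(1/\ep))$ and similarly $f_2=f_2|_{\ep=0}+O(\ep\log(1/\ep))$ in $L^\infty(1/2,1)$ — this is the analogue of Lemma~\ref{Linfty} on the right interval, exactly as already invoked in the proof of Lemma~\ref{desizq} — so
\[
(f_{1}-f_{2})'(1/2^+)=3\int_{1/2}^1 f_{1}|_{\ep=0}(w)\,f_{2}|_{\ep=0}(w)\,dw+O(\ep\log(1/\ep)).
\]

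Finally, the limiting functions $f_1|_{\ep=0}$ and $f_2|_{\ep=0}$ are strictly positive on $(1/2,1)$ by Lemmas~\ref{l:positivityfn} and \ref{l:positivityfnright}, hence their product integrates to a strictly positive constant, call it $\delta/3>0$ (independent of $\ep$). Therefore $(f_1-f_2)'(1/2^+)\geq \delta + O(\ep\log(1/\ep))$, which is $\geq \delta$ — with a possibly slightly reduced $\delta$, still uniform in $\ep$ — once $\ep$ is small enough. This is precisely the claim $f_1'(1/2^+)>f_2'(1/2^+)+\delta$. I expect no serious obstacle here: the only thing to be careful about is the sign bookkeeping coming from the two $(-1)$ factors in \eqref{eq:fn1-fn2right} together with $n_1^2-n_2^2=-3<0$, and the fact that the integral in the surviving boundary term runs over $(1/2,1)$ rather than $(0,1/2)$; everything else is a verbatim transcription of Lemma~\ref{desizq}. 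One can safely "check straightforwardly" the remaining routine estimates, so a short proof referring back to the argument of Lemma~\ref{desizq} suffices.
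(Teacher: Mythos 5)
Your proof is correct and follows exactly the paper's intended route: the paper states Lemma~\ref{desder} without proof, simply asserting that it follows by ``proceeding as in Lemma~\ref{desizq}'', and your careful transcription of that argument to the right interval --- differentiating \eqref{eq:fn1-fn2right}, evaluating at $1/2^+$ using $\bar f_1(1/2)=0$, $\bar f_1'(1/2)=1$, $W\equiv 1$, and passing to the $\ep\to 0$ limit via the right-interval analogue of Lemma~\ref{Linfty} --- carries out the sign bookkeeping correctly and yields the required uniform lower bound $(f_1-f_2)'(1/2^+)=3(1-\ep)^2\int_{1/2}^1 f_1 f_2\,dw\geq\delta>0$. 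The only cosmetic slip is that positivity of the limiting functions on $(1/2,1)$ is the content of Lemma~\ref{l:positivityfnright} (and its limiting analogue), not Lemma~\ref{l:positivityfn}, but this does not affect the argument.
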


\section{Codimension of the image of the linear operator}

In this section we shall prove the following theorem

\begin{theorem}\label{thmcodimension} There exists $\ep_0>0$ sufficiently small  such that for all $0<\ep<\ep_0$ there exists $\lambda^*=\lambda^*(\ep)\in \R$ such that the kernel and the cokernel of the operator
\begin{align*}
\cL^{\lambda^\ast}\, :\, \S \to \S
\end{align*}
are one dimensional. In addition, the element $h^*$ that spans the kernel of $\cL^{\lambda^\ast}$ is $C^\infty(\T\times [0,a]\cup[b,1])$ and the function $\lambda^*-\Psi'_0(y)$ has not zeros in $[0,a]\cup [b,1]$.
\end{theorem}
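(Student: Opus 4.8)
The plan is to combine the spectral information already obtained in Theorems~\ref{mainexistencelemma} and the one-dimensionality proof of Section~\ref{s:dimkernel} with a Fredholm-type argument to control the cokernel. First I would record what is already done: for the value $\lambda^\ast=v_\varpi-(1-\ep)^2\tilde\mu^2$ produced by the fixed point argument in Section~\ref{s:closing}, the operator $\cL^{\lambda^\ast}$ has a one-dimensional kernel spanned by $h^\ast(x,y)=h(y)\cos(x)$ with $h$ given by \eqref{solafterrescalin}, and crucially $\lambda^\ast-\Psi_0'(y)$ does not vanish on $[0,a]\cup[b,1]$. So it only remains to prove that $\cR(\cL^{\lambda^\ast})$ has codimension one in $\S$, equivalently that the equation $\cL^{\lambda^\ast}f=g$ is solvable for $g$ in a closed subspace of codimension one.

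The approach I would take is to decompose the problem by Fourier modes, exactly as $\cL^{\lambda^\ast}$ itself decomposes: writing $g=\sum_{n\ge1}g_n(y)\cos(nx)$ and $f=\sum_{n\ge1}f_n(y)\cos(nx)$, we must solve $\cL_n^{\lambda^\ast}f_n=g_n$ for each $n$. Because $\lambda^\ast-\Psi_0'(y)$ is bounded away from zero on the support of $\varpi'$, the operator $\cL_n^{\lambda^\ast}$ is of the form ``invertible multiplication plus compact,'' so it is Fredholm of index zero on the relevant function space for each fixed $n$; hence surjectivity of $\cL_n^{\lambda^\ast}$ is equivalent to injectivity. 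For $n\ne1$ the one-dimensionality analysis of Section~\ref{s:dimkernel} (the contradiction argument culminating in $\mathrm{det}>(\delta+o(1))/n>0$) shows precisely that $\cL_n^{\lambda^\ast}$ has trivial kernel, hence is invertible, so $g_n$ imposes no constraint. For $n=1$, the kernel is spanned by $h$, so by the Fredholm alternative $\cL_1^{\lambda^\ast}f_1=g_1$ is solvable iff $g_1$ satisfies one scalar linear condition; to conclude codimension \emph{exactly} one (not more) I would exhibit this as the vanishing of a single nontrivial bounded linear functional $\ell$ on $\S$, coming from the left null space of $\cL_1^{\lambda^\ast}$, and show $\ell\not\equiv0$ — e.g.\ by producing one $g$ with $\ell(g)\neq0$, or by using the symmetry/self-adjointness structure of $\cL_1^{\lambda^\ast}$ after conjugation by the weight $(\lambda^\ast-\Psi_0')$, which identifies the cokernel functional with pairing against $h$ itself against an explicit weight.

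Concretely, after reducing to $n=1$ I would rewrite the integral equation $\cL_1^{\lambda^\ast}f_1=g_1$ as in Section~\ref{existence}: multiply by the nonvanishing factor $(\lambda^\ast-\Psi_0'(y))^{-1}$, introduce $H(y)=(\lambda^\ast-\Psi_0'(y))f_1(y)$, pass to the $\tilde G_\ep$-kernel form \eqref{compactintro}, rescale to $[0,1/2)\cup(1/2,1]$, and then transform to the inhomogeneous ODE system \eqref{odeleft_intro}--\eqref{bcright_intro} with a right-hand side built from $g_1$. Using the explicit Frobenius solutions $f$ (resp.\ $\bar f$, its linearly independent companion used in Section~\ref{s:dimkernel}) and variation of parameters, one writes the general solution of the ODE with the boundary conditions at $0$ and $1$; the remaining obstruction to matching at $x=1/2$ and to actually solving the \emph{integral} (not just differential) equation is exactly the $2\times2$ system \eqref{matrix} with an inhomogeneous right-hand side. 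Since that matrix has rank one at $\lambda^\ast$ (its determinant vanishes but not all entries do), the solvability condition is a single scalar equation, which is the desired codimension-one condition, and the functional is nontrivial because the matrix is nonzero.

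The main obstacle, and where the proof genuinely differs from \cite{CL,CL1}, is twofold. First, establishing that $\cL_n^{\lambda^\ast}$ is Fredholm of index zero \emph{uniformly enough} to run the mode-by-mode argument, and that the reconstructed $f=\sum f_n\cos(nx)$ actually lies in $\S$ (i.e.\ controlling the $C^{1+\alpha}$ norm of the sum from bounds on each $f_n$) — here the decay estimates on $f_n$ and on $\|\cL_n^{\lambda^\ast,-1}\|$ for large $n$ from Section~\ref{s:dimkernel}, together with the smoothing of the Green's function $G_n$, are what make the series converge. Second, proving the cokernel is \emph{exactly} one-dimensional rather than merely finite-dimensional: this requires checking that for $n=1$ the rank drop of \eqref{matrix} is exactly one and that the single compatibility functional is not identically zero on $\S$, which I would verify by a direct computation evaluating the functional on a conveniently chosen test function, or by the self-adjointness of the weighted operator which forces the cokernel dimension to equal the kernel dimension. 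I expect the bookkeeping in the reconstruction step — transferring from the ODE solvability back to the integral-equation solvability and then back through the rescaling and the weight $(\lambda^\ast-\Psi_0')$ to an element of $\S$ — to be the most delicate part.
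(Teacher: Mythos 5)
Your proposal correctly identifies the essential structural fact — that $\cL^{\lambda^\ast}$ is Fredholm of index zero, so one-dimensionality of the kernel forces one-dimensionality of the cokernel — but you reach it via a mode-by-mode Fourier decomposition that the paper deliberately avoids. The paper's proof of Theorem \ref{thmcodimension} is much shorter and works with the global operator on $\S$ at once: since $\lambda^\ast-\Psi_0'(y)$ has no zeros on $[0,a]\cup[b,1]$ and is $C^\infty$ there, multiplication by it is an isomorphism of $\S$; separately, elliptic regularity shows that $D_f\Psi_0$ maps $\S$ into $C^{1+\beta}_{0,\,\text{even}}$ for any $\beta<1$, and choosing $\beta>\alpha$ gives a compact operator from $\S$ to itself. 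Hence $\cL^{\lambda^\ast}$ is a compact perturbation of an isomorphism, so it is Fredholm of index zero as an operator on $\S$, and codimension one follows immediately from Theorem \ref{mainexistencelemma} (the paper credits this argument to \cite{GHS}). What the global formulation buys you is precisely the avoidance of the step you yourself flag as the ``most delicate part'': by never decomposing the range problem into modes, there is nothing to reconstruct, no uniform lower bounds on the mode determinants or on $\|(\cL_n^{\lambda^\ast})^{-1}\|$ are needed to make $\sum_n f_n\cos(nx)$ converge in $C^{1+\alpha}$, and the per-mode injectivity established in Section \ref{s:dimkernel} is used only to pin down the kernel, never to control the range. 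Your alternative suggestion — exploiting self-adjointness of a weighted version of the operator to identify cokernel with kernel — is also sound, and the paper does rely on the $L^2$-symmetry of $\cL^{\lambda^\ast}$, but it deploys that fact for the transversality condition in Section \ref{s:transversality}, not for the codimension count.
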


Due to the Theorem \ref{mainexistencelemma} we just have to prove that the codimension of $\mathcal{L}^{\lambda^*}$ is one. Recall that $\mathcal{L}^\lambda$ is given by \eqref{defiL} and \eqref{operadorlineal}. On one hand, since $\lambda^*-\Psi'_0(y)$ has not zeros in $[0,a]\cup [b,1]$ and $\Psi_0'(y)$ is $C^\infty([0,a]\cup[b,1])$,  the multiplication operator by   $\lambda^*-\Psi'_0(y)$ is an isomorphism between $\S$ and $\S$. On the other hand, the operator
\begin{align*}
D_f\Psi_0\, :  \S\to C^{1+\beta}_{0,even}
\end{align*}
is well defined  for any $\beta<1$. Taking $\beta>\alpha$ we find that $D_f\Psi_0$ is compact from $\S\to\S$. This implies that $\mathcal{L}^{\lambda^*}$ is a Fredholm operator  of zero index. Thus, since the dimension of the kernel is one, the codimension of the image needs to be also one. Let us mention that we learn this argument from
\cite{GHS}, see also \cite{G2,GHM,GHM2,GHM3}.

Therefore, to apply the bifurcation argument it only remains to check the transversality condition in the Crandall-Rabinowitz Theorem.

\section{The transversality property}\label{s:transversality}

This section is devoted to the transversality assumption concerning the fourth and last hypothesis of the Crandall–Rabinowitz’s theorem.

We just have to check that
$$  D_{\l,f}^2 F[0,\lambda^\ast ]h^\ast \notin \text{Image}(D_f F[0,\l^\ast]),$$
where $h^\ast$ is the element that span the Kernel of $D_f F[0,\lambda^\ast]=\cL^{\lambda^\ast}$ and $\lambda^\ast$ is the value we have found. Since $D^2_{\l,f} F[0,\l^\ast]h^\ast=h^\ast$, we have to show that $h^\ast$ is not in the Image of $D_f F[0,\l^\ast]=\cL^{\lambda^\ast}
$.

This means that the equation
\begin{align}\label{aqui1}
(\lambda^\ast-v_{\varpi}+y^2)h(y)+2\int_{0}^1\chi(z)G_1(y,z)h(z)dz&=h^\ast(y), \quad \text{if $0\leq y<a$},\\
\left(\lambda^\ast-v_{\varpi}+(y-b)^2+2ay-a^2\right) h(y)+2\int_{0}^1\chi(z)G_1(y,z)h(z)dz&=h^\ast(y), \quad \text{if $b<y\leq1$},\label{aqui2}
\end{align}
where
\begin{align*}
(\lambda^\ast-v_{\varpi}+y^2)h^\ast(y)+2\int_{0}^1\chi(z)G_1(y,z)h^\ast(z)dz&=0, \quad \text{if $0\leq y<a$},\\
\left(\lambda^\ast-v_{\varpi}+(y-b)^2+2ay-a^2\right) h^\ast(y)+2\int_{0}^1\chi(z)G_1(y,z)h^\ast(z)dz&=0, \quad \text{if $b<y\leq 1$},
\end{align*}
has no solutions.

Suppose by reduction to the absurd that $h$ does indeed solve the system \eqref{aqui1}-\eqref{aqui2}. Recall that $\cL^{\lambda^\ast}=D_f F[0,\l^\ast]$. Thus, we have that
\begin{align*}
(\cL^{\lambda^\ast} h,h^\ast)_{L^2}=(h^\ast ,h^\ast)_{L^2}=||h^\ast||^2_{L^2},
\end{align*}
just by testing \eqref{aqui1}-\eqref{aqui2} against $h^\ast$. In addition, as the operator $\cL^{\lambda^\ast}$ is symmetric in the space $L^2\left([0,a]\cup[b,1]\right)$ we find that $(\cL^{\lambda^\ast} h,h^\ast)_{L^2}=(h,\cL^{\lambda^\ast} h^\ast)_{L^2}=0$ that give us a contradiction.

This concludes the last required condition of Crandall-Rabinowitz Theorem \ref{th:CR}.

\section{The main theorem}\label{s:mainthm}

After verifying all conditions for the application of the C-R theorem \ref{th:CR} and the discussion in Section \ref{s:formulationproblem} we obtain the following theorem:

\begin{theorem}\label{thm:detallado}  There exist $\ep_0$ small enough such that for every $0<\ep<\ep_0$  there exist a branch of solutions, $f_{\ep}^\sigma\in \S$ parameterize by $\sigma$, of equation  \eqref{ecuf1}, with $|\sigma|<\sigma_0$, for some small number $\sigma_0>0$, $\varpi$ as in \eqref{profile} and $\lambda=\lambda^\sigma_{\ep}.$ These solutions satisfy:
\begin{enumerate}
\item $f^\sigma_{\ep}(x,y)$ is $2\pi-$periodic on $x$.
\item The branch $$\ff^\sigma_{\ep}=\sigma h^\ast +o(\sigma) \quad\text{ in\quad  $\S$},$$
and the speed
$$\lambda^\sigma_{\ep}=\lambda^{\ast}+o(1),$$ where $(h^\ast,\l^\ast)$ are given in Theorem \ref{mainexistencelemma}.
\item  $f^\sigma_{\ep}(x,y)$ depends on $x$ in a nontrivial way.
\end{enumerate}
In addition, the vorticity $\omega^\sigma_{\ep}\in W^{1,\infty}(\T\times [-1,1])$,  given implicitly by
\begin{align*}
&\omega^\sigma_{\ep}(x_1,x_2)=\varpi(y),
\end{align*}
for $(x_1,x_2)=(x,y+f^\sigma_{\ep}(x,y))$ with $x\in\T$ and $y\in [-1,-\tfrac{1+\ep}{2})\cup (-\tfrac{1-\ep}{2},+\tfrac{1-\ep}{2})\cup(+\tfrac{1+\ep}{2},+1]$, where profile  function $\varpi(y)$ is given by \eqref{profile},
and
\[
\omega^\sigma_{\ep,\kappa,m}(x_1,x_2)=
\begin{cases}
-2\frac{1-\ep}{2}, \qquad x_1\in \T, \quad  +\frac{1-\ep}{2}+f^\sigma_\ep\left(x_1, +\frac{1-\ep}{2}\right)\leq x_2\leq +\frac{1+\ep}{2}+f^\sigma_\ep\left(x_1, +\frac{1+\ep}{2}\right),\\
+2\frac{1-\ep}{2}, \qquad  x_1\in \T \quad  -\frac{1+\ep}{2}+f^\sigma_\ep\left(x_1,- \frac{1+\ep}{2}\right)\leq x_2\leq -\frac{1-\ep}{2}+f^\sigma_\ep\left(x_1, -\frac{1-\ep}{2}\right),
\end{cases}
\]
yields a traveling way solution for 2D Euler in the sense that $$\omega^\sigma_{\ep}(x_1+\lambda^\sigma_{\ep}t,x_2)$$
satisfies  the system \eqref{eulerw}-\eqref{stream}. Importantly, $\omega^\sigma_{\ep}(x_1,x_2)$ depends non trivially on $x_1$.
\end{theorem}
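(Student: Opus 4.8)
\textbf{Proof proposal for Theorem \ref{thm:detallado}.}

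The plan is to assemble the pieces that have already been proved and feed them into the Crandall--Rabinowitz theorem (Theorem \ref{th:CR}), then translate the abstract bifurcation statement back into the language of vorticity contour lines. First I would fix the functional setting: take $X=\S$, $Y=\S$, and let $F[f,\lambda]=\lambda f-\overline{\Psi}_f+\tfrac{1}{2\pi}\int_{\T}\overline{\Psi}_f(\bar x,\cdot)d\bar x$ be the functional of Section \ref{s:spaces}, defined on $B_{\ep/4}\times\R$. Hypothesis (1) of Theorem \ref{th:CR}, i.e. $F[0,\lambda]=0$, is exactly the statement that $f\equiv 0$ corresponds to the shear flow $\bu_{\text{shear}}$, which solves \eqref{ecuf1} for every $\lambda$; this is immediate from \eqref{equationlevel3} since $\overline{\Psi}_0$ is independent of $x$. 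Hypothesis (2), the existence and continuity of $D_\lambda F$, $D_f F$ and $D^2_{\lambda,f}F$, was established in Section \ref{s:spaces}: $D_\lambda F[f,\lambda]=f$ is obviously smooth, $D_f F$ is Fréchet differentiable with continuous derivative by the last Lemma of that section, and $D^2_{\lambda,f}F[0,\lambda]h=h$ is constant in $\lambda$ hence continuous.

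Next I would invoke the spectral analysis. By Theorem \ref{thmcodimension} there exists $\ep_0>0$ such that for all $0<\ep<\ep_0$ one finds $\lambda^\ast=\lambda^\ast(\ep)\in\R$ for which $\cL^{\lambda^\ast}=D_f F[0,\lambda^\ast]$ has one-dimensional kernel, spanned by a smooth $h^\ast(x,y)=h(y)\cos(x)$ with $h$ as in \eqref{solafterrescalin}, and one-dimensional cokernel; moreover $\lambda^\ast-\Psi'_0(y)$ has no zeros on $[0,a]\cup[b,1]$. This furnishes hypothesis (3). For hypothesis (4), the transversality condition, I would cite Section \ref{s:transversality}: since $D^2_{\lambda,f}F[0,\lambda^\ast]h^\ast=h^\ast$ and $\cL^{\lambda^\ast}$ is $L^2([0,a]\cup[b,1])$-symmetric, if $h^\ast$ were in the image of $\cL^{\lambda^\ast}$ then $\|h^\ast\|_{L^2}^2=(\cL^{\lambda^\ast}h,h^\ast)_{L^2}=(h,\cL^{\lambda^\ast}h^\ast)_{L^2}=0$, a contradiction, so $D^2_{\lambda,f}F[0,\lambda^\ast]h^\ast\notin\cR(\cL^{\lambda^\ast})$. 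With all four hypotheses verified, Theorem \ref{th:CR} yields, for each $\ep<\ep_0$, a neighbourhood $U$ of $(0,\lambda^\ast)$ and continuous curves $\varphi,\psi$ with $\varphi(0)=0$, $\psi(0)=0$, producing a one-parameter branch $f^\sigma_\ep=\sigma h^\ast+\sigma\psi(\sigma)$, $\lambda^\sigma_\ep=\lambda^\ast+\varphi(\sigma)$, $|\sigma|<\sigma_0$, solving $F[f^\sigma_\ep,\lambda^\sigma_\ep]=0$, i.e. \eqref{ecuf1}. Properties (1) and (2) of the theorem are then read off directly: $2\pi$-periodicity in $x$ and evenness are built into $\S$, and $f^\sigma_\ep=\sigma h^\ast+o(\sigma)$, $\lambda^\sigma_\ep=\lambda^\ast+o(1)$ are the Crandall--Rabinowitz expansions since $\psi(\sigma)\to 0$ and $\varphi(\sigma)\to 0$. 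Property (3), nontrivial $x$-dependence, follows because the leading term $\sigma h^\ast=\sigma h(y)\cos(x)$ genuinely depends on $x$ (as $h\not\equiv 0$ by construction, $h$ being positive up to the prefactors by Lemmas \ref{l:positivityfn}--\ref{l:positivityfnright}); for $\sigma$ small enough $f^\sigma_\ep$ cannot be a function of $y$ alone, hence the resulting $u_1$ depends nontrivially on $x$ and the solution is nontrivial in the sense of the Remark following Theorem \ref{thmbasic}.

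Finally I would promote the solution $f^\sigma_\ep$ of the reduced level-set equation to a genuine weak traveling wave of 2D Euler. Here the work of the subsection ``From the level set equation to the Euler equation'' applies verbatim: since $f^\sigma_\ep\in\S\subset C^{1+\alpha}$ with $\|f^\sigma_\ep\|_{C^{1+\alpha}}<\ep/4$ small (shrinking $\sigma_0$ if necessary so the branch stays in $B_{\ep/4}$), the map $\Phi_{f^\sigma_\ep}^{-1}$ is well-defined, the vorticity $\omega^\sigma_\ep$ defined implicitly by $\omega^\sigma_\ep(x,y+f^\sigma_\ep(x,y))=\varpi(y)$ together with the stated constant values on the middle strip belongs to $W^{1,\infty}(\T\times[-1,1])$ because $\varpi\in W^{1,\infty}$, and the computation leading to \eqref{weaktravel}--\eqref{weakeuler} shows that $\omega^\sigma_\ep(x_1+\lambda^\sigma_\ep t,x_2)$ satisfies \eqref{weakeuler}, hence solves \eqref{eulerw}--\eqref{stream} in the weak sense, with velocity recovered from $\Psi$ via \eqref{velostream}. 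The odd-in-$y$ extension is consistent by the symmetry computation of Section \ref{s:formulationproblem}, and the circulation condition \eqref{circulation} holds for $\ep=0$ exactly and is preserved along the branch by the normalization $v_\varpi$. The main obstacle in carrying out this program is not any single step here but lies upstream, in Theorem \ref{thmcodimension} (the construction of $(\lambda^\ast,h^\ast)$ via the Brouwer fixed-point argument of Section \ref{s:closing} and the delicate asymptotics of Lemmas \ref{tedious}--\ref{auxl2}, together with the one-dimensionality argument of Section \ref{s:dimkernel}); once that is in hand, the assembly of the Crandall--Rabinowitz hypotheses and the passage back to Euler are, by comparison, routine bookkeeping, the only care needed being to keep $\sigma$ small enough that $f^\sigma_\ep$ remains in the ball $B_{\ep/4}$ where all the regularity and invertibility statements hold.
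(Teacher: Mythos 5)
Your proposal is correct and follows essentially the same route the paper takes: the paper's ``proof'' of Theorem \ref{thm:detallado} is precisely the one-line observation that the four hypotheses of Theorem \ref{th:CR} were verified in Sections \ref{s:spaces}--\ref{s:transversality} and that Section \ref{s:formulationproblem} converts the level-set solution into a weak Euler traveling wave, which is exactly the assembly you spell out. Your only imprecision is the remark that the circulation ``holds for $\ep=0$ exactly and is preserved along the branch by $v_\varpi$'' --- in fact it holds identically for every $\ep$ and every $f$ because the boundary data $\Psi_f|_{y=\pm1}=\mp\tfrac13$ fixes $\int_{-1}^{1}u_1\,dy=\Psi_f(-1)-\Psi_f(1)=\tfrac23$ directly; this does not affect the argument.
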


Then, in order to prove Theorem \ref{thmbasic} it remains to prove that $H^{\frac{3}{2}-}(\T\times [-1,1])$-norm of $\omega^\sigma_{\ep}+2y$ can be made as small as we want and  we do this in Theorem \ref{t:distance}.

\subsection{Distance of the traveling wave to the Poiseuille flow}
The solution $\omega_{\ep}^\sigma$ obtained in
Theorem \ref{thm:detallado} satisfies the following statement:
\begin{theorem}\label{t:distance}
Fixed $0\leq \gamma < 3/2$, for all $\epsilon>0$, there exist
$\ep>0$ and $\sigma>0$ such that
\[
||\omega_\e^\sigma +2 y ||_{H^{\gamma}(\T\times[-1,1])}<\epsilon.
\]
\end{theorem}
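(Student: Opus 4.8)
The plan is to split the vorticity $\omega_\e^\sigma$ into its ``shear part'' coming from the profile $\varpi$ and the ``genuinely $x$-dependent part'' coming from the contour displacement $f_\e^\sigma$, and to control each piece separately in $H^\gamma$ with $\gamma<3/2$. Writing $\omega_\e^\sigma(x_1,x_2)=\varpi\circ(\Phi^{-1}_{f})_2(x_1,x_2)$ on the support of $\nabla\omega$ (and the locally constant values on the ``plateau'' strips), we compare with $-2y$ by interpolating between an $L^2$ bound and a bound just below $\dot H^{3/2}$. First I would estimate $\|\omega_\e^\sigma+2y\|_{L^2}$: this is $O(\e)$ in the $\e\to 0$ limit (the profile $\varpi$ differs from $-2y$ only on the two strips of width $\e$, where the difference is $O(\e)$, and the contour deformation $f_\e^\sigma=\sigma h^\ast+o(\sigma)$ contributes an $L^2$ error $O(\sigma)$ via the change of variables $\Phi_f$), so $\|\omega_\e^\sigma+2y\|_{L^2}\to 0$ as $\e,\sigma\to 0$.

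Next I would bound a higher Sobolev norm. The key structural fact is that $\omega_\e^\sigma$ is obtained by pushing the Lipschitz profile $\varpi$ forward by the $C^{1+\alpha}$ diffeomorphism $\Phi_f$ (with $1/2\le\alpha<1$, which is exactly why the excerpt fixes $\alpha\ge 1/2$ in Section~\ref{s:spaces}). So $\nabla\omega_\e^\sigma\in L^\infty$ with a bound uniform in $\sigma$ and $\e$, i.e. $\omega_\e^\sigma\in W^{1,\infty}\hookrightarrow H^{s}$ for every $s<3/2$ with norm bounded by a constant depending on $\e$ but \emph{not} blowing up — actually one must be a little careful, since $\|\nabla\omega_\e^\sigma\|_{L^\infty}$ does involve the constant $2$ (the slope of $\varpi$) and does not vanish. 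The correct move is therefore not to bound $\omega_\e^\sigma$ alone but the difference $\omega_\e^\sigma+2y$: this difference is supported (morally) in the two strips plus the graph-deformation region, and there $\nabla(\omega_\e^\sigma+2y)$ is $O(1)$ pointwise but supported on a set of measure $O(\e)+O(\|f\|)$. Hence $\|\omega_\e^\sigma+2y\|_{W^{1,p}}$ is small for every finite $p$, and by the Sobolev embedding / a Gagliardo--Nirenberg interpolation between $W^{1,p}$ and $L^2$ one gets $\|\omega_\e^\sigma+2y\|_{H^\gamma}\lesssim \delta(\e,\sigma)$ for any fixed $\gamma<3/2$, where $\delta(\e,\sigma)\to 0$. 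Concretely, interpolate: $\|g\|_{\dot H^\gamma}\le \|g\|_{L^2}^{1-\theta}\|g\|_{\dot W^{1,p}}^{\theta}$ with $\theta$ and $p$ chosen so that $\gamma=\theta(1-2/p+1)$ stays $<3/2$ as $p\to\infty$, $\theta\to 1$; since both factors are small, so is the left-hand side.

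More carefully, I would avoid the subtlety above by doing a direct Littlewood--Paley / difference-quotient estimate on $\omega_\e^\sigma+2y$. One writes $\omega_\e^\sigma(x_1,x_2)+2x_2 = \big(\varpi(y)+2y\big)\big|_{y=(\Phi_f^{-1})_2} + 2\big((\Phi_f^{-1})_2(x_1,x_2)-x_2\big) + (\text{corrections on the strips})$. The first term is a Lipschitz function of a $C^{1+\alpha}$ variable that vanishes outside strips of total width $O(\e)$; its $H^\gamma$ norm for $\gamma<3/2$ is $O(\e^{3/2-\gamma})$ by the standard estimate $\|\chi_{[0,\e]}\ast(\text{Lip})\|_{H^\gamma}\sim \e^{3/2-\gamma}$ (this is precisely the regularity threshold that makes $3/2$ sharp and is the heart of why the theorem stops at $3/2^-$). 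The second term is $2\big((\Phi_f^{-1})_2 - \mathrm{id}_2\big)$, and since $(\Phi_f^{-1})_2-\mathrm{id}$ is $C^{1+\alpha}$ with norm $O(\|f\|_{C^{1+\alpha}})=O(\sigma)$ by the construction of $\Phi_f$ in Section~\ref{s:spaces}, it is $O(\sigma)$ in $C^{1+\alpha}\hookrightarrow H^\gamma$ for $\gamma<1+\alpha$, hence in particular for $\gamma<3/2$. Summing, $\|\omega_\e^\sigma+2y\|_{H^\gamma}\le C\big(\e^{3/2-\gamma}+\sigma\big)$; choosing first $\e$ then $\sigma$ small gives $<\epsilon$, proving the theorem.

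\textbf{Main obstacle.} The delicate point is the sharp fractional estimate $\|\varpi+2y\|_{H^\gamma}\lesssim \e^{3/2-\gamma}$ together with the fact that composing with the $C^{1+\alpha}$ map $\Phi_f^{-1}$ (with $\alpha\ge 1/2$ exactly matching $\gamma<3/2$) does not destroy this: one needs a commutator-type / paraproduct estimate showing that $g\circ\Phi$ stays in $H^\gamma$ with comparable norm when $g\in H^\gamma$, $\gamma<1+\alpha$, and $\Phi\in C^{1+\alpha}$ is close to the identity — and, crucially, that the composition error is controlled by $\|f\|_{C^{1+\alpha}}$, so that it is $o(1)$ along the bifurcation branch. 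This is where the assumption $1/2\le\alpha<1$ fixed in Section~\ref{s:spaces} is used, and it is the only genuinely non-routine estimate; everything else is bookkeeping on strips of width $O(\e)$ and the smallness $\|f_\e^\sigma\|_{\S}=O(\sigma)$ from Theorem~\ref{thm:detallado}.
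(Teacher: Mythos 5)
Your first route — interpolating between $L^2$ and $W^{1,p}$ — breaks on a genuine analytic point. There is no Sobolev embedding $W^{1,p}(\R^2)\hookrightarrow H^{s}(\R^2)$ for any $s>1$, regardless of $p$: Sobolev embedding trades integrability for differentiability but cannot raise the order, and the Gagliardo--Nirenberg inequality $\|g\|_{\dot H^\gamma}\lesssim\|g\|_{L^2}^{1-\theta}\|\nabla g\|_{L^p}^{\theta}$ is only valid when $\gamma=\theta\le 1$ (with $q=2$ on the left this forces $p=2$ or $\theta=0$). A lacunary example makes this concrete: if $\nabla g=\sum_{j\le N}\phi\,e^{i2^j\cdot x}$ with $\phi$ a fixed cutoff, then $\|\nabla g\|_{L^p}$ is bounded uniformly in $N$ after normalization, while $\|\nabla g\|_{\dot H^{1-2/p}}\sim 2^{N(1-2/p)}\to\infty$ for $p>2$. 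What is special about $\omega_\ep^\sigma+2y$ is not just that its gradient is small in $L^p$ for finite $p$, but that $\nabla(\omega_\ep^\sigma+2y)$ has the structure of a characteristic function of a thin region (with a jump of size $O(1)$ across the plateau boundary), and that specific structure — not a generic $W^{1,p}$ bound — is what places it in $H^{1/2^-}$ and hence $\omega_\ep^\sigma+2y$ in $H^{3/2^-}$. Your proposed interpolation chain, if it worked, would also prove the theorem for $\gamma$ up to $2$, which contradicts the fact that $\varpi+2y$ (constant in $x$, piecewise linear in $y$ with a slope jump of $2$) is not in $H^{3/2}(\T\times[-1,1])$ for any $\ep>0$.

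Your second route — the decomposition $\omega_\ep^\sigma+2x_2 = (\varpi+2\,\cdot)\circ(\Phi_f^{-1})_2 + 2(x_2-(\Phi_f^{-1})_2)+\text{(strip corrections)}$, the 1D estimate $\|\varpi+2y\|_{H^\gamma}\sim\ep^{3/2-\gamma}$, and the observation that $\gamma<3/2$ together with $\alpha\ge 1/2$ is precisely what makes composition with a $C^{1+\alpha}$ change of variables harmless — captures the correct intuitions, and the $\ep^{3/2-\gamma}$ asymptotics matches what the paper actually produces. But the piece you describe as ``corrections on the strips'' and defer is not a lower-order correction: the discontinuity of $\nabla\omega_\ep^\sigma$ across $\partial\Omega_f$ is exactly what limits the regularity to $H^{3/2^-}$, and it is the dominant term the paper has to estimate. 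The paper does this not through a composition/paraproduct lemma but by writing the $\dot H^\gamma$ seminorm of $\nabla(\omega_\ep^\sigma+2y)$ as a Gagliardo double integral, splitting it into $I_1$ (both points in $\Omega_f$, controlled by the $C^{\alpha}$ seminorm of $\nabla f$, which is $o(1)$ in $\sigma$ once $\gamma<\alpha$) and $I_2$ (one point in $\Omega_f$, one in the plateau, controlled by explicit strip integrals whose leading contribution is $(\ep/2+\|f\|_\infty)^{1-2\gamma}$, $o(1)$ in $\ep$). Your version would in addition need (i) a quantitative $H^\gamma$-stability result for composition with $C^{1+\alpha}$ maps and (ii) a $C^{1+\alpha}$ extension of $\Phi_f^{-1}$ across the plateau whose constant blows up as $\ep\to 0$, which is admissible only because $\ep$ is fixed before sending $\sigma\to 0$; the paper sidesteps both by never unifying the two representations. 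So the second route is salvageable and genuinely different in method from the paper, but as written it leaves the main estimate unargued.
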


\begin{proof}
Let us emphasize that we can make $||f_\ep^\sigma||_{C^{1+\a}(\T\times\supp(\varpi'))}$ arbitrarily small fixed $\ep$ taking $\sigma$ small. We have all the ingredients to obtain a quantitative estimate of the distance between the Poiseuille flow and the constructed traveling wave.

To alleviate the notation let us skip the subscript $\ep$  and the superscript $\sigma$ on $\omega_\ep^\sigma$  and $f_\ep^\sigma$ in the rest of the section. For simplicity we use the notation $\Omega\equiv \mathbb{T}\times[-1,1].$

In first place, we note that
\[
||\omega +2 y ||_{H^{\gamma}(\Omega)}=||\omega +2 y ||_{L^2(\Omega)}+||\omega +2 y ||_{\dot{H}^{\gamma}(\Omega)}.
\]

\underline{$L^2(\Omega)$-norm:} Here, we just note that
\begin{align*}
||\omega +2 y ||^2_{L^2(\Omega)}& = \int_{\Omega_f}|\omega(x,y)+2y|^2 \dx\dy+\int_{\Omega\setminus\Omega_f}|\omega(x,y)+2y|^2 \dx\dy.
\end{align*}
with
\[
\Omega_f=\{ (\bar{x},\bar{y})\in \T \times [-1,1]\, :\, (\bar{x},\bar{y})=\Phi_f(x,y)  \text{ with } x\in \T \text{ and } y\in \supp(\varpi')\}.
\]
On the one hand, recalling \eqref{profile} we have
\begin{align*}
\int_{\Omega_f}|\omega(x,y)+2y|^2 \dx\dy &=\int_{\T\times\supp(\varpi')} |\omega(\Phi_f(x,y))+2(y+f(x,y)) |^2\det\nabla \Phi_f(x,y)dxdy\\
&=\int_{\T}\int_{[-1,-b_\ep]\cup [-a_\ep,+a_\ep]\cup [+b_\ep,+1]}|\varpi(y)+2y +2f(x,y)|^2\det\nabla \Phi_f(x,y)dxdy\\
&=\int_{\T}\int_{ [-a_\ep,+a_\ep]}|2f(x,y)|^2\det\nabla \Phi_f(x,y)dxdy\\
&+\int_{\T}\int_{[-1,-b_\ep]\cup  [+b_\ep,+1]}|2(b_\ep-a_\ep) +2f(x,y)|^2\det\nabla \Phi_f(x,y)dxdy.
\end{align*}
These two terms can be made arbitrarily small by first making $\ep$ and then $\sigma$ small.

On the other hand, we have that
\begin{align*}
\int_{\Omega\setminus\Omega_f}|\omega(x,y)+2y|^2 =&\int_{\T}\int_{a_\ep+f(x,a_\ep)\leq y\leq b_\ep+f(x,b_\ep)}|-2a_\ep+2y|^2dydx \\&+\int_{\T}\int_{-b_\ep+f(x,-b_\ep)\leq y\leq -a_\ep+f(x,-a_\ep)}|2a_\ep+2y|^2dydx.
\end{align*}
These integrals can easily be made small as before by first making $\ep$ and then $\sigma$ small.

Therefore, we have proven that we can make the $L^2(\Omega)$-norm as small as we want.\\

\underline{$\dot{H}^{3/2^{-}}(\Omega)$-seminorm:}
We deal with the norm
\begin{align*}
||\nabla(\omega(x,y)+2y)||_{H^{1/2-}(\Omega)}.
\end{align*}

We use Sobolev–Slobodeckij seminorm  and will consider that
\begin{align*}
||\nabla(\omega +2 y) ||^2_{\dot{H}^{\gamma}(\Omega)}&= \int_\Omega \int_\Omega |\nabla\omega(\bx)-\nabla\omega(\by)|^2K(x_1-y_1,x_2-y_2)d\by d\bx
\end{align*}
where
$$K(x_1,x_2)= \frac{1}{(\sin^2\left(\frac{x_1}{2}\right)+x_2^2)^{1+\gamma}}.$$

It turns that, in order to estimate this norm, it will be enough to estimate
\begin{align*}
&I=\int_{\Omega^{ext}} \int_{\Omega{ext}} \frac{|\nabla\omega(\bx)-\nabla\omega(\by)|^2}{|\bx-\by|^{2+2\gamma}}d\by d\bx,
\end{align*}
where $\Omega^{ext}=[-2\pi,2\pi]\times [-1,1]\equiv T\times[-1,1]$. In the remainder of the article, we will use the periodic extension of the vorticity $\omega$ to the extended domain $\Omega^{ext}$.

In first place, we have
\begin{align*}
I&=\int_{\Omega^{ext}_f} \int_{\Omega^{ext}_f} \frac{|\nabla\omega(\bx)-\nabla\omega(\by)|^2}{|\bx-\by|^{2+2\gamma}}d\by d\bx + 2\int_{\Omega^{ext}_f} \int_{\Omega^{ext}\setminus\Omega^{ext}_f} \frac{|\nabla\omega(\bx)-\nabla\omega(\by)|^2}{|\bx-\by|^{2+2\gamma}}d\by d\bx\\
&=I_1+2 I_2,
\end{align*}
with
\[
\Omega^{ext}_f = \{(\bar{x},\bar{y})\in T \times [-1,1]\, :\, (\bar{x},\bar{y})=\Phi_f(x,y)  \text{ with } x\in T \text{ and } y\in \supp(\varpi')\}.
\]

Let us start working with the first integral term. Then,  making the appropriate change of variables, we obtain
\begin{equation*}
I_1=\int_{T\times\supp(\varpi')}\int_{T\times\supp(\varpi')}\frac{|\nabla\omega(\Phi_f(\bx))-\nabla\omega(\Phi_f(\by))|^2}{|\Phi_f(\bx)-\Phi_f(\by)|^{2+2\gamma}}(1+\pa_2 f(\bx))(1+\pa_2 f(\by))d\by d\bx.
\end{equation*}
In order to estimate $I_1$, we recall that
\[
\nabla \omega(\Phi_f(\bx))=\frac{\left(-\pa_1 f(\bx), 1\right)}{1+\pa_2 f(\bx)}\varpi'(y), \qquad \text{on } \supp(\nabla\omega).
\]
and the fact that $\varpi'= (-2)\chi_{[-1,-b_\ep]\cup[-a_\ep,+a_\ep]\cup[+b_\ep,+1]}$. Then, we get
\begin{multline*}
\frac{|\nabla\omega(\Phi_f(\bx))-\nabla\omega(\Phi_f(\by))|^2}{|\Phi_f(\bx)-\Phi_f(\by)|^{2+2\gamma}}(1+\pa_2 f(\bx))(1+\pa_2 f(\by))\\
=\frac{(-2)^2}{(1+\pa_2 f(\bx))(1+\pa_2 f(\by))}\frac{\left(\pa_1 f(\by)(1+\pa_2 f (\bx))-\pa_1 f(\bx)(1+\pa_2 f(\by))\right)^2+(\pa_2 f(\by)-\pa_2 f(\bx))^2}{|\Phi_f(\bx)-\Phi_f(\by)|^{2+2\gamma}}.
\end{multline*}

Since $\Phi_f:T\times\supp(\varpi')\rightarrow \Omega^{ext}_f$ is a diffeomorphism  and  $||f||_{C^{1+\alpha}(T\times\supp(\varpi'))}$ is small enough, we have that there exists universal positive constants  $c_1,c_2>0$ such that
\[
c_1\leq \frac{|\bx-\by|^{2+2\gamma}}{|\Phi_f(\bx)-\Phi_f(\by)|^{2+2\gamma}} \leq c_2.
\]
In addition, adding and subtracting appropriate terms in the numerator and using the fact that $||f||_{C^{1+\alpha}(T\times\supp(\varpi'))}$ is small enough we arrive to the upper bound
\begin{multline*}
\frac{|\nabla\omega(\Phi_f(\bx))-\nabla\omega(\Phi_f(\by))|^2}{|\Phi_f(\bx)-\Phi_f(\by)|^{2+2\gamma}}(1+\pa_2 f(\bx))(1+\pa_2 f(\by)) \\
\leq C    \frac{|(\pa_1 f(\by)-\pa_1 f(\bx))(1+\pa_2 f(\bx))+\pa_1 f(\bx)(\pa_2 f(\bx)-\pa_2 f(\by))|^2}{|\bx-\by|^{2+2\gamma}}\\
+C \frac{|\pa_2 f(\by)-\pa_2 f(\bx)|^2}{|\bx-\by|^{2+2\gamma}},
\end{multline*}
for some uniform constant $C$.

Then, we have seen that
\begin{multline*}
I_1=\int_{T\times\supp(\varpi')}\int_{T\times\supp(\varpi')}\frac{|\nabla\omega(\Phi_f(\bx))-\nabla\omega(\Phi_f(\by))|^2}{|\Phi_f(\bx)-\Phi_f(\by)|^{2+2\gamma}}(1+\pa_2 f(\bx))(1+\pa_2 f(\by))d\by d\bx\\
\leq C\int_{T\times\supp(\varpi')}\int_{T\times\supp(\varpi')}\frac{|\nabla f(\bx)-\nabla f(\by)|^2}{|\bx-\by|^{2+2\gamma}}d\by d\bx\\
=C\int_{T\times\supp(\varpi')}\int_{T\times\supp(\varpi')}\frac{|\nabla f(\bx)-\nabla f(\by)|^2}{|\bx-\by|^{2\alpha}}\frac{1}{|\bx-\by|^{2+2(\gamma-\alpha)}}d\by d\bx.
\end{multline*}
To conclude with $I_1$, we just need to note that  $||f||_{C^{1+\alpha}(T\times\supp(\varpi'))}=o(1)$ in terms of $\sigma$ implies that
\[
\frac{|\nabla f(\bx)-\nabla f(\by)|^2}{|\bx-\by|^{2\alpha}}=o(1).
\]
Now, recalling that in fact we are just interested in  $0<\gamma<1/2$ and since $1/2\leq \alpha<1$ we obtain that the last term integrate and we get the desired result. That is,
\begin{multline*}
C\int_{T\times\supp(\varpi')}\int_{T\times\supp(\varpi')}\frac{|\nabla f(\bx)-\nabla f(\by)|^2}{|\bx-\by|^{2\alpha}}\frac{1}{|\bx-\by|^{2+2(\gamma-\alpha)}}d\by d\bx\\
=C o(1)\int_{T\times\supp(\varpi')}\int_{T\times\supp(\varpi')}\frac{1}{|\bx-\by|^{2+2(\gamma-\alpha)}}d\by d\bx = C o(1)=o(1).
\end{multline*}

To finish with the proof we have to estimate $I_2$. We start noting that
\begin{equation*}
I_2=\int_{\Omega^{ext}_f} \int_{\Omega^{ext}\setminus\Omega^{ext}_f} \frac{|\nabla\omega(\bx)-\nabla\omega(\by)|^2}{|\bx-\by|^{2+2\gamma}}d\by d\bx=\int_{\Omega^{ext}_f} \int_{\Omega^{ext}\setminus\Omega^{ext}_f} \frac{|\nabla\omega(\bx)|^2}{|\bx-\by|^{2+2\gamma}}d\by d\bx
\end{equation*}
In addition, by using  that $\omega\in W^{1,\infty}(\Omega)$, we obtain
\[
I_2\leq C \int_{\Omega^{ext}_f} \int_{\Omega^{ext}\setminus\Omega^{ext}_f} \frac{1}{|\bx-\by|^{2+2\gamma}}d\by d\bx.
\]
Now, using the oddness of $f$ with respect to the vertical variable, we have that
\begin{multline*}
\int_{\Omega^{ext}_f} \int_{\Omega^{ext}\setminus\Omega^{ext}_f}\frac{1}{|\bx-\by|^{2+2\gamma}}d\by d\bx\\
=2\int_{T\times T}dx_1 dy_1 \int_{[0,a_\ep+f(x_1,a_\ep)]\cup[b_\ep+f(x_1,b_\ep),1]}dx_2\int_{[a_\ep+f(y_1,a_\ep),b_\ep + f(y_1,b_\ep)]}dy_2 \frac{1}{|\bx-\by|^{2+2\gamma}}\\
=2\int_{T\times T}dx_1 dy_1 \int_{0}^{a_\ep+f(x_1,a_\ep)}dx_2\int_{a_\ep+f(y_1,a_\ep)}^{b_\ep + f(y_1,b_\ep)}dy_2\frac{1}{\left[(x_1-y_1)^2+(x_2-y_2)^2\right]^{1+\gamma}}\\
+2\int_{T\times T}dx_1 dy_1 \int_{b_\ep+f(x_1,b_\ep)}^{1}dx_2\int_{a_\ep+f(y_1,a_\ep)}^{b_\ep + f(y_1,b_\ep)}dy_2\frac{1}{\left[(x_1-y_1)^2+(x_2-y_2)^2\right]^{1+\gamma}}.
\end{multline*}

We focus on just one term of the above summand, since the other follows by the same argument. For example, we provide all the details for the first one. That is,
\[
\int_{T\times T}dx_1 dy_1 \int_{0}^{a_\ep+f(x_1,a_\ep)}dx_2\int_{a_\ep+f(y_1,a_\ep)}^{b_\ep + f(y_1,b_\ep)}dy_2\frac{1}{\left[(x_1-y_1)^2+(x_2-y_2)^2\right]^{1+\gamma}}.
\]
In fact, taking $\sigma$ small enough, we can decompose the double integral in $(x_2,y_2)$-variables  as follows
\begin{multline*}
\int_{0}^{a_\ep + f(x_1,a_\ep)}dx_2 \int_{a_\ep + f(y_1,a_\ep)}^{b_\ep + f(y_1,b_\ep)}dy_2=\int_{0}^{a_\ep + f(x_1,a_\ep)}dx_2 \int_{a_\ep + f(y_1,a_\ep)}^{1/2}dy_2\\
+\int_{0}^{a_\ep + f(x_1,a_\ep)}dx_2 \int_{1/2}^{b_\ep + f(y_1,b_\ep)}dy_2,
\end{multline*}
where only the first double integral gives us a singularity in the denominator. For that reason, in the following we will focus only on estimating the most delicate term. That is,
\[
J=\int_{T}dx_1\int_{T}dy_1\int_{0}^{a_\ep+f(x_1,a_\ep)}dx_2\int_{a_\ep+f(y_1,a_\ep)}^{1/2}dy_2\frac{1}{\left[(x_1-y_1)^2+(x_2-y_2)^2\right]^{1+\gamma}}.
\]
By making appropriate change of variables, we get that $J$ can be written as
\[
\iint_{T\times T}dx_1 dy_1\int_{-f(x_1,a_\ep)}^{a_\ep} dx_2\int_{a_\ep}^{1/2-f(y_1,a_\ep)}dy_2 \frac{1}{\left[(x_1-y_1)^2+(x_2+f(x_1,a_\ep)-y_2-f(y_1,a_\ep))^2\right]^{1+\gamma}}.
\]
Fixed $\ep$, taking $\sigma$ sufficiently small, we have
\begin{align*}
(x_1-y_1)^2&+(x_2-y_2+f(x_1,a_\ep)-f(y_1,a_\ep))^2\\
&\geq (x_1-y_1)^2+(x_2-y_2)^2+2(x_2-y_2)(f(x_1,a_\ep)-f(y_1,a_\ep))\\
&\geq (x_1-y_1)^2\left[1-2\left(\frac{f(x_1,a_\ep)-f(y_1,a_\ep)}{x_1-y_1}\right)^2\right]+\frac{1}{2}(x_2-y_2)^2\\
&\geq (x_1-y_1)^2 \left(1-2||f||_{C^{1+\alpha}(T\times\sup(\varpi'))}^2\right)+\frac{1}{2}(x_2-y_2)^2\\
&\geq c\left((x_1-y_1)^2+(x_2-y_2)^2\right),
\end{align*}
for some positive constant $c$.

Therefore,
\begin{align*}
J&\leq C\int_{T}dx_1\int_{T}dy_1\int_{-f(x_1,a_\ep)}^{a_\ep}dx_2\int_{a_\ep}^{1/2-f(y_1,a_\ep)}dy_2 \frac{1}{\left[(x_1-y_1)^2+(x_2-y_2)^2\right]^{1+\gamma}}\\
&\leq C\int_{T}dx_1\int_{T}dy_1\int_{-||f||_{L^\infty}}^{a_\ep}dx_2\int_{a_\ep}^{1/2+||f||_{L^\infty}}dy_2\frac{1}{\left[(x_1-y_1)^2+(x_2-y_2)^2\right]^{1+\gamma}}.
\end{align*}
Now we compute
\begin{align*}
\int_{T}\int_{T}\frac{dx_1dy_1}{\left[(x_1-y_1)^2+(x_2-y_2)^2\right]^{1+\gamma}}&=\frac{1}{|x_2-y_2|^{2\gamma}}\int_{-\frac{2\pi}{|x_2-y_2|}}
^{\frac{2\pi}{|x_2-y_2|}}\int_{-\frac{2\pi}{|x_2-y_2|}}
^{\frac{2\pi}{|x_2-y_2|}}\frac{dx_1dy_1}{\left[1+(x_1-y_1)^2\right]^{1+\gamma}}\\
&\leq \frac{1}{|x_2-y_2|^{2\gamma}} \int_{-\frac{2\pi}{|x_2-y_2|}}
^{\frac{2\pi}{|x_2-y_2|}}dy_1 \int_{-\infty}
^{\infty}dx_1\frac{1}{\left[1+(x_1-y_1)^2\right]^{1+\gamma}}\\
&\leq \frac{C}{|x_2-y_2|^{2\gamma}}\int_{-\frac{2\pi}{|x_2-y_2|}}
^{\frac{2\pi}{|x_2-y_2|}}dy_1\leq \frac{C}{|x_2-y_2|^{1+2\gamma}}.
\end{align*}
Thus, we have
\begin{align*}
J&\leq C\int_{-||f||_{L^\infty}}^{a_\ep}\int_{a_\ep}^{1/2+||f||_{L^\infty}}\frac{1}{(y_2-x_2)^{1+2\gamma}}dy_2 dx_2\\
&\leq \frac{C}{\gamma} \int_{-||f||_{L^\infty}}^{a_\ep} \left(\frac{1}{(a_\ep-x_2)^{2\gamma}}-\frac{1}{(1/2+||f||_{L^\infty}-x_2)^{2\gamma}}\right)dx_2\\
&\leq \frac{C}{\gamma(1-2\gamma)}\left[ (a_\ep+||f||_{L^\infty})^{1-2\gamma}-(1/2+2||f||_{L^\infty})^{1-2\gamma}+(1/2-a_\ep+||f||_{L^\infty})^{1-2\gamma}\right].
\end{align*}
Since $0<\gamma<1/2$, $1/2-a_\ep=\ep/2$, and we can make $||f||_{L^\infty}$ as small as we want by taking $\sigma$ small enough, we have proved that $J$ is of order $o(1)$.

To sum up, combining all we have proved that
\[
||\nabla(\omega +2 y) ||^2_{\dot{H}^{1/2^-}(\Omega)} =o(1).
\]

Consequently, for any $\epsilon > 0$ and for any $0 \leq \gamma < 3/2$, taking $\ep$ and $\sigma$ small enough, we find a
traveling wave such that its vorticity satisfies $||\omega^\sigma_{\ep}+2y||_{H^{\gamma}(\T\times[-1,1])}<\epsilon.$
\end{proof}

\textbf{Acknowledgments:}  A.C. acknowledge financial support from the Severo Ochoa Programme for Centres of Excellence Grant CEX2019-000904-S funded by MCIN/AEI/10.13039/501100011033, Grant PID2020-114703GB-I00 funded by MCIN/AEI/10.13039/501100011033, Grants RED2022-134784-T and RED2018-102650-T funded by MCIN/AEI/10.13039/501100011033
and by a 2023 Leonardo Grant for Researchers and Cultural Creators, BBVA Foundation. The BBVA Foundation accepts no responsibility for the opinions, statements, and contents included in the project and/or the results thereof, which are entirely the responsibility of the authors.

D.L. is  supported by RYC2021-030970-I funded by MCIN/AEI/10.13039/501100011033 and the NextGenerationEU. D.L also acknowledge financial support from Grant PID2020-114703GB-I00 and Grant PID2022-141187NB-I00 funded by MCIN/AEI/10.13039/501100011033.

%\bibliography{references}
\bibliographystyle{abbrv}

\end{document}